\newlist{enumtheo}{enumerate}{1}			
\newlist{enumstep}{enumerate}{1}
\setlist[enumtheo,enumstep]{wide,label=(\roman*),font=\upshape}
\setlist[enumerate,1]{label=\alph*),font=\upshape}
\theoremstyle{plain}
\newtheorem{satz}{Satz}[section]
\newtheorem{theo}[satz]{Theorem}
\newtheorem{lemm}[satz]{Lemma}
\newtheorem{prop}[satz]{Proposition}
\newtheorem{coro}[satz]{Corollary}
\newtheorem*{thm*}{Theorem}
\newtheorem*{prop*}{Proposition}
\theoremstyle{definition}
\newtheorem{defi}[satz]{Definition}
\newtheorem{ques}[satz]{Question}
\newtheorem{exam}[satz]{Example}
\theoremstyle{remark}
\newtheorem{rema}[satz]{Remark}
\numberwithin{equation}{subsection}
\crefname{rema}{remark}{remarks}
\crefname{defi}{definition}{definitions}
\crefname{theo}{theorem}{theorems}
\crefname{prop}{proposition}{propositions}
\crefname{coro}{corollary}{corollaries}
\crefname{enumtheoi}{item}{items}
\crefname{enumstepi}{step}{steps}
\crefname{diag}{diagram}{diagrams}
\crefname{sequ}{sequence}{sequences}
\newcommand{\G}{\mathbb{G}}				
\newcommand{\N}{\mathbb{N}}				
\newcommand{\Z}{\mathbb{Z}}				
\newcommand{\R}{\mathbb{R}}				
\newcommand{\C}{\mathbb{C}}				
\renewcommand{\P}{\mathbb{P}}			
\newcommand{\cA}{\mathcal{A}}			
\newcommand{\cC}{\mathcal{C}}			
\newcommand{\cF}{\mathcal{F}}			
\newcommand{\cG}{\mathcal{G}}
\newcommand{\cI}{\mathcal{I}}			
\newcommand{\cL}{\mathcal{L}}			
\newcommand{\cM}{\mathcal{M}}			
\newcommand{\cN}{\mathcal{N}}			
\newcommand{\cT}{\mathcal{T}}			
\newcommand{\cU}{\mathcal{U}}			
\newcommand{\cV}{\mathcal{V}}			
\newcommand{\cW}{\mathcal{W}}
\newcommand{\cX}{\mathcal{X}}			
\newcommand{\sC}{\mathscr{C}}			
\newcommand{\sF}{\mathscr{F}}			
\newcommand{\sG}{\mathscr{G}}
\newcommand{\sL}{\mathscr{L}}			
\newcommand{\sM}{\mathscr{M}}			
\newcommand{\sN}{\mathscr{N}}			
\newcommand{\sT}{\mathscr{T}}			
\newcommand{\sS}{\mathscr{S}}			
\renewcommand{\O}{\mathscr{O}}			
\newcommand{\per}{\mathsf{p}}					
\newcommand{\Per}{\mathsf{P}}					
\newcommand{\pdom}[1]{\mathsf{D}_{#1}}			
\newcommand{\Weyl}[1]{\mathsf{W}_{#1}}			
\newcommand{\Tw}{\mathsf{T}} 					
\newcommand{\Twr}{\mathsf{T}_{\R}}				
\newcommand{\id}[1]{\mathsf{id}_{#1}}			
\newcommand{\pr}[1]{\mathsf{pr}_{#1}}			
\newcommand{\pt}{\mathsf{pt}}					
\DeclareMathOperator{\df}{Def} 					
\newcommand{\CP}[1]{\mathbb{P}^{#1}}			
\newcommand{\Gr}[2]{\mathsf{Gr}(#1,#2)}			
\newcommand{\Grp}[2]{\mathsf{Gr}^+(#1,#2)}		
\newcommand{\Sec}{\mathsf{Sec}}					
\newcommand{\dou}{\operatorname{\mathsf{Dou}}}	
\newcommand{\Nb}[2]{\mathscr{N}_{#1/#2}}		
\newcommand{\csh}[2]{\underline{#1}_{#2}}		
\newcommand{\Hsh}[3]{\mathsf{H}^{#1}(#2;#3)}		
\newcommand{\hdim}[3]{\mathsf{h}^{#1}(#2;#3)}		
\newcommand{\Hdg}[3]{\mathsf{H}^{#1,#2}(#3)}		
\newcommand{\Rs}[3]{\mathsf{R}^{#1}#2_*{#3}}		
\DeclareMathOperator{\codim}{codim}					
\newcommand{\rk}{\operatorname{\mathsf{rk}}}		
\renewcommand{\deg}{\operatorname{\mathsf{deg}}}	
\newcommand{\End}{\operatorname{\mathsf{End}}}      
\newcommand{\Aut}{\operatorname{\mathsf{Aut}}}		
\newcommand{\Holono}{\operatorname{\mathsf{Hol}}}	
\renewcommand{\Re}{\operatorname{\mathsf{Re}}}		
\renewcommand{\Im}{\operatorname{\mathsf{Im}}}		
\renewcommand{\subset}{\subseteq}					
\renewcommand{\epsilon}{\varepsilon}				
\newcommand*{\defeq}{\coloneqq}						
\newcommand*{\eqdef}{\eqqcolon}						
\renewcommand{\tilde}{\widetilde}
\newcommand{\from}{\colon}
\newcommand{\isom}{\cong}							
\newcommand{\inv}{^{-1}}							
\newcommand{\lto}{\longrightarrow}					
\newcommand{\x}{\times}								
\newcommand{\inj}{\hookrightarrow}					
\newcommand{\bij}{\overset\sim\lto}					
\newcommand{\rest}[2]{\left.{#1}\right\vert_{#2}}	
\newcommand{\Sp}{\operatorname{\mathsf{Sp}}}		
\newcommand{\SL}{\operatorname{\mathsf{SL}}}		
\newcommand{\Orth}{\operatorname{\mathsf{O}}}		
\newcommand{\Orthp}{\operatorname{\mathsf{O}^+}}	
\newcommand{\SO}{\operatorname{\mathsf{SO}}}		
\newcommand{\SU}{\operatorname{\mathsf{SU}}}		
\newcommand{\PSL}{\operatorname{\mathsf{PSL}}}		
\author{Daniel Greb}
\address{Essener Seminar für Algebraische Geometrie und Arithmetik, Fakultät für Mathematik, Universität Duisburg-Essen, 45117 Essen}
\email{daniel.greb@uni-due.de}
\urladdr{\url{https://www.esaga.uni-due.de/daniel.greb/}}
\author{Martin Schwald}
\address{Universität zu Köln, Department Mathematik/Informatik, Abteilung Mathematik, Weyertal~86-90, 50931 Köln, Germany}
\email{mschwald@uni-koeln.de}
\urladdr{\url{https://www.mi.uni-koeln.de/~mschwald}}
\keywords{Families of K3 surfaces, irreducible holomorphic-symplectic manifolds, twistor families, deformation of fibration structures, period domains, flag domains, cycle spaces, twistor lines, Penrose twistor construction, complex-hyperk\"ahler metrics}
\subjclass[2020]{14J42, 14J28, 32G20, 14C05, 32G07, 32G13, 53C28}
\begin{document}

\title[K3 families and deformed twistor cycles]{
Moduli of K3 families over $\P^1$ and complex-hyperk\"ahler metrics induced by deformed twistor cycles}
\date\today

\begin{abstract}
We answer a question posed independently by  Fels--Huckleberry--Wolf and Looijenga concerning the geometric meaning of small deformations of twistor cycles in the K3 period domain.
These are shown to induce complex-hyperk\"ahler metrics on members of the families via Penrose's Non-linear Graviton construction.
On the way to proving this result, we construct a Hausdorff fine moduli space for families of marked K3 surfaces over smooth rational curves in the K3 period domain.
Over an open subset containing all twistor cycles we construct a family of such families, which is a universal small deformation for every twistor family.
Whenever possible, we extend the results to higher-dimensional irreducible holomorphic-symplectic manifolds. 
\end{abstract}

\maketitle
\tableofcontents

\section{Introduction}

The period domain of complex K3 surfaces is a well-studied open subspace $\Omega$ of a $20$-dimensional complex projective quadric.
One of the key geometric differences between $\Omega$ and period domains of Hermitian-symmetric type is the existence of non-trivial compact submanifolds.
Most prominently, $\Omega$ contains a real $57$-dimensional family of conics defined over the reals that arise via hyperk\"ahler geometry as follows:
If $X_0$ is a complex K3-surface with underlying real four-manifold $M$ and $\alpha\in\Hsh{1,1}{X_0}{\mathbb{R}}$ is a K\"ahler class, the unique K\"ahler-Einstein metric $g_{\mathrm{KE}}$ whose associated $(1,1)$-form lies in $\alpha$ (guaranteed to exist by Yau's solution of the Calabi-conjecture) has holonomy $\mathrm{Sp}(1)= \mathrm{SU}(2)$, and is hence hyperk\"ahler.
In particular, $g_{\mathrm{KE}}$ admits a sphere $S^2= \{aI + bJ + cK \mid a^2+ b^2+ c^2 =1\}$ of compatible complex structures.
These endow $M \times S^2$ with a complex structure, yielding a \emph{twistor family} $(X\stackrel{f}{\to}\mathbb{P}^1)$ of K3-surfaces containing $X_0$ as one of its members, see Section~\ref{twistor} for details.
Any marking $\mu_0$ of $\Hsh2{X_0}{\Z}$ can be uniquely extended to a marking $\mu$ of the family, and if $p \in \Omega$ is the period point of $X_0$ with respect to $\mu_0$, the period map $\per\colon\mathbb{P}^1 \to \Omega$ embeds $\mathbb{P}^1$ as a smooth conic $C_0\subset\Omega$ defined over $\mathbb{R}$ and passing through $p$.
We call conics arising in this way \emph{twistor cycles}.
A quotient of the space of twistor cycles can be identified with a moduli space for K\"ahler-Einstein metrics, see \cite[Section~12.K]{Besse}. Moreover, these cycles were used in crucial ways in classical proofs for the surjectivity of the period map. 
We note that a similar picture also arises when one studies higher-dimensional compact irreducible holomorphic-symplectic (IHS) manifolds. 

Varying the plane spanned by $C_0$ in the complex Grassmannian or using Kodaira-Spencer theory, one observes that $C_0$ is actually a member of a complex $57$-dimensional family $\{C_t\}$, $t \in C_1(\Omega)$, of quadratically embedded smooth rational curves $C_t\subset\Omega$, which we call \emph{complex cycles}.
Arriving at the problem from different angles, Fels--Huckleberry--Wolf and Looijenga independently asked what happens geometrically when one deforms $C_0$ in the imaginary direction, i.e., away from the locus of cycles defined over the reals (which we call \emph{real cycles}). 

\begin{ques}[\protect{\cite[p.~236, Remark]{FHW}}]
\label{geomques}
Let $C$ be a complex cycle obtained as a small deformation of a twistor cycle.
Do the K3 surfaces parametrized by $C$ share a geometric structure? 
\end{ques}
More ambitiously, one can ask as Looijenga \cite[Question~3.1]{Loo21} whether $C_1(\Omega)$ can be interpreted as a moduli space of geometric structures on the real four-manifold $M$ underlying every complex K3 surface. 

Our goal here is to  give a detailed answer to \Cref{geomques} and furthermore make some significant steps in the direction of Looijenga's question. As intermediate steps  arising in our approach, we clarify fundamental properties of the Douady spaces of period domains of IHS manifolds, we investigate the deformation theory of families of IHS manifolds over smooth rational curves inside the period domain and establish a moduli theory for them. We then restrict to the K3 case, where we relate the  moduli space of K3 families constructed rather abstractly in the first step to a concrete sheaf-theoretic construction involving the Burns-Rapoport space. Additionally, we exhibit a family of families of K3 surfaces that is a universal small deformation for every K3 twistor family. We consider these intermediate results to be of independent interest. 

The geometric starting point is the observation that many pathological properties of $\Omega$ disappear when we pass to the parameter space $C_1(\Omega)$ for complex cycles inside $\Omega$.
Hence, it is key to understand in which way the good topological and complex-analytic properties of $C_1(\Omega)$ (and more generally of the entire space of smooth compact complex curves in $\Omega$) are reflected in the moduli theory for (marked) families of K3 surfaces.
We find that this moduli theory shares many features with the moduli theory of (marked) K3 surfaces itself, with $C_1(\Omega)$ playing the role of a period domain for the deformation problem at hand.

\subsection{Existence results for marked families of K3 surfaces}
When studying the \emph{Douady space}, i.e., the space of compact complex subspaces of a given complex manifold, the first question to answer is which non-empty connected components it has.
It is known classically that period domains of IHS manifolds do not contain compact complex subspaces of dimension greater than one, see e.g.~\cite[Proposition 3.4]{Deev} for one argument.
On the other hand, if $C$ is a compact Riemann surface of genus $g$, then for any $d > g$ there exists an embedding of $C$ into the period domain of degree $2d$, see \Cref{Deev} below.
Hence, there are many complex curves inside the period domain.
One of the main features of the theory of K3 surfaces is that the moduli space $\tilde{\Omega}$ of marked K3s is a \emph{non-Hausdorff} sheaf over $\Omega$.
In particular, while the period map from $\tilde{\Omega}$ to $\Omega$ is surjective, there is no universal family over $\Omega$, which one could restrict to curves inside $\Omega$.
Nevertheless, we are able to show the following Lifting Theorem.

\begin{thm*}[\ref{liftfamily}]
Let $C$ be a compact Riemann surface, and let $\per\from C\to\Omega$ be holomorphic. Then, for every $p\in C$ and a marked K3 surface $(X,\mu_X)$ with period point $\per(p)$ there exists a marked family $(\cX,\mu)$ of K3 surfaces over $C$ with associated period map $\per$ and $(\cX_p,\mu_p)\cong(X,\mu_X)$.
\end{thm*}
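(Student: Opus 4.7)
The plan is to construct the marked family over $C$ by an open-and-closed argument, combining local Torelli (for openness) with Kulikov's filling-in theorem (for closedness).

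For the local construction, recall that a K3 surface $X$ has $h^0(X,T_X)=0$ and $h^1(X,T_X)=20=\dim\Omega$, so its Kuranishi family $\cX\to B$ is a smooth $20$-dimensional universal deformation, and local Torelli identifies the induced period map $B\to\Omega$ with a biholomorphism onto a neighborhood $U$ of the period point of $X$. Inverting this identification turns the Kuranishi family of $(X,\mu_X)$ into a marked family of K3 surfaces over $U$, and pulling back by $\per$ yields a marked family over $\per^{-1}(U)\subset C$ with the prescribed fiber at $p$.

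To globalize, let $S\subset C$ be the set of points $q$ admitting a connected open neighborhood $V$ containing both $p$ and $q$, over which there exists a marked family $(\cX_V,\mu_V)$ realizing $\per|_V$ and having $(X,\mu_X)$ as fiber over $p$. The previous step shows that $S$ contains a neighborhood of $p$, and $S$ is open by construction. Any two such local families over a connected overlap agree canonically: marked K3 surfaces have no nontrivial automorphisms and their isomorphism scheme is \'etale over the base, so two marked families realizing the same period map on a connected base and sharing one fiber are canonically isomorphic. The local pieces therefore assemble into a single marked family $(\cX_S,\mu_S)$ over $S$.

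It remains to prove closedness of $S$ in the connected space $C$. Suppose for contradiction that $t\in\overline{S}\setminus S$, and fix a small disc $\Delta\ni t$ in $C$ with $\Delta\setminus\{t\}\subset S$. Over $\Delta\setminus\{t\}$ the marking trivializes the monodromy of $R^2\pi_*\Z$, and the period map extends holomorphically to all of $\Delta$ via $\per|_\Delta$. By Kulikov's theorem (in the form due to Persson--Pinkham) such a trivial-monodromy degeneration admits, after a bimeromorphic modification of the total space, a smooth Type~I extension across $t$ whose central fiber is a K3 surface with period $\per(t)$; the marking extends uniquely by continuity of $R^2\pi_*\Z$ as a constant sheaf. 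This extension glues with $(\cX_S,\mu_S)$ and contradicts the maximality of $S$, yielding $S=C$. The main obstacle is precisely this closedness step: producing a smooth K3 central fiber at boundary points of $S$ with the correct period and compatible marking. This is the content of Kulikov's filling-in theorem under trivial monodromy, and its use here parallels and essentially reproduces the classical proof of surjectivity of the K3 period map.
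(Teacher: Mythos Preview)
Your argument has a genuine gap in the gluing step. You assert that two marked K3 families over a connected base with the same period map and one common marked fiber are canonically isomorphic, citing \'etaleness of the isomorphism functor. But \'etaleness only shows that the agreement locus is \emph{open}, not closed. The moduli space $\tilde{\Omega}$ of marked K3 surfaces is famously non-Hausdorff over $\Omega$: Looijenga--Peters exhibit two marked K3 families over a disc $D \subset \Omega$, both with period map the inclusion, that agree on $D$ minus a single point yet are not globally isomorphic. So your uniqueness claim is false, the local families over the various $V$'s need not assemble into a family over $S$, and without a family over $S$ the Kulikov filling step has nothing coherent to extend. (There is also a secondary issue: Kulikov--Persson--Pinkham in its standard form assumes projectivity of the generic fiber, which fails for most K3 surfaces arising here.)

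The paper circumvents non-Hausdorffness by constructing a global lift $\tilde{\per}\colon C \to \tilde{\Omega}$ of $\per$ directly, before any pullback. The K\"ahler cone of $(X,\mu_X)$ determines a partition of the $(-2)$-classes $\Delta_{\per(p)}$, which restricts to a partition of the subset $\Delta^\perp \subset \Delta$ of $(-2)$-classes orthogonal to all of $\per(C)$. Off a countable set $N \subset C$ one has $\Delta_{\per(q)} = \Delta^\perp$, so this single fixed partition selects a unique Weyl chamber over $\per(q)$ and hence a canonical local lift; at the bad points one corrects by an element of the local Weyl group. The resulting local lifts agree off $N$ and are patched via a covering of $C$ by \emph{closed} subsets whose pairwise intersections avoid $N$, built from level sets and gradient flowlines of a Morse function on $C$. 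The pasting lemma then yields a continuous, hence holomorphic, global lift; pulling back the universal family on $\tilde{\Omega}$ and applying Global Torelli at $p$ finishes the proof.
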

As the proof uses the Global Torelli Theorem, which is more subtle for higher-dimensional IHS manifolds than for K3 surfaces, we currently do not know to what extent \Cref{liftfamily} generalizes to higher dimensions.

\subsection{Moduli of families of K3 surfaces and IHS manifolds}
\label{introSect_moduli}
While we emphasize that \Cref{liftfamily} also holds for curves of genus $g\ge1$, motivated by \Cref{geomques} we then focus on the case $g=0$.
From \Cref{liftfamily} it is a natural next step to study marked families of K3 surfaces over smooth rational curves $C\subset\Omega$ with period map the inclusion; we call such marked families \emph{embedded}.
As discussed above, we want to establish a moduli theory for such families.
It will turn out that the corresponding moduli space is a sheaf over $C_1(\Omega)$, cf.~Looijenga's question. 

To take some hints, let us look at the classical theory for K3 surfaces themselves. There are two classical constructions of the moduli space $\tilde{\Omega}$ of marked K3 surfaces. 
The approach of Burns--Rapoport \cite{BR75} gives an explicit description of $\tilde{\Omega}$ in terms of K3 lattice theory (see Section~\ref{K3moduli}).
A bit more abstractly, Beauville's approach \cite{Bea81} relies on three crucial properties of K3 surfaces: unobstructedness, the Local Torelli Theorem and faithfulness of the representation of the automorphism group on the second cohomology.

\subsubsection{Second level period maps}
The a priori problem in constructing our moduli theory in a way similar to the second approach is that the total spaces $X$ of embedded families are in general not K\"ahler, see \cite{HitchinNonKaehler} (and \cite{DMN,Cam91} for more general statements in higher dimensions).
Therefore, one cannot use period maps and period domains for these total spaces directly in an attempt to generalize Beauville's construction of $\tilde{\Omega}$.

However, as $X$ is fibered into K3 surfaces, we are nevertheless able to construct holomorphic maps from bases of deformations of marked embedded families to the parameter space of smooth rational curves in $\Omega$.
We call this the \emph{second level period map} (\Cref{periodmap2}). 
In fact, many of the fundamental properties classically known for the period map for K3 surfaces, see for example \cite[Chapter~VIII]{BHPV}, have analogs for the second level period map.

First of all, \Cref{liftfamily} above can be seen as an analog of the \emph{surjectivity of the period map}.
Secondly, Brecan--Kirschner--Schwald proved unobstructedness results for total spaces of IHS families over rational curves, see \cite{BKS}.
We refine these slightly to obtain an analog of the \emph{Local Torelli Theorem} for the second level period map of marked embedded families (\Cref{local torelli}):
A deformation over $B$ of a marked embedded family of K3 surfaces is universal at a point $b\in B$ if and only if its second level period map is a local biholomorphism at $b$.
In particular, near a point corresponding to a marked twistor family $(X\stackrel{f}{\to}\P^1)$, the cycle space is isomorphic to the Kuranishi space of the total space $X$, as well as to the deformation space of the morphism $f$ (\Cref{deformation space}).
These results generalize to higher-dimensional IHS manifolds. 

\subsubsection{Fine moduli spaces for (total spaces of) marked second level families }
The aforementioned results establish the first two necessary pieces to mimic Beauville's construction. As the last piece, we proceed with proving an appropriate \emph{faithfulness result} for automorphism groups of total spaces of embedded families (\Cref{automorphisms}):
If $(X\stackrel{f}{\to}C)$ is an embedded family of K3 surfaces, then $\id X$ is the only biholomorphic automorphism of the total space $X$ that induces the identity on $\Hsh2X{\Z}$.

Then we are in the position to construct a fine\footnote{See for example \cite[Section~23]{Har10} for a discussion of \emph{fine} and \emph{coarse} moduli spaces in the algebraic category, which can be transferred to the complex-analytic category \emph{mutatis mutandis}.} moduli space $\sM$ for marked embedded families of K3 surfaces (\Cref{fine}) by gluing their local universal deformation spaces over the Douady space using second level period maps. 

Additionally, in Section~\ref{section sheaves} we give a second construction of $\sM$, as follows:
if one pulls back the Burns-Rapoport space $\tilde \Omega$ (considered as a sheaf) from $\Omega$ to the total space of the universal family of smooth rational curves in $\Omega$ via the evaluation map and then pushes down to the parameter space of these curves, one obtains a sheaf $\sF$ whose \'espace \'etal\'e we show to be isomorphic to $\sM$.
With the help of the lattice-theoretic description of $\tilde{\Omega}$ this yields an explicit description of $\sM$, which in turn allows us to show that---in contrast to $\tilde{\Omega}$---the moduli space $\sM$ is in fact Hausdorff (\Cref{MT2}).
Let us summarize these fundamental technical results as follows.

\begin{thm*}[cf.~Theorems \ref{fine}, \ref{total spaces}, and \ref{MT2}]
There exists a fine moduli space $\sM$ of marked embedded families of K3 surfaces. This moduli space is Hausdorff and admits a locally biholomorphic map to the space of smooth rational curves in $\Omega$.
It is also a fine moduli space for total spaces $X$ of embedded families of K3 surfaces together with markings of $\Hsh2X{\Z}$.
\end{thm*}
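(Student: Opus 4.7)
The plan is to build $\sM$ in three stages: first as a set equipped with a surjective forgetful map to the space of smooth rational curves in $\Omega$, second as a complex space étale over that base, and third as a fine moduli space by exhibiting a universal family. The reinterpretation via total spaces is then handled by a reconstruction lemma.

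Set-theoretically, I would take $\sM$ to be the set of isomorphism classes of marked embedded families $(\cX,\mu)\to C$ with $C\subset\Omega$ a smooth rational curve and period map equal to the tautological inclusion $C\inj\Omega$. Writing $\Sec$ for the space of such curves, the Lifting Theorem~\ref{liftfamily} makes the forgetful map $\pi\from\sM\to\Sec$, $[(\cX,\mu)\to C]\mapsto C$, surjective. To endow $\sM$ with a complex structure for which $\pi$ is étale, I would argue locally: given $m=[(\cX,\mu)\to C]\in\sM$, relative Kuranishi theory for marked families of K3 surfaces over $C$, applied to a deformation of the base $C$ inside $\Omega$, produces a canonical analytic family of marked embedded families parametrized by a neighborhood $V$ of $C$ in $\Sec$. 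This defines a canonical section $s_m\from V\to\sM$ with $s_m(C)=m$, and the images of all such sections give an open cover of $\sM$ in which the transition functions are trivial, making $\pi$ étale.

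The delicate point is Hausdorffness, since the sheaf $\tilde\Omega\to\Omega$ is itself non-Hausdorff. I would use the following rigidity: a marked embedded family over a connected base is determined by any one of its marked fibers together with the period map, because the corresponding lift to $\tilde\Omega$ is continuous and $\tilde\Omega$ is separated over the complement of the countable union of $(-2)$-hyperplanes. Suppose $m_1,m_2\in\pi^{-1}(C)$ were inseparable in $\sM$, with local sections $s_{m_1},s_{m_2}$ defined on a common neighborhood $V$; inseparability forces $s_{m_1}(C')=s_{m_2}(C')$ for some $C'\in V$. Choosing $C'$ so that it passes through a point $p\in\Omega$ off all $(-2)$-hyperplanes, the marked fiber over $p$ then determines both $s_{m_i}$ globally, and in particular at $C$, giving $m_1=m_2$.

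For the fine moduli property, the Kuranishi families glue to a universal family over $\sM$ because the markings rigidify fibers (no non-trivial marking-preserving automorphisms). Finally, for the reinterpretation as a moduli space of total spaces $X$ together with a marking of $\Hsh2X\Z$, I would reconstruct the fibration from $(X,\nu)$ as follows: the Leray spectral sequence for $\pi\from X\to C\cong\CP1$, together with the vanishing $R^1\pi_*\Z=0$ that follows from $\Hsh1{S}\Z=0$ for any K3 surface $S$, realizes the fiber class $[F]=\pi^*[\pt]$ as a distinguished element of $\Hsh2X\Z$ corresponding to a fixed element of the model lattice via $\nu$. The pencil $|[F]|$ then recovers the fibration and its base, while restriction of $\nu$ to the summand of $\Hsh2X\Z$ coming from fiber cohomology gives the fiber-wise marking. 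The main obstacle I expect is rigorously verifying the equivalence of moduli problems over arbitrary analytic bases, in particular ruling out the existence of competing fibrations on $X$ compatible with the same $\Hsh2X\Z$-marking; this likely requires a careful study of the Picard group of the total space and the restriction map to fibers.
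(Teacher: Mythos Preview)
Your overall architecture---defining $\sM$ as isomorphism classes, exhibiting local charts via universal deformations from the Unobstructedness Theorem, and gluing via rigidity of marked families---matches the paper's approach and is correct. The gap is in your Hausdorffness argument.

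Your rigidity claim, that a marked embedded family over a connected base is determined by any one marked fiber together with the period map, is false. Over the complement of the $(-2)$-hyperplanes the fiber of $\tilde\Omega\to\Omega$ is a single point, so the marked fiber at a generic $p$ carries no information: \emph{every} lift has that same fiber there. Conversely, at special points the lift is genuinely ambiguous, and the Looijenga--Peters example produces two sections over a disk differing at exactly one point; the paper notes explicitly that the sheaf $\sF'$ of lifts over single curves is \emph{not} Hausdorff. What you need instead is the paper's Theorem~\ref{T2}: the direct image $\sF=(\pr_1)_*\sF'$ over $\mathsf S_d$ \emph{is} Hausdorff. The mechanism is that if two sections differ at $([C_0],p_0)$ because of some $\delta\in\Delta_{p_0}$ separating their Weyl chambers, then the hyperplane $H_\delta$ meets \emph{every} nearby cycle $C_t$ (in either two points or all of $C_t$), and by continuity of the representatives $\kappa_i$ the sign discrepancy in $\langle\kappa_i,\delta\rangle$ survives on those intersection points. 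The key is that the bad locus does not avoid nearby cycles; your argument tried to exploit avoidance of $(-2)$-hyperplanes, which goes the wrong way.

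For the total-space reinterpretation, your proposed reconstruction via the pencil $|[F]|$ runs into the problem that $X$ is never K\"ahler, so there is no reason for the class $[F]$ to underlie a holomorphic line bundle with a two-dimensional space of sections. The paper instead proves directly (Lemma~\ref{unique fiber structure}) that any two IHS-fibration structures on $X$ coincide up to an automorphism of $C$: if the fibers of one structure surjected onto $C$ under the other, the resulting family of elliptic K3 surfaces would sweep out an open set of the period domain, contradicting the countability of period points of K3s with nonzero Picard rank. The Leray decomposition you mention is then used (Lemma~\ref{leray}, Proposition~\ref{automorphisms}) only to pass from triviality on $\Hsh2X\Z$ to triviality on $\Hsh2{X_p}\Z$ for each fiber, whence the automorphism is the identity by the K3 Torelli theorem.
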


In higher dimensions, automorphism groups of IHS manifolds do generally not act faithfully on the second cohomology. As a consequence, our construction then only yields coarse moduli spaces, see Section~\ref{glueing} for details.
Currently, we do not have much evidence on whether these coarse moduli spaces are also Hausdorff.

\subsubsection{Coarse moduli spaces for (total spaces of) embedded families}
While moduli spaces of marked objects are interesting in their own right, the initial motivation for introducing markings is to be able to use period maps but eventually factor out the additional marking data to obtain a (coarse) moduli space for isomorphism classes of the considered objects themselves.
In the case of (marked) K3 surfaces, one is not able to carry out this second step in a satisfactory manner, as the discrete group $\Gamma = \Orthp(\Lambda)\subset\Orth(\Lambda)$ that realizes changes of markings acts wildly on the period domain; in particular, the quotient topology on $\Omega / \Gamma \simeq \tilde\Omega/\Gamma$ is very non-Hausdorff and a coarse moduli space of K3 surfaces does not exist in the category of complex spaces. 

Again, passing to the parameter space $C_1(\Omega)$ of complex cycles eliminates this problem:
the induced action of $\Gamma$ on $C_1(\Omega)$ is properly discontinuous, see \cite[Chapter 11]{FHW}.
As a quotient by a properly discontinuous action on a complex manifold, $C_1(\Omega)/\Gamma$ is a Hausdorff orbifold.
From this fundamental fact and the Hausdorff property of $\sM$ we obtain the following result. 

\begin{thm*}[\ref{thm:coarsemoduli}]
Let $\sM^\circ$ be the part of $\sM$ lying over the component $C_1(\Omega)$ containing twistor cycles.
Then, the group $\Gamma$ acts properly discontinuously on $\sM^\circ$ and the complex space $\sM^\circ/\Gamma$ is a coarse moduli space both for embedded families of K3 surfaces over cycles in $C_1(\Omega)$ and for their total spaces. 
\end{thm*}

\subsection{The geometric meaning of complex cycles}
\label{answerquest}
Once the results concerning deformation and moduli theory described above are established, we turn to \Cref{geomques}.
After the previous discussion, it is natural to interpret it as a question about marked embedded families obtained as small deformations of twistor families.
While in search for a first answer one could look at small deformations of a single twistor family. However, in the moduli-theoretic spirit of the previous results and motivated by Looijenga's question \cite[Question 3.1]{Loo21}, we work as globally as possible and consider all twistor cycles and twistor families at once.

\subsubsection{A family containing all deformations of twistor spaces}
First we give a new construction of a real-analytic family consisting of all K3 twistor families (\Cref{realfamily}), which is based on our construction and investigation of $\sM$ outlined above.
This family was originally described by Kobayashi~\cite[Theorem 10]{Kob90} using differential-geometric methods that build on work of Morrison, Looijenga, and Todorov; see \emph{loc.~cit.} for the precise references.
From our point of view, it can be seen as a section in the sheaf $\sF$ corresponding to $\sM$ over the (totally real) subset  of $C_1(\Omega)$ that parametrizes twistor cycles. 

Using the sheaf-theoretic interpretation, we extend this family to a family $\cT_U$ over a $\Gamma$-stable open subspace of the cycle space $C_1(\Omega)$ (\Cref{extsection}).
The results discussed in Section~\ref{introSect_moduli} above allow us to show that $\cT_U$ is a \emph{universal small deformation of marked K3 twistor spaces} (\Cref{defT}).
In other words, $\cT_U$ contains all families of K3 surfaces that \Cref{geomques} applies to. 

\subsubsection{The Nonlinear Graviton Construction}
The final idea for our answer of \Cref{geomques} is to apply a relative version of \emph{Penrose's Nonlinear Graviton Construction} \cite{MR0439004}. 
The absolute version of this construction relevant for us\footnote{Discussed for example in \cite[Theorem~9.3.3 and 10.5.5]{Dun10}.} says that out of 1) a smooth family $(X\stackrel{f}{\to}\mathbb{P}^1)$ of (compact) complex surfaces over $\mathbb{P}^1$, 2) a twisted $f$-relative holomorphic $2$-form $X$, and 3) a section $\sigma\colon \mathbb{P}^1 \to X$ whose image has normal bundle $\O_{\mathbb{P}^1} \oplus \O_{\mathbb{P}^1}$ one produces a holomorphic-Riemannian metric\footnote{E.g.~in the sense of \cite{Leb83}.} on an open subset $S$ of the four-dimensional Douady space of sections of $f$, which contains the point corresponding to the section $\sigma$.
Moreover, this metric is Ricci-flat (this was Penrose's original motivation) and more precisely has complex holonomy contained in $\SL(2, \C)$ (which should be considered as a complex-symplectic group).
We call such holomorphic metrics \emph{complex-hyperk\"ahler} and refer the reader to Section \ref{subsubsect_HK} for further points of view on this construction and for the relevant references. 

Given a totally real submanifold $M$ of $S$, we may restrict the holomorphic-Riemannian metric on $X$ to what we call a \emph{complex-hyperk\"ahler metric} on $M$, see \Cref{CHK}. In our setup, the resulting geometric architecture on $M$ in particular consists of a sphere $\{I_t\}_{t \in S^2}$ of complex structures on $M$; each $(M, I_t)$ is hence a complex surface.\footnote{Note the relation to the work of Bielawski--Schwachh\"ofer \cite{MR3085661} on pluricomplex geometry.} 
In case that the totally real embedding $M \hookrightarrow S$ comes from considering the real sections in a family $f$ in which all the data 1), 2), 3) carry a compatible real structure, this leads to the construction of a real (indefinite) hyperk\"ahler metric on $M$; this is the well-known twistor construction in hyperk\"ahler geometry, see \cite{HKLR}.

\subsubsection{The answer to the question of Fels--Huckleberry--Wolf}
Coming back to our specific situation, we show that the universal small deformation $\cT_U$ of marked K3 twistor spaces constructed above carries a relative holomorphic-symplectic form (\Cref{sympl}).
Moreover, we show that the sections of the twistor fibrations it contains (so-called \emph{twistor lines}) deform freely inside the total space of $\cT_U$ (\Cref{deform twistor lines}); more precisely, the spaces of sections of the marked embedded families contained in $\cT_U$ form a smooth family $p\from S\to U$ over $U$ as well. In summary, the data 1), 2), and 3) deform together with $\cT_U$ over $U$.
A small technical issue arises when one now wants to perform a \emph{relative version} of the Nonlinear Graviton Construction, which can however be solved at the small cost of discussing metrics up to pointwise scalings; i.e.~one looks at \emph{conformal equivalence classes} of metrics. As being hyperk\"ahler is not conformally invariant, this leads to the classical notion of anti-self dual conformal structures, using which we can finally formulate our precise answer to \Cref{geomques} as follows.

\begin{thm*}[\ref{conformalStructures}]
Let $p\from S\to U$ be as introduced above and denote by $M$ the real 4-manifold underlying every K3 surface.
Then, up to shrinking $U\subset C_1(\Omega)$ around the space of twistor cycles, there is a canonical $\Gamma$-equivariant family of anti-selfdual holomorphic conformal structures on $p\colon S \to U$ such that for every $t = [C] \in U$ the induced conformal equivalence class of metrics on the fiber $S_t$ contains a complex-hyperk\"ahler representative that induces exactly those complex structures on $M$ that correspond to the points $p \in C \subset \Omega$ via the Global Torelli Theorem. Over every twistor cycle this complex-hyperk\"ahler metric is the complexification of the K\"ahler-Einstein metric coming from Yau's solution of the Calabi conjecture.
\end{thm*}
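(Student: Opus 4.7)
The strategy is to apply the complex version of Penrose's nonlinear graviton construction fiberwise over $U$. For each $t = [C] \in U$, the fiber $(\cT_U)_t$ is a complex threefold, fibered over the rational curve $C \isom \P^1$ with K3 fibers, and equipped with a relative holomorphic symplectic form by \Cref{sympl}; the fiber $S_t$ of $p$ is the moduli space of holomorphic sections of this fibration. Over a twistor cycle $[C] \in \Twr$ the sections parametrized by $S_t$ are the classical twistor lines, whose normal bundle is known to be isomorphic to $\O_C(1)^{\oplus 2}$. Since this splitting condition is open in both the section and the parameter $t$, after shrinking $U$ around $\Twr$ I may assume it holds for every section parametrized by a point of $S$. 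Kodaira's deformation theorem then guarantees that $p \from S \to U$ is smooth and identifies $T_{[s]}S_t$ canonically with $H^0\bigl(C, N_{s/(\cT_U)_t}\bigr) \isom \C^4$.

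The conformal structure is now produced via the Penrose incidence: for each $[s] \in S_t$ and each $c \in C$, the sections of $\O_C(1)^{\oplus 2}$ vanishing at $c$ form a $2$-plane $V_c \subset T_{[s]}S_t$, and as $c$ varies over $C$ the planes $V_c$ sweep out a smooth rank-$4$ quadric cone. This defines a non-degenerate holomorphic conformal structure on $S_t$, and the very fact that its null $2$-planes come in a $\P^1$-family parametrized by $C$ is, in complex dimension four, equivalent to anti-selfduality. Holomorphic dependence on $t$ follows from smoothness of $p$ together with the base-change compatibility of the cohomology $H^0(C, N)$ under the simultaneous variation of $C$ and of the embedded family. $\Gamma$-equivariance is automatic since the $\Gamma$-action on $U$ lifts tautologically to $\cT_U$ by the construction in \Cref{defT}, hence to $S$, and the Penrose correspondence is functorial in the input data.

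To upgrade the conformal structure on each $S_t$ to a complex-hyperk\"ahler one, one invokes the extra datum required in the complex nonlinear graviton theorem: a fiberwise holomorphic symplectic form on $(\cT_U)_t \to C$ taking values in a line bundle on $C$ of the correct degree. This is precisely what the relative symplectic form of \Cref{sympl} supplies after restriction to the fiber. The standard argument (see for example \cite[\S 10.5]{Dun10}) then produces a canonical metric in the conformal class whose Levi-Civita connection has holonomy in $\Sp(2,\C)$, i.e.\ a complex-hyperk\"ahler representative. Moreover, the Penrose correspondence identifies the $\P^1$-family of integrable complex structures of this complex-hyperk\"ahler metric with the points of $C \subset \Omega$, as required by the statement.

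The remaining and most delicate point is the identification over $\Twr$. For $[C] \in \Twr$ the fibration $(\cT_U)_t \to C$ coincides by construction (cf.\ \Cref{realfamily}) with the classical twistor space of the Calabi--Yau metric $g$ on the underlying K3 surface $X$ provided by Yau's theorem. The historical raison d'\^etre of the twistor construction then ensures that feeding this data into the nonlinear graviton returns $(X,g)$ as the real slice of $S_t$, and hence $S_t$ endowed with the complexified metric $g_\C$. Making this match precise requires carefully tracking how the $\Sp(2,\C)$-structure produced abstractly by the graviton construction aligns with the complexified Levi--Civita connection of $g$, and matching the holomorphic symplectic form obtained from \Cref{sympl} with the complexified K\"ahler form associated to the complex structure indexed by $c \in C$. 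This comparison, which forces the abstract output of the Penrose machine to be the complexification of a known real-analytic metric, is the main technical hurdle of the argument, while the rest of the proof is a fiberwise application of well-established twistor machinery.
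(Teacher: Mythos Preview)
Your proposal is correct and follows essentially the same approach as the paper: a relative application of the complex nonlinear graviton construction over $U$, using the relative symplectic form of \Cref{sympl} and the normal-bundle condition built into the definition of $S$ in \Cref{deform twistor lines}, followed by the reproducing property of the twistor construction over $\Twr$.

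Two points of comparison are worth recording. First, where you describe the conformal structure geometrically via the null $2$-planes $V_c$ swept out by sections vanishing at $c\in C$, the paper produces the same object algebraically by exhibiting a $\Gamma$-equivariant ``spinor'' decomposition $\sT_{S/U}\cong\sS\otimes\tilde{\sS}$ and taking the sub line bundle $\mathrm{Alt}^2\sS\otimes\mathrm{Alt}^2\tilde{\sS}\hookrightarrow\mathrm{Sym}^2(\sT_{S/U})$; this makes the holomorphic dependence on $t$ and the $\Gamma$-equivariance manifest rather than deduced from base-change and functoriality. Second, to obtain an actual metric representative the paper needs a symplectic form on the rank-two factor $\tilde{\sS}$, which it gets by globally trivializing the $\P^1$-bundle $\cC^+(\Omega)\to C_1^+(\Omega)$ via \Cref{trivial bundle}; this trivialization is \emph{not} $\Gamma$-equivariant, so the paper is careful to claim $\Gamma$-equivariance only for the conformal structure, not for the metric. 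Your argument that ``$\Gamma$-equivariance is automatic \ldots\ the Penrose correspondence is functorial in the input data'' is correct for the conformal class (which is all the theorem asserts), but would overreach if read as applying to the metric itself.
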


As a consequence, given a complex cycle $C$ obtained by a small deformation of a twistor cycle, the K3 surfaces parametrized by $C$ share a complex-hyperk\"ahler metric, see \Cref{cor:commonHKmetric}. We emphasize that for cycles that are not defined over $\mathbb{R}$ the complex-hyperk\"ahler structure found above is not the complexification of a K\"ahler-Einstein metric but a ``truly complex'' object.

\subsection{Acknowledgments}
We thank Ana-Maria Brecan and Tim Kirschner for participating in the first part of the project that led up to \cite{BKS}, which serves as a foundation and motivation for our paper.
We thank Tim Kirschner for additionally contributing ideas for the proof of the Lifting Theorem in the case where the base is $\P^1$ and for the Hausdorff property of the components of the moduli space $\sM$ over $C_1(\Omega)$, and Ana-Maria Brecan for mathematical input related to \Cref{ExampleLooijenga}. 

Daniel Greb wants to thank Daniel Huybrechts and Keiji Oguiso for patiently answering his questions regarding automorphism groups of K3 surfaces, Henri Guenancia for repeated guidance concerning analytic aspects, relevant especially for the proof of \Cref{sympl}, Eyal Markman for several informative discussions during the ``Hyperkahler quotients, singularities, and quivers'' workshop at the Simons Center, Roger Bielawski for help with finding modern references for Penrose's Non-linear Graviton Construction as well as for interesting discussions about pluricomplex structures, and Andrea Iannuzzi for discussions on crown domains of Riemannian symmetric spaces that helped him correct some of his misconceptions about the cycle space of the K3 period domain.
Finally, he wants to thank John Rawnsley for introducing him to twistor theory 20 years ago, Alan Huckleberry for pointing out that the connection of cycles spaces of flag domains to moduli theory has been largely unexplored, and Andrew Dancer for mentioning Penrose's work during a conversation that took place at FU Berlin in September 2015.

The authors want to thank the referees for their constructive criticism and their suggestions towards improving the exposition.

Both authors gratefully acknowledge support by the DFG-Research Training Group 2553 ``Symmetries and classifying spaces -- Analytic, arithmetic, and derived''.
DG was furthermore partially supported by the ANR-DFG Project ``QuaSiDy -- Quantization, Singularities, and Holomorphic Dynamics''.

\part{Preliminaries}

We collect preliminary material to fix notation and conventions and for later reference. The content of Section~\ref{sect1} is almost entirely known, Section~\ref{cycle section} contains some new examples and observations on cycle spaces of IHS period domains.

\section{Complex-analytic and complex-Riemannian geometry, IHS manifolds}\label{sect1}

\subsection{Complex spaces and complexifications}
\label{section complex spaces}

We use the basic definitions regarding Complex Geometry as they are introduced in \cite{Fischer}.
Manifolds and complex spaces are assumed to be Hausdorff and second countable unless mentioned otherwise.
Partly, we will also have to work with certain non-Hausdorff versions of complex spaces; a reference for the basic theory for such spaces is \cite{GrothendieckAnalyticSpaces}.
For the reader's convenience and to fix a consistent terminology, we recall the basic notions of families, deformations, Douady spaces and topological sheaves.

\subsubsection{Families of compact complex manifolds}
\label{families}
If $X$ and $S$ are complex spaces and $f \from X\to S$ is a proper holomorphic submersion in the sense of \cite[2.18]{Fischer}, we call $\cF\defeq (X\stackrel{f}{\to}S)$ a \emph{family of compact complex manifolds}.
We denote the fibers of $f$ as $X_s\defeq f\inv(s)$ for $s\in S$. 
Let $\cF = (X\stackrel{f}{\to}S)$ and $\cF' = (X'\stackrel{f'}{\to}S')$ be two families of compact complex manifolds, $g\from X\to X'$ and $h\from S\to S'$ holomorphic maps with $f'\circ g=h\circ f$.
We call $\phi=(g,h)\from\cF\to\cF'$ a \emph{morphism of families} if the corresponding diagram is a Cartesian square, that is, a pullback diagram.
In particular we only consider morphisms that induce fiberwise isomorphisms.
The Cartesian property is automatic if $g$ and $h$ are isomorphisms; in that case $\phi$ is an isomorphism of families.

\subsubsection{Deformation theory of complex manifolds}\label{deformation sec}
A \emph{deformation} of a complex manifold $X$ is a flat, proper holomorphic map $f\from Z\to S$ of complex spaces together with an isomorphism of the \emph{central fiber} to $X$. A \emph{small deformation} is a deformation over a complex space germ $(S,0)$.
In our terminology, regarding $X$ as a family over a reduced point $\pt$, we view a deformation as a morphism $(X\to\pt)\to(Z\stackrel{f}{\to}S)$ of families, such that $\pt$ gets mapped to a distinguished point $0\in S$ inducing an isomorphism $X\isom Z_0$;
in the case of small deformations we consider the family $(Z\stackrel{f}{\to}S)$ only up to shrinking the base $S$ around $0$.
\emph{Complete} and \emph{(semi-)universal deformations} are defined as usual, \cite[Section~2.9]{BKS}.
Every compact complex manifold admits a semi-universal small deformation, the \emph{Kuranishi-family}, whose base space is denoted by $\df(X)$.
If $\df(X)$ is smooth at $0$, we say that $X$ \emph{has unobstructed deformations}. 

\subsubsection{Relative Douady spaces}\label{douady section}
Douady constructed for every complex space $X$ a complex space $\dou(X)$ parametrizing the compact complex subspaces of $X$, \cite[\S9, Th\'{e}or\`{e}me~1]{Douady}.
Pourcin generalized Douady's construction to a relative setting \cite[Th\'{e}or\`{e}me~2]{Pou69}, which can be spelled out as follows:

Let $X\to B$ be a holomorphic map of complex spaces, then the \emph{relative Douady space} is a complex space $\dou(X/B)$ over $B$ for which there exists a closed complex subspace $Z\subset\dou(X/B)\x_B X$ that is flat and proper over $\dou(X/B)$.
Then $(Z\stackrel{\pr1}{\to}\dou(X/B))$ is called the \emph{universal family of compact complex subspaces of $X$ over $B$}, and is characterized by being the terminal object in the category of flat and proper morphisms $(Y\stackrel{\pr1}{\to}S)$, where $S$ is a complex space over $B$ and $Y\subset S\x_B X$ a closed subspace with projection $\pr1$ onto $S$.
That is, for each such $(Y\stackrel{\pr1}{\to}S)$ there is a unique holomorphic map $\alpha\from S\to\dou(X/B)$ over $B$ that together with $\rest{(\alpha\x\id X)}{Y}\from Y\to Z$ and the two projections $\pr1$ from $Y$ and $Z$ forms a Cartesian diagram.

If $B$ is just a point, we write $\dou(X/B)=\dou(X)$.
In general, the universal property induces a canonical embedding $\dou(X/B)\inj\dou(X)$.
On the other hand, for each $b\in B$ and every compact complex subspace $C\subset X_b$ its family over a point defines a unique point in $\dou(X_b)\subset\dou(X/B)$.
Denoting this point by $[C]$, the fiber of $Z\to\dou(X/B)$ over $[C]$ is $\{[C]\}\x C$.

In our applications, we will always restrict $Z\to\dou(X/B)$ over open subspaces $U\subset\dou(X/B)$ such that all fibers are smooth.
Such a restriction is then a family of compact complex manifolds over $U$ in the sense of Section~\ref{families}.

\subsubsection{Topological sheaves}
\label{topsheaves}

Using the terminology of \cite[p.~223--227]{GR84}, if $\pi\from E\to X$ is a local homeomorphism of topological spaces, we call $\sF\defeq (E\stackrel{\pi}{\to}X)$ a \emph{topological sheaf} on $X$ (also known as \emph{\'etal\'e space} over $X$); see also \cite[Chapter~3]{Wed16}.
For every subspace $Z\subset X$ we let $\sF(Z)$ be the set of continuous sections of $\rest{\pi}{\pi\inv(Z)}$.
As $\pi$ is a local homeomorphism, the assignment $U\mapsto\sF(U)$ for open subsets $U\subset X$ defines a sheaf of sets on $X$.
Vice versa, for a sheaf of sets $\sF$ on $X$, the set $\dot\bigcup_{x\in X}\sF_x$ of stalks carries a natural structure as a topological sheaf on $X$.
These constructions define inverse functorial isomorphisms between the categories of topological sheaves and sheaves of sets on $X$.
If $f\from Y\to X$ is a continuous map of topological spaces, then the inverse image sheaf of $\sF$ corresponds to the pulled back topological sheaf $f\inv\sF=(Y\x_X E\stackrel{\pr1}{\to} Y)$.
For a topological sheaf $\sG$ on $Y$, the topological structure of the direct image sheaf $f_*\sG$ is more complicated to describe, so we refer the reader to the above-cited literature.

If $E$ is a Hausdorff topological space, $\sF$ is called a \emph{Hausdorff sheaf}.
For two sections $\sigma_1,\sigma_2\in\sF(Z)$ the set $\{x\in X\mid\sigma_1(x)=\sigma_2(x)\}\subset X$ is always open, as $\pi$ is a local homeomorphism; if $\sF$ is a Hausdorff sheaf, it is also closed.
If $G$ is a group acting equivariantly on $E\stackrel{\pi}{\lto}X$, we can consider $\sF$ as a \emph{$G$-equivariant sheaf} via the maps $\sF(U)\mapsto\sF(g(U))$, $\sigma\mapsto\left(g(\sigma)\from x\mapsto g(\sigma(g\inv(x))\right)$ for each subspace $U\subset X$ and $g\in G$.

In our applications, $X$ and $Y$ are always complex spaces, which are in particular paracompact. As a consequence, the sections of $\sF$ over a closed subspace $Z\subset X$ can be calculated as the direct limit $\sF(Z)=\varinjlim_U\sF(Z)$ over all open neighborhoods $U$ of $Z$ in $X$, \cite[Proposition~II.2.5.1(iii)]{KS94}; every section $\sigma\in\sF(Z)$ can thus be extended to an open neighborhood $U$ of $Z$ in $X$.
When $f$ is proper, this implies $(f_*\sG)(Z)=\sG(f\inv(Z))$; every section $\sigma\in (f_*\sG)(Z)$ can be extended to an $f$-saturated open neighborhood of the fiber $f\inv(Z)$.
Moreover, note that $\pi\from E\to X$ induces a unique complex structure on the total space $E$ of $\sF$ such that $\pi$ is \'{e}tale and all sections of $\sF$ over complex subspaces of $X$ are holomorphic.

\subsubsection{Complexification of real-analytic manifolds}
\label{section complexification}

Let $X$ be a complex manifold with (integrable almost) complex structure $I$.
We call a real-analytic submanifold $M\subset X$ a \emph{totally real submanifold} of $X$ if $I(T_mM)\cap T_mM=\{0\}$ in $T_mX$ for every $m\in M$.
We call a real-analytic embedding $\iota\from M\inj X$ of a real-analytic manifold a \emph{totally real embedding} if $\iota(M)$ is a totally real submanifold of $X$.
If $X$ carries an antiholomorphic involution $\tau$ and $M$ is real-analytically isomorphic to the non-empty fixed locus $X^{\tau}$, we call $X$ a \emph{complexification} of $M$, cf.~\cite{Kul78}.

In this context, for a complex manifold $X$ with complex structure $I$, we note that antiholomorphic involutions of $X$ can be considered as biholomorphisms $\tau\from X \to \overline{X}$, where $\overline{X}$ is $X$ considered with the \emph{conjugate complex structure} $-I$.
In other words, if $\O_X \subset \sC^\infty_X$ is the subsheaf of holomorphic functions inside the sheaf of smooth $\C$-valued functions, then $\O_{\overline{X}} = \overline{\O_X} \subset \sC^\infty_X$. This reformulation can be used to introduce conjugate complex structures $\overline{X}$ and discuss antiholomorphic involutions on complex spaces, see \cite[Chapter~I, \S4]{GMT}.

\begin{rema}[Local coordinates of complexifications]
\label{localcoordinates}
For now, let $M$ be an $n$-dimensional real-analytic submanifold of an $n$-dimensional complex manifold $X$.
It follows from the real-analytic implicit function theorem that $M$ is a totally real submanifold if and only if for every $m\in M$ there exists a biholomorphism $\phi$ between an open neighborhood $U$ of $m$ in $X$ and an open subset $V\subset\C^n$ such that $\phi(U\cap M)=V\cap\R^n$. In this case, for varying $m\in M$, the complex conjugations on the $V\cap\C^n$ glue together to an antiholomorphic involution $\tau$ of an open neighborhood $X'$ of $M$ in $X$, making $X'$ a complexification of $M$. In other words, $M$ is a totally real $n$-dimensional submanifold if and only if an open neighborhood of $M$ in $X$ is a complexification of $M$.
On the other hand, let $X$ be an $n$-dimensional complex manifold with an anti\-holomorphic involution $\tau$ having a submanifold $M\defeq X^{\tau}\neq\emptyset$ as fixed locus, then $M$ is totally real.
If $M$ is $n$-dimensional, the complex charts $\phi$ discussed above can be chosen such that in local complex coordinates $z_j=x_j+iy_j$ we have $M=\{y_1=\ldots=y_n=0\}$ and $\tau$ looks like complex conjugation in $\C^n$. 
\end{rema}

Bruhat and Whitney proved in \cite[Proposition~1]{BW} that if $M$ is a real-analytic manifold, then there exists a complexification $\iota\from M\inj X$ of $M$ with the following universal property: If $\iota'\from M\inj X'$ is another complexification of $M$, then there exists a biholomorphism between open neighborhoods of $\iota(M)$ and $\iota'(M)$ extending the map $\iota'\circ\iota\inv\from\iota(M)\to\iota'(M)$. More details for the construction were provided by Shutrick in \cite{MR0098846}.


\begin{lemm}
\label{douinvolution}
Let $X$ be a complex manifold with an antiholomorphic involution $\tau$.
Then $\tau$ induces an equivariant antiholomorphic involution on the universal family $Z\to\dou(X)$ of compact complex subspaces of $X$.
\end{lemm}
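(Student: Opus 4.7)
The plan is to invoke the universal property of the Douady space after reinterpreting the antiholomorphic involution $\tau$ as a biholomorphism $\tau \from X \to \overline{X}$ with $\tau^2 = \id{X}$, in the spirit of the paragraph following \Cref{complexification}. Since the Douady functor is built from a holomorphic universal property, it will then produce the antiholomorphic involution on $\dou(X)$ by pure formalism.

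Concretely, I would start with the universal family $Z \subset \dou(X) \x X$ and pass to conjugate complex structures to obtain $\overline{Z} \subset \overline{\dou(X)} \x \overline{X}$, which remains flat and proper over $\overline{\dou(X)}$. Composing with the biholomorphism $\id{\overline{\dou(X)}} \x \tau \from \overline{\dou(X)} \x \overline{X} \to \overline{\dou(X)} \x X$ yields a closed complex subspace $W \defeq (\id{\overline{\dou(X)}} \x \tau)(\overline{Z}) \subset \overline{\dou(X)} \x X$ that is flat and proper over $\overline{\dou(X)}$, hence a family of compact complex subspaces of $X$ parametrized by the complex space $\overline{\dou(X)}$.

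By the universal property of $\dou(X)$ recalled in \Cref{douady section}, there is then a unique holomorphic classifying map $\alpha \from \overline{\dou(X)} \to \dou(X)$ fitting into a Cartesian diagram with $W$ and $Z$ as subspaces of $\overline{\dou(X)} \x X$ and $\dou(X) \x X$, respectively. Reading $\alpha$ back through the conjugation, it becomes an antiholomorphic self-map $\tilde{\tau} \from \dou(X) \to \dou(X)$, while the Cartesian square provides a biholomorphism $\overline{Z} \to Z$ over $\alpha$; this translates into an antiholomorphic map $Z \to Z$ covering $\tilde{\tau}$ and satisfying $\pr{2} \circ (\cdot) = \tau \circ \pr{2}$, i.e.\ the equivariance required.

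Finally, the involution property $\tilde{\tau}^2 = \id{\dou(X)}$ and the analogous statement on $Z$ follow from applying the same construction to $\tau \circ \tau = \id{X}$ and invoking the uniqueness clause of the universal property. The only obstacle, modest as it is, is the bookkeeping around conjugate complex structures: one has to keep consistent track of which maps are holomorphic and which are antiholomorphic at each step, and verify that the implicit identification of $\dou(\overline{X})$ with $\overline{\dou(X)}$ via the universal property intertwines the two actions of $\tau$. Once this is set up, equivariance and the involution property drop out formally.
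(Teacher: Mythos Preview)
Your proposal is correct and follows essentially the same approach as the paper: reinterpret $\tau$ as a biholomorphism $X\to\overline{X}$ and let the universal property of the Douady space do the work. The only organizational difference is that the paper routes through $\dou(\overline{X})$ (producing two classifying maps $\alpha\from\dou(X)\to\dou(\overline{X})$ and $\beta\from\overline{\dou(X)}\to\dou(\overline{X})$ and then composing $\beta^{-1}\circ\alpha$), whereas you conjugate first and classify directly into $\dou(X)$; your version is slightly more economical but the content is the same.
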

\begin{proof}
We consider the antiholomorphic involution as a biholomorphism $\tau\from X \to \overline{X}$, which we extend to a biholomorphism
$$\hat\tau\defeq \id{\dou(X)}\x\tau\from\dou(X)\x X\to\dou(X)\x\overline{X}$$ over $\dou(X)$.
Then $\hat{\tau}$ maps the universal family $Z\subset \dou(X)\x X$ to a closed complex subspace of $\dou(X)\x \overline{X}$; in particular $\hat{\tau}(Z)\to\dou(X)$ is flat and proper.
Similarly, the complex conjugate $\overline{Z}\subset\overline{\dou(X)}\x \overline{X}$ is flat and proper over $\overline{\dou(X)}$.
Denoting the universal family of compact complex subspaces of $\overline{X}$ by $Y \subset \dou(\overline{X})\x\overline{X}$, the universal property of the Douady space $\dou(\overline{X})$ (see Section~\ref{douady section}) induces morphisms
\begin{align*}
(\alpha\x\id{\overline{X}},\alpha)\from \left(\hat{\tau}(Z)\to\dou(X)\right) &\to \left(Y\to\dou(\overline{X})\right)\\
(\beta\x\id{\overline X},\beta)\from\left(\overline{Z}\to\overline{\dou(X)}\right)&\to\left(Y\to\dou(\overline{X})\right)
\end{align*}
of families of compact complex subspaces of $\overline{X}$. 
Clearly, these are isomorphisms, as we can argue with $X$ and $\overline{X}$ interchanged.
The equivariant antiholomorphic involution we are looking for is the composition $((\beta\inv\circ\alpha)\x\id{\overline{X}},\beta\inv\circ\alpha)\circ(\hat{\tau},\id{\dou(X)})$.
\end{proof}

\subsection{IHS manifolds, K3 surfaces and their period domains}
\label{section ihs}

\subsubsection{IHS manifolds}

\label{ihs}

An \emph{irreducible holomorphic-symplectic manifold}, abbreviated as \emph{IHS manifold}, is a simply connected compact K\"ahler manifold $X$ with $\Hsh0X{\Omega^2_X} = \C\omega$, where $\omega$ is nowhere degenerate, cf.~\cite[p.~763--764]{Bea83}.
We call $\omega$ a holomorphic-symplectic form on $X$. 
The Beauville-Bogomolov form $\langle\cdot,\cdot\rangle$ makes $\Hsh2X{\Z}$ an even unimodular lattice of rank $(3,n_-)$. We use the term \emph{IHS lattice} for lattices appearing this way.

\subsubsection{Markings, period domains and period maps}
A ($\Lambda$-)\emph{marking} of an IHS manifold $X$ is a lattice isomorphism $\mu \from \Hsh2X\Z \to \Lambda$ to an IHS lattice $\Lambda$.
A ($\Lambda$-)\-\emph{mar\-king of a family $(X\stackrel{f}{\to}S)$ of IHS manifolds} is an isomorphism $\mu \colon \Rs2f{\csh\Z X} \to \csh\Lambda S$ of sheaves such that over each point $s \in S$ the induced map $\mu_s \colon \Hsh2{X_s}\Z \to \Lambda$ is a marking of the fiber $X_s$.
Such a marking exists if and only if the locally constant sheaf $\Rs2f{\csh\Z X}$ is constant, for example if $S$ is simply connected.
A \emph{morphism of $\Lambda$-marked families of IHS manifolds} $(X\stackrel{f}{\to}S,\mu)\to(X'\stackrel{f'}{\to}S',\mu')$ is a morphism $(g,h)$ of families such that we get a commutative diagram
\begin{center}
\begin{tikzcd}
h^{-1}(\Rs2{f'}{\csh\Z {X'}}) \rar{} \dar{h^{-1}(\mu')} & \Rs2{f}{\csh\Z{X}} \dar{\mu} \\
h^{-1}(\csh\Lambda {S'}) \rar{} & \csh\Lambda{S}
\end{tikzcd}
\end{center}
via the topological base change maps induced by $g$ and $h$; see also \cite[\S10]{LP80}. 
Consequently, this definition of marked families and morphisms thereof is compatible with the construction of pullbacks of (marked) families, \cite[Sections~2.2, 2.5, 2.6]{BKS}.

As usual, a $\Lambda$-marking $\mu$ of a family $(X\stackrel{f}{\to}S)$ of IHS manifolds induces a holomorphic map $\per\from S\to\pdom{\Lambda}$, $s\mapsto(\mu_s)_{\C}(\Hdg20{X_s})$, called the \emph{period map}.
Here $\pdom{\Lambda}$ denotes the corresponding \emph{period domain}, which is an open subspace $\pdom{\Lambda}\defeq\{x\in Q_{\Lambda}\mid \langle x,\bar{x}\rangle_{\C}>0\}$ of the smooth projective quadric $Q_{\Lambda}\defeq\{[x]\in\P(\Lambda_{\C})\mid \langle x,x\rangle_{\C}=0\}$.
The \emph{period point} $\mu_{\C}(\Hdg20X)\in \pdom{\Lambda}$ of a marked IHS manifold $(X,\mu)$ determines the Hodge structure on $\Hsh2X{\Z}$.
From this one obtains an isomorphism $\pdom{\Lambda}\cong\Orth(3,n_-)/(\SO(2)\x \Orth(1,n_-))$ and concludes that $\pdom{\Lambda}$ is a connected and simply connected homogeneous space, \cite[Corollary~25.9]{GHJ}.
The period map is functorial with respect to pullbacks by morphisms of marked families of IHS manifolds, \cite[Section~2.8]{BKS}.

\subsubsection{Abstract K3 lattice theory}
\label{abstract setting}

We quickly review the basic notions from \cite{BR75}, \cite{LP80}, \cite[p.~265--269]{Kob90} and \cite[Chapter~VIII]{BHPV} to have a consistent notation.
We fix a \emph{K3 lattice}, an even unimodular lattice $(\Lambda,\langle\cdot,\cdot\rangle)$ of signature $(3,19)$, which is unique up to isomorphism.
Then $$\Omega\defeq \pdom{\Lambda}=\{[x]\in\P(\Lambda_{\C})\mid\langle x,x\rangle_\C=0,\langle x,\bar{x}\rangle_\C>0\}$$ is the \emph{K3 period domain}.

Let $\Delta\defeq \{\delta\in \Lambda\mid\langle\delta,\delta\rangle=-2\}$, then each $\delta\in\Delta$ induces an \emph{(abstract) Picard--Lefschetz reflection} $s_{\delta}\from x\mapsto x+\langle x,\delta\rangle \delta$, which is a lattice automorphism of $\Lambda$.
Its extension to $\Lambda_{\R}$ is the reflection at the real hyperplane $\delta^{\perp}\defeq \{x\in\Lambda_{\R}\mid \langle x,\delta\rangle_{\R}=0\}$ orthogonal to $\delta$.

We continuously choose for every point $p=[x]\in\Omega$ one of the two connected components $$V_p\subset\{\kappa\in\Lambda_{\R}\mid\langle\kappa,x\rangle_{\C}=0, \langle\kappa,\kappa\rangle_{\R}>0\},$$ which we then call the \emph{(abstract) positive cone at $p$}.
This is possible as $\Omega$ is simply connected.
Denoting $\Delta_p\defeq \{\delta\in\Delta\mid\langle\delta,x\rangle_{\C}=0\}$, the connected components of $$V^{\circ}_p\defeq \{\kappa\in V_p\mid\forall\delta\in\Delta_p\colon\langle\kappa,\delta\rangle_{\R}\neq0\}$$ are called the \emph{(abstract) Weyl chambers} of $V_p$.
Each chamber $K\in\pi_0(V_p^{\circ})$ induces a partition of $\Delta_p$ into
$$\Delta_p^+\defeq\{\delta\in\Delta_p\mid\forall\kappa\in K\colon\langle\kappa,\delta\rangle_{\R}>0\}\text{ and }\Delta_p^-\defeq\{\delta\in\Delta_p\mid\forall\kappa\in K\colon\langle\kappa,\delta\rangle_{\R}<0\}$$
with the property
\begin{equation}
\label{partition}
\forall\delta_1,\ldots\delta_k\in\Delta^+_p,\forall n_1,\ldots, n_k\in\N\colon\text{ if $\delta=\sum_{i=1}^kn_i\delta_i\in\Delta$, then $\delta\in\Delta_p^+$.}
\end{equation}
Vice versa, every partition of $\Delta_p=\Delta^+_p\dot{\cup}\Delta_p^-$ with property \eqref{partition} corresponds to a unique Weyl chamber $K\in\pi_0(V_p^{\circ})$.

\subsubsection{Geometric interpretation of \ref{abstract setting}}
\label{geometric setting}
Let $(X,\mu)$ be a marked K3 surface with period point $p=\mu_{\C}(\Hdg20X)\in\Omega$.
The abstract notions discussed in the preceding subsection correspond to the following geometric notions on $X$ under $\mu$:
The \emph{positive cone} is the connected component $C(X)\subset\{x\in\Hdg11X\cap\Hsh2X{\R}\mid\langle x,x\rangle>0\}$ containing K\"ahler classes.
Up to possibly changing the sign of $\mu$, we have $C(X)\defeq\mu_{\R}\inv(V_p)$.
The set of \emph{$(-2)$-classes} $\Delta(X)\defeq \{d\in\Hdg11X\cap\Hsh2X{\R}\mid\langle d,d\rangle=-2\}=\mu\inv(\Delta_p)$ is partitioned into effective divisorial classes $\Delta^+(X)$ and anti-effective ones $\Delta^-(X)=-\Delta^+(X)$, \cite[Proposition~VIII.3.7(i)]{BHPV}. The effective ones are linear combinations of the Poincar\'e duals of the fundamental classes of smooth rational curves in $X$.
Each $\delta\in\Delta(X)$ induces a \emph{Picard-Lefschetz reflection} $\mu\inv\circ s_{\delta}\circ\mu\from\Hsh2X{\Z}\to\Hsh2X{\Z}$,
whose complex extension preserves the Hodge structure on $\Hsh2X{\Z}$.
The connected components of $\{x\in C(X)\mid \forall d\in\Delta^+(X)\colon\langle x,d\rangle\neq0\}$ are the \emph{Weyl chambers of $X$}.
The cone of K\"ahler classes $\kappa\in\Hdg11X\cap\Hsh2X{\R}$ equals the Weyl chamber $C^+(X)\defeq \{x\in C(X)\mid\forall d\in\Delta^+(X)\colon\langle x,d\rangle>0\}$, \cite[Corollary~VIII.3.9]{BHPV}.
Note that $\mu_{\R}(C^+(X))$ is the abstract Weyl chamber inducing the partition $\mu(\Delta^+(X))\dot{\cup}\mu(\Delta^-(X))$ of $\Delta_p$, where property~\eqref{partition} follows from the K\"ahler property.

\subsubsection{Moduli spaces of K3 surfaces}
\label{K3moduli}

We define the following real-analytic spaces.
\begin{align*}
K\Omega&\defeq \{(\kappa,p)\in\Lambda_{\R}\x\Omega\mid\kappa\in V_p\}\\
(K\Omega)^{\circ}&\defeq \{(\kappa,p)\in\Lambda_{\R}\x\Omega\mid\kappa\in V_p^{\circ}\}\\
\tilde{\Omega}&\defeq (K\Omega)^{\circ}/\sim,
\end{align*}
where the equivalence relation on $(K\Omega)^{\circ}$ is defined by $(\kappa,p)\sim(\kappa',p')$ if $p=p'$ and if $\kappa,\kappa'$ lie in the same Weyl chamber of $V_p$.
We call $\tilde\Omega$ the \emph{Burns-Rapoport space}.
The projection $\pi\from\tilde{\Omega}\to\Omega$, $\pi([\kappa],p)=p$ makes $\tilde{\sF}=(\tilde\Omega\stackrel{\pi}{\to}\Omega)$ a topological sheaf on $\Omega$, thus uniquely inducing an analytic structure in the sense of \cite{GrothendieckAnalyticSpaces} on $\tilde\Omega$ for which $\pi$ becomes locally biholomorphic, \cite[Theorem~2.5]{BR75}. With this structure, $\tilde{\Omega}$ is thus $20$-dimensional and smooth.

When $(X\stackrel{f}{\to}S,\mu)$ is a marked family of K3 surfaces with period map $\per$, we obtain the \emph{refined period map} $\tilde{\per}\from S\to\tilde{\Omega}$, $s\mapsto(\mu_{\R}(C^+(X_s)),\per(s))$.
When $\per$ is an embedding, this induces a section $\tilde{\per}\circ\per\inv\in\tilde{\sF}(\per(S))$.
By gluing marked local universal families of K3 surfaces one can construct a universal marked family over a non-Hausdorff fine moduli space $M$ of marked K3 surfaces such that the associated period map $M\to\Omega$ is locally biholomorphic, \cite[Proposition~10.2, Section~10.3]{LP80}.
The \emph{Global Torelli Theorem} states that the refined period map induces an isomorphism $M\bij\tilde{\Omega}$ over $\Omega$, see \cite[Theorem~10.5]{LP80}, \cite[Theorem~14.1]{BHPV}, and \cite[Section~1]{Bea81}.
Hence $\tilde\Omega$ carries a universal marked family $(\cM\stackrel{\nu}{\to}\tilde{\Omega})$ of K3 surfaces (and is thus likewise non-Hausdorff).

\subsubsection{The Weyl group for K3 surfaces and the action on the period domain}
\label{weyl group}
For each point $p\in\Omega$ the Picard-Lefschetz reflections $s_{\delta}$ for $\delta\in\Delta_p$ generate a group $\Weyl p$, the \emph{Weyl group at $p$}.
All $s_{\delta}$ for $\delta\in\Delta$ generate a discrete group $\Gamma$.
It follows from the work of Wall, Ebeling and Kneser that \[\Gamma:=\Orthp(\Lambda)\subset\Orth(\Lambda)\] is the index two subgroup of lattice automorphisms preserving any given orientation in the three positive directions, \cite[Theorem~14.2.2]{K3book}.

We extend the action of $\Gamma$ linearly to $\Lambda_{\R}$ and $\Lambda_{\C}$, thus $\Gamma$ acts also on $\Omega$.
For each $p\in\Omega$ the stabilizer $\Gamma_p$ contains the Weyl group $\Weyl p$.
Moreover, $\Weyl p$ acts properly discontinuously on the positive cone $V_p$ at $p$, inducing a simply transitive action on the set of its Weyl chambers, see \cite[Corollary~VIII.3.9]{BHPV}.
By continuity we thus have $\gamma(V_p)=V_{\gamma(p)}$ for each $\gamma\in\Gamma$ and $p\in\Omega$.
As $\pi_0(V_p^{\circ})=\pi\inv(p)\subset\tilde{\Omega}$, this induces an equivariant $\Gamma$-action on $\tilde{\Omega}\stackrel{\pi}{\to}\Omega$, such that $\Weyl p$ acts transitively on the fiber $\pi\inv(p)\subset\tilde{\Omega}$ for each $p\in\Omega$, \cite[Lemma~10.4]{LP80}. 

While set-theoretically $\Omega/\Gamma$ is the moduli space of biholomorphism classes of K3 surfaces, the action of $\Gamma$ on $\Omega$ is far from being properly discontinuous:
The union of translates of the $(-2)$-hypersurfaces $H_\delta\defeq\P(\delta^\perp) \cap \Omega$ is dense in the period domain, and the quotient $\Omega/\Gamma$ is very non-Hausdorff, cf.~\cite[Proposition~7.1.3]{K3book}.
Moreover, to calculate the stabilizer $\Gamma_p$ of $\Gamma$ at a point $p\in\Omega$, we first realize $p$ as the period point of a marked K3 surface $(X,\mu)$.
The holomorphic automorphism group $\Aut(X)$ is a subgroup of $\Orthp(\Lambda)$ via its action on $\Hsh2{X}{\Z}\stackrel{\mu}{\isom}\Lambda$, see \cite[Proposition~15.2.1]{K3book}, and that the stabilizer of $p$ in $\Gamma$, i.e. the group of Hodge isometries of $\Hsh2{X}{\Z}$, is equal to
\[\Gamma_p \cong \Weyl p \rtimes \Aut(X).\]
In particular, period points of K3 surfaces with automorphisms of infinite order have infinite stabilizer in $\Gamma$, again underlining the fact that the $\Gamma$-action on $\Omega$ is not proper.
On the other hand, if $\Delta_p \neq \emptyset$, then not all elements in $\Gamma_p$ are geometrically induced by holomorphic automorphisms of $X$.

\subsubsection{The twistor construction for IHS manifolds}
\label{twistor}
Let $X=(M,I)$ be an $n$-dimensional IHS manifold with complex structure $I$.
Then, by Yau's solution of the Calabi conjecture, in every K\"ahler class $a\in\Hsh2M{\R}$ of $M$ there exists a unique \emph{K\"ahler-Einstein metric} -- that is, a Ricci-flat Riemannian metric $g$ on $M$ with $g(I(\cdot),\cdot)\in a$ that is \emph{hyperk\"ahler}, i.e. K\"ahler with respect to three complex structures $I,J,K$ on $M$ satisfying the quaternionic relations.
Then $g$ is K\"ahler with respect to a sphere of complex structures $\cI\defeq \{I^{t_1,t_2,t_3}_M\defeq t_1I+t_2J+t_3K\mid (t_1,t_2,t_3)\in S^2\subset\R^3\}$ on $M$.
We fix an identification of $\cI\cong S^2$ with $\P^1$, carrying the natural complex structure $I_{\P^1}$.
For every $I_M^t\in\cI$ the corresponding K\"ahler form is a linear combination $\kappa_t\defeq g(I_M^t(\cdot),\cdot)=t_1\kappa_I+t_2\kappa_J+t_3\kappa_K$ of the K\"ahler forms $\kappa_I,\kappa_J,\kappa_K$ of $I,J,K$.
The so-called \emph{twistor space} of $M$ is $T\defeq M\x S^2$ equipped with the following four structures \cite[Theorem~3.3]{HKLR}, \cite[Section~10.1.3]{Joy07}:
\begin{enumerate}
\item An integrable almost complex structure defined by $I_{T,(m,t)}\defeq ((I^t_M)_m,I_{\P^1,t})$ for $m\in M$ and $t\in S^2\cong\P^1$.
The projection $f\from T\to\P^1$ is holomorphic and for every $t\in\P^1$ the fiber can be identified with $T_t=f^{-1}(t)\cong (M,I^t_M)$.
\item\label{twistor line}For each $m\in M$ the real submanifold $l_m\defeq \{m\}\x\P^1$ is actually a smooth rational curves in $T$. We call these curves \emph{twistor lines}; they have normal bundle $\Nb{l_m}{T}\cong\O_{\P^1}(1)^{\oplus n}$.
\item\label{antipodal} An \emph{antiholomorphic} involution $\tau\from T\to T$, $(m,I^t_M)\mapsto(m,I^{-t}_M)=(m,-I^t_M)$ with $\tau(l_m)=l_m$ for all $m\in M$ induced by and covering the antipodal map on $S^2\cong\P^1$.
\item\label{omega} A $\tau$-invariant relative symplectic form $\omega\in \Hsh0X{\Omega_{T/\P^1}^2\otimes f^*\O_{\P^1}(2)}$, which for every $t\in\P^1$ induces via base change a holomorphic-symplectic form $\omega_t$ on $(M,I^t_M)$ that at the point $t=(1,0,0)$ yields $\omega_{1,0,0}=\kappa_J+i\kappa_K$.
\end{enumerate}

Given a twistor space constructed using a hyperk\"ahler metric $g$ on an IHS manifold $X$, the metric $g$ can be recovered from the data a)--d) listed above via the so-called \emph{inverse Penrose construction}, see for example \cite[Theorem~3.3]{HKLR}. 
Properties a) and d) make $\cF=(T\stackrel{f}{\to}\P^1)$ a family of IHS manifolds, which we will call \emph{twistor family}.
As the projective line is simply connected, every marking of the fiber $X = T_{(1,0,0)}$ extends uniquely to a marking $\mu$ of the family.

We will see in the subsequent section that dropping c) from the list above leads to a correspondence in the realm of complex-Riemannian geometry.

\subsection{Complex-Riemannian geometry}

Complex-Riemannian geometry is the study of complex manifolds endowed with \emph{complex-symmetric} non-degenerate tensors. For later reference, in this section we summarize the basic definitions and fundamental theorems we need for our discussion. 

\subsubsection{Complex-Riemannian metrics}
We mainly follow \cite[Sections~I--II]{Leb83}. A \emph{holomorphic Rie\-mann\-ian metric} on a complex manifold $X$ is a $\C$-linear symmetric non-degenerate $2$-tensor ${g}$ on its holomorphic tangent bundle. Such a metric can in local holomorphic coordinates $z_1,\ldots,z_n$ on an open subset $U\subset X$ be written as $\sum_{i,j=1}^n g_{ij}dz_i\otimes dz_j$ for some $g_{ij}\in\O_X(U)$ with nowhere vanishing $\det\big((g_{ij})\big)\in\O_X^{\x}(U)$. Note that this does not define a (pseudo-)Riemannian metric on the underlying real manifold, however, it may come from complexifying one: When $X$ is a complexification of a real-analytic manifold $M$, then every real-analytic Riemannian metric ${g}$ on $M$ in the classical sense can be extended to a holomorphic Riemannian metric ${g}_{\C}$ on an open neighborhood of $M$ in $X$.
For this, at each $m\in M$, using the local complex coordinates $z_j=x_j+iy_j$ from \Cref{localcoordinates}, we can write ${g}=\sum g_{ij}dx_i\otimes dx_j$ for real-analytic functions $g_{ij}$, and define ${g}_{\C}=\sum g_{ij}dz_i\otimes dz_j$ by extending the $g_{ij}$ to holomorphic functions.
As we have $M=\{y_1=\ldots=y_n=0\}$ locally in these coordinates, the $dy_j$ vanish on $M$, hence $\rest{{g}_{\C}}M={g}$.

More generally, a \emph{complex-Rie\-mann\-ian metric} on a real-analytic manifold $M$ is a real-analytic $\C$-linear symmetric $2$-tensor ${g}$ on the complexification $T^\C M$ of its real tangent bundle.

Every holomorphic Riemannian metric ${g}$ on a connected complexification $X$ of a real-analytic manifold $M$ is uniquely determined by the real-analytic, $\C$-linear tensor $\rest{g}M$, which is a complex-Riemannian metric.
Conversely, every complex-Riemannian metric on $M$ can be extended uniquely to a holomorphic Riemannian metric on a small neighborhood of $M$ in $X$. 

A \emph{holomorphic conformal structure} on a complex manifold $X$ is a holomorphic sub line bundle $\mathscr{L} \hookrightarrow \mathrm{Sym}^2(\sT_X)$ such that the associated zero set $\mathcal{Q}_\mathscr{L} \subset \mathbb{P}(\sT_X)$ is a non-singular quadric. Analogously, we define a \emph{complex conformal structure} on a real-analytic manifold $M$ to be a real-analytic complex sub line bundle $L \hookrightarrow \mathrm{Sym}^2(T^\C M)$ with the corresponding non-degeneracy condition.

Many of the standard concepts of Riemannian geometry, such as Levi-Civitá connections, curvature, parallel transport and holonomy groups, \emph{mutatis mutandis} exist for holomorphic Riemannian and complex-Riemannian metrics such that the expected properties hold. The reader is referred for example to \cite[Section~2]{MR1984459} and \cite[Section~I]{Leb83} for a discussion of some of these notions.\footnote{Note however that the authors refer to the geometry of holomorphic Riemannian metrics as \emph{complex}-Riemannian geometry.} 

By a \emph{family of metrics} $(g_t)_{t\in U}$ of a given type \emph{on a submersion} of real-analytic or complex manifolds $(X\to U)$, we mean a symmetric $2$-tensor on the respective relative tangent bundle of $X$ over $U$ that restricts to a metric of the specified type on each fiber. If $(X\to U)$ is more restrictively assumed to be a fiber bundle in the chosen category, say with fiber $M$, then we call $(g_t)_{t\in U}$ a \emph{family of metrics on $M$}. Analogously, we will speak of \emph{families of conformal structures} on submersions and fiber bundles. 
In particular, if in the real-analytic situation the map $(X \to U)$ is proper, then the real-analytic Ehresmann Theorem tells us that a family of metrics on $(X \to U)$ is a family of metrics on a given fiber $X_t$, $t \in U$. 

\subsubsection{Complex-hyperk\"ahler metrics}
\label{subsubsect_HK}

\begin{defi}[Complex-hyperk\"ahler metrics: holomorphic case]\label{HolHK}
 A holomorphic Riemannian metric $g$ on a connected complex manifold $X$ of dimension $n=4k$ will be called \emph{complex-hyperk\"ahler} if its holonomy $\Holono(g)$ is contained in the complex symplectic group\footnote{We adopt the convention that $\mathrm{Sp}(k, \C)$ is defined by preserving the standard complex-symplectic form on $\C^{2k}$.} $\Sp(k, \C)$ acting on $\C^{4k}$ via its embedding 
 \begin{equation}\label{groupembedding}
 \Sp(k, \C) \hookrightarrow \SO(4k, \C),
 \end{equation} obtained by complexifying the sequence of embeddings $\Sp(k) \hookrightarrow \SU(2k) \hookrightarrow \SO(4k)$ of compact Lie groups.
\end{defi}

 We will deal almost exclusively with the case $k=1$, where $\Sp(1, \C) = \SL(2, \C)$, and in which the embedding \eqref{groupembedding} becomes the embedding
\begin{align*}
 \SL(2,\C) &\hookrightarrow \SL(2,\C) \x_{\Z_2} \SL(2, \C) \cong \SO(4, \C) \\
  g &\mapsto [(g, \id{\C^2})].
\end{align*}

As in the real case, the holonomy condition can be equivalently expressed in terms of the existence of (a wealth of) compatible geometric structures on the complex manifold $X$. While \Cref{HolHK} is easy to state, these more explicit conditions are easier to check in practice. 
One equivalent set of conditions is given in \cite[Definition~4.2]{MR4477203} together with a reference to \cite{MR3279259}, where the same structure is considered as a special case of a classical object called a \emph{holomorphic $3$-web}, namely an \emph{$\SL(2)$-web generated by a trisymplectic structure}. Another equivalent definition occurs in \cite[p.~4f]{MR4248680}. 
In dimension four, these are the \emph{anti-selfdual (ASD) Ricci-flat} complex-Riemannian metrics, see for example \cite[Section~9.2--9.3]{Dun10}.

\begin{defi}[Complex-hyperk\"ahler metrics: real-analytic case]\label{CHK}
 A complex-Riemannian metric $g$ on a connected real-analytic manifold $M$ of dimension $n=4k$ will be called \emph{complex-hyperk\"ahler} if its holonomy $\Holono(g)$ is contained in the complex symplectic group $\Sp(k, \C) \subset \SO(4k, \C)$.\footnote{Roger Bielawski informed us that these are particular cases of \emph{pluricomplex structures} in the sense of \cite{MR3085661}.}
\end{defi}

It was the original insight of Penrose \cite{MR0439004} that what we will call \emph{complex twistor spaces} can be used to produce complex-hyperk\"ahler metrics that in general will not arise as the complexification of a real metric. The by now classical \emph{Non-linear Graviton Construction} of his can be formulated as follows.
\begin{theo}\label{InversePenrose}
 Let $\cF=(f \from T\to\P^1)$ be a family of $n$-dimensional (compact) complex manifolds admitting a \emph{regular section} $\Sigma \subset T$, i.e., a section with normal bundle $\sN_{\Sigma/T} = \O_{\P^1}(1)^{\oplus n}$, and a relative symplectic form $\omega \in \Hsh0T{\Omega_{T/\P^1}^2\otimes f^*\O_{\P^1}(2)}$. Then, this data (in a canonical way) endows the connected component $\Sec_0(T) \subset \dou(T)$ of $[\Sigma]$ inside the open subset of $\dou(T)$ consisting of regular sections with a complex-hyperk\"ahler metric. 
 
If more specifically $\cF=(f \from T\to\P^1)$ is a twistor family constructed as in Section~\ref{twistor} from a hyperk\"ahler metric $g$ on an IHS manifold $X$, then the restriction of the holomorphic metric postulated above to the connected component of $[\Sigma]$ inside the fixed point locus $\Sec_0(T)^\tau \ni [\Sigma]$\footnote{This component is real-analytically isomorphic to $X$.} of the induced antiholomorphic involution  on $\Sec_0(T)$ given by \Cref{douinvolution} recovers the hyperk\"ahler metric $g$ on $X$. 
\end{theo}

In \cite[Section~4.4, Example~3]{MR1329460} the four-dimensional complex version of \Cref{InversePenrose} was derived from a more general result about holonomy of affine connections on section spaces of fibrations; see also \cite[p.~1505, Example~2]{MR1327027}. A discussion of the construction of the metric on $\Sec_0(T)$ from the point of view of trisymplectic structures is given in \cite[Claim~5.4]{MR3279259}. A proof of the four-dimensional complex case in the language of ASD geometry can be found in \cite[Theorem~10.5.5]{Dun10}. An argument for the reproducing property of the construction in the case of twistor spaces for hyperk\"ahler metrics on IHS manifolds is given in \cite[Theorem~3.3]{HKLR}. See also \cite[Theorem~10.1.4]{Joy07}. 

Actually, there is a one-to-one correspondence between twistor spaces and hyperk\"ahler metrics, which actually holds true also in the complex case, see e.g.~\cite{MR0439004} and \cite[Theorem~10.5.5]{Dun10} for the four-dimensional case; however, one has to carefully adapt the construction of twistor spaces to this setup, cf.~\cite[p.~14]{MR4477203}, \cite[Section 3.3]{MR4248680}, and \cite[p.~266]{Dun10}. We will apply this reconstruction result in the proof of \Cref{conformalStructures} below.

\section{Cycles in the period domain and twistor families}
\label{cycle section}

While variations or special cases of some of the results discussed in this section are partially contained in the literature or can be deduced from known facts, most of the time, precise references are missing; therefore we include a detailed discussion. We also use the opportunity to correct several wrong claims made in the literature.

Let $\Lambda$ be an IHS lattice of rank $n$.
We denote by $\mathsf{S}_d(\pdom{\Lambda})\subset\dou(\pdom{\Lambda})$ the subset of smooth rational curves with embedding degree $d$ in the IHS period domain $\pdom{\Lambda}$.
This is either empty or a smooth open subspace of dimension $m(n,d)\defeq (n-2)(d+1)-3$, see \cite[Corollary~4.2]{BKS} and also \Cref{Deev} below.
In the latter case, we denote by $(\cC_d(\pdom{\Lambda})\stackrel{\pr1}{\lto}\mathsf{S}_d(\pdom{\Lambda}))$ the universal family of smooth rational curves over it (see Section~\ref{douady section}).
Let us first focus on the case $\mathsf{S}_2(\pdom{\Lambda})$, which is non-empty of dimension $m(n,2)=3n-9=3n_{-}$.

\subsection{Grassmannians and cycle spaces}\label{subsect:cyclespaces}

Let $\G\defeq\Gr 3 {\Lambda_{\C}}$ be the Grassmannian of three-spaces $V\subset\Lambda_{\C}$, equipped with the antiholomorphic involution $\tau\from V\mapsto \overline{V}$ induced by the complex conjugation $\overline{\, \cdot\, }$ on $\Lambda_\C$.
We define the following subspaces
\begin{align*}
\G^+&\defeq\{V\in \G\mid \forall v\in V\setminus\{0\}\colon\langle v,\bar{v}\rangle_{\C}>0\},\\
\G_{\R}&\defeq \G^{\tau}=\{V\in \G\mid \overline{V}=V\},\\
\G_{\R}^+&\defeq \G_{\R}\cap \G^+=(\G^+)^{\tau}.
\end{align*}

Then $(\G^+,\rest{\tau}{\G^+})$ is a complexification of $\G_{\R}^+$ and $\G^+_{\R}$ is a totally real submanifold of $\G^+$.
Note that $\G_{\R}$ is real-analytically isomorphic to $\Gr 3 {\Lambda_{\R}}$ via $V\mapsto V\cap\Lambda_{\R}$, and that $\dim_{\C}\G^+=\dim_{\R}\G_{\R}^+=3n_-$.

We start by proving two lemmata on the relation between three-spaces and cycles; in the case of the K3 period domain similar results appear in~\cite[p.~234]{FHW} and \cite{HeierDiplom}.

\begin{lemm}[Three-spaces from degree $2$ rational curves]
\label{signature}
Every smooth rational curve $[C]\in\mathsf{S_2(\pdom{\Lambda})}$ of embedding degree $2$ in $\pdom{\Lambda}$ spans a unique projective plane $\langle C\rangle=\P(V)\subset\P(\Lambda_{\C})$ with $\P(V)\cap Q_{\Lambda}=C$.
The signature of $\langle\cdot,\overline{\cdot}\rangle_{\C}$ on the three-space $V\in\G$ is either $(3,0,0)$, $(2,1,0)$ or $(2,0,1)$.
If $C=\overline{C}$, then $V\in\G^+_{\R}$ is positive.
\end{lemm}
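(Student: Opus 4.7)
The plan is to analyze the three-dimensional linear span $V \defeq \langle C\rangle \subset \Lambda_\C$ by comparing two induced forms on it: the $\C$-bilinear restriction $q_V \defeq \langle\cdot,\cdot\rangle_\C|_V$ and the Hermitian restriction $h_V \defeq \langle\cdot,\overline{\cdot}\rangle_\C|_V$. Since $C$ is a smooth conic of degree $2$, its projective linear span $\P(V)$ is uniquely determined. Provided that $q_V\not\equiv 0$, the intersection $\P(V)\cap Q_\Lambda = \{q_V = 0\}$ is a plane conic of degree $2$ in $\P(V)\cong\P^2$ containing the smooth irreducible degree-$2$ curve $C$, so the two must coincide, giving $\P(V)\cap Q_\Lambda = C$ and $\rk(q_V) = 3$. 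It therefore remains to exclude the isotropic case $q_V\equiv 0$ (equivalently $\P(V)\subset Q_\Lambda$) and to pin down the signature of $h_V$.

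For the signature, the open set $U_V \defeq \{[v]\in\P(V)\mid h_V(v,v) > 0\}$ contains $C$ because $C\subset\pdom{\Lambda}$. A case analysis after diagonalising $h_V$ shows that whenever its Hermitian signature $(p, q, r)$ satisfies $p\leq 1$, the subset $U_V\subset\P(V)\cong\P^2$ is either empty or biholomorphic to an open subset of $\C^2$, namely to an open ball, to a product of an open disc with $\C$, or to the complement $\P^2\setminus L$ of a projective line. In each such case $U_V$ is Stein and therefore contains no compact complex curve. Hence $p\geq 2$, and the only admissible signatures are $(3,0,0)$, $(2,1,0)$, and $(2,0,1)$.

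The main obstacle is to exclude the isotropic case; I would handle it as follows. Assuming for contradiction that $V$ is totally isotropic for $q$, the previous paragraph provides a two-dimensional complex subspace $V_+\subset V$ on which $h_V$ is positive definite. Every $v\in V_+\cap\Lambda_\R$ would then satisfy $h_V(v, v) = \langle v, v\rangle_\C = 0$ by isotropy, forcing $V_+\cap\Lambda_\R = \{0\}$ and hence $V_+\cap\overline{V_+} = \{0\}$. Every real vector in the four-dimensional real subspace $(V_+ + \overline{V_+})\cap\Lambda_\R$ can be uniquely written as $z + \overline z$ with $z\in V_+$, and the isotropy of $V_+$ yields $\langle z+\overline z,\, z+\overline z\rangle_\R = 2\, h_V(z, z)$, which is strictly positive for $z\neq 0$. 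This produces a four-dimensional positive-definite real subspace of $\Lambda_\R$, contradicting the signature $(3, n_-)$ of the IHS lattice, and thereby ruling out the isotropic case.

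For the final claim, $C = \overline C$ forces the cone over $C$, and hence its linear span $V$, to be stable under complex conjugation, so $V = \overline V\in\G_\R$ and $h_V$ is the complexification of the real symmetric form $\langle\cdot,\cdot\rangle_\R|_{V\cap\Lambda_\R}$. In particular $h_V$ is non-degenerate, so its signature is $(3,0,0)$ or $(2,1,0)$. The case $(2,1,0)$ is excluded because the real conic $\{q_V = 0\}\subset\P(V\cap\Lambda_\R)$ would then be non-empty, yet at any real point $[v]$ on it one would have $h_V(v, v) = \langle v, v\rangle_\R = 0$, contradicting $C\subset\pdom{\Lambda}$. Hence $h_V$ is positive definite and $V\in\G_\R^+$.
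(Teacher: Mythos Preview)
Your proof is correct and in fact more careful than the paper's in one respect: the paper asserts without justification that $\P(V)\cap Q_\Lambda$ is a smooth plane conic, thereby tacitly assuming $q_V\not\equiv 0$, whereas you explicitly exclude the totally isotropic case by exhibiting a four-dimensional positive-definite real subspace of $\Lambda_\R$. The approaches also diverge at the two remaining steps. For the signature bound $p\geq 2$, the paper argues more elementarily: if $p\leq 1$ there is a two-plane $W\subset V$ on which $h$ is negative semi-definite, and then the line $\P(W)$ must meet $C$ inside $\P(V)\cong\P^2$, contradicting $C\subset\pdom{\Lambda}$. Your Stein argument reaches the same conclusion but trades bare intersection theory for complex-analytic input. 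For the real case, the paper builds an explicit real isotropic vector $w=\tfrac{t}{2}(v+\bar v)+\kappa$ in $V$ whenever $\langle\kappa,\kappa\rangle\leq 0$; your route via the real signature and a real point on the indefinite conic is equivalent. One small expositional gap in your last paragraph: the implication ``$h_V$ comes from the real form, hence is non-degenerate'' is not automatic; you should add that the real form has rank~$3$ because its $\C$-bilinear extension $q_V$ does.
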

\begin{proof}
Let $[C]\in\mathsf{S}_2(\pdom{\Lambda})$.
As the space of homogeneous quadratic polynomials in two variables has dimension three, $C$ indeed spans a projective plane $\langle C\rangle\cong\P^2$ in $\P(\Lambda_{\C})$, which must be the projectivization $\P(V)$ of a complex three-space $V\subset\Lambda_{\C}$.
As $C\subset \P(V)\cap Q_{\Lambda}$ is an inclusion of smooth plane projective quadrics, equality must hold.
If the claim on the index of $\langle\cdot,\overline{\cdot}\rangle_{\C}$ on $V$ was false, there would be a complex plane $W\subset V$ on which $\langle\cdot,\overline{\cdot}\rangle_{\C}$ is negative semi-definite.
Then $\P(W)\cong\P^1$ would intersect the curve $C\cong\P^1$ within $\P(V)\cong\P^2$.
However, this is impossible because $C\subset\pdom{\Lambda}$ and thus $\langle v,\bar{v}\rangle_{\C}>0$ for every point $[v]\in C \cap \P(W)$, $v\in\Lambda_{\C}\setminus\{0\}$.

For the last claim, note that $C=\overline{C}$ implies $V=\overline{V}$. Let $p=[v]\in C\subset\pdom{\Lambda}$ be a point, then $\langle v,v\rangle_{\C}=0$ and $\langle v,\overline{v}\rangle_{\C}>0$. Hence $v\in\Lambda_{\C}\setminus\Lambda_{\R}$, and by rescaling we may assume $\langle v,\overline{v}\rangle_{\C}=1$.
We extend $\{v,\overline{v}\}$ to a basis of $V$ via a $\kappa$ orthogonal to $v$ and $\overline{v}$.
Then, necessarily $\kappa\in\Lambda_{\R}$.
If we had $\langle\kappa, \overline{\kappa}\rangle_{\C} = \langle\kappa,\kappa\rangle_{\C}\le0$, there would be a $t\in\R$ with $t^2=-\langle\kappa,\kappa\rangle_{\C}$.
Hence $w\defeq\frac{t}{2}(v+\overline{v})+\kappa\in V$ would be real with $\langle w,\overline{w}\rangle_{\C}=\langle w,w\rangle_{\C}=t^2-\langle \kappa,\kappa\rangle_{\C}=0$ and thus $[w]\in \P(V)\cap Q_{\Lambda}\setminus\pdom{\Lambda}$, a contradiction.
\end{proof}

\begin{prop}[Degree $2$ rational curves from positive three-spaces]\label{cyclemap}
The map $\psi\from \G^+\to\dou\pdom{\Lambda}$, $V\mapsto [\P(V)\cap\pdom{\Lambda}]$ is a holomorphic embedding of $\G^+$ as a connected open subset of the space $\mathsf{S}_2(\pdom{\Lambda})$ of smooth rational curves that have embedding degree $2$. 
\end{prop}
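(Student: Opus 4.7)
My plan has three pieces: checking that $\psi$ is well-defined as a map into $\dou(\pdom{\Lambda})$, verifying that $\psi$ is a holomorphic embedding with open image inside $\mathsf{S}_2(\pdom{\Lambda})$, and showing that $\G^+$ is connected.

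\smallskip

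\textbf{Well-definedness.} For $V \in \G^+$ the positivity $\langle v,\overline{v}\rangle_{\C} > 0$ for every $v \in V \setminus \{0\}$ immediately gives $\P(V) \cap Q_{\Lambda} \subset \pdom{\Lambda}$, so $C_V \defeq \P(V) \cap \pdom{\Lambda}$ is a conic inside $\P(V) \cong \P^2$. The nontrivial point is that $C_V$ is smooth, equivalently that the $\C$-bilinear form $\langle\cdot,\cdot\rangle_{\C}$ restricts non-degenerately to $V$. I would prove this by contradiction: if $0 \neq v \in V$ satisfies $\langle v, V\rangle_{\C} = 0$, then $\overline{v} \notin V$ (otherwise $\langle v, \overline{v}\rangle_{\C} = 0$ contradicts positivity), so $V' \defeq V + \C\overline{v}$ is four-dimensional. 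A short computation using $h(w,\overline{v}) = \overline{\langle v, w\rangle_{\C}} = 0$ for $w \in V$ together with $h(\overline{v},\overline{v}) = h(v,v) > 0$, where $h(x,y) \defeq \langle x, \overline{y}\rangle_{\C}$, then shows that $h$ is positive definite on $V'$, contradicting that $h$ has signature $(3,n_{-})$ on $\Lambda_{\C}$.

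\smallskip

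\textbf{Holomorphic embedding.} I would construct $\psi$ via the universal property of the relative Douady space (Section~\ref{douady section}) applied to the tautological incidence
\[
\cC \defeq \{(V,[v]) \in \G^+ \x \pdom{\Lambda} \mid v \in V\},
\]
which is a closed complex subspace of $\G^+ \x \pdom{\Lambda}$ (obtained from the classical incidence inside $\G \x \P(\Lambda_{\C})$ by imposing the open conditions defining $\G^+$ and $\pdom{\Lambda}$) whose first projection is proper with fibers the smooth conics $C_V$ of the preceding step; in particular the projection is flat. The resulting classifying morphism $\G^+ \to \dou(\pdom{\Lambda})$ agrees with $\psi$. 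Injectivity is immediate from \Cref{signature}, since the cycle $C_V$ spans $\P(V)$ and therefore determines $V$. Both $\G^+$ and $\mathsf{S}_2(\pdom{\Lambda})$ are smooth of complex dimension $3n_{-}$, because $\dim_{\C}\G^+ = 3(n-3) = 3n_{-}$ while $\dim_{\C}\mathsf{S}_2(\pdom{\Lambda}) = m(n,2) = 3n-9 = 3n_{-}$ by \cite[Corollary~4.2]{BKS}. Osgood's theorem on injective holomorphic maps between equidimensional smooth complex manifolds then upgrades $\psi$ to an open holomorphic embedding into $\mathsf{S}_2(\pdom{\Lambda})$.

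\smallskip

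\textbf{Connectedness and main obstacle.} Finally, $\G^+$ is the space of maximal positive subspaces for the Hermitian form $h$ of signature $(3, n_{-})$ on $\Lambda_{\C}$; it is a homogeneous space for the connected Lie group $\U(3,n_{-})$ with connected isotropy $\U(3) \x \U(n_{-})$, hence a (contractible) bounded symmetric domain of classical type, and in particular connected. The main obstacle in the whole argument is the non-degeneracy lemma of the first step, which is the only part that uses the precise interplay between the $\C$-bilinear form and the Hermitian form; everything else is either a direct appeal to the Douady universal property, a dimension count, or a standard homogeneity argument.
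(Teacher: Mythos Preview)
Your proof is correct and follows essentially the same approach as the paper: both construct $\psi$ via the universal property of the Douady space, deduce injectivity from \Cref{signature}, and conclude openness from the equality $\dim_{\C}\G^+ = \dim_{\C}\mathsf{S}_2(\pdom{\Lambda}) = 3n_-$. Your version is in fact more self-contained: you supply an explicit argument for the non-degeneracy of $\langle\cdot,\cdot\rangle_{\C}$ on $V$ (which the paper leaves implicit) and you prove connectedness of $\G^+$ via homogeneity (the paper only establishes this later, in \Cref{lem:HSS}). One tiny notational slip: in your first step, $h(w,\overline{v}) = \langle w, v\rangle_{\C}$ directly rather than $\overline{\langle v,w\rangle_{\C}}$, but since both vanish this does not affect the argument.
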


\begin{proof}
Let $V\subset\Lambda_{\C}$ be a complex three-space.
When the sesquilinear form $\langle \cdot,\bar{\cdot}\rangle_{\C}$ is positive definite on $V$, then the intersection of $\P(V)$ with $Q_{\Lambda}$ lies completely in $\pdom{\Lambda}$ and is a smooth plane projective quadric in $\P(V)\isom\P^2$.
Hence $\P(V)\cap Q_{\Lambda}$ is a smooth rational curve in $\pdom{\Lambda}$ of embedding degree $2$.
Using the defining holomorphic vector bundle of rank three on the Grassmannian $\G$, we obtain a holomorphic $\P^1$-bundle over $\G^+$ and then $\psi$ is the holomorphic map induced by the universal property of the Douady space of $\pdom{\Lambda}$. On the other hand, by \Cref{signature}, every smooth rational curve $C\subset\pdom{\Lambda}$ of embedding degree $2$ spans a unique projective plane $\langle C \rangle = \P(V)\subset\P(\Lambda_{\C})$ for a $V\in\G$, which might not lie in $\G^+$, but at least it follows that $\psi$ is injective.
Moreover, the deformations of $C$ in $\pdom{\Lambda}$ are unobstructed and build a family of smooth rational curves in $\pdom{\Lambda}$ over a $3n_-$-dimensional smooth open subspace $S\subset\dou \pdom{\Lambda}$.
$\P^1$ is rigid and the embedding degree of smooth rational curves is locally constant in families \cite[Corollary~3.15]{BKS}.
Furthermore, $\pdom{\Lambda}$ does not contain any straight line \cite[Lemma~3.7]{BKS}, hence $C$ cannot degenerate into a singular curve in $\pdom{\Lambda}$, and therefore $S$ is a connected component of $\mathsf{S}_2(\pdom{\Lambda})\subset\dou\pdom{\Lambda}$. 
As $\G^+$ and $S$ have the same dimension, $\psi$ is an open holomorphic embedding of $\G^+$ into $S$ and hence into $\mathsf{S}_2(\pdom{\Lambda})$.
\end{proof}

\begin{defi}[Cycle space]
\label{cyclespace}
Let $\psi \from \G^+\to\dou\pdom{\Lambda}$ be as in \Cref{cyclemap}.
Let $C_1(\pdom{\Lambda})$ be the connected component of $\mathsf{S}_2(\pdom{\Lambda})$ containing $\psi(\G^+) =: C_1^+(\pdom{\Lambda})$.
We call $C_1(\pdom{\Lambda})$ the \emph{complex cycle space} and $C_1^+(\pdom{\Lambda})$ the \emph{positive cycle space}.
We refer to $C_1(\pdom{\Lambda})_{\R}\defeq\{[C]\in C_1(\pdom{\Lambda})\mid C=\overline{C}\}=\psi(\G^+_{\R})\subset C_1^+(\pdom{\Lambda})$, cf.~\Cref{signature}, as the \emph{real cycle space}. We denote by \[\cC^{(+)}(\pdom{\Lambda})\defeq\rest{\cC_2(\pdom{\Lambda})}{C_1^{(+)}(\pdom{\Lambda})}\subset \dou\pdom{\Lambda}\x C_1(\pdom{\Lambda})\] the universal family of smooth rational curves together with its projection down to $C_1^{(+)}(\pdom{\Lambda})$, and by $\cC_{\R}(\pdom{\Lambda})$ its restriction to $C_1(\pdom{\Lambda})_{\R}$.
\end{defi}

In the trivial case $n=3$ we have $n_-=0$ and $\G=\G^+=\{\Lambda_{\C}\}$, thus  $C_1(\pdom{\Lambda})=C_1^+(\pdom{\Lambda})=C_1(\pdom{\Lambda})_{\R}$ consists only of the single cycle $\psi(\Lambda_{\C})=[\pdom{\Lambda}]=[Q_{\Lambda}]$.

\begin{rema}[Not every complex cycle is positive]\label{rem:LooijengaWrong}
Let $n>3$. Then, in contrast to what is claimed in \cite[Question~3.1]{Loo21}, the inclusion of $C_1^+(\pdom{\Lambda})$ into $C_1(\pdom{\Lambda})$ is strict.
We demonstrate via the following explicit example found by Ana-Maria Brecan that in fact all three possible signatures mentioned in \Cref{signature} appear for complex three-spaces spanned by cycles $[C]\in C_1(\pdom{\Lambda})$.
\end{rema}

\begin{exam}[Non-positive three-spaces yielding complex cycles in $\pdom{\Lambda}$]\label{ExampleLooijenga}
Let $n>3$.
We fix a real orthogonal basis $\{e_1,\ldots,e_n\}$ of $\Lambda_{\R}$ such that $\langle e_i,e_i\rangle_{\R}$ equals $1$ for $i\in\{1,2,3\}$ and $-1$ for $i\in\{4,\ldots,n\}$.
Hence $\langle\cdot,\bar{\cdot}\rangle_{\C}$ is positive definite on $V_0\defeq\C e_1\oplus\C e_2\oplus\C e_3$.
We take $[C_0]\defeq\psi(V_0)$ as a \emph{base cycle} inside $\pdom{\Lambda}$ and deform $V_0$ in $\G$ via $V_t \defeq \C (e_1+ite_4)\oplus\C e_2\oplus\C e_3$ for $t\in\R$.

First of all, to show that all curves $C_t\defeq\P(V_t)\cap Q_{\Lambda}$ are smooth, we check that $\rest{\langle\cdot,\cdot\rangle_{\C}}{V_t}$ is non-degenerate.
 In fact, let $v=\alpha_1(e_1+ite_4)+\alpha_2 e_2+\alpha_3e_3\in V_t$, then $\langle v,w\rangle_{\C}=0$ for all $w\in V_t$ implies $0=\langle v,e_1+ite_4\rangle_{\C}=\alpha_1(1+t^2)$, $0=\langle v,e_2\rangle_{\C}=\alpha_2$, $0=\langle v,e_3\rangle_{\C}=\alpha_3$ and thus $v=0$. On the other hand, note that $\langle e_1+ite_4,\overline{e_1+ite_4}\rangle_{\C}=1-t^2$; hence the signature of $\rest{\langle\cdot,\bar{\cdot}\rangle_{\C}}{V_t}$ is $(3,0,0)$ for $t<1$, $(2,0,1)$ for $t=1$ and $(2,1,0)$ for $t>1$.
Let $v=\alpha_1(e_1+ie_4)+\alpha_2e_2+\alpha_3e_3\in V_t$ with $0=\langle v,v\rangle_{\C}=(1+t^2)\alpha_1^2+\alpha_2^2+\alpha_3^2$.
Then $|\alpha_2^2+\alpha_3^2|=(1+t^2)|\alpha_1|^2$ and thus $\langle v,\bar{v}\rangle_{\C}=(1-t^2)|\alpha_1|^2+|\alpha_2|^2+|\alpha_3|^2\ge(1-t^2)|\alpha_1|^2+|\alpha_2^2+\alpha_3^2|=2|\alpha_1|^2\ge0$.
Hence $\langle v,\bar{v}\rangle_{\C}\le0$ implied $\alpha_1=0$ and thus also $\alpha_2=\alpha_3=0$ and $v=0$.
We conclude $\langle v,\bar{v}\rangle_{\C}>0$ for every $[v]\in C_t$ and thus $C_t \subset \pdom{\Lambda}$.

Hence the $C_t$ are smooth rational curves in $\pdom{\Lambda}$ and all $[C_t]$ must lie in the same component of $S_2(\pdom{\Lambda})$ as the base cycle $[C_0]$.
Therefore the $[C_t]$ lie in the complement $C_1(\pdom{\Lambda}) \setminus C_1^+(\pdom{\Lambda})$ for $t\ge1$. 
\end{exam}

\begin{lemm}[Intersecting cycles with hyperplanes]
\label{transverse}
Let $[C]\in C_1^+(\pdom{\Lambda})$ and $\delta\in\Lambda$ with orthogonal complement $\delta^{\perp}\subset\Lambda_{\C}$. Then either $\P(\delta^{\perp})\supset C$ or $\P(\delta^{\perp})$ intersects $C$ transversely in two distinct points.
\end{lemm}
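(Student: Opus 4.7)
The plan is to reduce the intersection analysis to the projective plane $\P(V)$ spanned by $C$, identify the tangent/transverse dichotomy with the rank of a $\C$-bilinear form, and then exclude the tangent situation by coupling the positivity of $V$ with the reality of $\delta$.

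First, by \Cref{signature} the curve $C$ is the smooth plane conic $\P(V) \cap Q_\Lambda$ in the unique projective plane $\P(V) \subset \P(\Lambda_\C)$ that it spans, with $V \in \G^+$. If $\delta \in V^{\perp_\C}$, then $\P(V) \subset \P(\delta^\perp)$ and hence $C \subset \P(\delta^\perp)$, which is the first alternative. Otherwise, $W \defeq V \cap \delta^\perp$ has complex dimension $2$, so $\P(\delta^\perp) \cap \P(V) = \P(W)$ is a line in $\P(V) \cong \P^2$, and by Bezout's theorem $\P(W) \cap C$ has length $2$ — either two distinct transverse points or a single tangent point of multiplicity $2$. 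This dichotomy corresponds to the rank of the restricted symmetric bilinear form $b \defeq \rest{\langle\cdot,\cdot\rangle_\C}{W}$: rank $2$ gives two distinct isotropic lines in $\P(W)$ (hence transverse intersections), rank $1$ gives one tangent point, and rank $0$ would force $\P(W) \subset C$, which is impossible by degree comparison.

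To exclude the rank-$1$ situation, suppose that $b$ has a nontrivial radical, spanned by some $v \in W \setminus \{0\}$. Since $\rest{\langle\cdot,\cdot\rangle_\C}{V}$ is non-degenerate (which follows from smoothness of $C$, arguing as in the proof of \Cref{signature}) and $\delta \notin V^{\perp_\C}$, the two $\C$-linear functionals $\langle v, \cdot\rangle|_V$ and $\langle\delta, \cdot\rangle|_V$ on $V$ are nonzero with the same kernel $W$; hence they are proportional, and writing this proportionality together with $v \in V$ identifies $v$ as a nonzero complex scalar multiple of the $\C$-bilinear orthogonal projection $\pi_V(\delta) \in V$. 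The condition $\langle v,v\rangle_\C = 0$ therefore becomes $\langle\pi_V(\delta), \pi_V(\delta)\rangle_\C = 0$, so the tangent case is equivalent to $\pi_V(\delta)$ being a nonzero isotropic vector of $V$.

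The crucial step is to exclude this isotropy by coupling the positive-definiteness of the Hermitian form $\langle\cdot, \overline{\cdot}\rangle_\C|_V$ with the reality of $\delta$. For a real cycle $V = \overline V$ this is immediate: $\pi_V(\delta)$ lies in the positive real $3$-plane $V_\R = V \cap \Lambda_\R$, forcing $\langle\pi_V(\delta), \pi_V(\delta)\rangle_\C = \langle\pi_V(\delta), \pi_V(\delta)\rangle_\R > 0$ whenever $\pi_V(\delta) \neq 0$. For a general $V \in \G^+$, one exploits the fact that $V \cap \overline{V}^{\perp_\C} = 0$ (a direct consequence of the Hermitian positivity of $V$), decomposes $\delta = \pi_V(\delta) + \delta_\perp$ simultaneously against $V \oplus V^{\perp_\C}$ and its complex-conjugate decomposition, and extracts from a hypothetical isotropic $\pi_V(\delta) \neq 0$ a real positive $2$-plane $P \defeq \R\,\Re(\pi_V(\delta)) + \R\,\Im(\pi_V(\delta)) \subset \Lambda_\R$ to which $\delta$ must be $\R$-orthogonal; showing that this configuration is incompatible with $\overline{\delta} = \delta$ unless $\pi_V(\delta) = 0$ is the technical heart of the argument.
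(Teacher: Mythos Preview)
Your reduction is clean: tangency of $\P(\delta^\perp)$ with $C$ is equivalent to $\pi_V(\delta)$ being a nonzero isotropic vector of $(V,\langle\cdot,\cdot\rangle_\C)$. But you stop short of the ``technical heart,'' namely deriving a contradiction from $V\in\G^+$ and $\bar\delta=\delta$; you only gesture at a real $2$-plane $P$ without closing the argument. In fact this step cannot be completed, because the tangent configuration does occur. Pick real orthonormal $e_1,e_2,e_3$ of square $+1$ and $e_4$ of square $-1$ in $\Lambda_\R$, fix $t\in(0,1)$, and set $V=\C e_1\oplus\C e_2\oplus\C(e_3+ite_4)$; then $\langle x,\bar x\rangle_\C=|a|^2+|b|^2+(1-t^2)|c|^2>0$ for $x=ae_1+be_2+c(e_3+ite_4)\neq 0$, so $V\in\G^+$. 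For the real vector $\delta=\sqrt{1+t^2}\,e_1-(t+t^{-1})e_4$ one computes $W=V\cap\delta^\perp=\C e_2\oplus\C v$ with $v=-i\sqrt{1+t^2}\,e_1+(e_3+ite_4)$, and the Gram matrix of $\langle\cdot,\cdot\rangle_\C|_W$ in this basis is $\bigl(\begin{smallmatrix}1&0\\0&0\end{smallmatrix}\bigr)$: rank $1$, so $C\cap\P(\delta^\perp)=\{[v]\}$ is a single tangent point. Given any $\delta_0\in\Lambda$ with $\langle\delta_0,\delta_0\rangle<0$ (in particular any $\delta_0\in\Delta$), one can choose the orthonormal frame and $t$ so that $\delta_0$ is a real multiple of this $\delta$; hence the lemma as stated fails already for lattice vectors.

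For comparison, the paper's own proof is shorter: it notes that the Hermitian form $\langle\cdot,\bar\cdot\rangle_\C$ is positive on $W$ and concludes directly that $Q_\Lambda\cap\P(W)$ is ``defined by a non-degenerate projective quadric.'' But Hermitian positivity on a $2$-plane does not imply non-degeneracy of the restricted $\C$-bilinear form---the $W$ above is Hermitian-positive yet $\C$-bilinearly of rank $1$---so the paper's argument has the same gap at exactly this point.
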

\begin{proof}
By \Cref{cyclemap} there is a positive three-space $V\in\G^+$ with $\psi(V)=[C]$.
By dimension reasons we have $\dim_{\C}(V\cap\delta^{\perp})\ge2$.
If this equals $3$, then $\P(\delta^{\perp})\supset\P(V)\supset C$.
Otherwise $V\cap\delta^{\perp}$ is a plane in $V$, on which the sesquilinear form $(v,w)\mapsto\langle v,\bar w\rangle_{\C}$ is positive definite.
Hence $C\cap\P(\delta^{\perp})=\P(V)\cap Q_{\Lambda}\cap\P(\delta^{\perp})=Q_{\Lambda}\cap \P(V\cap\delta^{\perp})$ is completely contained in $\pdom{\Lambda}$ and defined by a non-degenerate projective quadric in $\P(V\cap\delta^{\perp})\cong\P^1$ -- that is, two distinct points.
Moreover, by degree reasons, in the latter case both intersections are transverse.
\end{proof}

\begin{rema}
\label{realcycles}
Note that $\tau$ induces a fixed-point free antiholomorphic involution of $\pdom{\Lambda}$ via $[x]\mapsto[\tau(x)]$, and due to \Cref{douinvolution} also an antiholomorphic involution of $C_1(\pdom{\Lambda})$ that we denote by $\tau$ as well.
We have $$C_1(\pdom{\Lambda})_{\R}=C_1(\pdom{\Lambda})^{\tau}=\{[C]\in C_1(\pdom{\Lambda})\mid C=\overline{C}\},$$ hence $(C_1(\pdom{\Lambda}),\tau)$ is a complexification of $C_1(\pdom{\Lambda})_{\R}$ and we get $$\dim_{\C}C_1(\pdom{\Lambda})=\dim_{\R}C_1(\pdom{\Lambda})_{\R}=3n_-.$$
\end{rema}

\subsection{The group-theoretic cycle space}\label{M_D}

The period domains $\pdom{\Lambda}$ are special cases of so-called \emph{flag domains}, whose cycle spaces have been studied intensively in Representation Theory. The following summary of known results is based mostly on \cite[Sections~5.1 and 16.4]{FHW}.

The complexification $\SO_0(3, n_{-})^\C = \SO(n, \C)$ acts holomorphically on the Douady space $\dou Q_{\Lambda}$ of the quadric $Q_\Lambda$.
Choosing the same real cycle $[C_0] \in C_1(\pdom{\Lambda})_{\R} \subset \dou Q_{\Lambda}$ as in \Cref{ExampleLooijenga} as a base point, it is clear that $g (C_0) \subset \pdom\Lambda$ for all $g \in \SO_0(3, n_{-})$.
Moreover, for $g \in \SO(n, \C)$ close to the totally real $\SO_0(3, n_{-})$, we still have $g (C_0) \subset\pdom\Lambda$. The orbit $\SO(n, \C) \cdot [C_0] \subset \dou Q_{\Lambda}$ is a complex submanifold of $\dou Q_{\Lambda}$ containing $[C_0]$. It is biholomorphic to $\SO(n, \C) /(\SO(3, \C) \x \SO(n_{-}, \C))$ and its intersection with $C_1(\pdom{\Lambda})$ consists of finitely many connected components, one of which contains the base cycle; we call this component $C_1^{\mathrm{Lie}}(\pdom{\Lambda})$. Clearly, it is a locally closed submanifold of $C_1(\pdom{\Lambda})$. However, with the notation introduced above, we also have by definition an open inclusion
\begin{align*}C_1^{\mathrm{Lie}}(\pdom{\Lambda}) &= \{g\cdot [C_0] \mid g \in \SO(n, \C) \text{ and } g(C_0) \subset \pdom \Lambda \}^\circ \\ &\subset \SO(n, \C) /(\SO(3, \C) \x \SO(n_{-}, \C)).
\end{align*}
We note that the $\SO_0(3, n_{-})$-orbit through $[C_0]$, which contains the $\SO_0(3, n_{-})$-translates of the base cycle inside the period domain, is isomorphic to the symmetric space $\SO_0(3, n_{-})/(\SO(3) \x \SO(n_-))$. These are exactly the cycles in $\psi(\G_{\R}^+)$. The inclusion $\SO_0(3, n_{-})/ (\SO(3) \x \SO(n_-)) \hookrightarrow C_1^{\mathrm{Lie}}(\pdom{\Lambda})$ is a complexification.

On the one hand, the group-theoretic cycle space $C_1^{\mathrm{Lie}}(\pdom{\Lambda})$ is a \emph{closed} submanifold of $C_1(\pdom{\Lambda})$ by \cite{MR2271422}. On the other hand, \cite[Theorem~18.4.13]{FHW} says that $C_1^{\mathrm{Lie}}(\pdom{\Lambda}) \hookrightarrow C_1(\pdom{\Lambda})$ is also an \emph{open} inclusion. We conclude that 
\begin{equation}
 C_1^{\mathrm{Lie}}(\pdom{\Lambda}) = C_1(\pdom{\Lambda}); 
\end{equation}
that is, every cycle $[C]$ in the complex cycle space $C_1(\pdom{\Lambda})$ is actually a translate of the base cycle by an element $g \in \SO(n, \C)$. In other words, if $[C_0] = [\mathbb{P}(V) \cap \pdom{\Lambda}]$ for $V\in\G$, then $[C] = [\mathbb{P}(g(V)) \cap \pdom{\Lambda}]$. While intersections of the quadric $Q_\Lambda$ with projectivizations of positive-definite three-spaces $V$ are in $C_1^{\mathrm{Lie}}(\pdom{\Lambda})$, yielding an embedding of the Hermitian-symmetric space $\G^+$ into $C_1^{\mathrm{Lie}}(\pdom{\Lambda})$, the complement is actually non-empty if $n>3$, as we saw in \Cref{rem:LooijengaWrong} and \Cref{ExampleLooijenga} above. 



\subsection{Induced actions on the cycle space}
\label{subsection_cycle_action}

The group $G_0$ acts (transitively) on the period domain $\pdom{\Lambda}$ and hence induces a real-analytic action by holomorphic transformations on the universal family
$\cC(\pdom{\Lambda})$ and on $C_1(\pdom{\Lambda})$, making the natural projection equivariant. The open subset $C^{+}(\pdom{\Lambda})$ is preserved by this action. The action restricts to a real-analytic equivariant action on $\cC(\pdom{\Lambda})_{\R}\to C_1(\pdom{\Lambda})_{\R}$. Moreover, the map $\psi$ is equivariant with respect to this action on the target and with respect to the $G_0$-action on $\G^+$.

\begin{theo}
The $G_0$-action on $C_1(\pdom{\Lambda})$ is proper. 
\end{theo}
\begin{proof}
 Fels--Huckleberry--Wolf proved in \cite[Theorem~11.3.7]{FHW}  that $C_1(\pdom{\Lambda})$ coincides with the \emph{Akhiezer-Gindikin domain}, which is constructed with the defining property that the action of $G_0$ be proper, see \cite[Section~6.1B]{FHW}. 
\end{proof}

The following direct consequence is of crucial importance for us. 
\begin{prop}[Action on the cycle space is properly discontinuous]\label{ProperlyDisc}The action of $\Gamma$ on $C_1(\pdom{\Lambda})$ is properly discontinuous. Consequently, there exists a quotient $\pi\from C_1(\pdom{\Lambda}) \to C_1(\pdom{\Lambda})/\Gamma$ in the category of complex spaces. In particular, $C_1(\pdom{\Lambda})/\Gamma$ is a Hausdorff complex space, and $\pi$ is holomorphic.
\end{prop}
\begin{proof} The group $\Gamma$ is a closed subgroup of $G_0$. Properness of the $G_0$-action hence implies that the action of $\Gamma$ on $C_1(\pdom{\Lambda})$ is proper, and hence properly discontinuous, as $\Gamma$ is a fortiori a discrete subgroup of $G_0$. The statements about the existence of quotients by actions of properly discontinuous actions are classical, see \cite{CartanDiscreteQuotients}.
\end{proof}

For the remainder of this subsection, we focus on the K3 period domain $\Omega$ and prove two basic properties regarding the interaction of cycles with reflection in $(-2)$-classes. 

First, while the union of the $H_\delta$ for $\delta \in \Delta$ is dense in $\Omega$, properness of the $\Gamma$-action implies that the situation is better in $C_1^+(\Omega)$. 
\begin{coro}\label{cor:FixedPointSetsAnalytic}
 The set of positive cycles fixed by some Picard-Lefschetz reflection, 
 \[ C_1^+(\Omega)^{\Delta} \defeq \bigcup_{\delta\in \Delta} C_1^+(\Omega)^{s_\delta}, \]
 is a closed, proper analytic subset of $C_1^+(\Omega)$.
\end{coro}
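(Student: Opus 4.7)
The plan is to transfer the problem to the Grassmannian $\G^+$ via the biholomorphism $\psi\from\G^+\bij C_1^+(\Omega)$ from \Cref{cyclemap} and \Cref{cyclespace}, to analyze the fixed locus of a single Picard-Lefschetz reflection there, and then to use the properly discontinuous action of $\Gamma$ provided by \Cref{ProperlyDisc} in order to pass from finitely many to infinitely many reflections.

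For a single $\delta\in\Delta$, the reflection $s_\delta$ acts on $\Lambda_\C$ with $+1$-eigenspace $\delta^\perp$ and $-1$-eigenspace $\C\delta$. A three-space $V\in\G$ thus satisfies $s_\delta(V)=V$ if and only if $V=(V\cap\delta^\perp)\oplus(V\cap\C\delta)$, which leaves two possibilities: either $V\subseteq\delta^\perp$, or $\delta\in V$. The second case is impossible on $\G^+$: if $V\in\G^+$ contained $\delta$, then positivity of $\langle\cdot,\bar\cdot\rangle_\C$ on $V$ would force $\langle\delta,\bar\delta\rangle_\C=\langle\delta,\delta\rangle_\R>0$, contradicting $\delta\in\Delta$. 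By the equivariance of $\psi$ established in \Cref{cycle action},
\[
 C_1^+(\Omega)^{s_\delta}=\psi\bigl(\{V\in\G^+\mid V\subseteq\delta^\perp\}\bigr)=\psi\bigl(\G^+\cap\Gr(3,\delta^\perp)\bigr),
\]
which is the trace on $C_1^+(\Omega)$ of a closed linear-algebraic subvariety of $\G$. Hence each $C_1^+(\Omega)^{s_\delta}$ is a closed analytic subset of $C_1^+(\Omega)$ of pure codimension three, and in particular proper.

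The key step is to show that the infinite union $C_1^+(\Omega)^{\Delta}$ is still analytic, and here I would invoke proper discontinuity of the $\Gamma$-action from \Cref{ProperlyDisc}: for every compact set $K\subset C_1^+(\Omega)$, only finitely many $\gamma\in\Gamma$ satisfy $\gamma(K)\cap K\neq\emptyset$. Since every $[C]\in K\cap C_1^+(\Omega)^{s_\delta}$ witnesses $s_\delta(K)\cap K\neq\emptyset$, and since $s_\delta$ is determined by $\delta$ up to sign, only finitely many $\delta\in\Delta$ contribute to the union inside $K$. Consequently, the union $C_1^+(\Omega)^\Delta$ is locally a finite union of closed analytic subsets of positive codimension, hence itself closed analytic and nowhere dense, which yields the claim.

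I do not expect any real obstacles beyond this: the geometric description of the individual fixed loci is a direct eigenspace computation, and the local-finiteness step reduces to a clean application of proper discontinuity. The only mildly subtle observation is that on $\G^+$ (as opposed to the full Grassmannian $\G$) the component of the fixed locus coming from the condition $\delta\in V$ is automatically empty, which is what keeps the codimension uniform and the argument compact.
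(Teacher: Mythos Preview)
Your argument is correct and at bottom the same as the paper's, just unpacked. The paper compresses the whole thing into one line: $C_1^+(\Omega)^{\Delta}$ is a union of irreducible components of the ramification locus of the quotient map $\pi\from C_1^+(\Omega)\to C_1^+(\Omega)/\Gamma$, and the ramification locus of a holomorphic map is automatically closed analytic. Your explicit eigenspace analysis of each $C_1^+(\Omega)^{s_\delta}$ (in particular ruling out the $\delta\in V$ branch by positivity) and your direct local-finiteness argument via proper discontinuity are exactly what underlies that sentence; the paper simply invokes the packaged statement. As a bonus, your proof makes the codimension-three claim used later in Section~\ref{cycleintro} explicit, and it anticipates part of the content of \Cref{FixedEqualsPointwiseFixed}.
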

\begin{proof}
 This just follows from the fact that $C_1^+(\Omega)^{\Delta}$ is a union of components of the ramification locus of the holomorphic map $\pi$. 
\end{proof}

Second, while $[C] \in C_1^+(\Omega)^{s_\delta}$ a priori only means that $C \subset \Omega$ is preserved as a set, more is true.

\begin{lemm}[Cycles fixed by reflections]\label{FixedEqualsPointwiseFixed}
 Let $\delta \in \Delta$. If $[C] \in C_1^+(\Omega)^{s_\delta}$, then $C \subset H_\delta \subset \P(\delta^\perp)$, that is, $s_{\delta}$ fixes $C$ pointwise.
\end{lemm}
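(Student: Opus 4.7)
The plan is to reduce the statement to a linear-algebraic fact about the three-space $V \in \G^+$ spanning $C$, and then to exploit positivity of $V$ together with the negativity of $\delta$.

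First I would use \Cref{cyclemap} to write $C = \P(V) \cap \Omega$ for a uniquely determined positive three-space $V \in \G^+$ with $\psi(V) = [C]$. The hypothesis $s_\delta([C]) = [C]$ means that $s_\delta(C) = C$ as subsets of $\Omega$. Since $s_\delta$ extends to a linear automorphism of $\Lambda_\C$, and by \Cref{signature} the plane $\P(V)$ is the unique projective plane in $\P(\Lambda_\C)$ spanned by $C$, we must have $s_\delta(\P(V)) = \P(V)$ and hence $s_\delta(V) = V$ as elements of the Grassmannian~$\G$.

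Next I would analyze $s_\delta|_V$. Being an involution, $s_\delta$ has $\pm 1$-eigenspaces on $\Lambda_\C$ equal to $\delta^\perp$ and $\C\delta$ respectively. Consequently, any $s_\delta$-stable subspace splits as the direct sum of its intersections with these eigenspaces, so
\[
V = (V \cap \delta^\perp) \oplus (V \cap \C\delta).
\]
The key point is that $\C\delta$ is a negative line: indeed $\delta \in \Lambda \subset \Lambda_\R$, so $\langle \delta, \overline{\delta}\rangle_\C = \langle \delta, \delta \rangle = -2 < 0$. Since $V \in \G^+$ is positive-definite with respect to $\langle\cdot,\overline{\cdot}\rangle_\C$, it cannot contain a non-zero vector of $\C\delta$. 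Therefore $V \cap \C\delta = \{0\}$ and comparing dimensions yields $V \subset \delta^\perp$.

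Finally, this gives $C = \P(V) \cap \Omega \subset \P(\delta^\perp) \cap \Omega = H_\delta$, and because $s_\delta$ is the identity on $\delta^\perp$, it fixes every point of $C$. There is no real obstacle here; the entire argument rests on the positivity of $V$ forcing the $-1$-eigenspace part of the $s_\delta$-decomposition of $V$ to vanish.
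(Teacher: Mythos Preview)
Your proof is correct and follows essentially the same approach as the paper: both reduce to showing that the $s_\delta$-stable three-space $V$ must lie in $\delta^\perp$, using the eigenspace decomposition of $s_\delta$ together with the fact that $\delta$ is negative while $V$ is positive. The paper phrases this as a proof by contradiction (assuming $C \not\subset H_\delta$ and invoking \Cref{transverse} along the way), whereas you argue directly; your version is slightly more streamlined, but the mathematical content is the same.
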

\begin{proof} 
Let $V\subset\Lambda_{\C}$ be the positive complex three-space such that $C = \P(V) \cap \Omega$.
Suppose that $C$ is not contained in $H_\delta$.
Then, by \Cref{transverse} it must intersect $\P(\delta^\perp)$ in two distinct points, say $p = [v_1]$ and $q=[v_2]$, where $v_1, v_2 \in V^{s_\delta}$.
Note that $\Lambda_{\C}$ splits into $s_\delta$-eigenspaces as
 \[\Lambda_{\C} = (\Lambda_{\C})^{s_\delta} \oplus \C\cdot \delta.\]
Since $C$ is preserved by $s_\delta$, $V$ is also $s_\delta$-stable.
As by assumption $V$ is not contained in $(\Lambda_{\C})^{s_\delta}$, it must contain $\delta$, cf.~\cite[Section~3]{Shayman}.
However, $\delta$ is real, thus we have $\langle\delta, \bar{\delta}\rangle_{\C} = \delta^2 < 0$, so that $V$ cannot be positive with respect to $\langle\cdot,\bar{\cdot}\rangle_{\C}$, contradicting the assumption.
\end{proof}

\subsection{Twistor families and related cycles in the K3 period domain}\label{cycleintro}

In the K3 surface case, it seems to be well-known which positive three-spaces $V\in\G^+$ correspond to twistor cycles, see for example \cite[Section~VIII.13]{BHPV}, \cite[Section~1.2]{Kob90} or \cite[Section~16.4B]{FHW}. As however for example in \emph{loc.~cit.}, the effect of $(-2)$-classes is not taken into account correctly, we give a detailed description. 

We start by making the following observation.

\begin{lemm}
\label{twistor cycle decomposition}
Let $(T\stackrel{f}{\to}\P^1,\mu)$ be a marked twistor family of IHS manifolds.
Then its induced period map embeds $\P^1$ in $\pdom{\Lambda}$ as a real cycle $[C_{X,\mu,a}]=\psi(V)\in C_1(\pdom{\Lambda})_{\R}$, such that for all $t\in\P^1$ we have $V=\mu_{\C}\left(\Hdg20{T_t}\oplus\C[\kappa_t]\oplus\Hdg02{T_t}\right)\subset\Lambda_{\C}$.
\end{lemm}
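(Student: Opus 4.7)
The plan is to identify $V$ as the $\mu_{\C}$-image of the complexification of the real three-plane spanned by the three K\"ahler classes $[\kappa_I], [\kappa_J], [\kappa_K]$ of the hyperk\"ahler metric $g$, and then to show that the period map embeds $\P^1$ as the corresponding real cycle $\psi(V)$.

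First, I would set $V_{\R} \defeq \R[\kappa_I] \oplus \R[\kappa_J] \oplus \R[\kappa_K] \subset \Hsh2M{\R}$. Standard computations in hyperk\"ahler geometry (see e.g.~\cite[Section~23]{GHJ}) show that the three K\"ahler classes of a hyperk\"ahler metric are mutually Beauville--Bogomolov-orthogonal and have the same positive square; hence $V_{\R}$ is a positive real three-plane. Consequently $V \defeq \mu_{\C}(V_{\R}\otimes_{\R}\C)$ lies in $\G_{\R}^+$, and $\psi(V) = \P(V) \cap \pdom{\Lambda}$ is a real cycle by \Cref{cyclemap} and \Cref{signature}.

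Next, I would verify that for every $t = (t_1,t_2,t_3) \in S^2 \cong \P^1$,
\[
V_{\R} \otimes_{\R} \C \;=\; \Hdg20{T_t} \oplus \C[\kappa_t] \oplus \Hdg02{T_t}.
\]
The class $[\kappa_t] = t_1[\kappa_I] + t_2[\kappa_J] + t_3[\kappa_K]$ clearly lies in $V_{\R}$, so the task reduces to showing $\Hdg20{T_t} \oplus \Hdg02{T_t} \subset V_{\R}\otimes\C$. The cleanest route is the classical Fujiki/Verbitsky-type fact that the Beauville--Bogomolov-orthogonal complement $W_{\R}$ of $V_{\R}$ inside $\Hsh2M{\R}$ consists of classes of Hodge type $(1,1)$ with respect to \emph{every} complex structure in the twistor sphere --- a consequence of the $\mathfrak{su}(2)$-representation on $g$-harmonic two-forms generated by the three K\"ahler--Lefschetz operators. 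Granting this, $(W_{\R}\otimes\C) \oplus \C[\kappa_t] \subset \Hdg11{T_t}$, and a dimension count forces equality, so $\Hdg20{T_t} \oplus \Hdg02{T_t} \subset V_{\R}\otimes\C$. Applying $\mu_{\C}$ yields the displayed formula for $V$.

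It then follows that the period map $\per \colon \P^1 \to \pdom{\Lambda}$, $t \mapsto \mu_{\C}(\Hdg20{T_t})$, factors through $\P(V)\cap\pdom{\Lambda} = \psi(V)$. A local calculation using the normalization $\omega_{(1,0,0)} = \kappa_J + i\kappa_K$ recorded in Section~\ref{twistor}, together with $\mathrm{SO}(3)$-equivariance of $t \mapsto [\omega_t]$, shows that this assignment is locally injective near $(1,0,0)$; hence $\per$ is a degree-one holomorphic map between smooth rational curves, i.e., a biholomorphism onto $[C_{X,\mu,a}] = \psi(V)$. The main obstacle is the Fujiki/Verbitsky input above, which depends on genuinely global hyperk\"ahler representation theory; once this is in hand, the remainder reduces to dimension counting combined with the characterization of positive real cycles already established in \Cref{cyclemap} and \Cref{signature}.
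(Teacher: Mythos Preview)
Your proof is correct and takes a somewhat different route from the paper's.

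The paper's argument proceeds in the opposite logical order: it first cites \cite[Proposition~25.7(ii)]{GHJ} for the fact that the period map embeds $\P^1$ as a real cycle $\psi(V)$ with $V = \mu_{\C}(\C[\kappa_I]\oplus\C[\kappa_J]\oplus\C[\kappa_K])$, and then deduces the decomposition for each $t$ by a short direct argument: since $\per(t) \in C \subset \P(V)$ one has $\mu_{\C}(\Hdg20{T_t}) \subset V$; reality of $C$ then gives $\mu_{\C}(\Hdg02{T_t}) \subset V$; and $[\kappa_t]$ visibly lies in the span of $[\kappa_I],[\kappa_J],[\kappa_K]$. Mutual orthogonality of these three lines in $\Hsh2M{\C}$ forces them to span $V$.

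Your approach instead establishes the decomposition first, via the Verbitsky-type $\mathfrak{su}(2)$-action forcing $W_{\R}$ to be of type $(1,1)$ for every twistor complex structure, and only then deduces that the period map factors through $\psi(V)$ and is an isomorphism. This is more self-contained---you do not outsource the embedding statement---but invokes heavier representation-theoretic input. The paper's route is shorter precisely because it leans on the cited reference for the geometric content and keeps only elementary linear algebra in the proof itself.

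One small point to tighten: your final step, inferring degree one from local injectivity near a single point, is not valid on its own (a degree-$d$ map $\P^1\to\P^1$ is locally injective at every unramified point). The $\SO(3)$-equivariance you invoke does rescue this, since it propagates local injectivity to every point of $S^2$, making $\per$ an unramified cover of $\P^1$ by the simply connected $\P^1$ and hence an isomorphism; but that step should be made explicit.
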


\begin{proof}
The induced period map $\per$ embeds $\P^1$ in $\pdom{\Lambda}$ as a real cycle $[C]=\psi(V)\in C_1(\pdom{\Lambda})_{\R}$ and it turns out that $V=\mu_{\C}\left(\C[\kappa_I]\oplus\C[\kappa_J]\oplus\C[\kappa_K]\right)\subset\Lambda_{\C}$ is determined by the three K\"ahler classes with respect to $I,J,K$, \cite[Proposition~25.7(ii)]{GHJ}.
In particular, for each $t\in\P^1$ we have $\mu_{\C}\left([\kappa_t]\right)\in V$.
Note that also the period point $\per(t)=\mu_{\C}(\Hdg20{T_t})$ lies in $C$, as well as its complex conjugate $\mu_{\C}(\Hdg02{T_t})$ because $[C]$ is real.
Due to orthogonality in $\Hsh2M{\C}$, we must have $V=\mu_{\C}\left(\Hdg20{T_t}\oplus\C[\kappa_t]\oplus\Hdg02{T_t}\right)$.
\end{proof}

For cycles $[C_{X,\mu,a}]$ coming from marked twistor families as in \Cref{twistor cycle decomposition} we use the term \emph{twistor cycles}. Next, we define
\begin{align*}
(\G^+)^{\circ}&\defeq\{V\in\G^+\mid \not\exists\delta\in\Delta\colon V\subset\delta^{\perp}\} ,&\Tw\defeq\psi((\G^+)^{\circ}) ,\\
(\G^+_{\R})^{\circ}&\defeq\G^+_{\R}\cap(\G^+)^{\circ}=\{V\in(\G^+)^{\circ}\mid V=\overline{V}\},&\Twr\defeq\psi((\G_{\R}^+)^{\circ}).
\end{align*}

Using \Cref{FixedEqualsPointwiseFixed}, we see that
\[
 \Tw = C_1^+(\Omega) \setminus C_1^+(\Omega)^\Delta \;\; \;\text{ and } \;\;\;\Twr = C_1(\Omega)_{\R} \cap \Tw,
\]
that is, we removed for each $\delta\in\Delta$ the cycles $[C] \in C_1^+(\Omega)$ with $s_\delta \in \Gamma_{[C]}$.
It hence follows from \Cref{cor:FixedPointSetsAnalytic} that $\Tw$ is the complement of a closed, proper analytic subset of $C_1^+(\Omega)$.
Furthermore, we note that
\[\codim_{\R}\bigl(\G^+_{\R} \setminus (\G^+_{\R})^{\circ} \bigr)=\codim_{\C}\bigl(\G^+ \setminus (\G^+)^{\circ}\bigr)=3.\]
As a consequence, both $\Tw$ and $\Twr$ are connected and simply connected, see e.g.~\cite[Lemma~8.11]{BL18}.
The condition $V\subset\delta^{\perp}$ for a (real) positive three-space $V$ and $\delta\in\Delta$ is invariant under $\Orth(\Lambda)$, hence $\Tw$ and $\Twr$ are invariant subspaces under the $\Orth(\Lambda)$-action on the positive cycle space. With these notations, we obtain the following characterization. 
\begin{prop}
\label{twr}
$\Twr$ is the set of K3 twistor cycles.
\end{prop}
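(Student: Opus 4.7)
The plan is to prove $\Twr$ equals the set of K3 twistor cycles by establishing both inclusions, using the explicit description $[C_{X,\mu,a}] = \psi(V)$ with $V = \mu_{\C}(\Hdg20X \oplus \C[\kappa_I] \oplus \Hdg02X)$ from \Cref{twistor cycle decomposition} together with the moduli-theoretic characterization of points in $(K\Omega)^{\circ}$ from Section~\ref{K3moduli}.

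For the inclusion ``twistor cycle $\Rightarrow$ element of $\Twr$'', the three-space $V$ is visibly positive and real. To see $V \in (\G^+_\R)^{\circ}$, I would suppose that $V \subset \delta^{\perp}$ for some $\delta \in \Delta$ and derive a contradiction: then $d \defeq \mu\inv(\delta)$ is orthogonal to $\Hdg20X$, so by the Lefschetz $(1,1)$-theorem $d \in \Hdg11X \cap \Hsh2X{\Z}$ with $d^2 = -2$. Riemann--Roch (as recalled in Section~\ref{geometric setting}) then implies that $d$ or $-d$ is effective, forcing $\langle d, [\kappa_I]\rangle \neq 0$ by positivity of the pairing of K\"ahler classes with effective classes -- contradicting $\mu\inv(\delta) \perp [\kappa_I] \in \mu_{\C}\inv(V)$.

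For the reverse inclusion, start with $V \in (\G^+_\R)^{\circ}$ and pick any $p = [x] \in \P(V) \cap \Omega$; positivity forces $x \notin \Lambda_{\R}$, so $V = \C x \oplus \C\kappa \oplus \C \bar x$ for a real vector $\kappa \in V \cap \Lambda_{\R}$ orthogonal to $\mathrm{Re}(x), \mathrm{Im}(x)$ and satisfying $\langle\kappa,\kappa\rangle_\R > 0$ (by positive-definiteness of $\langle\cdot,\bar{\cdot}\rangle_{\C}$ on $V$). After replacing $\kappa$ by $-\kappa$ if necessary, we have $\kappa \in V_p$. The hypothesis $V \not\subset \delta^{\perp}$ for all $\delta \in \Delta$ then translates into $\langle \kappa, \delta \rangle_\R \neq 0$ for every $\delta \in \Delta_p$, so $(\kappa, p) \in (K\Omega)^{\circ}$. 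The universal family $\cM \to \tilde{\Omega}$ from Section~\ref{K3moduli} produces a marked K3 surface $(X,\mu)$ with period point $p$ and $a \defeq \mu\inv(\kappa) \in C^+(X)$ a K\"ahler class. Applying \Cref{twistor cycle decomposition} to the twistor family built from $(X,\mu,a)$ via Yau's theorem gives
\[
[C_{X,\mu,a}] \;=\; \psi\bigl(\C x \oplus \C \kappa \oplus \C \bar x\bigr) \;=\; \psi(V) \;=\; [C].
\]

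The main obstacle, and the bridge that makes the second direction work, is matching the purely lattice-theoretic hypothesis ``$V$ avoids every root hyperplane $\delta^{\perp}$'' with the geometric Weyl-chamber condition on $\kappa$ at the chosen period point $p$. The key observation is that for $\delta \notin \Delta_p$ the inclusion $V \not\subset \delta^{\perp}$ is automatic via $x \in V$, so the lattice hypothesis is effectively a statement only about $\Delta_p$; there it precisely encodes the non-degeneracy needed to land in $(K\Omega)^{\circ}$ and invoke the surjectivity of the refined period map $\tilde{\per}$ onto $\tilde{\Omega}$.
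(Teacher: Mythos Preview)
Your proof is correct and follows essentially the same approach as the paper's: both directions hinge on the equivalence between $V \not\subset \delta^\perp$ and $\langle \kappa, \delta\rangle_\R \neq 0$ for $\delta \in \Delta_p$, together with \Cref{twistor cycle decomposition}. The only cosmetic difference is that in the direction $\Twr \Rightarrow$ twistor cycle, the paper picks an arbitrary marked K3 with period point $p$ and then adjusts by a Weyl-group element (and a sign) to make $a$ K\"ahler, whereas you invoke the universal family over $\tilde{\Omega}$ to land directly on a marked K3 with the correct K\"ahler cone; these are equivalent since $\Weyl p$ acts simply transitively on $\pi_0(V_p^\circ)$.
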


\begin{proof}
Let $[C]=\psi(V)\in\Twr$.
We pick any point $p\in C$ and write $p=[x]$ for $x\in\Lambda_{\C}\setminus\{0\}$.
As $V=\overline{V}$, we can write $V=\C x\oplus\C\alpha\oplus\C\overline{x}$ for an $\alpha\in\Lambda_{\R}$ orthogonal to $x$ and $\overline{x}$.
Let $(X,\mu)$ be a marked K3 surface with period point $p$, then $a\defeq \mu_{\R}\inv(\alpha)\in\Hsh{1,1}X{\R}$.
There cannot be a $\delta\in\Delta(X)$ orthogonal to $a$, otherwise we would have $V\subset\mu(\delta)^{\perp}$.
By changing the sign of $\mu$ and composing with an element of the Weyl group $\Weyl p$ if necessary, we can assume $a$ to be a K\"ahler class without changing the period point of $(X,\mu)$.
By construction we have $V=\mu_{\C}\left(\Hdg20X\oplus\C a\oplus\Hdg02X\right)$ and thus $C=C_{X,\mu,a}$ by \Cref{twistor cycle decomposition}.

For the other direction, let $\psi(V)=[C_{X,\mu,a}]\in C_1(\Omega)_{\R}$ be a twistor cycle.
If there was a $\delta\in\Delta$ with $V\subset\delta^{\perp}$, then $\mu_{\R}(a)\perp\delta$ and thus $a\perp\mu\inv(\delta)\in\Delta(X)$, which up to a sign is the sum of Poincar\'e duals of smooth rational curves in $X$.
Hence $a$ could not be a K\"ahler class on $X$.
\end{proof}

\part{IHS families and complex-hyperk\"ahler metrics}

\section{Existence of K3 families with prescribed period map}
\label{section lift}
As already mentioned in the introduction, the K3 period domain $\Omega$ contains many compact complex curves. In fact, this is also true for most IHS period domains. More precisely, there is the following result.

\begin{lemm}\label{Deev}
 Let $\Lambda$ be an IHS lattice of rank at least five. Then, we have $\mathsf{S}_{2d}(\pdom{\Lambda})\neq\emptyset$ for every positive integer $d$. More generally, for \emph{every} compact Riemann surface $C$ of genus $g$ and every integer $d > g$, there exists a holomorphic embedding $C \hookrightarrow \pdom{\Lambda}$ with embedding degree $2d$.
\end{lemm}

\begin{proof}
 The following refines an argument by D\'eev by keeping track of degrees of embeddings, cf.~\cite[p.~376]{Deev}. Also note that the assumption on the rank of the lattice is missing in \emph{loc.~cit.}. 
 
Let $p\in C$ be a general point on $C$ and $d>g$ an integer.
Then there exists a meromorphic function $f\in\Hsh0C{\O_C(dp)}\setminus\Hsh0C{\O_C((d-1)p)}$ on $C$, \cite[p.~273f]{GH94}.
This induces a holomorphic map $f\from C \to \P^1$ of degree $d$.

Let $\P^1 \overset{\imath}{\hookrightarrow}\pdom{\Lambda}$ be the embedding of a twistor cycle into an IHS period domain.
As $\imath(\P^1)$ is the intersection of the projectivization of a (positive) three-space with the quadric $Q_{\Lambda} \subset \P(\Lambda_\C)$, inside $Q_{\Lambda}$, it is a complete intersection of $n_-$ elements of the linear system $\left|\rest{\O_{\P(\Lambda_\C)}(1)}{Q_{\Lambda}}\right|$. Consequently, we have 
 \begin{equation*}
\imath^*(\sT_{\pdom{\Lambda}})  = \imath^*(\sT_{Q_{\Lambda}}) \cong \sT_{\P^1} \oplus \O_{\P^1}(2)^{\oplus n_-}  = \O_{\P^1}(2)^{\oplus(n-2)}. 
 \end{equation*}

Now, we consider the composition $\tilde f\defeq  \imath \circ f\from C \to\pdom{\Lambda} \subset Q_{\Lambda}$.
Then, $\tilde{f}^*(\sT_{\pdom{\Lambda}}) \cong \tilde{f}^*(\O_{\P^1}(2))^{\oplus(n-2)}$, so that we may write $\tilde{f}^*(\sT_{Q_{\Lambda}}) \cong \O_C(D)^{\oplus(n-2)}$ for some effective divisor $D$ of degree $2d$ on $C$.
Then for every two points $p, q \in C$ the assumption on $d$ implies $\deg (D -p -q) > 2g-2$, from which we conclude that 
 \[ \Hsh1C{\tilde{f}^*(\sT_{Q_{\Lambda}})\otimes \O_C(-p-q)} =  \Hsh1C{\O_C (D - p - q)}^{\oplus(n-2)} = \{0\}. \]
 Note that $\dim Q_{\Lambda}=\rk\Lambda-2\ge3\ge\hdim1C{\tilde{f}^*(\sT_{Q_{\Lambda}})\otimes \O_C(-p-q)}+3$, therefore
 \cite[Theorem~II.1.8.2]{KollarRationalCurves} applies to show that the morphism $\tilde{f} = \tilde{f}_0$ can, via a small deformation with fixed source and target, be deformed to an embedding $\tilde{f}_\epsilon\from C \hookrightarrow Q_{\Lambda}$. 
 As the image of $\tilde{f}$ equals $\imath(\P^1) \subset\pdom{\Lambda}$, the image of $\tilde{f}_\epsilon$ is likewise contained in $\pdom{\Lambda}$. 
\end{proof}

It is therefore natural to ask whether these curves carry marked families of K3 surfaces, that is, whether each of these curves gets parametrized by the period map of a marked family. The following results answer this question strongly positively. They will be applied in \Cref{section sheaves}. We use the notation from Section~\ref{K3moduli} throughout. 

\begin{prop}
\label{lift} Let $C$ be a compact Riemann surface, and let $\per\from C\to\Omega$ be holomorphic. Then, for every $p\in C$ and $K\in\pi_0(V^{\circ}_{\per(p)})$ there exists a holomorphic map $\tilde{\per}\from C \to\tilde{\Omega}$ with $\pi\circ\tilde{\per}=\per$ and $\tilde{\per}(p)=(K,p)$.
\end{prop}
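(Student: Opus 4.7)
A lift $\tilde{\per}\from C\to\tilde\Omega$ of $\per$ with $\tilde{\per}(p)=(K,p)$ is the same data as a continuous section of the pulled-back topological sheaf $\per\inv\tilde\sF$ on $C$ taking the value $(K,p)$ at $p$. Since $\pi\from\tilde\Omega\to\Omega$ is \'etale and $\per$ is holomorphic, such a continuous section is automatically holomorphic, so the question is purely topological. Using that $\tilde\Omega$ is the quotient of $(K\Omega)^{\circ}\subset\Lambda_{\R}\x\Omega$ by the equivalence of lying in a common Weyl chamber of $V_{q}^{\circ}$, it will suffice to construct a continuous map $\kappa\from C\to\Lambda_{\R}$ with $(\kappa(q),\per(q))\in(K\Omega)^{\circ}$ for every $q\in C$ and $\kappa(p)\in K$; then $\tilde{\per}(q)\defeq[\kappa(q),\per(q)]$ is the required lift.

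Such a $\kappa$ must take values in the open real $20$-dimensional positive cone $V_{\per(q)}\subset\Lambda_{\R}$ and avoid every real hyperplane $\delta^{\perp}\cap V_{\per(q)}$ with $\delta\in\Delta_{\per(q)}$. The obstructing $(-2)$-classes are of two kinds: those in $\Delta_{C}\defeq\Delta\cap\per(C)^{\perp}$, giving walls that vary continuously with $q\in C$, and those in $\Delta\setminus\Delta_{C}$, each of which contributes a single fiber-hyperplane above the finitely many points of $\per\inv(H_{\delta})\subset C$.

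I plan to construct $\kappa$ inductively from a Morse function $h\from C\to\R$ whose unique minimum is at $p$, with pairwise distinct critical values $t_{0}<t_{1}<\dots<t_{n}$; setting $C_{s}\defeq h\inv((-\infty,s])$, the section will be extended along the filtration $C_{t_{0}+\epsilon}\subset\dots\subset C_{t_{n}+\epsilon}=C$. On the initial small closed disk around $p$, a choice of generic $\kappa(p)\in K$ determines $\kappa$ by continuous propagation in the real-analytic bundle of positive cones, staying inside the Weyl chamber represented by $K$. Crossing each critical value $t_{i}$ attaches a compact contractible cell $e_{i}$: a new disk component at a local minimum, a $1$-handle at a saddle, or a closed $2$-disk at a local maximum. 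In each case $\kappa$ has to be extended over $e_{i}$ so as to agree with the section already built on the overlap $e_{i}\cap C_{t_{i}-\epsilon}$.

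\textbf{Main obstacle.} The heart of the argument is the consistency check across each attached cell. Since $e_{i}$ is contractible, the \'etaleness of $\tilde\sF$ lets one extend $\tilde{\per}$ uniquely from a single overlap point, and the task is to show that this unique extension matches the pre-existing boundary values everywhere. Equivalently, the composition of Picard--Lefschetz reflections $s_{\delta}$ picked up when $\per$ traverses any loop in the growing region bounding a disk must act trivially on the chamber fiber. I would deduce this from the following inputs: first, $\Omega$ is simply connected, so that $\per\circ\partial e_{i}$ is null-homotopic there; second, the walls from $\Delta_{C}$ form an arrangement whose chambers vary continuously in $q$, and the inductive choice keeps $\kappa$ in a single fixed chamber of this arrangement throughout $e_{i}$; third, walls from $\Delta\setminus\Delta_{C}$ meet $e_{i}$ only at discrete points, and a topological pair-up of their transverse crossings inside the contractible $e_{i}$, combined with the inductively chosen chamber structure, forces the composite product of associated reflections on the starting chamber to be trivial. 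Iterating past all critical values then yields the required lift $\tilde{\per}$.
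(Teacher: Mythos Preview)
Your overall strategy is close to the paper's in spirit---both distinguish the ``global'' roots $\Delta_C$ (the paper writes $\Delta^{\perp}$) from the sporadic ones, and both invoke a Morse function on $C$. But the argument you give for the ``main obstacle'' does not go through as written, and the framing in terms of monodromy and products of reflections is misleading.

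The gap is this. When you attach a $1$-handle along two arcs $\gamma_1,\gamma_2$ to the boundary of $C_{t_i-\epsilon}$, you need the lifts already defined on $\gamma_1$ and on $\gamma_2$ to be \emph{compatible}, i.e., to extend jointly over the handle. Your point (1) is irrelevant: $\tilde\sF$ is not a locally constant sheaf (the fibers $\pi^{-1}(q)=\pi_0(V_q^{\circ})$ change cardinality with $q$), so simple connectivity of $\Omega$ gives no monodromy statement. Your point (3)---a ``topological pair-up'' of sporadic wall crossings forcing a product of Picard--Lefschetz reflections to act trivially---is not an argument: sporadic walls lie over isolated points of $C$ and do not compose along paths in any well-defined way, and no cancellation principle of this sort is available.

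The correct mechanism, which your point (2) almost states, is the following. For $q$ outside the countable set $N=\{q\in C\mid\Delta^{\perp}\subsetneq\Delta_{\per(q)}\}$ one has $\Delta_{\per(q)}=\Delta^{\perp}$, so the Weyl chambers of $V_{\per(q)}^{\circ}$ are \emph{exactly} the chambers of the fixed $\Delta^{\perp}$-arrangement. The initial chamber $K$ at $p$ singles out one partition $P$ of $\Delta^{\perp}$ with property~\eqref{partition}, and hence a unique chamber $K_q$ for every $q\notin N$; any continuous lift through $(K,p)$ is \emph{forced} to equal $K_q$ at all such $q$. This is what makes the two boundary values on $\gamma_1,\gamma_2$ agree (after arranging the attaching region to meet $N$ only in its interior, where the unique local extension from one side simply makes whatever choice is required at each point of $N$). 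The paper implements exactly this idea: it takes arbitrary local lifts via \'etaleness, adjusts each by an element of the local Weyl group so that it realizes the partition $P$ on $C\setminus N$, and then glues---not by handle attachment, but by a closed cover of $C$ (built from level sets and flowlines of a Morse function, \Cref{toplem}) whose pairwise intersections avoid $N$, so that the pasting lemma applies directly. Your inductive scheme can be repaired once you replace the reflection-product heuristic by this uniqueness-on-$C\setminus N$ argument; as written, the compatibility step is unjustified.
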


\begin{rema}[Uniqueness of the lift and known cases]
We will see in the proof of \Cref{lift} that the lift $\tilde{\per}$ is only unique in cases where $\Delta^{\perp}\defeq\{\delta\in\Delta\mid\forall q\in C\colon\langle\delta,\per(q)\rangle_{\C}=0\}$ is empty.
This situation occurs in particular for all K3 twistor cycles, for which \Cref{lift} has been known classically.
We do not know if \Cref{lift} can be generalized to IHS manifolds besides the case of generic twistor cycles of IHS manifolds, for which the result is proven in \cite[Proposition~5.4]{Huy12}.
\end{rema}

\begin{theo}[Lifting Theorem]
\label{liftfamily}
 Let $C$ be a compact Riemann surface, and let $\per\from C\to\Omega$ be holomorphic. Then, for every $p\in C $ and a marked K3 surface $(X,\mu_X)$ with period point $\per(p)$ there exists a marked family $(\cX,\mu)$ of K3 surfaces over $C$ with associated period map $\per$ and $(\cX_p,\mu_p)\cong(X,\mu_X)$.
\end{theo}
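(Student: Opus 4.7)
The plan is to deduce \Cref{liftfamily} from the refined lifting statement \Cref{lift} combined with the existence of the universal marked family over the non-Hausdorff moduli space $\tilde\Omega$ recalled in Section~\ref{K3moduli}.

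First, I would encode the given marked K3 surface $(X,\mu_X)$ as a point of $\tilde\Omega$ lying over $\per(p)$. By the discussion in Section~\ref{geometric setting}, after adjusting the sign of $\mu_X$ if necessary (the conventional sign ambiguity in the definition of $\tilde\Omega$), the image $K\defeq\mu_{X,\R}(C^+(X))$ of the K\"ahler cone is one of the abstract Weyl chambers of $V_{\per(p)}^{\circ}$. Equivalently, $(K,\per(p))\in\tilde\Omega$ is the point representing $(X,\mu_X)$ under the isomorphism between $\tilde\Omega$ and the fine moduli space of marked K3 surfaces.

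Next, I would invoke \Cref{lift} with this choice of $p$ and $K$. This produces a holomorphic map $\tilde{\per}\from C\to\tilde\Omega$ satisfying $\pi\circ\tilde{\per}=\per$ and $\tilde{\per}(p)=(K,\per(p))$.

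Finally, I would construct the desired marked family by pulling back the universal marked family $(\cM\stackrel{\nu}{\to}\tilde\Omega)$ along $\tilde{\per}$. Setting $(\cX\stackrel{f}{\to}C,\mu)\defeq\tilde{\per}^*(\cM,\nu)$, the functoriality of the period map under pullbacks of marked families (Section~\ref{section ihs}) shows that the associated period map of $(\cX,\mu)$ is $\pi\circ\tilde{\per}=\per$. By construction, the fiber of $\cX$ over $p$ is the marked K3 surface parametrized by the point $\tilde{\per}(p)=(K,\per(p))\in\tilde\Omega$ in the universal family, which is isomorphic to $(X,\mu_X)$ by the choice of $K$.

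Since the substantive work has been done in \Cref{lift}, the theorem is essentially a direct consequence of the fact that $\tilde\Omega$ carries a universal family. The only point that requires attention is matching the Weyl chamber prescribed at $p$ with the K\"ahler cone of $(X,\mu_X)$, which is precisely the freedom to prescribe $K$ built into \Cref{lift}. I do not anticipate any serious obstacle beyond this bookkeeping.
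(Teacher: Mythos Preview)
Your proposal is correct and follows essentially the same approach as the paper: encode the K\"ahler cone of $(X,\mu_X)$ as a Weyl chamber $K$, apply \Cref{lift} to lift $\per$ to $\tilde{\per}$ with $\tilde{\per}(p)=(K,\per(p))$, and pull back the universal family from $\tilde\Omega$. The paper phrases the identification of the fiber $(\cX_p,\mu_p)$ with $(X,\mu_X)$ as an application of the Global Torelli Theorem rather than via the moduli interpretation of $\tilde\Omega$, but these amount to the same thing.
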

\begin{proof}
The marking $\mu_X$ maps the K\"ahler cone of $X$ onto a Weyl chamber $K_p\in\pi_0(V^{\circ}_{\per(p)})$. By \Cref{lift} we can choose a holomorphic map $\tilde{\per}\from C\to\tilde{\Omega}$ with $\pi\circ\tilde{\per}=\per$ and $\per(p)=K_p$.
Pulling back the universal family of marked K3 surfaces from $\tilde{\Omega}$ gives a marked family $(\cX,\mu)\defeq\tilde{\per}^*\cM$ over $C$ with period map $\pi\circ\tilde{\per}=\per$.
By the Global Torelli Theorem \cite[Theorem~9.1]{LP80}, the two marked K3 surfaces $(\cX_p,\mu_p)$ and $(X,\mu)$ are isomorphic.
\end{proof}

\begin{proof}[Proof of \Cref{lift}]
The set $\Delta^{\perp}=\{\delta\in\Delta\mid\forall q\in C\colon\langle\delta,\per(q)\rangle_{\C}=0\}$ is contained in $\Delta_{\per(q)}$ for all $q\in C $.
As for each $\delta\in\Delta\setminus\Delta^{\perp}$ the hypersurface $\P(\delta^\perp)$ intersects $\per(C)$ only in finitely many points, the set $N\defeq \{q\in C \mid\Delta^{\perp}\subsetneq\Delta_{\per(q)}\}$ is countable.
For each $q'\in C \setminus N$ we have $\Delta_{\per(q')}=\Delta^{\perp}$.
The given Weyl chamber $K_p\in\pi_0(V^{\circ}_{\per(p)})$ induces a partition $\Delta_{\per(p)}=\Delta^+\dot{\cup}\Delta^-$ with property \eqref{partition}, where
$$\Delta^+\defeq\{\delta\in\Delta_{\per(p)}\mid\forall\kappa\in K_p\colon\langle\kappa,\delta\rangle_{\R}>0\}.$$
This restricts to a partition $P$ of $\Delta^{\perp}=(\Delta^{\perp}\cap\Delta^+)\dot{\cup}(\Delta^{\perp}\cap\Delta^-)$ automatically satisfying property \eqref{partition} as well.
For every $q'\in C \setminus N$ this partition of $\Delta^{\perp}$ is induced by a unique Weyl chamber $K_{q'}\in\pi_0(V^{\circ}_{\per(q')})$.

As $\pi\from\tilde{\Omega}\to\Omega$ is locally biholomorphic, for every $q\in C$ there is an open neighborhood $V_q$ of $\per(q)$ in $\Omega$ admitting a continuous section $\sigma_q$ of $\pi$ over $V_q$.
Then $\sigma_q\circ\per$ is a continuous lift of $\per$ over $U_q\defeq\per\inv(V_q)$.
By continuity of $\sigma_q$, all Weyl chambers $\sigma_q(q')$ for $q'\in U_q$ induce the same partition $P'$ of $\Delta^{\perp}$ as $\sigma_q(q)$.
As $ C \setminus N$ is dense in $ C $ we can pick a $q_0\in U_q\setminus N$.
The Weyl group $\Weyl{\per(q_0)}$ acts transitively on the set of Weyl chambers $\pi_0(V^{\circ}_{\per(q_0)})$, hence there is an $s\in \Weyl{\per(q_0)}$ that maps $\sigma_q(q_0)$ to the Weyl chamber $K_{q_0}$ corresponding to the partition $P$.
As $\Delta_{\per(q_0)}=\Delta^{\perp}\subset\Delta_{\per(q')}$ for all $q'\in U_q$, we have $s\in \Weyl{h(q')}$ for all $q'\in U_q$.
Therefore $\tilde{\per}_q\defeq s\circ\sigma_q\circ \per$ is a continuous lift of $\per$ over $U_q$ with $\tilde{\per}_q(q')=K_{q'}$ for all $q'\in U_q\setminus N$.

Let $\cU$ be the open covering of $ C $ by the $U_q$ for $q\in C$.
By the following \Cref{toplem}, $\cU$ has a refinement by a covering $\cA$ of $ C $ by closed subsets, such that each two different $A,B\in\cA$ do intersect at most outside of $N$.
As the maps $\tilde{\per}_q$ are continuous and do at most differ at the points in $N$, by the pasting lemma \cite[Theorem~18.3]{Munkres} they glue to a continuous lift $\tilde{\per}\from C \to\tilde{\Omega}$ of $\per$.
As $\pi$ is locally biholomorphic, every continuous local section $s\circ\sigma_q$ is holomorphic, hence $\tilde{\per}$ fulfills all the desired properties.
\end{proof}

\begin{lemm}[Coverings of surfaces by closed subsets]\label{toplem}
Let $C$ be a closed orientable surface of genus $g$.
Let $\cU$ be an open covering of $C$ and $N\subset C$ an at most countably infinite subset.
Then, there is a covering $\cA$ of $C$ by closed subsets with the following two properties:
\begin{enumerate}
\item For every $A\in\cA$ we have $A\subset U$ for some $U\in\cU$.
\item For each two different $A,B\in\cA$ we have $A\cap B\subset C\setminus N$.
\end{enumerate}
\end{lemm}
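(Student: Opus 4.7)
The plan is to refine a sufficiently fine triangulation of $C$ so that its $1$-skeleton avoids the countable set $N$, and then take $\cA$ to be the collection of closed $2$-simplices of the resulting triangulation. Condition (i) will come from choosing the triangulation fine enough; condition (ii) will come from the perturbation.

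First I would fix any Riemannian metric on $C$ and apply the Lebesgue number lemma to the open cover $\cU$ of the compact space $C$: this yields $\epsilon > 0$ such that every subset of $C$ of diameter less than $\epsilon$ is contained in some $U \in \cU$. By the standard existence of smooth triangulations of closed surfaces together with iterated barycentric subdivision, I may then choose a smooth triangulation $T$ of $C$ all of whose closed $2$-simplices have diameter less than $\epsilon$. For any such $T$, condition (i) holds automatically for the collection of its closed $2$-simplices.

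Next I would modify $T$ so that its $1$-skeleton $T^{(1)}$ is disjoint from $N$. The vertex set $T^{(0)}$ is finite, so a sufficiently small generic displacement of the vertices—small enough to preserve the diameter bound $\epsilon$—moves them into $C \setminus N$. For each closed edge $\overline e$ of $T$, with endpoints $v_0, v_1 \in C \setminus N$, I choose a tubular neighborhood $W_e$ of $\overline e$ in the surface $C$ whose intersection with the other closed edges is contained in $\{v_0,v_1\}$. Inside the $2$-dimensional patch $W_e$, the arc $\overline e$ can be isotoped rel endpoints by an arbitrarily small smooth isotopy avoiding the countable set $W_e \cap N$: pointwise, the arcs through a fixed point of $N \cap W_e$ form a nowhere-dense subspace of the space of arcs with the given endpoints, so countably many such conditions can be avoided (for instance, by a Baire-category or elementary transversality argument, or concretely by pushing past each bad point one at a time). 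Carrying out these perturbations independently for every edge replaces $T$ by a combinatorially equivalent triangulation $T'$ whose $1$-skeleton lies in $C \setminus N$ and whose closed $2$-simplices still have diameter less than $\epsilon$.

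Finally I would set $\cA$ to be the set of closed $2$-simplices of $T'$. These form a closed cover of $C$; condition (i) follows from the diameter bound, and any two distinct $A, B \in \cA$ meet in a subcomplex of $T'^{(1)} \subset C \setminus N$, giving condition (ii). The main technical obstacle is the control of the edge perturbations: one must ensure that after perturbing all edges, no new edge-edge intersections have been created and the combinatorial structure of $T$ is preserved. The choice of pairwise almost-disjoint tubular neighborhoods $W_e$—overlapping only near the vertices, where the perturbation is supported away from—is what makes this work, and is the only place where the fact that $C$ is a $2$-manifold (rather than some higher-dimensional space) is really used.
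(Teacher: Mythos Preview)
Your argument is correct and complete enough to be convincing. The Lebesgue number step, the existence of a fine triangulation, and the Baire-type perturbation of the $1$-skeleton all go through as you describe; the only delicate point is the one you flag yourself, and your fix via almost-disjoint tubular neighborhoods (so that the independent edge perturbations assemble into a single small isotopy of the $1$-skeleton, hence into an honest triangulation) is adequate.

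The paper's proof takes a different route. Instead of triangulating and then perturbing, it embeds $C$ in $\R^3$ so that the height function $\varphi$ is Morse, and then slices $C$ using finitely many level sets $\varphi^{-1}(c)$ and finitely many (closures of) gradient flowlines. The point is that these curves already come in \emph{continuous one-parameter families}: only countably many level values $c$ or flowlines can meet $N$, so one simply \emph{selects} generic level sets and flowlines that miss $N$ entirely, rather than perturbing a fixed $1$-complex. The Lebesgue number step is the same as yours. What this buys is that there is no perturbation to control and no combinatorial structure to preserve---the decomposition is into closed pieces bounded by the chosen curves, and the only care needed is near the finitely many critical points of $\varphi$. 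Your approach, by contrast, is more portable (it does not depend on Morse theory or on an embedding in $\R^3$) and would generalize more readily to other CW decompositions; the price is the mild bookkeeping about edge neighborhoods that you already identified.
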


To give an easy first example of \Cref{toplem}, in the case $g=0$ we can find a sufficiently fine subdivision of the sphere $C\cong S^2$ via circles of longitude and latitude not meeting $N$.

\begin{proof}[Proof of \Cref{toplem}]
Via a fixed homeomorphism, we identify $C$ with an ``upright'' embedded surface $S\subset\R^3$ of genus $g$, such that the standard height function $\varphi$ given by projecting onto the third coordinate is Morse.
Then, $\varphi$ has $2g+2$ critical points $p_1,\ldots,p_{2g+2}$ with values $c_j\defeq \varphi(p_j)$, where we denote by $p_1$ the minimum, by $p_2,\ldots,p_{2g+1}$ the saddle points and by $p_{2g+2}$ the maximum, cf.~\cite[p.~36, Figure~2.10]{Audin}. We may assume that none of the critical points is contained in $N$.
Equipping $S$ with the metric induced by the Euclidean metric on $\R^3$, for a given open covering $\cU$ of $S$, there exists a Lebesgue number. 
That is, there is an $\epsilon>0$ such that for every $p\in S$ the metric ball $B_{\epsilon}(p)$ is completely contained in one of the $U\in\cU$, see e.g.~\cite[Lemma~27.5]{Munkres}.

Then, as $N$ is countable, only countably many level sets $\varphi^{-1}(c)$ for $c\in (c_1, c_{2g+2})$ or (closures of) flowlines for $-\mathrm{grad}(\varphi)$ contain points of $N$.
Hence, via finitely many level sets and closures of flowlines not containing any points of $N$, we can subdivide $S$ into a set $\cA$ of closed subsets $A\subset S^2$ that all have diameter less than $\epsilon$.
In this subdivision, the resulting neighborhood of $p_j$ for $j=1$ is of the form $\varphi^{-1}([c_1, c_1 + \delta])$, for $j=2,\ldots,2g+1$ it is cut out by some flowlines together with $\varphi^{-1}(c_j -\delta)$ and $\varphi^{-1}(c_j +\delta)$, and for $j=2g+2$ it is of the form $\varphi^{-1}([c_{2g+2} - \delta, c_{2g+2}])$, cf.~\cite[p.~25, Figure~2.2]{Audin}.
By construction, every $A\in\cA$ is contained in a $U\in\cU$, and each intersection of two different $A,B\in\cA$ is a subset of one of the chosen level sets or flowlines, which by selection do not meet $N$.
\end{proof}

\section{Second level families and second level period maps}
\label{slf section}

To make precise what we mean by deforming families of IHS manifolds, we introduce a category that we call \emph{second level families}. Using this notion we spell out some consequences of the unobstructedness result obtained in \cite{BKS}.
The construction of a holomorphic \emph{second level period map} then allows us to prove analogs in our current setup of the Extension Theorem obtained in \cite{BKS} and of the Local Torelli Theorem. Finally we present the construction of a moduli space of marked \emph{embedded} IHS families; this is one of the key foundational results of our paper. The section is concluded by some applications of the construction of second level period maps. 

\subsection{Deformations of families of IHS manifolds}\label{DefFamiliesIHS}

Let $\cF_0=(X_0\stackrel{f_0}{\to}C_0)$ be a family of compact complex manifolds.
According to Ran's definition \cite[Definition~1.1]{Ran89}, a \emph{(small) deformation of the morphism $f_0$} over a complex space $B$ with a distinguished point $0\in B$ consists of two (small) deformations $X\stackrel{g}{\to}B$, $C\stackrel{h}{\to}B$ of $X_0$ and $C_0$ over $B$, such that $g=h\circ f$ for a holomorphic map $X\stackrel{f}{\to}C$ inducing $f_0$ on the central fibers.
Note that, as $f_0$ is a submersion, $f$ is a submersion near the central fiber \cite[Lemma~4.4]{BKS}.
Hence, up to shrinking the base, a deformation of $f_0$ induces a deformation of the family $\cF_0$ in the following sense.

\begin{defi}[Deformations of families]
\label{defslf}
Let $\cF_0=(X_0\stackrel{f_0}{\to}C_0)$ be a family of compact complex manifolds.
A \emph{deformation of the family $\cF_0$} over $B$ is a deformation $X\stackrel{f}{\to}C\stackrel{h}{\to}B$ of $f_0$ such that $(X\stackrel{f}{\to}C)$ is a family of compact complex manifolds.
\end{defi}

\begin{rema}[Representation of the functor of small deformations of families]
When $\cF_0=(X_0\stackrel{f_0}{\to}C_0)$ is a family of compact complex manifolds, then the functors of small deformations of $f_0$ and of $\cF_0$ are isomorphic.
\end{rema}

\begin{defi}[Second level families]
Let $X\stackrel{f}{\to}C\stackrel{h}{\to}B$ be a deformation of a family $\cF_0$ of compact complex manifolds.
We call $\cF_B\defeq(X\stackrel{f}{\to}C\stackrel{h}{\to}B)$ a \emph{second level family over $B$} if $(X\stackrel{f}{\to}C)$ is a family of IHS manifolds and $(C\stackrel{h}{\to}B)$ is a family of smooth rational curves.
A \emph{marking of $\cF_B$} is a marking of $(X\stackrel{f}{\to}C)$.

A morphism of two (marked) second level families $(X\stackrel{f}{\to}C\stackrel{h}{\to}B)$, $(X'\stackrel{f'}{\to}C'\stackrel{h'}{\to}B')$ consists of holomorphic maps $\alpha\from X\to X',\beta\from C\to C',\gamma\from B\to B'$ such that $(\alpha,\beta)$ and $(\beta,\gamma)$ are morphisms of (marked) families of IHS manifolds or of smooth rational curves, respectively.
For each $b\in B$ we denote $C_b\defeq h\inv(b)$ and $X_b\defeq f\inv(C_b)$.
Note that the restriction $\cF_b\defeq (\rest{f}{X_b}\from X_b\to C_b)$ of $\cF_B$ over $b\in B$ is a family of IHS manifolds over $C_b\cong\P^1$.
A marking $\mu$ of $\cF_B$ restricts over each $b\in B$ to a marking $\mu_b$ of $\cF_b$.
\end{defi}

\begin{defi}[Degree of IHS families]
\label{degree}
When $\cF_0=(X_0\stackrel{f_0}{\to}C_0)$ is a family of IHS manifolds over a smooth rational curve, we call $\deg\cF_0\defeq-\deg(f_*\Omega^2_{X_0/C_0})$ its \emph{degree}.
We call $\cF_0$ an \emph{embedded family (of degree $d$)} if moreover for a marking $\mu_0$ the period map of $(\cF_0,\mu_0)$ is an embedding.
When $\cF_B$ is a second level family for which all fibers are (embedded) families of IHS manifolds of degree $d$, we call $\cF_B$ an \emph{(embedded) second level family of degree $d$}.
Instead of degree $2$ we also say \emph{quadratic}.
\end{defi}

\begin{rema}
\label{degree of period map}
The degree of a marked family of IHS manifolds over a smooth rational curve $C$ with period map $\per$ equals the embedding degree of $\per(C)$ in $\pdom{\Lambda}$, \cite[Proposition~3.3]{BKS}.
The degree of the fibers $\cF_b$ of a second level family $\cF_B$ is locally constant in $b\in B$, \cite[Corollary~4.2(ii)]{BKS}.
Every twistor family is quadratic, and every quadratic family of IHS manifolds is an embedded family of IHS manifolds, \cite[Example~3.11 and Proposition~3.12]{BKS}, hence also every quadratic second level family is embedded.
\end{rema}

\begin{rema}[Second level families as small deformations of IHS families over $\P^1$]
\label{slf}
Let $X\stackrel{f}{\to}C\stackrel{h}{\to}B$ be a deformation of a family $\cF_0=(X_0\stackrel{f_0}{\to}C_0)$ of IHS manifolds, where $C_0\cong\P^1$.
Then $\cF_B=(X\stackrel{f}{\to}C\stackrel{h}{\to}B)$ is up to shrinking $B$ a second level family.
On the other hand, $\cF_B$ can be considered as a deformation of each of its fibers $\cF_b$ for $b\in B$.
Considering $\cF_0$ as a second level family over a reduced point, we can thus understand a 
small deformation of $\cF_0$ as a morphism $(\cF_0\to\pt)\to\cF_B$ of second level families.
Similarly, when $\mu_0$ is a marking of $\cF_0$, by a small deformation of $(\cF_0,\mu_0)$ we understand a morphism $(\cF_0\to\pt,\mu_0)\to(\cF_B,\mu)$ of marked second level families of IHS manifolds.
Moreover, by \Cref{degree of period map}, when $\cF_0$ is embedded, $\cF_B$ can be assumed to be embedded as well (of the same degree).
\end{rema}

\begin{prop}[Universality of marked (second level) families]
\label{deformation of marking}
\mbox{}
\begin{enumerate}
\item\label{part a} A marked family $(X\stackrel{f}{\to}B,\nu)$ of IHS manifolds is a universal deformation of one of its fibers $(X_b,\nu_b)$ if and only if $X$ is a universal deformation of~$X_b$.
\item\label{part b} An embedded second level family $(\cF_B,\mu)$ is a universal deformation of one of its fibers $(\cF_b,\mu_b)$ if and only if $\cF_B$ is a universal deformation of $\cF$.
\end{enumerate}
\end{prop}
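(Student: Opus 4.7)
The plan is to reduce both statements to a rigidity principle for markings. If $(Y \stackrel{g}{\to} S)$ is a family of IHS manifolds with $S$ connected, then a marking is uniquely determined by its restriction to any single fiber: two markings differ by a global automorphism of the constant sheaf $\csh{\Lambda}{S}$, which over a connected base is just an element of $\Aut(\Lambda)$, hence trivial everywhere as soon as it is trivial at one point. Moreover, since we work with small deformations, we may shrink $S$ to a simply connected neighborhood of the central point, as permitted by the conventions of \Cref{deformation sec} and \Cref{slf}; on such a base the locally constant sheaf $\Rs{2}{g}{\csh{\Z}{Y}}$ becomes globally trivializable, so every marking of a central fiber extends uniquely to a marking of the entire family.

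Granting this rigidity, the reverse direction of \ref{part a} is immediate: assume $X \to B$ is universal as an unmarked deformation of $X_b$, and let $(Y \to S, \mu)$ be a marked deformation of $(X_b, \nu_b)$. Unmarked universality produces a unique morphism $\phi\from (Y \to S) \to (X \to B)$ of deformations, and $\phi^*\nu$ is a marking of $Y \to S$ which restricts to $\nu_b$ on the central fiber, as does $\mu$; by rigidity, $\phi^*\nu = \mu$, so $\phi$ is already a morphism of marked deformations, with uniqueness inherited from the unmarked setting.

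For the forward direction of \ref{part a}, assume $(X \to B, \nu)$ is universal as a marked deformation. Given an unmarked deformation $Y \to S$ of $X_b$, shrink $S$ and extend $\nu_b$ uniquely to a marking $\mu$ of $Y \to S$; marked universality then yields a unique morphism $(Y \to S, \mu) \to (X \to B, \nu)$, providing the required morphism of unmarked deformations. For uniqueness in the unmarked category, any morphism $\phi$ of unmarked deformations pulls $\nu$ back to a marking extending $\nu_b$, which by rigidity must coincide with $\mu$; hence every such $\phi$ is in fact a morphism of marked deformations and is therefore unique.

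Part \ref{part b} follows by exactly the same argument carried out in the category of embedded second level families: a marking only concerns the IHS part $X \to C$, while the ambient family $C \to B$ of smooth rational curves is part of the data of a morphism of second level families and does not interact with the marking. Since $\P^1$ is simply connected and $B$ may be assumed simply connected for small deformations, the total space $C$ is simply connected as well, so both rigidity and unique extension of markings apply verbatim. I do not anticipate any serious obstacle beyond careful book-keeping with the various universal properties and the permissible shrinkings of bases.
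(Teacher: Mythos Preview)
Your proposal is correct and follows essentially the same approach as the paper's proof: both rely on the unique extension of markings over a simply connected base and the rigidity of markings on connected bases, applied in the obvious way to both directions of each equivalence. The only cosmetic difference is that you isolate the rigidity principle explicitly at the outset, whereas the paper invokes it inline; for part~\ref{part b} you and the paper both observe that shrinking $B$ to be simply connected makes $C$ simply connected (as a $\P^1$-bundle over $B$), so that the same marking-extension argument applies.
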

In suggestive notation, we can equivalently state this as $\df(X)=\df(X,\nu)$ and $\df(\cF_B)=\df(\cF_B,\mu)$.
Of course, part~\ref{part a} is well-known and we mainly want to point out its analogy to part~\ref{part b}.

\begin{proof}[Proof of \Cref{deformation of marking}]
To prove \ref{part a}, let $(\cF',\nu')=(X'\stackrel{f'}{\to}B',\nu')$ be another deformation of $(X_b,\nu_b)$. If $\cF=(X\stackrel{f}{\to}B)$ is a universal deformation of $X_b$, then up to shrinking $B'$ the morphism $(X_b\to\{b\})\inj\cF$ factors through $(X_b\to\{b\})\inj\cF'$ via a morphism $\phi\from\cF'\to\cF$ of families.
Up to shrinking $B$ and $B'$ to simply connected neighborhoods of $b$, any given marking $\nu_b$ of $X_b$ can be uniquely extended to a marking $\nu$ of $\cF$, and a marking of $\cF'$ that must coincide with $\nu'$.
The uniqueness implies $\phi^*\nu=\nu'$, making $\phi$ a morphism of marked families.

For the other direction, assume that $(\cF,\nu)$ is a universal deformation of $(X_b,\nu_b)$.
Then we can consider every universal morphism $\phi\colon(\cF',\nu')\to(\cF,\nu)$ as a morphism $\cF'\to\cF$ of the unmarked families.
The uniqueness does still hold, again because $\nu$ and $\nu'$ are locally near $b$ the unique extended markings of $\nu_b$.

The argument to prove \ref{part b} is analogous to the proof of \ref{part a}.
For this, let $\cF_B=(X\stackrel{f}{\to}C\stackrel{h}{\to}B)$ and note that by shrinking $B$ to a simply connected neighborhood, we also make $C$ a simply connected neighborhood of $C_b$.
Then every marking of $\cF_b$ extends uniquely to a marking of $\cF_B$.\qedhere
\end{proof}

\begin{rema}[Relative symplectic forms]
\label{rel sympl}
Let $\cF=(X\stackrel{f}{\to}C)$ be an embedded family of IHS manifolds.
Then $\deg\cF=d$ is by the projection formula equivalent to $f_*(\Omega^2_{X/C}\otimes f^*\O_C(d))\cong f_*\Omega^2_{X/C}\otimes\O_C(d)\cong\O_C$.
Hence there is a nowhere vanishing section $\omega\in\Hsh0X{\Omega^2_{X/C}\otimes f^*\O_C(d)}$, which we call a \emph{relative symplectic form} on $X$.
By construction $\omega$ restricts to a symplectic form on each fiber of $f$. Similarly, for an embedded second level family $\cF_U=(\cX\stackrel{F}{\to}\cC\stackrel{\pi}{\to} U)$ a relative symplectic form on $\cX$ is a section $\omega\in\Hsh0{\cX}{\Omega^2_{\cX/\cC}\otimes F^*\O_{\cC}(d)}$ that restricts to a relative symplectic form on each fiber of $\cF_U$, where $\O_{\cC}(d)$ denotes the relative $\O(d)$-bundle on the $\P^1$-bundle $\cC\to U$. When $U$ is connected, $\deg\cF_p$ is by \Cref{degree of period map} constant in $p\in U$.
Hence $\sL\defeq F_*(\Omega^2_{\cX/\cC}\otimes F^*\O_{\cC}(d))$ is a holomorphic line bundle that is trivial on every fiber of $\pi$, and thus $\sL\cong\pi^*\pi_*\sL$.
Hence the relative symplectic forms on $\cX$ correspond to the pulled back nowhere vanishing sections of $\pi_*\sL$.

\end{rema}

\subsection{The second level period map}

Like in Section~\ref{cycle section}, let $\cC_d(\pdom{\Lambda})\stackrel{\pr1}{\lto}\mathsf{S}_d(\pdom{\Lambda})\subset\dou(\pdom{\Lambda})$ denote the universal family of smooth rational curves of embedding degree $d$ in an IHS period domain $\pdom{\Lambda}$.

\begin{prop}[Second level period map]
\label{periodmap2}
Let $\cF_B\defeq(X\stackrel{f}{\to}C\stackrel{h}{\to}B)$ be an embedded second level family of degree $d$ with a marking $\mu$ and period map $\per$.
Then $$\Per\from B\to \mathsf{S}_d(\pdom{\Lambda})\subset\dou(\pdom{\Lambda}),\quad b\mapsto [\per(C_b)]$$ defines a holomorphic map, which we call the \emph{second level period map} of $(\cF_B,\mu)$.
More precisely, $((\Per\circ h,\per),\Per)\from(C\stackrel{h}{\to}B)\to(\cC_d(\pdom{\Lambda})\stackrel{\pr1}{\to}\mathsf{S}_d(\pdom{\Lambda}))$ defines a morphism of families of smooth rational curves.
\end{prop}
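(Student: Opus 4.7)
The plan is to extract $\Per$ from the universal property of the relative Douady space of $\pdom{\Lambda}$ over $B$ reviewed in Section~\ref{douady section}, once we have realized $C$ together with its period map as a $B$-family of closed subvarieties of $\pdom{\Lambda}$.

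The key preliminary step is to verify that the holomorphic map
\[ \Phi\defeq(h,\per)\from C\to B\x\pdom{\Lambda} \]
is a closed immersion of complex spaces. Properness of $\Phi$ follows from that of $h$. Injectivity follows because whenever $\Phi(c_1)=\Phi(c_2)$ the points $c_1,c_2$ lie in a common fiber $C_b$, on which $\per$ is an embedding by the assumption that $\cF_B$ is embedded. Finally, $\Phi$ is an immersion: at any $c\in C$ with $b\defeq h(c)$, a tangent vector $v\in T_cC$ satisfying $\mathrm{d}\Phi_c(v)=0$ must in particular satisfy $\mathrm{d}h_c(v)=0$ (so that $v$ is tangent to the fiber $C_b$), and the additional condition $\mathrm{d}\per_c(v)=0$ then reduces to the injectivity of $\mathrm{d}(\rest{\per}{C_b})_c$, which is guaranteed by the embedding property.

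Hence $Z\defeq\Phi(C)\subset B\x\pdom{\Lambda}$ is a closed complex subspace, $\Phi$ is an isomorphism $C\bij Z$ over $B$, and the projection $Z\to B$ is flat and proper. By \Cref{degree of period map}, each fiber $Z_b\isom\per(C_b)$ is a smooth rational curve in $\pdom{\Lambda}$ of embedding degree $d$. Applying the universal property of the relative Douady space (Section~\ref{douady section}) to $Z\to B$ yields a unique holomorphic $B$-morphism $B\to\dou(B\x\pdom{\Lambda}/B)$; composition with the canonical embedding $\dou(B\x\pdom{\Lambda}/B)\inj\dou(\pdom{\Lambda})$ produces the desired holomorphic map $\Per\from B\to\dou(\pdom{\Lambda})$ with $\Per(b)=[\per(C_b)]$. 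Since $[\per(C_b)]\in\mathsf{S}_d(\pdom{\Lambda})$ for every $b\in B$ and $\mathsf{S}_d(\pdom{\Lambda})\subset\dou(\pdom{\Lambda})$ is open, $\Per$ factors holomorphically through $\mathsf{S}_d(\pdom{\Lambda})$.

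The refined statement that $((\Per\circ h,\per),\Per)$ is a morphism of families is then automatic from the construction: by the universal property, pulling back $(\cC_d(\pdom{\Lambda})\stackrel{\pr1}\to\mathsf{S}_d(\pdom{\Lambda}))$ along $\Per$ reproduces $(Z\to B)\isom(C\stackrel{h}\to B)$ via $\Phi$, and the induced map to the universal family is precisely $c\mapsto(\Per(h(c)),\per(c))$, giving the required Cartesian square. The main obstacle I anticipate is the rigorous verification that $\Phi$ is a closed immersion of complex spaces rather than merely a set-theoretic embedding; the tangent-space argument in the second paragraph is the critical ingredient for this.
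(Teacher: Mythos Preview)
Your proof is correct and follows essentially the same approach as the paper's: both construct the closed embedding $(h,\per)\from C\to B\times\pdom{\Lambda}$, verify it is a proper injective immersion, and then invoke the universal property of the Douady space. One minor terminological slip: there is no canonical embedding $\dou(B\times\pdom{\Lambda}/B)\inj\dou(\pdom{\Lambda})$ (the former lives over $B$); rather, the universal property of $\dou(\pdom{\Lambda})$ applied directly to $Z\subset B\times\pdom{\Lambda}$ yields $\Per$, which is what the paper does and what your argument effectively accomplishes.
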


\begin{rema}
We expect \Cref{periodmap2} to hold also for non-embedded families.
The restriction is for the sake simplicity, as we are mostly interested in the case $d=2$, where the assumption always holds by \Cref{degree of period map}.
\end{rema}

\begin{proof}[Proof of \Cref{periodmap2}.]
To show that $\Per$ is holomorphic at a given point $0\in B$, we may shrink $B$ around $0$.
We obtain a holomorphic map $(h,\per)\from C\to B\x\pdom{\Lambda}$.
It is injective as we assumed the $\rest{\per}{C_b}$ for $b\in B$ to be embeddings.
As the $C_b$ are compact, we may up to shrinking $B$ assume $(h,\per)$ to be proper and thus, by Remmert's Proper Mapping Theorem, its image to be a complex subspace $Y\subset B\x\pdom{\Lambda}$.
As by assumption $h$ is a submersion and $d\per$ has at least rank $1$, the differential $d(h,\per)$ is everywhere of maximal rank and thus $(h,\per)$ is an embedding.
Hence the projection $\pr1\from Y\to B$ is like $h$ a proper submersion.
Thus the universal property of the Douady space induces a holomorphic map $\alpha\from B\to\dou\pdom{\Lambda}$.
As $\cF_b$ is an embedded family of degree $d$ for all $b\in B$, $\alpha$ actually maps to the (smooth) subspace $\mathsf{S}_d(\pdom{\Lambda})\subset\dou(\pdom{\Lambda})$.
Then the universal property states that $(\alpha\x\id{\pdom{\Lambda}},\alpha)\from(Y\stackrel{\pr1}{\to}B)\to(\cC_d(\pdom{\Lambda})\stackrel{\pr1}{\to}\mathsf{S}_d(\pdom{\Lambda}))$ is a morphism of families of smooth rational curves.
Hence $((\alpha\circ h,\per),\alpha)\from(C\stackrel{h}{\to}B)\to(\cC_d(\pdom{\Lambda})\stackrel{\pr1}{\to}\mathsf{S}_d(\pdom{\Lambda}))$ is also a morphism of families.
By construction, the point $\per(c)$ has to lie on the cycle $\alpha(h(c))$ for each $c\in C$, from which we conclude $\alpha=\Per$.
\end{proof}

\begin{rema}[Functoriality of the second level period map under pullbacks]
\label{functoriality2}
For an embedded second level family $\cF_B=(X\stackrel{f}{\to}C\stackrel{h}{\to}B)$ and a holomorphic map $\gamma\from B'\to B$ of complex spaces we define the pulled back second level family $\gamma^*\cF_B=(X'\stackrel{f'}{\to}C'\stackrel{h'}{\to}B')$ via $C'\defeq C\x_B B'$ with projections $\beta,h'$ onto $C,B'$, and $X'\defeq X\x_C C'$ with projections $\alpha,f'$ onto $X,C'$.
Each marking $\mu$ of $(X\stackrel{f}{\to}C)$ induces a marking $\beta^*\mu$ of $(X'\stackrel{f'}{\to}C')$.
Then $(\alpha,\beta,\gamma)$ is a morphism of (marked) embedded second level families.
Let $\per$ be the period map of $(X\stackrel{f}{\to}C,\mu)$ and $\Per$ the second level period map of $(\cF_B,\mu)$.
By the functoriality of the period map, the period map of $(X'\stackrel{f'}{\to}C')$ is $\per\circ\beta$.
Thus the second level period map $\Per'$ of $(\gamma^*\cF_B,\beta^*\mu)$ is $B'\to C_1(\pdom{\Lambda})$, $b'\mapsto[(\per\circ\beta)(C'_{b'})]=[\per(C_{\gamma(b')})]=(\Per\circ\gamma)(b')$, so $\Per'=\Per\circ\gamma$.
\end{rema}

\subsection{Gluing universal morphisms of families}
\label{Glueing universal morphisms of families}
The following technical result seems to be well-known, cf.~\cite[Chapter~I, Remark after (10.6)]{BHPV}; we include a proof owing to the lack of a reference. 

\begin{prop}
\label{glueuniv}
Let $S\subset D$ be a complex subspace of a complex manifold and $\{U_i\}_{i\in I}$ an open covering of an open neighborhood of $S$ in $D$.
Assume that we are given on each $U_i$ a family $\cF^i$ of compact complex manifolds that is universal at every $s\in S\cap U_i$ and with $\hdim0{X_s}{\sT_{X_s}}=0$.
If the restrictions $\rest{\cF^i}{U_i\cap S}$ glue together to a family of complex manifolds over $S$, then (up to going over to a locally finite refinement) restrictions of the families $\cF^i$ glue together over an open neighborhood of $S$ in $D$.
\end{prop}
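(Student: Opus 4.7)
The plan is to produce, for each pair of indices $i,j$, an isomorphism of the families $\cF^i$ and $\cF^j$ on an open neighborhood of $S \cap U_i \cap U_j$ inside $U_i \cap U_j$, and then verify the cocycle condition on triple intersections. Both steps rely on the rigidity enforced by the hypothesis $\hdim0{X_s}{\sT_{X_s}}=0$: under this hypothesis, a universal deformation is universal in the strong sense that morphisms produced by its universal property are unique \emph{as morphisms of families}, not merely on base spaces.

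First I would pass to a locally finite refinement $\{V_k\}_{k\in K}$ of $\{U_i\}$ by relatively compact open subsets, keeping track of the index map $K\to I$, so that all subsequent ``shrinkings towards $S$'' can be performed simultaneously. For each pair $(k,\ell)$ with $V_k\cap V_\ell\cap S\neq\emptyset$, the gluing data on $S$ provides an isomorphism of the two restricted families over $V_k\cap V_\ell\cap S$. For each $s\in V_k\cap V_\ell\cap S$, the family $\cF^\ell|_{V_\ell}$ is a deformation of $X_s$ whose restriction to a small neighborhood of $s$ in $S$ agrees with that of $\cF^k|_{V_k}$; by universality of $\cF^k$ at $s$ there exists, after shrinking, a morphism of families $\cF^\ell\to\cF^k$ extending this isomorphism on $S$. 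The rigidity property mentioned above guarantees that any two such extensions coincide on a common smaller neighborhood of $s$. Consequently the locally defined extensions glue to a single morphism of families defined on an open neighborhood $W_{k\ell}$ of $V_k\cap V_\ell\cap S$ in $V_k\cap V_\ell$, and this morphism is an isomorphism (the analogous argument in the reverse direction produces an inverse, and the compositions with the original are the identity on $S$, hence by rigidity on a neighborhood of $S$).

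Next I would verify the cocycle condition. On triple intersections $V_k\cap V_\ell\cap V_m$ meeting $S$, the three pairwise isomorphisms compose to an automorphism of $\cF^k$ on a neighborhood of $V_k\cap V_\ell\cap V_m\cap S$ which restricts to the identity on $S$ (because the original gluing data on $S$ does satisfy the cocycle condition by hypothesis). Applying the rigidity consequence of $\hdim0{X_s}{\sT_{X_s}}=0$ one more time, this automorphism must equal the identity on an open neighborhood of $V_k\cap V_\ell\cap V_m\cap S$. Using local finiteness, I can shrink all the $W_{k\ell}$ simultaneously so that each point of $S$ has a neighborhood meeting only finitely many of them and on which the cocycle condition holds strictly.

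Finally, the family $\cF^k$ can be glued along the isomorphisms over the $W_{k\ell}$, producing a family of compact complex manifolds on an open neighborhood $U\defeq \bigcup_k W_k$ of $S$ in $D$ (where $W_k\subset V_k$ is a suitable open neighborhood of $V_k\cap S$), whose restriction to $S$ recovers the prescribed glued family. The main obstacle is the interplay between the shrinkings needed on double and triple intersections: it is precisely the locally finite refinement together with the strong rigidity guaranteed by $\hdim0{X_s}{\sT_{X_s}}=0$ that makes it possible to perform these shrinkings in a coherent way so that a common open neighborhood of $S$ survives.
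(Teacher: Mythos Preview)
Your proposal is correct and follows essentially the same approach as the paper's proof: pass to a locally finite refinement, use universality together with $\hdim0{X_s}{\sT_{X_s}}=0$ to obtain unique pairwise gluing isomorphisms on neighborhoods $W_{k\ell}$ of $S\cap V_k\cap V_\ell$, invoke the same rigidity to establish the cocycle condition near $S$, and then shrink coherently using local finiteness. The only place the paper is more explicit is in the final step, where it writes down the open sets $\tilde V_i \defeq V_i\setminus\bigcup_{j}\overline{V_j\setminus W_{ij}}$ on which the restricted families glue; your ``suitable open neighborhood $W_k\subset V_k$ of $V_k\cap S$'' is exactly this.
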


\begin{proof}
We can go over to a locally finite refinement $(V_i)_{i\in I}$ of $(U_i)$ with $\overline{V_i}\subset U_i$ for all $i\in I$, \cite[Lemma~41.6]{Munkres}.
For all $i,j,k\in I$ we denote $V_{ij}\defeq V_i\cap V_j$ and $V_{ijk}\defeq V_i\cap V_j\cap V_k$.
Then for all $i,j\in I$ the families $\cF^i,\cF^j$ glue together over an open neighborhood $W_{ij}$ of $S\cap V_{ij}$ via a map $\phi_{ji}$ that is unique up to shrinking $W_{ij}$ around $S\cap W_{ij}$, \cite[Theorem~6.3(i)]{BKS}.
By this uniqueness and as the covering is locally finite, each $s\in S$ has an open neighborhood where $\phi_{kj}\circ\phi_{ji}=\phi_{ki}$ for every $i,j,k\in I$ with $s\in V_{ijk}$.
Hence, up to shrinking the $W_{ij}$ around $S\cap W_{ij}$, we can assume that the $\phi_{ij}$ satisfy the cocycle condition.
Then, for each $i\in I$, we set $\tilde{V}_i\defeq V_i\setminus\bigcup_{j\in I}\overline{V_j\setminus W_{ij}}$, which is open in $D$ because the refinement $(V_i)$ was chosen to be locally finite.
By construction the $\rest{\cF^i}{\tilde{V}_i}$ glue together to a family over $\bigcup_{i\in I}\tilde{V}_i$.
\end{proof}
For later reference we note the following well known result.
\begin{coro}
\label{refined}
If two marked families of K3 surfaces over the same base space have the same refined period map, they are isomorphic as marked families.
In particular, for every marked family of K3 surfaces $(\cF,\mu)$ with refined period map $\tilde{\per}$ we have $(\cF,\mu)\isom\tilde{\per}^*\cM$.\qed
\end{coro}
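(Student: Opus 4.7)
The plan is to prove the more specific second assertion $(\cF,\mu)\isom\tilde{\per}^*\cM$ and deduce the first from it. Since $\tilde{\per}^*\cM$ carries a natural marking pulled back from the tautological marking $\nu$ of $\cM$, and since its refined period map is again $\tilde{\per}$ by functoriality of the refined period map under pullbacks of marked families, two marked families $(\cF_i,\mu_i)$ sharing the same refined period map $\tilde{\per}$ must both be isomorphic to $\tilde{\per}^*\cM$ and hence to each other.

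To build the isomorphism $(\cF,\mu)\isom\tilde{\per}^*\cM$, I would argue locally around each $s\in S$ and then glue. Set $\tilde{p}\defeq\tilde{\per}(s)\in\tilde{\Omega}$. By the construction of $\tilde{\Omega}$ as the fine moduli space of marked K3 surfaces via gluing of local universal marked families (Section~\ref{K3moduli}), there is an open neighborhood $W$ of $\tilde{p}$ in $\tilde{\Omega}$ such that $\cM|_W$ is a universal deformation of the marked K3 surface $(\cM_{\tilde{p}},\nu_{\tilde{p}})$. The fiber $(\cF_s,\mu_s)$ has the same refined period point $\tilde{p}$ and is therefore marked-isomorphic to $(\cM_{\tilde{p}},\nu_{\tilde{p}})$. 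Setting $U_s\defeq\tilde{\per}^{-1}(W)$, the universal property of $\cM|_W$ in the marked category provides a unique holomorphic map $\psi\from U_s\to W$ together with a marked isomorphism $(\cF|_{U_s},\mu|_{U_s})\isom\psi^*(\cM|_W,\nu|_W)$. Since the refined period map of the pullback $\psi^*\cM$ equals $\psi$, functoriality forces $\psi=\tilde{\per}|_{U_s}$, yielding the desired local marked isomorphism with $\tilde{\per}^*\cM|_{U_s}$.

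The main obstacle I expect is ensuring that these local isomorphisms glue into a global one. The decisive rigidity input is that any automorphism of a K3 surface $X$ acting trivially on $\Hsh2X{\Z}$ is the identity; combined with $\hdim 0 X {\sT_X}=0$ for K3 surfaces (the same vanishing that underlies Proposition~\ref{glueuniv}), this forces any marked automorphism of a marked K3 family restricting to the identity on one fiber to be locally the identity, and hence globally the identity on any connected base. Consequently, the sheaf on $S$ assigning to each open $U$ the set of marked isomorphisms $(\cF|_U,\mu|_U)\isom\tilde{\per}^*\cM|_U$ is a sheaf of singletons on its support, so the local sections constructed above automatically agree on overlaps and patch to the desired global marked isomorphism $(\cF,\mu)\isom\tilde{\per}^*\cM$.
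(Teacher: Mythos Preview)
Your proof is correct and follows the same strategy the paper alludes to when it says the result ``can also be seen as a special case of \Cref{glueuniv}'': build local marked isomorphisms via the universal property of $\cM$, then glue using uniqueness. Two minor points. First, the universal property of $\cM|_W$ as a deformation only furnishes the isomorphism on a \emph{germ} at $s$, so you should shrink $U_s$ to an open neighborhood of $s$ rather than taking all of $\tilde{\per}^{-1}(W)$. Second, your gluing argument is slightly overloaded: once you know the transition automorphism is a \emph{marked} automorphism over $\id_{U_s\cap U_{s'}}$, it acts trivially on $\Hsh2{X_v}{\Z}$ on every fiber $X_v$, so \Cref{automorphism property} alone forces it to be the identity fiberwise and hence globally---the $\hdim0X{\sT_X}=0$ input is not needed here (it is the mechanism behind \Cref{glueuniv} in the unmarked setting, but in the marked K3 setting the Torelli-type rigidity is stronger and more direct).
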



\subsection{Deformations of families of degree $d$ and their total spaces}
\label{section degree d families}

By gluing Kuranishi families of the fibers of $\cF$ via \Cref{glueuniv} the following result was proven in \cite[Chapters~6 and 7]{BKS}.
\begin{theo}[Extension theorem for embedded families]\label{extension}
Let $(\cF,\mu)$ be a $\Lambda$-marked embedded family of IHS manifolds over a complex space $S$ with period map $\per$.
Then $(\cF,\mu)=\per^*(\tilde{\cF},\tilde{\mu})$ for a marked family $(\tilde\cF,\tilde{\mu})$ with period map $U\inj\pdom{\Lambda}$ the inclusion of an open neighborhood $U$ of $\per(S)$ in $\pdom\Lambda$.
Up to shrinking $U$ around $\per(S)$, the family is unique up to isomorphism of marked families.
\end{theo}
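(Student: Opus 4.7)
My approach is to build the extension $(\tilde{\cF}, \tilde{\mu})$ locally near each point of $\per(S) \subset \pdom{\Lambda}$ from Kuranishi families of the fibers of $\cF$ together with the Local Torelli Theorem for IHS manifolds, and then assemble the pieces by means of \Cref{glueuniv}.

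For every $s \in S$, the Kuranishi family of $X_s$ has contractible base $\df(X_s)$, so the marking $\mu_s$ extends uniquely to a marking of this family (cf.~\Cref{deformation of marking}). By the Local Torelli Theorem for IHS manifolds, the associated period map is a local biholomorphism from $\df(X_s)$ onto an open neighborhood $U_s$ of $\per(s)$ in $\pdom{\Lambda}$. Transporting along its inverse yields a marked family $(\tilde{\cF}^s, \tilde{\mu}^s)$ over $U_s$ whose own period map is the inclusion $U_s \hookrightarrow \pdom{\Lambda}$. Applying the same Local Torelli argument at an arbitrary point $u \in U_s$ to the fiber of $\tilde{\cF}^s$ over $u$ shows that $\tilde{\cF}^s$ is a universal deformation of each of its fibers, and markedly so by \Cref{deformation of marking}.

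To apply \Cref{glueuniv} with $D \defeq \pdom{\Lambda}$, the closed subspace $\per(S) \hookrightarrow D$, and the cover $\{U_s\}_{s \in S}$ of an open neighborhood of $\per(S)$, it remains to verify that the restrictions $(\tilde{\cF}^s, \tilde{\mu}^s)|_{U_s \cap \per(S)}$ glue into a single marked family over $\per(S)$. For this I use the universal property of the Kuranishi family of $X_s$: since $(\cF, \mu)$ is a small deformation of $(X_s, \mu_s)$, there is a unique marked morphism from $(\cF, \mu)$, restricted to a neighborhood of $s$ in $S$, into $(\tilde{\cF}^s, \tilde{\mu}^s)$ inducing the identity on central fibers, and functoriality of the period map under pullbacks forces the base component of this morphism to coincide with $\per$. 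Hence $\per^*(\tilde{\cF}^s, \tilde{\mu}^s) \cong (\cF, \mu)$ on a neighborhood of $s$. Since $\per$ identifies $S$ with the closed subspace $\per(S)$, this produces the compatibility on $\per(S) \cap U_s \cap U_t$ demanded by \Cref{glueuniv}, which then delivers the sought marked family $(\tilde{\cF}, \tilde{\mu})$ over an open $U \supset \per(S)$ with period map the inclusion and with $(\cF, \mu) \cong \per^*(\tilde{\cF}, \tilde{\mu})$.

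Uniqueness will follow from the fact that any second extension $(\tilde{\cF}', \tilde{\mu}')$ is, at each point $p = \per(s) \in \per(S)$, likewise a marked universal deformation of its fiber. The universal property therefore yields unique local marked isomorphisms $\tilde{\cF}' \cong \tilde{\cF}$, which glue on overlaps (again by the uniqueness clause in the universal property) to a marked isomorphism on an open neighborhood of $\per(S)$ in $U \cap U'$. The main technical obstacle is the coherence of the local candidates $\tilde{\cF}^s$ on triple intersections \emph{away} from $\per(S)$, where the cocycle condition for the gluing isomorphisms is not automatic; this is exactly the situation treated by \Cref{glueuniv}, whose proof resolves it by passage to a locally finite refinement and appeal to the vanishing $\hdim0{X_s}{\sT_{X_s}} = 0$ which holds for IHS fibers.
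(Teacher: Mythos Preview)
Your proposal is correct and follows essentially the same approach the paper indicates: the paper does not prove \Cref{extension} in full but attributes it to \cite[Chapters~6 and 7]{BKS}, summarizing the method as ``gluing Kuranishi families of the fibers of $\cF$ via \Cref{glueuniv}'', which is precisely what you carry out. One minor point: you establish $\per^*(\tilde{\cF}^s,\tilde{\mu}^s)\cong(\cF,\mu)$ only on a neighborhood of $s$, but since $\tilde{\cF}^s$ is universal at \emph{every} point of $U_s$ the same argument extends this to all of $\per^{-1}(U_s)$, which is what the gluing hypothesis of \Cref{glueuniv} actually requires.
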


The same statement can also be deduced from Markman's result \cite[Theorem~1.1]{Mar21}, see~\cite[Section~1.3]{BKS} for a discussion. The special case where $\cF$ is an embedded family of K3 surfaces follows directly from the fact that a section of the topological sheaf $\tilde{\sF}=(\tilde{\Omega}\stackrel{\pi}{\to}\Omega)$ over a closed subspace $C\subset\Omega$ can be extended to an open neighborhood (cf.~Section~\ref{topsheaves}).

The Unobstructedness Theorem \cite[Theorem~1.1]{BKS} states that the deformations of the total space $X_0$ of a marked embedded family of IHS manifolds are unobstructed, identifying $\df(X_0)$ with a germ of the Douady space of the corresponding period domain. Actually, there is more information coming out of the proof presented in \cite[Section~4]{BKS}, which we can now formulate concisely using the terminology of second level families introduced in Section~\ref{DefFamiliesIHS}. The central new observation is that the universal morphisms constructed as part of the proof glue together to the second level period map.

\begin{theo}[Unobstructedness Theorem for embedded families]
\label{unobs}
Let $(\cF_0,\mu_0)=(X_0\stackrel{f_0}{\to}C_0,\mu_0)$ be a $\Lambda$-marked embedded family of IHS manifolds of degree $d$ with period map $\per_0$.
Then there exists a universal deformation \[{(\cF_U,\mu)=(X\stackrel{f}{\to}\rest{\cC_d(\pdom\Lambda)}U\stackrel{\pr1}{\to}U,\mu)}\] of $(\cF_0,\mu_0)$ over an open neighborhood $U$ of $[\per_0(C_0)]$ in $\mathsf{S}_d(\pdom\Lambda)$ with period maps $\per=\pr2\from\rest{\cC_d(\pdom\Lambda)}U\to\pdom{\Lambda}$ and $\Per\from U\inj \mathsf{S}_d(\pdom\Lambda)$ the inclusion.
Induced universal morphisms $(\cF'_B,\mu')\to(\cF_U,\mu)$ of deformations of $(\cF_0,\mu_0)$ are induced by the second level period map of $(\cF'_B,\mu')$ and are uniquely determined as morphisms of second level families. Moreover, $X\stackrel{\pr1\circ f}{\lto}U$ is a universal deformation of $X_0$.
\end{theo}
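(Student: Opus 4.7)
The plan is to build $(\cF_U, \mu)$ by combining two pieces: the Extension Theorem (\Cref{extension}), which handles the ``vertical'' direction of propagating a marked family out into $\pdom{\Lambda}$, and the universal family of cycles $\cC_d(\pdom{\Lambda}) \to \mathsf{S}_d(\pdom{\Lambda})$, which handles the ``horizontal'' direction of deforming the base curve. More precisely, we first apply \Cref{extension} to $(\cF_0, \mu_0)$ to obtain a marked family $(\tilde{\cF}, \tilde{\mu})$ over an open neighborhood $V$ of $\per_0(C_0)$ in $\pdom{\Lambda}$ with period map the inclusion $V \inj \pdom{\Lambda}$, satisfying $(\cF_0, \mu_0) \isom \per_0^*(\tilde{\cF}, \tilde{\mu})$; by compactness of $C_0$ and properness of $\pr_1\from \cC_d(\pdom{\Lambda}) \to \mathsf{S}_d(\pdom{\Lambda})$, we may shrink $V$ and choose an open neighborhood $U$ of $[\per_0(C_0)]$ in $\mathsf{S}_d(\pdom{\Lambda})$ such that $\pr_2(\rest{\cC_d(\pdom{\Lambda})}{U}) \subset V$. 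We then define $X$ to be the pullback of $\tilde{\cF}$ along $\pr_2\from \rest{\cC_d(\pdom{\Lambda})}{U} \to \pdom{\Lambda}$, endowed with the pulled-back marking $\mu$. By construction, $\cF_U = (X \to \rest{\cC_d(\pdom{\Lambda})}{U} \stackrel{\pr_1}{\to} U)$ is a marked second level family whose period map is $\per = \pr_2$ and whose second level period map is the inclusion $\Per\from U \inj \mathsf{S}_d(\pdom{\Lambda})$; by \Cref{degree of period map} and shrinking $U$ if necessary, every fiber is an embedded IHS family of degree $d$.

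For universality, let $(\cF'_B, \mu')$ be any deformation of $(\cF_0, \mu_0)$ with distinguished point $0 \in B$. By \Cref{periodmap2} its second level period map $\Per'\from B \to \mathsf{S}_d(\pdom{\Lambda})$ is holomorphic with $\Per'(0) = [\per_0(C_0)]$, so after shrinking $B$ around $0$ we may assume $\Per'(B) \subset U$. The tautological factorization coming from the universal property of the relative Douady space (cf.~\Cref{periodmap2}) gives a morphism $(C'_B \to B) \to (\rest{\cC_d(\pdom{\Lambda})}{U} \to U)$ of families of smooth rational curves realizing $C'_B$ as $(\Per')^*(\rest{\cC_d(\pdom{\Lambda})}{U})$. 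By \Cref{functoriality2} the period map of $(X'_B \to C'_B, \mu')$ is the composition $C'_B \to \rest{\cC_d(\pdom{\Lambda})}{U} \stackrel{\pr_2}{\to} \pdom{\Lambda}$; using uniqueness in \Cref{extension} (applied after shrinking $V$ further) we conclude that $X'_B$ is the pullback of $\tilde{\cF}$ along this composition, which is precisely $(\Per')^* X$. Hence $(\cF'_B, \mu') \isom (\Per')^*(\cF_U, \mu)$ as marked second level families.

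Uniqueness of the induced morphism as a morphism of second level families requires two ingredients. The base map to $U$ is forced to equal $\Per'$ because $\per$ is an embedding on each fiber of $\pr_1$, so the cycle $\Per'(b)$ is determined intrinsically by the family $\cF'_b$ together with $\mu'_b$. Given the base map, the map of total spaces is determined because $X_0$ satisfies $\hdim0{X_0}{\sT_{X_0}} = 0$ by \cite[Theorem~5.4]{BKS}, so \Cref{glueuniv} and the discussion in Section~\ref{Glueing universal morphisms of families} give uniqueness not merely on the base but as morphisms of families.

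Finally, for the assertion that $X \stackrel{\pr_1 \circ f}{\to} U$ is a universal deformation of $X_0$, we combine the above with \Cref{deformation of marking}\ref{part b} to pass from marked to unmarked second level families; it then remains to observe that every deformation of the total space $X_0$ refines uniquely to a deformation of the embedded family $\cF_0$, because $X_0$ carries the submersion $f_0$ and any deformation of $X_0$ inherits (after possibly shrinking) such a structure from the Stein factorization / universal property of the Douady space associated with the deforming period cycle, so that the universal property of $\cF_U$ transfers to $X \to U$. The main obstacle throughout is bookkeeping the shrinking of $V$, $U$, and $B$ so that all of \Cref{extension}, \Cref{periodmap2}, and \Cref{glueuniv} apply simultaneously and produce compatible data; once this is arranged, each individual identification is forced by a universal property.
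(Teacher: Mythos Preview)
Your construction of $(\cF_U,\mu)$ via \Cref{extension} and pullback along $\pr_2$ is exactly the paper's, and your identification of the base map with the second level period map is fine. The gap is in the universality step.

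You write: ``using uniqueness in \Cref{extension} (applied after shrinking $V$ further) we conclude that $X'_B$ is the pullback of $\tilde{\cF}$ along this composition.'' But \Cref{extension} only asserts uniqueness of the \emph{extension} of a marked \emph{embedded} family; it does not say that an arbitrary marked family whose period map happens to land in $V$ is isomorphic to the pullback of $\tilde{\cF}$. The family $(X'_B \to C'_B,\mu')$ is not embedded: its period map $C'_B \to \pdom{\Lambda}$ is an embedding on each fiber $C'_b$ but globally can have arbitrary overlaps, so neither \Cref{extension} nor its uniqueness clause applies to it. You therefore have not constructed the morphism $(\cF'_B,\mu') \to (\cF_U,\mu)$ at the level of total spaces.

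The paper fills this gap by working fiberwise in the IHS direction rather than invoking \Cref{extension}: since the period map of $(\tilde{\cF},\tilde{\mu})$ is the inclusion $V \hookrightarrow \pdom{\Lambda}$, Local Torelli for IHS manifolds makes $(\tilde{\cF},\tilde{\mu})$ a universal deformation of each of its IHS fibers. This yields, near each $c \in C_0$, a local universal morphism $\phi_c$ from $(X'_B \to C'_B,\mu')$ to $(\tilde{\cF},\tilde{\mu})$; these $\phi_c$ are uniquely determined on total spaces because the IHS fibers satisfy $\hdim0{X_c}{\sT_{X_c}}=0$, and so glue via \Cref{glueuniv} to a global morphism over a neighborhood of $C_0$ (and then, by properness of $h'$, over a shrunk $B$). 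Note that the vanishing used here is for the IHS fibers $X_c$, not for the total space $X_0$; your appeal to $\hdim0{X_0}{\sT_{X_0}}=0$ for uniqueness presupposes a universal deformation of $X_0$ with the strong uniqueness property, which is what is being proven. Finally, your sketch for ``$X \to U$ is universal for $X_0$'' points in the right direction (stability of the fibration), but the paper explicitly defers this nontrivial step to \cite[Section~4.6]{BKS}; your Stein-factorization remark does not by itself produce a fibration on an arbitrary small deformation of $X_0$.
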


\begin{rema}[Universal deformations of (marked) families and their total spaces]
\label{deformation space}
\Cref{unobs} and \Cref{deformation of marking} together yield canonical isomorphisms \[(U,[C_0])\cong\df(\cF_0,\mu_0)\cong\df(\cF_0)\cong\df(X_0).\]
Let $n\defeq \rk(\Lambda)$.
As $U$ is smooth of dimension $m(n,d)\defeq (n-2)(d+1)-3$ by \cite[Corollary~4.2]{BKS}, these deformations are unobstructed.
Specifically, for twistor families we have $d=2$ and thus $m=3n-9=3n_{-}$.
\end{rema} 

\begin{proof}[Proof of \Cref{unobs}]
As the period map $\per_0$ induces an isomorphism $(\cF_0,\mu_0)\cong(\per_0\inv)^*(\cF_0,\mu_0)$, we can assume without loss of generality $C_0\subset\pdom{\Lambda}$ and that $\per_0$ is the inclusion. 
By \Cref{extension} there is a morphism of marked families $(\cF_0,\mu_0)\inj(\tilde{\cF},\tilde{\mu})$, where $\tilde{\cF}=(\tilde{X}\stackrel{\tilde{f}}{\to}V,\tilde{\mu})$ is a marked family with period map $V\inj\pdom{\Lambda}$ the inclusion of an open neighborhood of $C_0$ in $\pdom{\Lambda}$.

Then $U\defeq\{[C_t]\in \mathsf{S}_d(\pdom{\Lambda})\mid C_t\subset V\}$ is an open neighborhood of $[C_0]$ in $\dou(\pdom{\Lambda})$.
We obtain a marked family of IHS manifolds $(\cF,\mu)\defeq \pr2^*(\tilde{\cF},\tilde{\mu})$ with $\cF=(X\stackrel{f}{\to}C)$ over $C\defeq \pr1\inv(U)=\rest{\cC_d(\pdom\Lambda)}U$.
This gives rise to a marked second level family $(\cF_U,\mu)=(X\stackrel{f}{\to}\rest{\cC_d(\pdom\Lambda)}U\stackrel{\pr1}{\to}U,\mu)$ whose restriction over $[C_0]\in U$ is by construction isomorphic to $(\cF_0,\mu_0)$.
By functoriality, the period map of $(\cF_U,\mu)$ is $\id V\circ\pr2=\pr2$.
Hence $(\cF_U,\mu)$ is an embedded second level family of degree $d$ with second level period map $U\inj\dou(\pdom{\Lambda})$ the inclusion.

To show the universality of $(\cF_U,\mu)$, let $(\cF'_B,\mu')=(X'\stackrel{f'}{\to}C'\stackrel{h'}{\to}B,\mu')$ be another deformation of $(\cF_0,\mu_0)$.
We denote the period maps of $(\cF'_B,\mu')$ by $\per'$ and $\Per'$.
By \Cref{periodmap2} we get a morphism $(Q',\Per')\from (C'\stackrel{h'}{\to}B)\to(\cC_d(\pdom{\Lambda})\stackrel{\pr1}{\to}\mathsf{S}_d(\pdom{\Lambda}))$ of families of smooth rational curves, where $Q'=(\Per'\circ h',\per')$.
By the Local Torelli Theorem for IHS manifolds, $(\tilde{\cF},\tilde{\mu})$ is a universal deformation of each of its IHS fibers because its period map is a local biholomorphism.
This induces locally near each $c\in C_0$ universal morphisms $\phi_c\from(\rest{X'}{U_c}\stackrel{f'}{\to} U_c,\mu')\to(\tilde{\cF},\tilde{\mu})$.
By functoriality of period maps, each $\phi_c$ must induce $\per'$ on $C'$.
As $\hdim0{X_c}{\sT_{X_c}}=0$ for all $c\in C_0$, the $\phi_c$ must also be uniquely determined on the total space and thus by \Cref{glueuniv} glue to a morphism of families on an open neighborhood of $C_0$ in $C'$.
As $h'$ is proper, we can up to shrinking $B$ assume that the $\phi_c$ glue to a morphism $\phi\from(X'\stackrel{f'}{\to}C',\mu')\to(\tilde{\cF},\tilde{\mu})$ over $\per'$ that thus must also be uniquely determined on $X'$.
As $\pr2\circ Q'=\per'$, by applying the Cartesian universal property we see that the morphism $\phi$ must factor uniquely through the morphism $(\cF,\mu)\to(\tilde{\cF},\tilde{\mu})$ obtained above. In summary, we constructed a morphism $(\cF'_B,\mu')\to(\cF_U,\mu)$.
The uniqueness of this morphism on $B$ and $C'$ follows from functoriality of the period maps, the uniqueness on $X'$ follows from the uniqueness of $\phi$.
It requires some more work to conclude that $X\stackrel{\pr1\circ f}{\lto}U$ is a universal deformation of the total space $X_0$, see \cite[Section~4.6]{BKS}.
\end{proof}

From \Cref{unobs} we can draw the following two immediate consequences, which we consider to be natural analogs of the Local Torelli Theorem for IHS manifolds and of the Extension Theorem for embedded families of IHS manifolds (\Cref{extension}).

\begin{theo}[Local Torelli Theorem for embedded families]
\label{local torelli}
A marked embedded second level family $(\cF_B,\mu)$ is universal at a point $b\in B$ if and only if its second level period map is a local biholomorphism at $b$.
\end{theo}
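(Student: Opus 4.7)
The plan is to deduce both directions from the Unobstructedness Theorem \ref{unobs} by comparing $(\cF_B,\mu)$ with the universal deformation of its fiber $(\cF_b,\mu_b)$, whose second level period map is, by construction, the inclusion of an open neighborhood of $[C_b]$ in $\mathsf{S}_d(\pdom{\Lambda})$. Throughout let $\Per$ denote the second level period map of $(\cF_B,\mu)$ and let $(\cF_U,\mu_U)$ be the universal deformation of $(\cF_b,\mu_b)$ provided by \Cref{unobs}, so that the second level period map of $(\cF_U,\mu_U)$ is the inclusion $U\hookrightarrow\mathsf{S}_d(\pdom{\Lambda})$ around $[C_b]$.

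For the direction ``universal $\Rightarrow$ local biholomorphism,'' assume $(\cF_B,\mu)$ is universal at $b$. Universality being characterized by an isomorphism in the deformation functor, the universal morphism $(\cF_B,\mu)\to(\cF_U,\mu_U)$ induced by \Cref{unobs} is, after shrinking $B$ around $b$, an isomorphism of marked second level families. By the functoriality of the second level period map under pullbacks (\Cref{functoriality2}), $\Per$ equals, via this base isomorphism, the second level period map of $(\cF_U,\mu_U)$, i.e.\ the inclusion $U\hookrightarrow\mathsf{S}_d(\pdom{\Lambda})$, which is a local biholomorphism at $[C_b]$.

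For the converse, suppose $\Per$ is a local biholomorphism at $b$. Applying universality of $(\cF_U,\mu_U)$ to the deformation $(\cF_B,\mu)$ of its fiber $(\cF_b,\mu_b)$ yields, after shrinking $B$, a unique morphism $(\cF_B,\mu)\to(\cF_U,\mu_U)$ of marked second level families. By \Cref{functoriality2} the induced holomorphic map on bases equals $\Per\from B\to U\subset\mathsf{S}_d(\pdom{\Lambda})$, which by assumption is a local biholomorphism at $b$. Consequently, up to further shrinking $B$, the morphism realizes $(\cF_B,\mu)$ as the pullback of $(\cF_U,\mu_U)$ along the biholomorphism $\Per\from B\bij\Per(B)\subset U$. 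Since the universal property is preserved under biholomorphic base change (one simply composes universal morphisms with the inverse biholomorphism on the base and uses the uniqueness on total spaces granted by $\hdim0{X_c}{\sT_{X_c}}=0$, cf.\ \Cref{glueuniv}), $(\cF_B,\mu)$ is itself universal at $b$.

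The main technical point to handle carefully is that universality is defined not only on the base but also on the total space, so one must make sure that the morphism produced by the universal property of $(\cF_U,\mu_U)$ is the unique one and that, once $\Per$ is inverted locally, the resulting pulled-back second level family is still universal as a second level family and not merely as a deformation of its base or total space. This is exactly what \Cref{deformation of marking} and the uniqueness-on-total-spaces part of the Unobstructedness Theorem \ref{unobs} guarantee, so no further input is required.
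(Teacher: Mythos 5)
Your proposal is correct and follows essentially the same route as the paper: both directions are deduced from the Unobstructedness Theorem \ref{unobs}, using that the universal morphism to the universal deformation of the fiber $(\cF_b,\mu_b)$ is induced on the base by the second level period map and is uniquely determined as a morphism of second level families. The paper's proof is simply a more condensed version of the same comparison argument.
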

\begin{proof}
By \Cref{unobs} the second level period map $\Per\from B\to\dou(\pdom{\Lambda})$ induces up to shrinking $B$ around $b$ a universal morphism from $(\cF_B,\mu)$ to a universal deformation of $(\cF_b,\mu_b)$.
Hence, $(\cF_B,\mu)$ is universal at $b$ if and only if $\Per$ is a local isomorphism at $b \in B$.
\end{proof}

\begin{theo}[Extension Theorem for embedded second level families]
\label{extension2}
Let $(\cF_B,\mu)$ be a $\Lambda$-marked embedded second level family of degree $d$ whose second level period map $\Per$ is an embedding.
Then $(\cF_B,\mu)=\Per^*(\cF'_U,\mu')$ for a second level family $\cF'_U$ with second level period map the inclusion of an open neighborhood $U$ of $\Per(B)$ in $\mathsf{S}_d(\pdom{\Lambda})$.
Up to shrinking $U$ around $\Per(B)$, the family $(\cF'_U,\mu')$ is unique up to isomorphism of marked second level families.
\end{theo}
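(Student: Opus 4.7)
The plan is to cover an open neighborhood of $\Per(B) \subset \mathsf{S}_d(\pdom{\Lambda})$ by local universal deformations provided by the Unobstructedness Theorem (\Cref{unobs}) and glue them via \Cref{glueuniv}. Since $\Per$ is an embedding, I freely identify $B$ with the complex subspace $\Per(B) \subset \mathsf{S}_d(\pdom{\Lambda})$ throughout.

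For each $b \in B$, applying \Cref{unobs} to the fiber $(\cF_b, \mu_b)$ yields a marked embedded second level family $(\cF_{U_b}, \mu_{U_b})$ on an open neighborhood $U_b$ of $b$ in $\mathsf{S}_d(\pdom{\Lambda})$ whose second level period map is the inclusion $U_b \hookrightarrow \mathsf{S}_d(\pdom{\Lambda})$. This inclusion is a local biholomorphism everywhere, so the Local Torelli Theorem (\Cref{local torelli}) upgrades universality at $b$ to universality at every point of $U_b$. Moreover, \Cref{unobs} tells me that the universal morphism $(\cF_B, \mu) \to (\cF_{U_b}, \mu_{U_b})$ induced near $b$ has base component equal to $\Per$ and is unique as a morphism of marked second level families. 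Consequently, under the identification via $\Per$, the restriction of $\cF_{U_b}$ to $\Per(B) \cap U_b$ coincides with $\cF_B$ locally around $b$ as a Cartesian pullback along $\Per$.

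Next, I would apply \Cref{glueuniv} to the families of total spaces $X_{U_b} \to U_b$, taking $S = \Per(B)$ and $D = \mathsf{S}_d(\pdom{\Lambda})$: each $\cF_{U_b}$ is universal at every point of $S \cap U_b$, the total spaces satisfy $\hdim0{X_b}{\sT_{X_b}} = 0$ by \cite[Theorem~5.4]{BKS}, and the restrictions over $S$ glue to $\cF_B$ by the previous paragraph. Passing to a locally finite refinement yields a family over an open neighborhood $U$ of $\Per(B)$ in $\mathsf{S}_d(\pdom{\Lambda})$. Because the transition isomorphisms arise from unique universal morphisms of marked second level families, they automatically respect the intermediate $\P^1$-fibrations $X_{U_b} \to \cC_{U_b} \to U_b$ and the markings, producing a marked second level family $(\cF'_U, \mu')$ with second level period map the inclusion $U \hookrightarrow \mathsf{S}_d(\pdom{\Lambda})$ and satisfying $(\cF_B, \mu) = \Per^*(\cF'_U, \mu')$.

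Uniqueness up to shrinking $U$ follows symmetrically: any second extension $(\cF''_V, \mu'')$ is universal at every point of $V$ by \Cref{local torelli}, so universality yields unique local isomorphisms to $(\cF'_U, \mu')$ near each $\Per(b)$, which glue via another application of \Cref{glueuniv} to an isomorphism over an open neighborhood of $\Per(B)$ inside $U \cap V$. The main obstacle I anticipate lies in the second paragraph: one must verify that the universal morphism furnished by \Cref{unobs} is not merely an abstract isomorphism but literally identifies $\cF_B$ with the $\Per$-pullback of $\cF_{U_b}$, so that the gluing input for \Cref{glueuniv} is met on the nose rather than up to yet another choice of isomorphism. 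Here the explicit assertion in \Cref{unobs} that the induced universal morphism is unique as a morphism of second level families and has base component exactly $\Per$ is precisely what makes the argument go through cleanly.
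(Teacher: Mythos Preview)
Your proposal is correct and follows the same overall strategy as the paper: cover a neighborhood of $\Per(B)$ by the local universal families from \Cref{unobs}, invoke \Cref{local torelli} to get universality everywhere, and glue. The difference lies in how the gluing is executed.

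You route the gluing through \Cref{glueuniv} applied to the families of \emph{total spaces} $X_{U_b} \to U_b$, and then argue afterward that the resulting transition maps respect the intermediate $\P^1$-fibrations and markings. The paper instead glues directly at the level of marked second level families: since by \Cref{local torelli} each $(\cF'_{U_b},\mu'_{U_b})$ is universal at every point of $U_b$ (not just at points of $B$), and since \Cref{unobs} says the resulting morphisms are unique \emph{as morphisms of marked second level families}, one obtains unique isomorphisms over each full overlap $U_{bb'}$ which automatically satisfy the cocycle condition. No appeal to \Cref{glueuniv} is needed, and no shrinking to a locally finite refinement occurs.

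Your detour is legitimate but requires the extra observation you only implicitly make: the transition isomorphisms produced by \Cref{glueuniv} come from universality of the \emph{total space} deformations, whereas you need them to be second level morphisms. This works because the unique second level morphism from \Cref{unobs} induces a total space morphism that, by the uniqueness used in \Cref{glueuniv} (via $\hdim0{X_s}{\sT_{X_s}}=0$), must coincide with the one \Cref{glueuniv} produces. Your phrasing ``the transition isomorphisms arise from unique universal morphisms of marked second level families'' slightly obscures this point; it would be cleaner to say that the second level morphism exists and, upon forgetting the extra structure, must agree with the one from \Cref{glueuniv}. The paper's approach sidesteps this entirely by never leaving the second level category.
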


\begin{proof}
As $\Per$ induces an isomorphism $(\cF_B,\mu)\cong(\Per\inv)^*(\cF_B,\mu)$, we can without loss of generality assume $B\subset \mathsf{S}_d(\pdom{\Lambda})$ and that $\Per$ is the inclusion.

We first consider the situation locally at a given point $b\in B$.
By applying \Cref{unobs} to $(\cF_b,\mu_b)$ we obtain a second level family $(\cF'_{U_b},\mu'_{U_b})$ over an open neighborhood $U_b$ of $b$ in $\mathsf{S}_d(\pdom\Lambda)$ with second level period map the inclusion $U_b\inj \mathsf{S}_d(\pdom\Lambda)$.
By \Cref{local torelli} it is a universal deformation of each of its fibers, in particular of the central fiber $(\cF'_b,\mu'_b)\cong(\cF_b,\mu_b)$.
Up to shrinking $U_b$ around $b$, we thus have $(\cF'_{U_b},\rest{\mu'_{U_b})}{B\cap U_b}\cong\rest{(\cF_B,\mu)}{B\cap U_b}$, and by the functoriality of the second level period map this isomorphism fixes the base.

Now, by varying $b\in B$, we obtain an open covering of an open neighborhood $U=\bigcup_{b\in B}U_b$ of $B$ in $\mathsf{S}_d(\pdom{\Lambda})$ with marked second level families $(\cF'_{U_b},\mu'_{U_b})$ over $U_b$ for each $b\in B$.
By the universality of these families, for each two $b,b'\in B$ the two marked families are on the intersection $U_{bb'}$ locally isomorphic over the base.
These local isomorphisms are by \Cref{unobs} uniquely determined as morphisms of marked second level families and thus glue together to a unique gluing of $(\cF'_{U_b},\mu'_{U_b})$ and $(\cF'_{U_{b'}},\mu'_{U_{b'}})$ over $U_{bb'}$.
Moreover, by the uniqueness of the local isomorphisms, such gluings must fulfill the cocycle condition and thus the $(\cF'_{U_b},\mu'_{U_b})$ glue uniquely together to a marked family $(\cF'_U,\mu')$ over $U$.
From the properties of the local families $(\cF'_{U_b},\mu'_{U_b})$ we can deduce that $\rest{(\cF'_U,\mu')}B$ is over $B$ isomorphic to $(\cF_B,\mu)$, and the uniqueness of the gluing implies the uniqueness of $(\cF'_{U},\mu')$ up to shrinking $U$ around~$B$.
\end{proof}

 In Section~\ref{section sheaves}, we will establish stronger functoriality results for embedded second level families of K3 surfaces. 

\subsection{Gluing local universal second level families to obtain moduli spaces}\label{glueing}

Let $\Lambda$ be an IHS lattice.
Recalling the construction of the Burns-Rapoport space $\tilde{\Omega}$, a natural follow-up step to \Cref{unobs} is to try to glue the universal families obtained there over $\mathsf{S}_d(\pdom{\Lambda})$, which plays the role of a period domain for our deformation problem.
We will indeed proceed in a way similar to \cite[Proposition~1]{Bea81} to construct a coarse moduli space $\sM_{\Lambda}$ of $\Lambda$-marked embedded families of IHS manifolds. In the K3 surface case, the local families will glue to a global universal family on $\sM=\sM_{\Lambda}$, making $\sM$ a fine moduli space; for this to work the following well-known property is crucial, see e.g.~\cite[Proposition 6]{MR785227} or \cite[Chapter~15, Proposition~2.1]{K3book} for a proof.

\begin{prop}
\label{automorphism property}
Let $X$ be a K3 surface. Then the identity $\id X$ is the only biholomorphic automorphism of $X$ acting trivially on $\Hsh2X{\Z}$.\qed
\end{prop}

This property does not generalize to all IHS manifolds (we discuss this point later in \Cref{ihsm automorphism property}).
The following lemma will serve as an analog of \Cref{automorphism property} for marked embedded families of K3 surfaces.

\begin{lemm}
\label{marked automorphisms}
Let $(\cF,\mu)=(X\stackrel{f}{\to}C,\mu)$ be a marked family of K3 surfaces with injective period map.
Then every marked automorphism of $(\cF,\mu)$ inducing the identity on $\Rs2f{\csh\Z X}$ is trivial.
\end{lemm}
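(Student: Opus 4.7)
The plan is to reduce the statement to the classical fact that a K3 surface has no non-trivial automorphism acting trivially on $\Hsh2{\cdot}{\Z}$ (\Cref{automorphism property}), by first showing that a marked automorphism of $(\cF,\mu)$ must fix the base $C$ pointwise, and then showing that its restriction to each fiber is a marked automorphism of a K3 surface.

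Concretely, let $\phi=(g,h)\from(\cF,\mu)\to(\cF,\mu)$ be a marked automorphism inducing the identity on $\Rs2f{\csh\Z X}$. First I would use the functoriality of the period map under morphisms of marked families (cf.~the discussion at the end of Section~3.2) to observe that $\per\circ h=\per$ on $C$. Since $\per$ is injective by assumption, this forces $h=\id_C$. In particular, $f\circ g=f$, so that $g$ restricts for each $c\in C$ to a biholomorphic automorphism $g_c\from X_c\to X_c$ of the K3 fiber.

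Next, I would combine this with the assumption that $g^*$ is the identity on $\Rs2f{\csh\Z X}$ to deduce that for every $c\in C$, the induced map $g_c^*\from\Hsh2{X_c}\Z\to\Hsh2{X_c}\Z$ is the identity; this uses the compatibility of the marking $\mu$ with restriction to fibers, expressed via the stalk $(\Rs2f{\csh\Z X})_c=\Hsh2{X_c}{\Z}$. Applying \Cref{automorphism property} fiberwise then yields $g_c=\id_{X_c}$ for all $c\in C$, hence $g=\id_X$ as a set-theoretic map, and therefore as a holomorphic map.

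The only point requiring a bit of care is the fiberwise translation of the hypothesis on $\Rs2f{\csh\Z X}$; this is essentially a bookkeeping issue using that $\Rs2f{\csh\Z X}$ is a locally constant sheaf whose stalks are canonically identified with the cohomology of the fibers via the base change maps. I do not anticipate a serious obstacle here, since once $h=\id_C$ has been established, the whole argument is local on $C$ and reduces immediately to \Cref{automorphism property}.
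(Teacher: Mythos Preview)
Your proposal is correct and follows essentially the same approach as the paper: use functoriality and injectivity of the period map to force $h=\id_C$, then restrict $g$ to each fiber and apply \Cref{automorphism property} to conclude $g_c=\id_{X_c}$, hence $g=\id_X$. The paper phrases the fiberwise triviality of $g_c^*$ via the marking condition $\mu_p\circ(\rest{g}{X_p})^*=\mu_p$ rather than via stalks of $\Rs2f{\csh\Z X}$, but this is the same observation.
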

\begin{proof}
Let $\per\from C\inj\Omega$ be the period map of $(\cF,\mu)$, and let $(g,h)\in\Aut_{\O}(\cF,\mu)$ be a marked automorphism inducing the identity on $\Rs2f{\csh\Z X}$.
By the functoriality of the period map we must have $\per\circ h=\per$ and thus $h=\id C$ since $\per$ is injective.
Hence, $g$ induces over every point $p\in C$ a marked automorphism $\rest{g}{X_p}$ of the fiber $(X_p,\mu_p)$.
As $\rest{\mu_p\circ g}{X_p}^*=\mu_p$, the automorphism $\rest{g}{X_p}^*$ of $\Hsh2{X_p}{\Z}$ must be trivial.
Therefore, by \Cref{automorphism property}, $g$ needs to be the identity on each fiber and thus $g=\id X$.
\end{proof}

\begin{lemm}
\label{unique isomorphism}
If two marked families of K3 surfaces with injective period map are isomorphic, the isomorphism is unique.
\end{lemm}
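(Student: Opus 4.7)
The plan is to reduce the statement to the previous \Cref{marked automorphisms}. Suppose $\phi_1,\phi_2\from(\cF,\mu)\to(\cF',\mu')$ are two isomorphisms of marked families of K3 surfaces, where the (common) period map $\per\from C\inj\Omega$ of source and target is assumed injective. Setting $\psi\defeq\phi_2\inv\circ\phi_1=(g,h)$, we obtain a marked automorphism of $(\cF,\mu)$, and it suffices to show that $\psi$ is trivial.

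First, I would argue that $h=\id_C$. Indeed, by functoriality of the period map (cf.~\cite[Section~2.8]{BKS} and the discussion after the definition of period maps in Section~\ref{section ihs}), we have $\per\circ h=\per$, and injectivity of $\per$ yields $h=\id_C$. Next, the defining commutative diagram for a morphism of marked families, applied with $h=\id_C$ and $\mu'=\mu$, reduces to
\begin{center}
\begin{tikzcd}
\Rs2{f}{\csh\Z{X}} \rar{\alpha} \dar{\mu} & \Rs2{f}{\csh\Z{X}} \dar{\mu} \\
\csh\Lambda{C} \rar{=} & \csh\Lambda{C},
\end{tikzcd}
\end{center}
where $\alpha$ is the isomorphism of locally constant sheaves induced by $g$ via topological base change. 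Since $\mu$ is an isomorphism, commutativity forces $\alpha=\id$, i.e. $\psi$ induces the identity on $\Rs2f{\csh\Z X}$.

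At this point, I invoke \Cref{marked automorphisms} directly: every marked automorphism of $(\cF,\mu)$ inducing the identity on $\Rs2f{\csh\Z X}$ is trivial. Therefore $\psi=\id$, and consequently $\phi_1=\phi_2$. There is no real obstacle here; the only thing to be careful about is to spell out that the marking compatibility (together with $h=\id_C$) automatically upgrades $\psi$ to an automorphism fixing the cohomology locally constant sheaf, so that the previous lemma is applicable.
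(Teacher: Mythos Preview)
Your proof is correct and follows essentially the same approach as the paper: compose two given isomorphisms to obtain a marked automorphism of the source, use injectivity of the period map to show it is the identity on the base, observe that marking compatibility forces triviality on $\Rs2f{\csh\Z X}$, and then invoke \Cref{marked automorphisms}. The only cosmetic difference is that the paper first compares the two base maps $h_1,h_2$ directly (using injectivity of the \emph{target}'s period map) before composing on the total space, whereas you compose first; also, your phrase ``common period map of source and target'' is slightly imprecise since the two families may have different bases, but this does not affect the argument once you pass to the composition $\psi$.
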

\begin{proof}
For $i=1,2$, let $(g_i,h_i)\from(X\stackrel{f}{\to}C,\mu)\to(X'\stackrel{f'}{\to}C',\mu')$ be two isomorphisms of marked families with injective period maps $\per,\per'$.
By functoriality and injectivity of the period maps, we have $\per=\per'\circ h_i$ and thus the $h_i$ are equal.
Hence $g_2\circ g_1\inv$ acts trivially on $\Rs2f{\csh\Z X}$ and thus needs to be the identity on $X$ by \Cref{marked automorphisms}.
\end{proof}

\begin{theo}[Moduli spaces of marked embedded families]
\label{fine}
For every IHS lattice $\Lambda$ there exists a complex space\footnote{in the sense of \cite{GrothendieckAnalyticSpaces}, a priori not necessarily Hausdorff} $\sM_{\Lambda}$ that is a coarse moduli space of $\Lambda$-marked embedded families of IHS manifolds and has a locally biholomorphic map $P\colon\sM_{\Lambda}\to \mathsf{S}(\pdom{\Lambda})$ to the space of smooth rational curves in $\pdom{\Lambda}$. In the K3 surface case, this moduli space, which we denote by $\sM$, is actually a fine moduli space, and $P\colon\sM\to \mathsf{S}(\Omega)$ is (in every degree) the second level period map of the universal family on $\sM$.
\end{theo}

\begin{proof} We are now in a position to implement the strategy employed in \cite[Proposition~1]{Bea81}:
Let $\sM_{\Lambda}$ be the set of isomorphism classes of $\Lambda$-marked embedded families $(\cF,\mu)$ of IHS manifolds.
By definition this induces a set-theoretic map $P\colon\sM_{\Lambda}\to\mathsf{S}(\pdom{\Lambda})$, mapping each $(\cF,\mu)$ to the smooth rational curve corresponding to the image of its period map.
By \Cref{unobs}, each such family has a universal deformation $(\cF_U,\mu_U)$ over a smooth open subspace $U\subset \mathsf{S}_d(\pdom{\Lambda})$ with second level period map the inclusion, where $d=\deg\cF$.
By the functoriality of the second level period map, the fibers of $(\cF_U,\mu_U)$ are pairwise non-isomorphic as marked families.
By \Cref{local torelli} the $(\cF_U,\mu_U)$ are universal at every fiber, hence for each two of them the space of isomorphic fibers is open in their bases.
Thus, the maps $U\to\sM_{\Lambda}$, $s\mapsto[(\cF_s,\mu_s)]$ are local sections of $P$ and thus give $\sM_{\Lambda}$ the structure of a (potentially non-Hausdorff) complex space in the sense of \cite{GrothendieckAnalyticSpaces} for which $P$ becomes locally biholomorphic.

In particular, in the K3 surface case the $(\cF_U,\mu_U)$ glue together to a global family over $\sM=\sM_{\Lambda}$ by \Cref{unique isomorphism}.
The universal properties (of a coarse or fine moduli space, respectively) follow from those of the local families $(\cF_U,\mu_U)$.
In the K3 surface case, by the construction of the maps $U\to\sM$ the map $P$ must be the second level period map of the universal family on $\sM$, as this must be true over each of the charts $U$.
\end{proof}

In fact, the spaces $\sM_{\Lambda}$ just constructed have further modular properties.

\begin{theo}[Moduli spaces of marked total spaces]
\label{total spaces}
The space $\sM_{\Lambda}$ constructed in \Cref{fine} is also a coarse moduli space for the total spaces $X$ of embedded families of IHS manifolds together with markings on $\Hsh2X{\Z}$.
In the K3 surface case, $\sM$ is a fine moduli space also for this deformation problem.
\end{theo}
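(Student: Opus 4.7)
The plan is to construct a forgetful functor $\Phi$ sending a $\Lambda$-marked embedded family $(\cF,\mu)=(X\stackrel{f}{\to}C,\mu)$ to the pair $(X,\mu_X)$, where the marking $\mu_X$ of $\Hsh2X{\Z}$ is induced by $\mu$ via the canonical identification $\Hsh0C{\Rs2f{\csh\Z X}}\cong\Lambda$ and the natural inclusion of monodromy-invariants into total-space cohomology coming from the Leray spectral sequence (the base $C\cong\P^1$ being simply connected). I would then show that $\Phi$ induces a bijection on isomorphism classes compatible with the local moduli structure on $\sM_\Lambda$ established in \Cref{fine}, and moreover that in the K3 case the universal family glues globally.

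The heart of the argument is an appeal to the Unobstructedness Theorem (\Cref{unobs}): for every marked embedded family $(\cF_0,\mu_0)$, the universal deformation $(\cF_U,\mu)$ of the second level family over an open $U\subset\mathsf{S}_d(\pdom\Lambda)$ has the property that its total-space map $X\to U$ is itself a universal deformation of the central fiber's total space $X_0$. Consequently, the local charts of $\sM_\Lambda$ arising from gluing the various $U$'s simultaneously provide local universal deformations of total spaces. Bijectivity of $\Phi$ on isomorphism classes then follows: essential surjectivity is built into the hypothesis that $X$ arises as the total space of some embedded family, while injectivity uses that a marked isomorphism $X\cong X'$ of total spaces descends, via the second level period map and its functoriality (\Cref{periodmap2} together with \Cref{functoriality2}), to an isomorphism of the base curves and hence to an isomorphism of marked families. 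This establishes that $\sM_\Lambda$ is a coarse moduli space for the new problem.

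For the K3 case, to upgrade coarseness to fineness, I would invoke \Cref{automorphisms}, stated in the introduction and proved later: the only automorphism of the total space $X$ of an embedded family of K3 surfaces that acts trivially on $\Hsh2X{\Z}$ is $\id X$. This is the total-space analog of \Cref{automorphism property} and, combined with \Cref{local torelli}, implies uniqueness of isomorphisms between marked total spaces arising from marked families (in direct parallel with \Cref{unique isomorphism}). The local universal total spaces $X_U\to U$ therefore glue uniquely into a global universal marked total space over $\sM$. The main obstacle I anticipate is making the definition of $\mu_X$ precise in a way that is functorial under morphisms of embedded families and compatible with deformations, so that $\Phi$ is genuinely an equivalence of moduli problems rather than just a bijection of sets; once this bookkeeping is in place, the theorem follows mechanically from \Cref{unobs}, \Cref{local torelli}, \Cref{automorphisms}, and the construction of $\sM_\Lambda$ in \Cref{fine}.
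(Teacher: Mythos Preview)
Your overall architecture is right and matches the paper's: use \Cref{unobs} to see that the local charts of $\sM_\Lambda$ are simultaneously universal deformation spaces for total spaces, then check that the correspondence $(\cF,\mu)\mapsto(X,\mu_X)$ is a bijection on isomorphism classes, and finally use \Cref{automorphisms} for fineness in the K3 case.

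However, your injectivity argument has a genuine gap. You write that a marked isomorphism $X\cong X'$ of total spaces ``descends, via the second level period map and its functoriality,'' to an isomorphism of base curves. But the second level period map is defined on the base of a second level family, not on total spaces; to even apply functoriality you would first need to know that an isomorphism $g\colon X\to X'$ sends $f$-fibers to $f'$-fibers, i.e., that the fibration structure is intrinsic to $X$. This is not automatic and is exactly the content of \Cref{unique fiber structure}, which the paper proves separately (using stability of fibrations under deformation, Lagrangian-fibration constraints, and properties of the Stein factorization) and then feeds into the proof of \Cref{total spaces}. Without it, a biholomorphism of total spaces could in principle scramble the fibers, and your appeal to \Cref{periodmap2} and \Cref{functoriality2} is circular.

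Once you insert \Cref{unique fiber structure}, your argument goes through: any isomorphism of marked total spaces is forced to be a $C$-isomorphism of the underlying families, and the rest follows as you outline. Note also that \Cref{automorphisms} itself relies on \Cref{unique fiber structure} in its proof, so even your fineness argument is implicitly using this lemma; you should make the dependence explicit.
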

Recall that total spaces $X$ of IHS families are not K\"ahler, hence the usual notions of period maps and period domains cannot be used. This makes it difficult to proceed with Beauville's strategy from \cite[p.~141f]{Bea81} directly.
However, we know that the second level families of \Cref{unobs} also define universal deformations of the total spaces of their fibers. Together with \Cref{fine} we therefore only need the following two results \Cref{unique fiber structure} and \Cref{automorphisms} to establish the claim of \Cref{total spaces}.

\begin{lemm}[Automorphisms preserve fibration structure]
\label{unique fiber structure}
Let $\cF=(X\stackrel{f}{\to}C)$ be an embedded family of IHS manifolds.
The structure on $X$ as a family of IHS manifolds is unique up to automorphisms of $C$.
As a consequence, every biholomorphic automorphism of $X$ is induced by a $C$-automorphism of $\cF$.
\end{lemm}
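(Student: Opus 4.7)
The second claim follows immediately from the first: given $g \in \Aut(X)$, the composition $f \circ g\inv$ is another structure on $X$ as an embedded family of IHS manifolds, so by uniqueness there exists an automorphism $h$ of $C$ with $f \circ g\inv = h\inv \circ f$, and then $(g,h)$ is a $C$-automorphism of $\cF$. So the plan focuses on the first claim. Suppose $f \from X \to C$ and $f' \from X \to C'$ are two structures realizing $X$ as an embedded family of IHS manifolds, with $C, C' \isom \P^1$.

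The approach is to analyze the relative Douady space $\dou(X)$. Each fiber $X_c$ is a compact IHS submanifold of $X$ with trivial normal bundle $\Nb{X_c}{X} \isom \O_{X_c}$, since $f$ is a submersion over a one-dimensional base. Combined with $\Hsh1{X_c}{\O_{X_c}} = 0$, valid on every IHS manifold, standard deformation theory of subvarieties implies that the connected component $D$ of $\dou(X)$ containing $[X_c]$ is smooth of dimension one. The universal property of $\dou(X)$ yields an injective holomorphic map $C \inj D$, and since $C$ is a compact connected smooth $1$-dimensional curve mapping into the smooth connected $1$-dimensional $D$, this map must be an isomorphism $C \isom D$. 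Analogously $f'$ produces a component $D' \subset \dou(X)$ with $C' \isom D'$.

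The crucial step is to show that $D = D'$, which reduces to producing a single point $c' \in C'$ such that the fiber $X'_{c'}$ coincides, as a subset of $X$, with some fiber $X_c$ of $f$: indeed, then $[X'_{c'}] = [X_c] \in D \cap D'$ forces $D = D'$ since connected components of $\dou(X)$ are either equal or disjoint. To find such $c'$, consider the restriction $g \defeq \rest{f}{X'_{c'}} \from X'_{c'} \to C \isom \P^1$: if $g$ is constant then $X'_{c'}$ is contained in a fiber of $f$ and, by compactness and dimension, equal to it. When $X'_{c'}$ has complex dimension at least $4$, we claim $g$ is constant for every $c'$: via the Stein factorization $X'_{c'} \to Y \to \P^1$, the base $Y$ is a simply connected compact Riemann surface, hence $Y \isom \P^1$, but then Matsushita's theorem on fibrations of IHS manifolds forces $\dim Y = \tfrac{1}{2} \dim X'_{c'} \ge 2$, a contradiction. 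When $X'_{c'}$ is a K3 surface, we instead use period theory: the positive-definite $3$-space $V'$ associated to $\per'(C')$ cannot lie in $\delta^\perp$ for any $\delta \in \Lambda \setminus \{0\}$ with $\delta^2 \ge 0$, because the maximal positive-definite subspace of such a $\delta^\perp \subset \Lambda_{\R}$ has dimension at most $2$. Consequently each $\per'(C') \cap \delta^\perp$ is a proper analytic, hence finite, subset of the curve, and for $c'$ in the complement of the countable union of these finite sets, $\Pic(X'_{c'})$ contains no nonzero class of non-negative self-intersection; such a K3 surface has algebraic dimension zero, so every holomorphic map $X'_{c'} \to \P^1$ is constant, including~$g$.

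The main obstacle is the K3 case in the preceding paragraph, since K3 surfaces with elliptic fibrations to $\P^1$ certainly exist, so one cannot rule out non-constant $g$ a priori; the period-theoretic signature argument is what guarantees that the generic fiber of $f'$ is not of such a special type. Once $D = D'$ is established, the induced isomorphisms $C \isom D = D' \isom C'$ provide a biholomorphism identifying the fibers of $f$ with those of $f'$, which is exactly the uniqueness statement up to an automorphism of the base.
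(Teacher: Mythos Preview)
Your Douady-space strategy for the first claim is elegant and the $\dim \geq 4$ case via Matsushita is fine, but there are two genuine gaps.

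\textbf{The K3 case.} You invoke ``the positive-definite $3$-space $V'$ associated to $\per'(C')$'', but this object need not exist: the second fibration $f'$ is not assumed embedded (the lemma compares $f$ to \emph{any} family structure on $X$), and even if it were, its degree need not be $2$, nor would the associated $3$-space have to be positive. More fundamentally, even if you run the signature argument using the \emph{given} embedded map $\per(C)$ instead, it still fails: there exist embedded families of K3 surfaces with $\per(C) \subset H_\delta$ for an isotropic $\delta$ (the Lifting Theorem produces them over any smooth rational curve in $H_\delta \subset \Omega$), and then \emph{every} fiber $X_c$ carries an elliptic pencil, so you cannot find a fiber with no nonconstant map to $\P^1$. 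The paper avoids this by extending both $f$ and the competing fibration $h$ to the universal deformation $\cX \to U$ via the Unobstructedness Theorem and Kodaira's stability result; if some fiber of $F$ mapped onto the base under $H$, nearby fibers would too, forcing an \emph{open} set of period points to correspond to elliptic K3s --- impossible since the very general K3 has Picard rank zero. This passage to an open neighborhood is the missing idea.

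\textbf{The second claim.} Your derivation produces $(g,h)$ with $h \in \Aut(C)$ satisfying $h \circ f = f \circ g$; this is an automorphism of $\cF$, but a ``$C$-automorphism'' here means one over the identity of $C$, i.e.\ $f \circ g = f$ (this is exactly how the lemma is used in \Cref{automorphisms}). Showing $h = \id{C}$ is a separate argument: the paper picks a fixed point $p_0$ of $h$ on $C \cong \P^1$, notes that $(\cF, h^*\mu)$ has period map $C \stackrel{h}{\to} C \hookrightarrow \pdom{\Lambda}$, and then uses the universality of a local family at $p_0$ (coming from Local Torelli) to force $h = \id{}$ near $p_0$, hence everywhere. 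This step is not immediate and you have omitted it.
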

\begin{proof}
As $\cF$ is an embedded family, say of degree $d\defeq\deg\cF$, by choosing a marking $\mu$ of $\cF$ we may assume without loss of generality that $C$ is contained in an IHS period domain $\pdom{\Lambda}$ with embedding degree $d$, and that the period map of $(\cF,\mu)$ is the inclusion.

Let $\cF'=(X\stackrel{h}{\to}C)$ denote an arbitrary structure on $X$ as a family of IHS manifolds over $C$.
We want to show that $f$ and $h$ define the same fibration structure on $X$.
For this, let $(\cF_U,\mu_U)=(\cX\stackrel{F}{\to}\cC\to U)$ be a universal deformation of $(\cF,\mu)$ as obtained in \Cref{unobs}, where $\cC\defeq\rest{\cC_d(\pdom{\Lambda})}U$ and $U$ is an open neighborhood of $[C]$ in $\mathsf{S}_d(\pdom{\Lambda})$.
The map $F$ extends the morphism $f$ on the central fiber $(\cF_{[C]},\mu_{[C]})=(\cF,\mu)$.
The composition $\cX\to U$ can also be viewed as a deformation of the total space $X$.
Due to Kodaira's classical result \cite[Theorem~4]{Kod63} the second fibration structure of $X$ into IHS manifolds must also be stable under small deformations; that is, up to shrinking $U$ around $[C]$, there is a proper holomorphic map $H\from \cX\to \cC$ over $U$ extending $h$ on the central fiber.

As $h$ is proper, every fiber $X_p$ of $f$ gets mapped by $h$ either to a point or surjectively onto $C$.
We want to show that the latter case cannot occur.
To see this, note first that $H$ is also proper, such that more generally $H$ must map each fiber $\cX_{([C'],p')}$ of $F$ either to a point or surjectively onto the curve $\cC_{[C']}$.
Aiming for a contradiction, suppose that $H(\cX_{([C],p)})=h(X_p)=C$ holds for some $p\in C$. In this case, the continuously differentiable map $\rest{H}{\cX_{([C],p)}}\colon \cX_{([C],p)} \to C$ is a submersion, and therefore the restriction of $H$ to neighboring fibers $\cX_{([C'],p')} $ continues to be a submersion. We conclude that in particular we have $H(\cX_{([C'],p')})=\cC_{[C']}$ for all $([C'],p')$ in an open neighborhood $V$ of $([C],p)$ in $\cC$.

As every fiber of $F$ is an IHS manifold, the Stein factorization of each non-trivial map $\rest{H}{\cX_{([C'],p')}}$ must be a Lagrangian fibration with $\dim \cX_{([C'],p')}=2\dim \cC_{[C']}=2$; see \cite[Proposition~24.8]{GHJ}, which generalizes \cite[Theorem~2]{Mat99}. In other words, the restriction of $(\cX\stackrel{F}{\to}\cC,\mu_U)$ to $V$ is a marked family of elliptic K3 surfaces, whose period points fill up the open neighborhood $\pr2(V)$ of $p$ in the K3 period domain $\Omega$.
This is impossible, as elliptic K3 surfaces carry a non-tivial line bundle, while a very general K3 surface has Picard rank zero.

It follows from the contradiction reached above that $f$ and $h$ both contract each others fibers.
As all fibers of both $f$ and $h$ are connected, $C$ is the (intermediate space in the) Stein factorization of both maps.
Hence, the universal property \cite[p.~214]{GR84} of the Stein factorization induces a unique automorphism $\varphi\in\Aut_{\O}(C)$ with $\varphi\circ f=h$.
Hence, $(\id X,\varphi)\from\cF\to\cF'$ is an isomorphism of families (cf.~Section~\ref{families}), which proves the first claim of \Cref{unique fiber structure}.

As a special case, note that every biholomorphic automorphism $g\in\Aut_{\O}(X)$ of the total space $X$ defines via $\cF'=(X\stackrel{f\circ g}{\lto}C)$ a second family structure on $X$ as a family of IHS manifolds over $C$.
Then, the induced isomorphism $(\id X,\varphi)\from\cF\to\cF'$ from above induces an automorphism $(g,\varphi)\from\cF\to\cF$ of $\cF$.
To show that this is a $C$-automorphism, it remains to show that any given automorphism $(g,\varphi)\in\Aut(\cF)$ is trivial on $C$.
As $C\cong\P^1$ is a smooth rational curve, $\varphi$ must have a fixed point, say $p_0\in C$.
Then $(\cF,\varphi^*\mu)$ must also be a marked embedded family with the same total space, but with period map $C\stackrel{\varphi}{\to}C\inj \pdom{\Lambda}$. 
Using the Local Torelli Theorem for IHS manifolds, we can construct a marked family $(\tilde{\cF},\tilde{\mu})$ of IHS manifolds with period map the inclusion $\tilde{W}\subset\pdom{\Lambda}$ of an open neighborhood of $p_0$ such that $\tilde{\cF}$ is a universal deformation of $X_{p_0}$.
Let $W\subset \tilde{W}\cap C$ be an open neighborhood of $p_0$ in $C$ with $\varphi(W)\subset W$.
Then $\tilde{\cF}|_{W}$ is $W$-isomorphic to the pullbacks of $\tilde{\cF}$ under $W\inj \tilde{W}$ and $\rest{\varphi}W$.
By the universality of $\tilde{\cF}$ we must then have $\varphi=\id C$; first on $W$ and then, by the identity theorem, everywhere on $C$.
\end{proof}

\begin{lemm}[Leray spectral sequence of an embedded family]
\label{leray}
Let $\cF=(X\stackrel{f}{\to}C)$ be an embedded family of IHS manifolds.
Then the Leray spectral sequence $E^{pq}_2=\Hsh pC{\Rs qf{\csh\Z X}}\Rightarrow\Hsh{p+q}X{\Z}$ for the morphism $f$ degenerates on $E_2$ up to degree~$2$.
\end{lemm}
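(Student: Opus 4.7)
The approach is a direct inspection of the $E_2$-page in total degree $\leq 2$, using that the base $C$ of an embedded family of IHS manifolds is a smooth rational curve, hence $C \isom \P^1$.

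First I would identify the higher direct images. Since $\P^1$ is simply connected, each $\Rs{q}{f}{\csh{\Z}{X}}$ is a constant sheaf with stalk $\Hsh{q}{X_p}{\Z}$; because the fibers $X_p$ are IHS and hence connected and simply connected, we obtain $\Rs{0}{f}{\csh{\Z}{X}} \isom \csh{\Z}{C}$ and $\Rs{1}{f}{\csh{\Z}{X}} = 0$. Combined with $\Hsh{1}{\P^1}{-} = 0$ for any constant sheaf and $\Hsh{p}{\P^1}{-} = 0$ for $p \geq 3$, one sees that the only possibly nonzero $E_2$-entries with $p + q \leq 2$ are $E_2^{0,0}$, $E_2^{2,0}$, and $E_2^{0,2}$.

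Next I would verify that every differential $d_r$ ($r \geq 2$) touching one of these positions vanishes. For $r \geq 3$ this is automatic: the target of any outgoing $d_r$ has first index $\geq 3$, and the source of any incoming $d_r$ has first index $\leq -1$, so both vanish by cohomological dimension of $\P^1$. For $r = 2$, the outgoing $d_2$ maps from $(0, 0)$ and $(2, 0)$ land in positions with $q < 0$, while the outgoing $d_2$ from $(0, 2)$ has target $E_2^{2, 1} = \Hsh{2}{\P^1}{\Rs{1}{f}{\csh{\Z}{X}}} = 0$; the three possible incoming $d_2$'s originate from $E_2^{-2, 1}$, $E_2^{0, 1}$, and $E_2^{-2, 3}$, all zero. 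Therefore the spectral sequence degenerates at $E_2$ in the claimed range.

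I do not foresee a real obstacle: the argument is a short case check relying only on $C \isom \P^1$, the simple connectedness of IHS fibers, and the standard cohomology of $\P^1$.
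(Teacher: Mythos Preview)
Your proof is correct and follows essentially the same approach as the paper: both arguments identify the higher direct images as constant sheaves using simple connectedness of $C\cong\P^1$, use that $\Rs1f{\csh\Z X}=0$ since IHS fibers are simply connected, and then check directly that all relevant differentials on the $E_2$-page vanish by cohomological dimension of $\P^1$. Your treatment is slightly more systematic in enumerating all incoming and outgoing differentials at the three surviving positions, but the substance is the same.
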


\begin{proof}
As $C\cong\P^1$ is simply connected, the locally constant sheaves $\Rs qf{\csh\Z X}$ are isomorphic to the constant sheaves $\underline{\Hsh q{X_0}{\Z}}_{C}$, where $X_0$ denotes a fiber of $f$.
In particular, for all $p,q$ we have $E^{0q}_2\cong\Hsh q{X_0}{\Z}$ and $E^{p0}_2=\Hsh p C{\Z}$.

The degeneracy at $E_2^{00}$ and $E_2^{10}$ is automatic. For the other entries, we conclude from the above that $E_2^{01}=E_2^{11}=\{0\}$, and that $d_2\colon E^{02}_2{\to}E^{21}_2=\{0\}$ and $d_2 \colon E^{01}_2=\{0\}\to E^{20}_2$ are the zero maps. Since in addition $d_3\colon E^{02}_3{\to}E^{30}_3\isom\Hsh3{C}{\Z}=\{0\}$ is also the zero map, the claimed degeneracy holds.
\end{proof}

Analogously to the application of \Cref{marked automorphisms} to prove \Cref{fine}, the following consequence of the previous two lemmata can be used to prove the existence of gluings in \Cref{total spaces}.

\begin{prop}[Automorphisms of total spaces acting trivially on $\Hsh2X{\Z}$]
\label{automorphisms}
Let $\cF=(X\stackrel{f}{\to}C)$ be an embedded family of K3 surfaces.
Then $\id X$ is the only biholomorphic automorphism of the total space $X$ that induces the identity on $\Hsh2X{\Z}$.
\end{prop}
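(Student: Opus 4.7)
My plan is to reduce the statement to \Cref{marked automorphisms} via the Leray spectral sequence of $f$. The hypothesis only concerns the action of $g$ on the single cohomology group $\Hsh2X{\Z}$, whereas \Cref{marked automorphisms} requires triviality of the induced action on the whole direct image sheaf $\Rs2f{\csh\Z X}$; passing from one to the other will be the main step.

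First, by \Cref{unique fiber structure} the automorphism $g$ is induced by a $C$-automorphism $(g,h)\from\cF\to\cF$ with $h\in\Aut_{\O}(C)$. Combining the Leray spectral sequence degeneration from \Cref{leray} with $\Rs1f{\csh\Z X}=0$ (as the fibers are K3 surfaces) and $\Hsh1C{\Z}=0$ (as $C\isom\P^1$) yields a $(g,h)^*$-equivariant short exact sequence
\[0\to\Hsh2C{\Z}\to\Hsh2X{\Z}\to\Hsh0C{\Rs2f{\csh\Z X}}\to0.\]
Triviality of $g^*$ on the middle term thus implies triviality on the quotient $\Hsh0C{\Rs2f{\csh\Z X}}$.

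Since $C$ is simply connected, the locally constant sheaf $\Rs2f{\csh\Z X}$ is in fact constant, and endomorphisms of a constant sheaf on a connected space are determined by their effect on global sections; therefore $(g,h)^*$ acts as the identity on all of $\Rs2f{\csh\Z X}$. In particular, for any marking $\mu$ of $\cF$ the pair $(g,h)$ automatically becomes a marked automorphism of $(\cF,\mu)$ that induces the identity on $\Rs2f{\csh\Z X}$. Since $\cF$ is embedded, its period map is injective, so \Cref{marked automorphisms} yields $(g,h)=(\id X,\id C)$; in particular $g=\id X$. The delicate ingredient in this strategy is the sheaf-theoretic step just performed, which leverages the simple connectedness of $C$; once it is secured, the remainder is a direct application of the lemmata already established.
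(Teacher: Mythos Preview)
Your proof is correct and follows essentially the same route as the paper: both invoke \Cref{unique fiber structure} to get $f\circ g=f$, then use the Leray spectral sequence degeneration of \Cref{leray} to pass from triviality of $g^*$ on $\Hsh2X{\Z}$ to triviality of the induced action on fiberwise cohomology, and conclude via \Cref{automorphism property}. The only cosmetic differences are that the paper writes the Leray output as a direct sum $\Hsh2X{\Z}\cong\Hsh2C{\Z}\oplus\Hsh2{X_0}{\Z}$ rather than a short exact sequence, and then applies \Cref{automorphism property} to each fiber directly instead of routing through \Cref{marked automorphisms}; your detour through \Cref{marked automorphisms} simply repackages the same fiberwise application. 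One small point of phrasing: \Cref{unique fiber structure} already yields $h=\id C$ (that is what ``$C$-automorphism'' means there), so you need not carry an unspecified $h$ through the argument.
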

\begin{proof}
We can conclude from \Cref{leray} that $\Hsh2X{\Z}\cong\Hsh2C{\Z}\oplus\Hsh2{X_0}{\Z}$ for a fiber $X_0$ of $f$.
Now, let $g\in\Aut_{\O}(X)$ be a biholomorphic automorphism.
By \Cref{unique fiber structure} we have $f\circ g=f$.
Hence $g^*\in\Aut(\Hsh2X{\Z})$ preserves the decomposition $\Hsh2X{\Z}\cong\Hsh2C{\Z}\oplus\Hsh2{X_0}{\Z}$.
Therefore, when $g^*$ acts trivially on $\Hsh2X{\Z}$, it must act trivially on $\Hsh2{X_0}{\Z}$ for every fiber $X_0$ of $f$.
Thus the result follows from \Cref{automorphism property}.
\end{proof}

\begin{rema}[Generalizations to some classes of IHS manifolds]
\label{ihsm automorphism property}
As the proofs just given show, the only reason to restrict ourselves to K3 surfaces in the formulation of \Cref{marked automorphisms} and \Cref{automorphisms} is to be able to apply \Cref{automorphism property}.
Among the known classes of IHS manifolds there are also some with the property that every element $X$ in the class fulfills this crucial condition; i.e., $\id X$ is the only biholomorphic automorphism acting trivially on $\Hsh2X{\Z}$. For example, every IHS manifold of K3$^{[n]}$-type has this property by \cite[Proposition~10]{MR0728605} and \cite[Theorem~2.1]{MR3112215}; and similarly every IHS manifold of OG$_{10}$-type satisfies the property by \cite[Theorem~3.1]{MR3592467}. Hence, there are fine moduli spaces for marked IHS manifolds of these types.
In particular, by extending our proofs of \Cref{fine} and of \Cref{total spaces} to IHS manifolds of these types, in these cases we can likewise establish the existence of fine moduli spaces both for marked embedded families $(X\stackrel{f}{\to}C,\mu)$ and for their total spaces together with markings on $\Hsh2X{\Z}$.
\end{rema}

\subsection{Applications of second level period maps}
Since the component $C_1(\Omega)\subset\mathsf{S}_2(\Omega)$ containing twistor cycles, although not Hermitian symmetric itself, shares many properties with Hermitian symmetric spaces, we now show that embedded second level families have properties similar to families of projective manifolds that satisfy local Torelli and have a Hermitian-symmetric period domain.

\begin{prop}[Brody hyperbolicity]\label{prop:Brody}
 Let $\cF_{\mathbb{C}}\defeq(X\stackrel{f}{\to}C\stackrel{h}{\to}\mathbb{C})$ be a quadratic second level family of $K3$ surfaces over the complex line $\mathbb{C}$. If there exists a point $z_0 \in \mathbb{C}$ such that the fiber $(X_{z_0} \to C_{z_0})$ is a twistor family, then $\cF_{\mathbb{C}}$ is isomorphic as a second level family to the trivial family over $\mathbb{C}$ with fibers $X_{z_0}$ and $C_{z_0}$. 
\end{prop}
\begin{proof}
 We choose any marking of the family $(X_{z_0} \to C_{z_0})$. Since $\mathbb{C}$ is simply-connected, we may extend it to a marking of $\cF_{\C}$; let the corresponding second level period map be $\Per \colon \mathbb{C} \to\mathsf{S}_2(\Omega)$. Since $\mathbb{C}$ is connected, the image of $\Per$ is contained in $C_1(\Omega) \ni \Per(C_{z_0})$, which is a Kobayashi-hyperbolic manifold by \cite[Theorem~11.3.1]{FHW}. Therefore, $\Per$ is constant with value $\Per(C_{z_0})$. However, the period map factors through the classifying map to the fine moduli space $\sM$. The classifying map consequently has to be constant as well, from which the claim follows. 
\end{proof}

\begin{prop}\label{prop:nocompact}
 Let $\cF_B\defeq(X\stackrel{f}{\to}C\stackrel{h}{\to} B)$ be a marked quadratic second level family of $K3$ surfaces over a connected compact base $B$. If there exists a point $z_0 \in B$ such that the fiber $(X_{z_0} \to C_{z_0})$ is a twistor family, then $\cF_B$ is isomorphic as a marked second level family to the trivial marked family over $B$ with fibers $X_{z_0}$ and $C_{z_0}$ and with the restricted marking.
\end{prop}
\begin{proof}
 As in the previous proof, we consider the induced second level period map $\Per \colon B \to C_1(\Omega)$. This time, we use the fact that $C_1(\Omega)$ is a Stein manifold, see again \cite[Theorem~11.3.1]{FHW}; in particular, $C_1(\Omega)$ does not have positive-dimensional compact complex subspaces. This implies that the proper holomorphic map $\Per$ must be constant, as otherwise $\Per(B)$ would provide such a compact complex subspace by Remmert's Proper Mapping Theorem. We conclude as in the proof of \Cref{prop:Brody} above. 
\end{proof}

\section{The moduli space as a Hausdorff sheaf}
\label{section sheaves}

From this section onward, we focus on families of K3 surfaces. \\
\vspace{-0.2cm}

It is evident from the construction in \Cref{fine} that the moduli space $\sM_{\Lambda}$ of $\Lambda$-marked embedded families of K3 surfaces in each even degree $d$ lies locally biholomorphically over the space $\mathsf{S}_d(\pdom{\Lambda})$. In this section we will show how to construct $\sM:=\sM_{\Lambda}$ out of the Burns-Rapoport space $\tilde{\Omega}$ as a topological sheaf in the sense of Section~\ref{topsheaves}. By studying the sections of this sheaf, we show that $\sM$ is Hausdorff, i.e., it is a complex manifold; this in turn will allow us to deduce that the quotient of $\sM$ by $\Gamma = \Orthp(\Lambda)$ is a coarse moduli space for embedded families of K3 surfaces. We will then use the indicated description to construct a $\Gamma$-invariant marked embedded second level family containing all marked K3 twistor families.

\subsection{The setup}
\label{setup}
We introduce the basic spaces used in our subsequent arguments; the situation will eventually be summarized by Diagram~\eqref{diag1} below. 

For a fixed positive even $d\in\N$ we know from \Cref{Deev} that $\mathsf{S}_d\defeq \mathsf{S}_d(\Omega)\neq\emptyset$ and moreover from \Cref{liftfamily} that there are points of $\sM$ lying over $\mathsf{S}_d$. 
Let $\cC_d\defeq \cC_d(\Omega)\stackrel{\pr1}{\lto}\mathsf{S}_d$ be the family of smooth rational curves in the K3 period domain $\Omega$ described in Section~\ref{cycle section}, with projection $\pr2\colon \cC_d \to \Omega$.
Furthermore, as before, let $\pi\colon \widetilde \Omega \to \Omega$ be the Burns-Rapoport space. 
Then, the fiber product $\cC_d\x_{\Omega}\tilde{\Omega}=\{(([C],p),([\kappa],p))\in\cC_d\x\tilde{\Omega}\}$
is over $\cC_d$ isomorphic to
$$\tilde{\cC}_d\defeq\{([C],([\kappa],p))\in \mathsf{S}_d\x\tilde{\Omega}\mid p\in C\}$$
with projection $q_1\from([C],([\kappa],p))\mapsto([C],p)$ onto $\cC_d$ and projection $q_2$ onto $\tilde{\Omega}$.
As $\pi$ is locally biholomorphic, $q_1$ is locally biholomorphic as well, and the topological sheaf $\sF'\defeq(\tilde{\cC}_d\stackrel{q_1}{\to}\cC_d)$ can be identified with the inverse image sheaf $\pr2\inv(\tilde{\sF})$, cf.~Section~\ref{topsheaves}.
Then $q_1\inv([C],p)\cong\pi_0(V_p^{\circ})$ for each $([C],p)\in\cC_d$.
We pull back the universal family $\nu\from\cM\to\tilde{\Omega}$ of marked K3 surfaces to
$\cN\defeq \tilde{\cC}_d\x_{\tilde\Omega}\cM\cong\cC_d\x_{\Omega}\cM$
with projections $p_1,p_2$ onto $\tilde{\cC}_d$ and $\cM$, respectively, and marking induced by $\cM$.
By definition, $(p_2,q_2)\from(\cN\stackrel{p_1}{\to}\tilde{\cC}_d)\to(\cM\stackrel{\nu}{\to}\tilde{\Omega})$ is then a morphism of marked families of IHS manifolds.
Each fiber of $\cN$ over a point $([C],([\kappa],p))\in\tilde{\cC}_d$ consists of the marked K3 surfaces with period point $p$ and K\"ahler cone $[\kappa]\in\pi_0(V_p^{\circ})$, which is in fact a unique one by the Global Torelli Theorem, see again \cite[Theorem~9.1]{LP80}.
In particular, the period map of $(\cN\stackrel{p_1}{\to}\tilde{\cC}_d)$, indicated by the diagonal arrow in Diagram~\eqref{diag1}, is $\pi\circ q_2\from\tilde{\cC}_d\to\Omega$. As mentioned above, the following diagram summarizes the situation.

\begin{equation}
\begin{aligned}\label{diag1}
\begin{tikzcd}[row sep=normal, column sep=normal]
&\cN\rar{p_2}\dar{p_1}&\cM\dar{\nu}\\
\dou\Omega\x\tilde{\Omega}\dar&\tilde{\cC}_d\drar\rar{q_2}\dar{q_1}\arrow[l,hook']&\tilde{\Omega}\dar{\pi}\\
\dou\Omega\x\Omega\dar&\cC_d\rar{\pr2}\dar{\pr1}\arrow[l,hook']&\Omega\\
\dou{\Omega}&\mathsf{S}_d\arrow[l,hook']&
\end{tikzcd}
\end{aligned}
\end{equation}
\smallskip
\begin{lemm}[Lifts of $\pr2$]
\label{sections of F}
For each complex subspace $S\subset\cC_d$, the natural map 
\begin{align*}\sF'(S)&\longrightarrow\{S\stackrel{\phi}{\to}\tilde{\Omega}\text{ holomorphic with }\pi\circ\phi=\pr2\from S\subset\cC_d\to\Omega\} \\\sigma&\longmapsto q_2\circ\sigma \end{align*}
is a bijection. 
\end{lemm}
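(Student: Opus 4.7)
The plan is to exploit the Cartesian definition $\tilde{\cC}_d = \cC_d \x_\Omega \tilde\Omega$ and read off the bijection from the universal property of the fiber product. This is essentially a formal exercise, but it is worth spelling out since the rest of the section will heavily use the correspondence between local holomorphic lifts of $\pr2$ and sections of $\sF'$.

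First I would verify that the map is well-defined. Given $\sigma \in \sF'(S)$, i.e.\ a continuous section of $q_1$ over $S$, the composition $\phi \defeq q_2 \circ \sigma$ satisfies
\[ \pi \circ \phi \;=\; \pi \circ q_2 \circ \sigma \;=\; \pr2 \circ q_1 \circ \sigma \;=\; \rest{\pr2}{S}, \]
where the second equality uses the commutativity of the Cartesian square built into the definition of $\tilde{\cC}_d$. Holomorphicity of $\phi$ follows because sections of the étale topological sheaf $\sF'$ over complex subspaces are automatically holomorphic (cf.~the last paragraph of Section~\ref{topsheaves}), and $q_2$ is holomorphic.

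Next I would construct the inverse. Let $\phi\from S \to \tilde{\Omega}$ be holomorphic with $\pi \circ \phi = \rest{\pr2}{S}$. Denote by $\iota_S\from S \inj \cC_d$ the inclusion. Because $\pr2 \circ \iota_S = \pi \circ \phi$, the universal property of the fiber product $\tilde{\cC}_d = \cC_d \x_\Omega \tilde{\Omega}$ yields a unique holomorphic map $\sigma\from S \to \tilde{\cC}_d$ with $q_1 \circ \sigma = \iota_S$ and $q_2 \circ \sigma = \phi$. The first equality says exactly that $\sigma$ is a section of $q_1$ over $S$, hence $\sigma \in \sF'(S)$, and by construction it maps to $\phi$.

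Finally, injectivity is immediate: if $\sigma_1, \sigma_2 \in \sF'(S)$ both satisfy $q_2 \circ \sigma_i = \phi$, then they also agree on the other factor via $q_1 \circ \sigma_i = \rest{\id{\cC_d}}{S} = \iota_S$, so uniqueness in the Cartesian universal property forces $\sigma_1 = \sigma_2$. There is no substantial obstacle here; the only point that merits mention is the automatic upgrade from continuity (which is what a topological-sheaf section abstractly demands) to holomorphicity, which is guaranteed by the étale nature of $q_1$ and the induced complex structure on $\tilde{\cC}_d$ discussed in Section~\ref{topsheaves}.
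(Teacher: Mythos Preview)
Your proof is correct and follows essentially the same approach as the paper: both verify well-definedness via the commutativity $\pi\circ q_2 = \pr2\circ q_1$ and then invoke the universal property of the fiber product $\tilde{\cC}_d = \cC_d \x_\Omega \tilde{\Omega}$ to produce the unique preimage $\sigma$. Your version is slightly more explicit about holomorphicity of sections and separates out injectivity, but the argument is the same.
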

\begin{proof}
On the one hand, note that for each $\sigma\in\sF'(S)$ we have $\pi\circ(q_2\circ\sigma)=\pr2\circ q_1\circ\sigma=\pr2$; the map is therefore well-defined.
On the other hand, for each holomorphic lift $S\stackrel{\phi}{\to}\tilde{\Omega}$ of $S\subset\cC_d\stackrel{\pr2}{\to}\Omega$ to $\tilde{\Omega}$, by the universal property of the fiber product $\tilde{\cC}_d$ there exists a unique holomorphic map $S\stackrel{\sigma}\to\tilde{\cC}_d$ such that $q_1\circ\sigma$ is the inclusion $S\subset\cC_d$ and $q_2\circ\sigma=\phi$.
In particular each $\phi$ has a unique preimage $\sigma\in\sF'(S)$.
\end{proof}

\begin{prop}[Sections of $\sF'$]
\label{functor of sections}
For complex subspaces $S\subset\cC_d$, the maps ${\sF'(S)\ni\sigma\mapsto\left[\sigma^*\cN\to S\right]}$ induce an isomorphism between $\sF'$ and the sheaf on $\cC_d$ that assigns to $S$ the set of $S$-isomorphism classes of marked families of K3 surfaces on $S$ with period map $\pr2\from S\to\Omega$.
The inverse map is induced by factoring the refined period map of a marked family through $\tilde{\cC}_d$.
\end{prop}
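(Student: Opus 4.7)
The plan is to exhibit mutually inverse maps between $\sF'(S)$ and the set of $S$-isomorphism classes of marked families of K3 surfaces on $S$ with period map $\rest{\pr2}{S}$, and then to observe that both constructions commute with restriction so that they assemble into a sheaf isomorphism. The two key ingredients are \Cref{sections of F}, which identifies $\sF'(S)$ with the set of holomorphic lifts $\phi\from S\to\tilde{\Omega}$ of $\pr2$ through $\pi$, and \Cref{refined}, which says that a marked family of K3 surfaces over $S$ is determined up to $S$-isomorphism by its refined period map.

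For the forward direction, given $\sigma\in\sF'(S)$ with associated lift $\phi\defeq q_2\circ\sigma$, I would use the fact that $\cN=\tilde{\cC}_d\x_{\tilde{\Omega}}\cM$ with marking inherited from $\cM$ via $p_2$, so base change along $\sigma$ yields an identification $\sigma^*\cN\isom\phi^*\cM$ of marked families over $S$. The associated period map is $\pi\circ\phi=\pr2$ by functoriality, and its refined period map is exactly $\phi$, since the universal family $\cM\to\tilde{\Omega}$ has refined period map $\id{\tilde{\Omega}}$. This construction commutes with restriction to open subspaces of $S$, so it defines a morphism from $\sF'$ into the sheaf on $\cC_d$ of isomorphism classes of marked families with period map $\pr2$; that this presheaf is in fact a sheaf is automatic since marked families of K3 surfaces glue uniquely along overlaps using \Cref{refined}.

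For the inverse, starting from a marked family $(\cX,\mu)$ over $S$ with period map $\pr2$, I would take its refined period map $\tilde{\per}\from S\to\tilde{\Omega}$, which is a holomorphic lift of $\pr2$ through $\pi$ and therefore corresponds via \Cref{sections of F} to a unique $\sigma\in\sF'(S)$ with $q_2\circ\sigma=\tilde{\per}$; concretely, $\sigma$ is exactly the factorization of $\tilde{\per}$ through $\tilde{\cC}_d=\cC_d\x_{\Omega}\tilde{\Omega}$ provided by the universal property of the fiber product. Since $\sigma^*\cN\isom\tilde{\per}^*\cM$ and both $(\cX,\mu)$ and $\tilde{\per}^*\cM$ have the same refined period map $\tilde{\per}$, \Cref{refined} yields an $S$-isomorphism between them, showing well-definedness on isomorphism classes and surjectivity of the forward map. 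Composing in the opposite order returns $\sigma$ itself by the uniqueness in \Cref{sections of F} applied to the refined period map $q_2\circ\sigma$ of $\sigma^*\cN$.

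The main point requiring care is the passage to isomorphism classes so that the two maps become genuinely mutually inverse, which is precisely what \Cref{refined} delivers; once this is settled, compatibility with restriction is routine from the functoriality of pullbacks and of refined period maps, so that the bijections assemble into a sheaf isomorphism on $\cC_d$ as claimed.
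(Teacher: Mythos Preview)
Your proposal is correct and follows essentially the same approach as the paper: both use \Cref{sections of F} to pass between sections $\sigma$ and lifts $\phi=q_2\circ\sigma$, identify $\sigma^*\cN\cong\phi^*\cM$, and invoke \Cref{refined} to see that refined period maps determine marked families up to $S$-isomorphism. Your version is simply more explicit about checking the two compositions and the sheaf property, while the paper's proof compresses these into a few sentences.
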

\begin{proof}
The maps $\sigma\mapsto q_2\circ\sigma$ from \Cref{sections of F} define an isomorphism from $\sF'$ to the presheaf of holomorphic lifts of $\pr2$, which is thus also a sheaf.
By functoriality of the period map, each marked family $\sigma^*\cN=(q_2\circ\sigma)^*\cM$ has period map $\pr2$. Conversely, mapping a marked family over a subspace $S\subset\cC_d$ to its refined period map is also functorial, and two such marked families over $S$ are $S$-isomorphic if and only if their refined period maps coincide.
\end{proof}

\begin{rema}[Continuously varying partitions of $(-2)$-classes]
\label{varying partition}
Let $S\subset\cC_d$ be a subspace.
A section $\sigma\in\sF'(S)$ is by definition a map $([C],p)\mapsto ([C],([\kappa([C],p)],p))$, where $[\kappa([C],p)]$ denotes the Weyl chamber represented by a $\kappa([C],p)\in V_p^{\circ}$.
Recall from Section~\ref{abstract setting} that the connected components of $V_p^{\circ}$ are in one-to-one correspondence with partitions $\Delta_p=\Delta_p
^+\dot{\cup}\Delta_p^-$ satisfying property \eqref{partition}.
In our case, at each point $([C],p)\in S$ the induced partition $P_{([C],p)}$ sorts the $\delta\in\Delta_p$ by the sign of their intersection $\langle\kappa([C],p),\delta\rangle_{\R}$.
Now, since $\pi\from\tilde{\Omega}\to\Omega$ is a local homeomorphism, the representatives $\kappa([C],p)$ can at least locally near each $([C],p)\in S$ be chosen continuously.
Hence, by continuity of the intersection product, this means that the induced sign of each $\delta\in\Delta_p$ is locally constant on the closed subspace $\{([C],p)\in S\mid \delta\in\Delta_p\}$ of $S$.
Via Section~\ref{geometric setting}, this allows an easy geometric interpretation for the induced family $(\cF,\mu)\defeq\sigma^*\cN$: The $\mu\inv(\kappa([C],p))$ are continuously varying K\"ahler classes on the fibers $\cF_{([C],p)}$, such that for each $(-2)$-class $\mu\inv(\delta)$ for $\delta\in\Delta$ the subset of fibers on which it is an effective (resp. anti-effective) $(1,1)$-class is closed in $S$.
\end{rema}

\subsection{The Hausdorff sheaf of marked embedded K3 families}

We consider the direct image sheaf $\sF\defeq(\pr1)_*\sF'$ on $\mathsf{S}_d$. Since $\pr1$ is proper, for a (locally closed) subspace $S\subset \mathsf{S}_d$ a section in $\sF(S)$ is a section $\sigma\in\sF'(\pr1\inv(S))$, cf.~Section~\ref{topsheaves}, and thus induces sections $\sigma_{[C]}\in\tilde{\sF}(C)$ for each $[C]\in S$ via $\sigma_{[C]}(p)\defeq\sigma([C],p)$ for all $p\in C$.

\begin{theo}
\label{T2}
The direct image sheaf $\sF = (\pr1)_*\sF'$ is a Hausdorff sheaf on $\mathsf{S}_d$.
\end{theo}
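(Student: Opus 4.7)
The plan is to verify the sequential closedness criterion for Hausdorff sheaves recalled in Section~\ref{topsheaves}: for $\sigma_1, \sigma_2 \in \sF(U)$ over an open $U \subset \mathsf{S}_d$ the agreement set is automatically open, so it suffices to show it is closed. Suppose $[C_n] \to [C_0]$ in $U$ with $\sigma_1([C_n]) = \sigma_2([C_n])$ for all $n$; I aim to prove $\sigma_1([C_0]) = \sigma_2([C_0])$. Since $\pr1$ is proper, one has $\sF_{[C]} = \sF'(C)$ (Section~\ref{topsheaves}), so the hypothesis means that $\sigma_1|_{C_n} = \sigma_2|_{C_n}$ as sections of $\sF'$; that is, $\sigma_1$ and $\sigma_2$ select the same Weyl chamber at every point of each $C_n$. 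The conclusion becomes the same statement on $C_0$.

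Fix $p \in C_0$. By the partition description of Weyl chambers in Section~\ref{abstract setting}, it suffices to show that for every $\delta \in \Delta_p$ the sections $\sigma_1, \sigma_2$ assign the same sign to $\delta$ at $([C_0], p)$. By \Cref{varying partition}, this sign function $s_i^\delta$ is locally constant on the closed complex subspace
\[ Y_\delta \defeq \pr1^{-1}(U) \cap \pr2^{-1}(H_\delta) \subset \pr1^{-1}(U). \]
Let $Z$ be the connected component of $([C_0], p)$ in $Y_\delta$. Constancy of $s_i^\delta$ on $Z$ reduces the task to producing a sequence $([C_n], q_n) \in Y_\delta$ that eventually lies in $Z$: then the hypothesis yields $s_1^\delta([C_n], q_n) = s_2^\delta([C_n], q_n)$, and local constancy propagates this equality to $([C_0], p)$.

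Producing such $q_n$ splits into two cases. If $C_0 \not\subset H_\delta$, then $C_n \not\subset H_\delta$ for $n$ large, since $\{[C] : C \subset H_\delta\}$ is closed in $\mathsf{S}_d$, and the intersections $C_n \cap H_\delta$ form a flat family of effective divisors of degree $d \geq 2$ on the compact rational curves $C_n$ (cycles of degree $1$ being excluded by \cite[Lemma~3.7]{BKS}); flatness then guarantees $q_n \in C_n \cap H_\delta$ with $q_n \to p$, whence $([C_n], q_n) \to ([C_0], p) \in Z$. If $C_0 \subset H_\delta$, then $\{[C_0]\} \times C_0$ is a connected subset of $Y_\delta$ and therefore contained in $Z$. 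The intersection $C_n \cap H_\delta$ is non-empty regardless of whether $C_n \subset H_\delta$; choosing any $q_n \in C_n \cap H_\delta$ and passing to a convergent subsequence gives $([C_n], q_n) \to ([C_0], p')$ for some $p' \in C_0$, and $([C_0], p') \in \{[C_0]\} \times C_0 \subset Z$ is all that is needed.

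The chief obstacle is the degenerate case $C_0 \subset H_\delta$: one cannot prescribe the limit of $q_n \in C_n \cap H_\delta$ to be $p$, because the degree-$d$ intersection divisor on nearby $C_n$ may land anywhere on $C_0$. The resolution is the observation that the entire real curve $\{[C_0]\} \times C_0$ sits inside the single component $Z$, so any convergent subsequence delivers a point of $Z$ and triggers the local-constancy argument, recovering sign agreement at $([C_0], p)$ for every $\delta \in \Delta_p$ and hence at every $p \in C_0$.
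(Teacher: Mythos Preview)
Your proof is correct and follows essentially the same approach as the paper's: both reduce the Hausdorff property to the behavior of the sign functions $\langle\kappa_i,\delta\rangle$ on the locus $Y_\delta=\pr_2^{-1}(H_\delta)$, split into the two cases $C_0\not\subset H_\delta$ and $C_0\subset H_\delta$, and use that nearby curves meet $H_\delta$ near (or in the same component as) the relevant point of $C_0$. The only differences are stylistic: you argue directly via sequential closedness and connected components of $Y_\delta$, whereas the paper argues by contrapositive using an explicit $\P^1$-bundle trivialization and the Weierstrass Preparation Theorem in place of your ``flatness'' step.
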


In other words, if two sections $\sigma_1,\sigma_2\in\sF(U)$ defined over an open subset $U\subset \mathsf{S}_d$ differ at some point $s_0\in U$, they differ in an open neighborhood of $s_0$ in $U$.

\begin{rema}\label{HausdorffRemark}For two sections $\sigma_1,\sigma_2\in\sF(U)$ defined over an open subset $U\subset \mathsf{S}_d$ the set $\{s\in U\mid\sigma_1(s)=\sigma_2(s)\}$ is always open in $U$; 
\Cref{T2} states that it is also closed in $U$.
\end{rema}

Note that $\sF'$ itself is not a Hausdorff sheaf; this is in fact one of the main features of the moduli theory of K3 surfaces.
To see this, recall first how Looijenga and Peters proved that $\tilde{\Omega}$ is not Hausdorff.
They constructed two marked families of K3 surfaces over an open disc $D\subset\Omega$ with period map the inclusion, for which the two induced sections in $\tilde{\sF}(D)$ differ only at a single point \cite[Section~10.3]{LP80}.
Analogously to the proof of \Cref{lift}, we can use this example to produce two sections $\sigma_1,\sigma_2\in\tilde{\sF}(C)$ over a curve $[C]\in \mathsf{S}_d$ that still differ only at a single point; the same continues to hold for the pulled back sections $\pr1\x(\sigma_i\circ\pr2)\in\sF'(\{[C]\}\x C)$.

What is different for $\sF$ is that for two sections $\sigma_i\in\sF(U)$ over some $U\subset \mathsf{S}_d$ to be considered distinct at a curve $[C]\in \mathsf{S}_d$, it already suffices if the induced sections $(\sigma_i)_{[C]}\in \tilde{\sF}(C)$ differ at a single point in $C$.
The proof of \Cref{T2} relies on a further elaboration of the idea explained in \Cref{varying partition}.

\begin{proof}[Proof of \Cref{T2}]
Let $s_0=[C_0]\in \mathsf{S}_d$, then every open neighborhood of $[C_0]$ in $\mathsf{S}_d$ contains an open neighborhood of the form $V=\{[C_t]\in \mathsf{S}_d\mid C_t\subset U\}$ for some open neighborhood $U$ of $C$ in $\Omega$.
As $C_0\cong\CP1$ is rigid, up to shrinking $U$ (and hence $V$) we may assume that over $V$ we are given a $\P^1$-bundle trivialization of $\cC_d$, i.e., 
\begin{align*}
V\x \CP1&\stackrel{\sim}{\to}\pr1\inv(V)\subset\cC_d\\
([C_t],z)&\mapsto([C_t],\phi([C_t],z))\quad\text{with}\quad\phi([C_t],z)\in C_t,
\end{align*}
for some holomorphic map $\phi\from V\x\CP1\to\Omega$. Composing with this trivialization, the space of sections $((\pr1)_*\sF')(V)$ can be identified with the space of continuous maps of the form
\begin{align*}
\sigma\from V\x\CP1&\to \mathsf{S}_d\x\tilde{\Omega}\\
(s,z)&\mapsto (s,([\kappa(s,z)],\phi(s,z)))\quad\text{with}\quad\kappa(s,z)\in V^{\circ}_{\phi(s,z)}.
\end{align*}
Note that since $\pi\from\tilde{\Omega}\to\Omega$ is a local homeomorphism, near every given point $(s_0 , z_0) \in V\x\P^1$ the representatives $\kappa(s,z)\in \Lambda_{\R}$ can locally be chosen in a continuous way.
To show that $(\pr1)_*\sF'$ is a Hausdorff sheaf is now equivalent to showing that each two such maps $\sigma_i=\pr V\x[\kappa_i]\x\phi$, $i=1,2$ that differ at a point $p_0\defeq(s_0,z_0)\in V\x\CP1$, differ also somewhere on $\{s\}\x\CP1$ for every $s$ in an open neighborhood of $s_0$ in $V$. 

The condition $\sigma_1(p_0)\neq\sigma_2(p_0)$ is equivalent to the $\kappa_i(p_0)$ lying in different connected components of $V^{\circ}_{\phi(p_0)}$.
As the Weyl group $\Weyl{\phi(p_0)}$ acts transitively on the set of Weyl chambers $\pi_0(V^{\circ}_{\phi(p_0)})$, this is equivalent to the existence of a $\delta\in\Delta_{\phi(p_0)}$ for which the $\langle\kappa_i(p_0),\delta\rangle$ are non-zero with different sign.
The hyperplane $\P(\delta^{\perp})\subset\P(\Lambda_{\C})$ is the zero set of a linear polynomial, which we denote by $h$.
Then $\phi\inv(\P(\delta^{\perp}))$ is the zero set of the holomorphic function $h\circ\phi$ and defines the set of points $p\in V\x\P^1$ for which $\delta\in\Delta_{\phi(p)}$. As for every $[C]\in \mathsf{S}_d$ we have $C.\P(\delta^{\perp})=\deg\left(\rest{\O_{\P(\Lambda_{\C})}(1)}C\right)=d$, each intersection $C\cap\P(\delta^{\perp})$ is non-empty, and either it is discrete or we have $C\subset\P(\delta^{\perp})$.
We study these two cases separately for the cycle $s_0=[C_0]$.

In the first case, $\phi(s_0,z_0)$ being an isolated point in $C_0\cap\P(\delta^{\perp})$ is equivalent to $(s_0,z_0)$ being an isolated zero of $\rest{h\circ\phi}{\{s_0\}\x\P^1}$.
Owing to the Weierstrass Preparation Theorem we can shrink $V$ and find an open neighborhood $W$ of $z_0$ in $\P^1$ such that for every $s\in V$ the function $h\circ\phi$ has a zero on $\{s\}\x W$.
By shrinking $V$ and $W$ further, we may assume that for $p\in V\x W$ the representatives $\kappa_i(p)$ can both be chosen continuously in $p$, and that the real parts $\Re\left(\langle\kappa_i(p),\delta\rangle_{\C}\right)$ do not vanish.
Then, by construction, at every zero $p\in V\x W$ of $h\circ\phi$ we have $\delta\in\Delta_{\phi(p)}$, the $\langle\kappa_i(p),\delta\rangle$ are real with different signs, and thus $[\kappa_1(p)]\neq[\kappa_2(p)]$ in $\pi_0(V^{\circ}_{\phi(p)})$.

In the second case, $C_0\subset\P(\delta^{\perp})$ means that, for all $z\in\P^1$, we have $\delta\in\Delta_{\phi(s_0,z)}$ and thus the $\langle\kappa_i(s_0,z),\delta\rangle$ are real with different sign.
As $\CP1$ is compact, we can shrink $V$ such that every $(s_0,z)\in\{s_0\}\times\P^1$ has an open neighborhood $V\x W(z)$ in $V\times\P^1$ on which the representatives $\kappa_i$ can be chosen continuously with $\Re\left(\langle\kappa_i(p),\delta\rangle_{\C}\right)\neq0$ for all $p\in V\x W(z)$.
For each $s\in V$ the intersection $C_s\cap\P(\delta^{\perp})$ is non-empty, i.e., the function $\rest{h\circ\phi}{\{s\}\x\P^1}$ must have a zero in one of the $\{s\}\x W(z)$.
Therefore, arguing like in the first case, the two sections need to differ there.
\end{proof}

\begin{prop}[Sections of $\sF$]
\label{functor of sections'}
For complex subspaces $S\subset \mathsf{S}_d$, the maps ${\sF(S)=\sF'(\pr1\inv(S))\ni\sigma\mapsto \left[\sigma^*\cN\to\rest{\cC_d}S\stackrel{\pr1}{\to}{S}\right]}$ induce an isomorphism between $\sF$ and the sheaf on $\mathsf{S}_d$ that assigns to $S$ the set of isomorphism classes of marked embedded second level families of K3 surfaces over $S$ with second level period map the inclusion $S\subset \mathsf{S}_d$.
\end{prop}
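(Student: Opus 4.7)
The plan is to exploit the identification $\sF(S)=\sF'(\pr1\inv(S))$ built into the pushforward $\sF=(\pr1)_*\sF'$ (cf.~Section~\ref{topsheaves}) together with \Cref{functor of sections} and the Cartesian requirement on morphisms of families from Section~\ref{families}.

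For the forward direction, let $\sigma\in\sF(S)=\sF'(\pr1\inv(S))$. By \Cref{functor of sections}, the associated marked family $\sigma^*\cN\to\pr1\inv(S)=\rest{\cC_d}{S}$ of K3 surfaces has period map $\pr2\from\rest{\cC_d}{S}\to\Omega$. Composing with the projection $\pr1\from\rest{\cC_d}{S}\to S$, which is by construction a family of smooth rational curves, we obtain a marked second level family $(\cF_S,\mu)$. Over each $s=[C]\in S$, its fiber is a marked family over $C\cong\P^1$ whose period map is the inclusion $C\inj\Omega$ of embedding degree $d$. By \Cref{degree of period map}, this fiber is an embedded family of degree $d$, so $(\cF_S,\mu)$ is a marked embedded second level family of degree~$d$. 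Finally, \Cref{periodmap2} identifies its second level period map with $s\mapsto[\pr2(\pr1\inv(s))]=s$, that is, with the inclusion $S\inj\mathsf{S}_d$.

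For the inverse direction, let $(\cF_S,\mu)=(X\stackrel{f}{\to}C\stackrel{h}{\to}S,\mu)$ be a marked embedded second level family of degree $d$ with second level period map the inclusion. \Cref{periodmap2} provides a morphism of families $(C\stackrel{h}{\to}S)\to(\cC_d\stackrel{\pr1}{\to}\mathsf{S}_d)$ covering $\Per=\id{S}\colon S\inj\mathsf{S}_d$, and the Cartesian condition from Section~\ref{families} yields a canonical $S$-iso\-morphism $(C\stackrel{h}{\to}S)\isom(\rest{\cC_d}{S}\stackrel{\pr1}{\to}S)$. Transporting along this isomorphism, $X\to C$ becomes a marked family of K3 surfaces over $\pr1\inv(S)$ whose period map is exactly $\pr2$, and \Cref{functor of sections} produces a unique corresponding section in $\sF'(\pr1\inv(S))=\sF(S)$.

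The two constructions are mutually inverse on isomorphism classes: this is immediate from \Cref{functor of sections}, which already characterizes marked families by their refined period maps, combined with the observation that the inverse construction only uses $C$ up to Cartesian $S$-isomorphism, i.e., up to the equivalence implicit in the isomorphism class of a second level family. Naturality in $S\subset\mathsf{S}_d$ is inherited from the naturality in \Cref{functor of sections} together with the compatibility of base change for second level families with restriction along inclusions $S'\subset S$. The only substantive input beyond the previous proposition is the Cartesian identification $(C\stackrel{h}{\to}S)\isom(\rest{\cC_d}{S}\stackrel{\pr1}{\to}S)$ derived from $\Per=\id{S}$, and this is precisely the point where our convention on morphisms of families plays its role.
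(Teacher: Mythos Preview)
Your proof is correct and follows essentially the same approach as the paper: both reduce to \Cref{functor of sections} after identifying the middle space $C$ of a given second level family with $\rest{\cC_d}{S}$ in such a way that the period map becomes $\pr2$. The only cosmetic difference is that you obtain this identification directly from the Cartesian property of the morphism in \Cref{periodmap2}, whereas the paper first tacitly takes $C=\rest{\cC_d}{S}$ and then corrects the period map via the $S$-automorphism $(\id{S},\per)$ of the $\P^1$-bundle; these are two phrasings of the same step.
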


\begin{proof}
By \Cref{functor of sections}, $\sF$ is isomorphic to the sheaf that maps a complex subspace $S\subset \mathsf{S}_d$ to the set of $\rest{\cC_d}S$-isomorphism classes of marked families of K3 surfaces over $\rest{\cC_d}S$ with period map $\pr2\from\rest{\cC_d}S\to\Omega$.
These families induce marked second level families over $S$ with second level period map the inclusion $S\subset \mathsf{S}_d$.
We claim that conversely every marked second level family $(X\to\rest{\cC_d}S\to S,\mu)$ with second level period map the inclusion of a complex subspace $S\subset \mathsf{S}_d$ is isomorphic to exactly one of them.
For this, note that the period map induces a holomorphic $S$-automorphism $(\id{S},\per)$ of the $\P^1$-bundle $\rest{\cC_d}S\stackrel{\pr1}{\to}S$.
Hence its inverse $\phi$ induces a second level family $(\phi^*X\to\rest{\cC_d}S\to S,\phi^*\mu)$ with period map $\pr2$.
The uniqueness follows by \Cref{functor of sections}, together with functoriality and injectivity of the second level period map. 
\end{proof}

\begin{rema}[Unobstructedness for embedded families of K3 surfaces]
\label{unobs remark}
Specifically, for every $[C_0]\in \mathsf{S}_d$ sections $\sigma\in\sF([C_0])$ correspond under the isomorphism from \Cref{functor of sections'} to the isomorphism classes of marked embedded families $(\cF,\mu)$ of K3 surfaces with period map mapping to $C_0$.
As $\sF$ is a topological sheaf, each such $\sigma\in\sF([C_0])$ can be extended to a section $\tilde{\sigma}\in\sF(U)$ over an open neighborhood $U$ of $[C_0]$ in $\mathsf{S}_d$; up to shrinking $U$, this extended section is uniquely determined.
This extended section in turn corresponds to the second level family $\tilde{\sigma}^*\cN\to\rest{\cC_d}U\to U$, which by \Cref{local torelli} is universal at every point in $U$.
Up to shrinking $U$ further, this second level family is thus $U$-isomorphic to the universal deformation of $(\cF,\mu)$ constructed in \Cref{unobs}.
\end{rema}

\begin{theo}[The moduli space of marked embedded K3 families is Hausdorff]
\label{MT2}
The total space of $\sF$ is a fine moduli space of marked embedded families of K3 surfaces of degree $d$.
In particular, the moduli space $\sM$ introduced in \Cref{fine} is Hausdorff and therefore a complex space in the usual sense.\footnote{As for example introduced in \cite{Fischer}.}
\end{theo}
\begin{proof}
We write $\sF=(X\stackrel{\pi}{\to}\mathsf{S}_d)$ and let $[C]\in \mathsf{S}_d$.
The fiber $X_{[C]}$ is isomorphic to the space of sections $\sF([C])$. Hence $X$ corresponds via \Cref{unobs remark} to the set of isomorphism classes of marked embedded K3 families of degree $d$.
In fact, the sections $\tilde{\sigma}$ corresponding to the universal deformations of marked embedded K3 families of degree $d$, see \Cref{unobs}, give $X$ the same structure as a complex space as the corresponding connected component of $\sM$.
Therefore, by \Cref{T2}, $\sM$ is also Hausdorff.
\end{proof}

After having constructed fine moduli spaces for \emph{marked} (embedded) families and their total spaces in \Cref{fine} and \Cref{total spaces}, it is desirable to get rid of the marking that was introduced to construct second level period maps. For this, one has to consider the action of the discrete group $\Gamma = \Orthp(\Lambda)$ on $\sM$ and analyze whether it admits a quotient that is a complex space. The Hausdorff property of $\sM$ established in \Cref{MT2} turns this into a sensible problem. The following result says that this goal is at least partly achievable.

To prepare the statement, recall that the universal marked embedded second level family over the fine moduli space $\sM$ of marked embedded families of K3 surfaces yields a second level period map $P\colon \sM \to \mathsf{S}(\Omega)$, which is locally biholomorphic by the Local Torelli Theorem for such families, \Cref{local torelli}, and equivariant with respect to the natural actions of $\Gamma$ on marked families and on the cycle space.

\begin{theo}[Coarse moduli spaces for embedded families over cycles in $C_1(\Omega)$]\label{thm:coarsemoduli}
 Let $\sM^\circ$ be the part of $\sM$ lying over the component $C_1(\Omega)\subset\mathsf{S}_2(\Omega)$ containing twistor cycles; i.e., \[\sM^\circ \defeq P^{-1}(C_1(\Omega)).\] Then, the group $\Gamma$ acts properly discontinuously on $\sM^\circ$ and the complex space $\sM^\circ/\Gamma$ is a Hausdorff coarse moduli space both for embedded families of K3 surfaces over cycles in $C_1(\Omega)$ and for their total spaces. 
\end{theo}
\begin{proof}
 \textcolor{red}{As discussed in 
 }, the action of $\Gamma$ on $C_1(\Omega)$ is properly discontinuous; i.e., for every compact subset $K \subset C_1(\Omega)$ the set $\Gamma_{[K]} \defeq \{\gamma \in \Gamma \mid \gamma\cdot K \cap K \neq \emptyset\}$ is finite. If now $M \subset \sM^\circ$ is compact, then clearly $P(M)$ is compact in $C_1(\Omega)$ and therefore equivariance of $P$ implies that $\Gamma_{[M]} \subset \Gamma_{[P(M)]}$ is finite; i.e., the action of $\Gamma$ on $\sM^\circ$ is properly discontinuous, as claimed. 
 
 As the action is properly discontinuous, the quotient $\sM^\circ/\Gamma$ exists in the category of complex spaces, see e.g.~\cite{CartanDiscreteQuotients}. Since the action of $\Gamma$ on $\sM$ is by change of markings, it is a standard argument to deduce the ``coarse moduli space property'' of the quotient $\sM^\circ/\Gamma$ (for both moduli functors mentioned in the theorem) from the fact that $\sM^\circ$ is a fine moduli space for the respective functor of marked objects established in Theorems \ref{fine} and \ref{total spaces}; see the second paragraph of \cite[Chapter~6, proof of Corollary~4.3]{K3book} for a sketch of the argument in a closely related setup. 
\end{proof}

We move on to discuss further applications of Theorem~\ref{MT2}.

\begin{prop}[Second level families are locally induced by sections]
\label{induced section}
For a marked embedded second level family $(\cF_B,\mu)=(X\to C\to B,\mu)$ of K3 surfaces of degree $d$, we denote by $(Q,\Per)\from(C\to B)\to(\cC_d\to \mathsf{S}_d)$ the morphism of smooth rational curves constructed in \Cref{periodmap2}.
Then, for every $b\in B$ there are connected open neighborhoods $V_b$ of $b$ in $B$ and $U_b$ of $\Per(b)$ in $\mathsf{S}_d$, a section $\sigma_b\in\sF(U_b)$ and a morphism $(\alpha_b,Q,P)\from\rest{(\cF_B,\mu)}{V_b}\to(\sigma_b^*\cN\to\rest{\cC_d}{U_b}\to U_b)$ of marked second level families.
Here, $\sigma_b$ and $\alpha_b$ are unique up to shrinking $V_b$ and $U_b$.
\end{prop}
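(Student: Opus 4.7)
My plan is to combine the Unobstructedness Theorem (\Cref{unobs}) applied to the fiber $(\cF_b,\mu_b)$ with the sheaf-theoretic description of marked embedded second level families given by \Cref{functor of sections'}. Specifically, \Cref{unobs} produces a marked embedded second level family $(\cF'_{U_b},\mu'_{U_b})$ over an open neighborhood $U_b$ of $\Per(b)$ in $\mathsf{S}_d$ whose second level period map is the inclusion $U_b\inj\mathsf{S}_d$ and which is universal at every fiber. By \Cref{functor of sections'}, such a family corresponds uniquely to a section $\sigma_b\in\sF(U_b)$, and one has an isomorphism of marked second level families $(\cF'_{U_b},\mu'_{U_b})\isom(\sigma_b^*\cN\to\rest{\cC_d}{U_b}\to U_b)$.

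Having produced $\sigma_b$, the next step is to exploit the universal property of $(\cF'_{U_b},\mu'_{U_b})$ as a deformation of $(\cF_b,\mu_b)$: up to shrinking $B$ to a connected open neighborhood $V_b$ of $b$, this yields a morphism $\rest{(\cF_B,\mu)}{V_b}\to(\cF'_{U_b},\mu'_{U_b})$ of marked second level families. Composing with the isomorphism above produces the sought morphism $(\alpha_b,Q,P)$ into $(\sigma_b^*\cN\to\rest{\cC_d}{U_b}\to U_b)$. Functoriality of the second level period map (\Cref{functoriality2}) forces its base component $P$ to equal $\rest{\Per}{V_b}$, and the universal property of the Douady space underlying $\cC_d$ identifies the middle component with the restriction of the morphism $Q$ constructed in \Cref{periodmap2}.

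For the uniqueness statements, the section $\sigma_b$ is unique up to shrinking $U_b$ because $\sF$ is a Hausdorff topological sheaf (\Cref{T2}): any two sections that agree at the point $\Per(b)\in U_b$ coincide on a neighborhood of $\Per(b)$. The uniqueness of $\alpha_b$ up to further shrinking of $V_b$ is precisely the uniqueness of the universal morphism asserted in \Cref{unobs}, which in turn relies on the vanishing $\hdim0{X_b}{\sT_{X_b}}=0$ for the total space of the embedded family $\cF_b$ together with \Cref{marked automorphisms} to rule out fiberwise ambiguities. I do not anticipate any serious obstacle: the proposition essentially records that the abstract universal deformations supplied by the Unobstructedness Theorem admit a concrete description as pullbacks $\sigma^*\cN$ along sections of the Hausdorff sheaf $\sF$, so the argument is mainly bookkeeping on top of results already established in the paper.
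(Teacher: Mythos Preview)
Your proof is correct and reaches the same conclusion, but it takes a somewhat different route than the paper's own argument. The paper proceeds more directly: it takes the pullback $(\rest{Q}{C_b}^{-1})^*(\cF_b,\mu_b)$, which corresponds via \Cref{functor of sections} to a section in $\sF'(Q(C_b))=\sF(\Per(b))$, extends this section from the point $\Per(b)$ to a neighborhood $U_b$ using the \'etal\'e-space property of $\sF$, and then produces $\alpha_b$ by comparing refined period maps and invoking \Cref{refined}. Your approach instead invokes the Unobstructedness Theorem (\Cref{unobs}) to manufacture the universal deformation over $U_b$ first, then identifies it with $\sigma_b^*\cN$ via \Cref{functor of sections'}, and obtains $\alpha_b$ from the universal property. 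Both work; the paper's argument is lighter (it does not need \Cref{unobs}), while yours has the virtue of making the universality transparent.

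One minor over-citation: for the uniqueness of $\sigma_b$ you only need that two sections of an \'etal\'e space agreeing at a point agree on a neighborhood, which is the local-homeomorphism property of $\sF$ and does not require the Hausdorff statement \Cref{T2}. The paper's uniqueness argument likewise avoids \Cref{T2}, observing instead that the refined period map of $(X\to C,\mu)$ factors uniquely through $\tilde{\cC}_d$ and that $\rest{Q}{C_b}$ is an embedding, so $\sigma_b$ is pinned down on $Q(C_b)$ and hence on a neighborhood.
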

\begin{proof} We refer the reader to Diagram~\eqref{diag1} for the relation between the morphisms discussed in the subsequent argument. For $b\in B$ we consider the family $(\rest{Q}{C_b}\inv)^*(\cF_b,\mu_b)$ over $Q(C_b)$.
It corresponds to a section in $\sF'(Q(C_b))=\sF(\Per(b))$ that can be extended to a section $\sigma_b$ over a connected open neighborhood $U_b$ of $\Per(b)$ in $\mathsf{S}_d$.
We take for $V_b$ a connected open neighborhood of $b$ in $\Per\inv(U_b)\subset B$.
Let $\tilde{\per}$ be the refined period map of $(X\to C,\mu)$, then by construction we get $\rest{\tilde{\per}}{V_b}=q_2\circ\sigma_b\circ Q$.
Therefore, \Cref{refined} implies that $\rest{(X\to C,\mu)}{V_b}\cong (q_2\circ\sigma_b\circ Q)^*\cM=Q^*(\sigma_b^*\cN)$.
This isomorphism in particular encompasses a morphism $\alpha_b\from X\to\sigma_b^*\cN$ over $Q$. 

It remains to show that the construction does not depend on choices: 
By the universal property of the fiber product, the refined period map $\tilde{\per}\from C \to \tilde \Omega$ factors uniquely through a morphism $\varphi\from C \to \tilde{\cC}_d$, for which we have $\varphi = \sigma_b \circ Q$ on $V_b$.
Hence, as $\rest{Q}{C_b}$ is an embedding, the section $\sigma_b$ is unique up to shrinking $U_b$.
Thus also $\alpha_b$ is unique up to shrinking $V_b$.
\end{proof}

By the functoriality of the period maps and the uniqueness of the local sections $\sigma_b$, we can draw the following immediate consequence.

\begin{theo}[Local isomorphism property of embedded K3 families]
Two (marked) embedded second level families $\cF_B,\cG_B$ of K3 surfaces are locally isomorphic if and only if they are fiberwise isomorphic---that is, every $b\in B$ having an open neighborhood $U$ in $B$ with $\cF_U\cong\cG_U$ is equivalent to $\cF_b\cong\cG_b$ for all $b\in B$.\qed
\end{theo}
\begin{rema}
It is interesting to note that such a \emph{local isomorphism property} does not follow from discreteness of automorphism groups, as it does not hold for families for K3 surfaces themselves---again, because the moduli space of marked K3 surfaces is not Hausdorff; see for example \cite[Theorem~1]{Kir18} and the classical references listed there.
\end{rema}

\Cref{induced section} naturally leads to the problem of determining conditions under which the sections $\sigma_b$ obtained in \emph{loc.~cit.} glue together in $\sF$, or equivalently, the local isomorphisms $\alpha_b$ glue to a global isomorphism of second level families.
Owing to \Cref{extension2}, gluing holds at least in the case when the second level period map is an embedding.
Using the Hausdorff property of $\sF$, we can moreover deduce the following criterion from \Cref{induced section}.

\begin{coro}
\label{global pullback}
Let $(\cF_B,\mu)=(X\to C\to B,\mu)$ be a marked embedded second level family of K3 surfaces of degree $d$ with second level period map $\Per$, let $\sigma_b\in\sF(U_b)$ be the sections introduced in \Cref{induced section}, and let $U$ be a connected open neighborhood of $\Per(B)$ in $\bigcup_{b\in B}U_b$.
Then the following conditions are equivalent:
\begin{enumerate}
\item\label{a} $(\cF_B,\mu)=\Per^*(Y\to\rest{\cC_d}U\to U,\mu)$ for a second level family over $U$ with second level period map the inclusion $U\subset \mathsf{S}_d$.
\item\label{b} There exists a marked second level family $(Y\to\rest{\cC_d}U\to U,\mu)$ over $U$ with second level period map the inclusion $U\subset \mathsf{S}_d$ that is isomorphic to $(\cF_B,\mu)$ after restriction to at least one fiber.
\item\label{c} One of the sections $\sigma_b$ can be extended to $U$.
\item\label{d} The sections $\sigma_b$ glue together to a section in $\sF(U)$.
\end{enumerate}
\end{coro}
\begin{proof}
\Cref{a} implies \Cref{b} because pullbacks have isomorphic fibers.
Assuming \Cref{b}, the family $(Y\to\rest{\cC_d}U\to U,\mu)$ corresponds to a section $\sigma\in\sF(U)$ with $\Per$ inducing an isomorphism of the fibers $(\cF_b,\mu_b)\bij(\sigma^*\cN)_{\Per(b)}$ for a $b\in B$.
Then $\sigma$ and $\sigma_b$ must coincide over $\Per(b)$.
As $\sF$ is Hausdorff by \Cref{T2}, we must have $\rest{\sigma}{U_b}=\sigma_b$, cf.~\Cref{HausdorffRemark}, and thus \Cref{c} holds.
Due to the uniqueness of the $\sigma_b$, \Cref{c} implies \Cref{d}.
Finally, if the $\sigma_b$ glue together to a section $\sigma\in\sF(U)$, then the $\alpha_b$ of \Cref{induced section} glue together to a morphism over $Q$, which is a morphism over $\Per$, implying \Cref{a}. 
\end{proof}

\subsection{Universal small deformation of K3 twistor families}

In the setup of Section~\ref{setup} we set $d=2$ and restrict $\cN\stackrel{p_1}{\to}\tilde{\cC}_2\stackrel{q_1}{\to}\cC_2\stackrel{\pr1}{\to} \mathsf{S}_2$ over the cycle space to $\cN\stackrel{p_1}{\to}\tilde{\cC}\stackrel{q_1}{\to}\cC\stackrel{\pr1}{\to} C_1(\Omega)$.
Here, $\cC\stackrel{\pr1}{\to}C_1(\Omega)\subset \mathsf{S}_2$ is the universal family over the cycle space discussed in Section~\ref{subsect:cyclespaces}, with restriction $\cC_{\R}\defeq\cC_{\R}(\Omega)$ over the real cycle space $C_1(\Omega)_{\R}$.
We denote the restricted sheaves $\sF'\defeq \rest{\sF'}{\cC}$ and $\sF\defeq \rest{\sF}{C_1(\Omega)}$ as before.

We will construct a particular section $\sigma\in\sF(U)$ over an open neighborhood $U$ of $\Twr$ in $\Tw$ invariant under the natural $\Gamma$-action defined below in \Cref{sheaf action}, see \Cref{realfamily} and \Cref{extsection}.
For the induced family $\sigma^*\cN$ over $U$, which contains all K3 twistor families, a universal property will be stated and proven in \Cref{univfamily}.

\begin{rema}[$\Gamma$-actions on the sheaves $\tilde{\sF},\sF',\sF$]
\label{sheaf action}
As $\Gamma$ acts equivariantly on $\tilde{\Omega}\stackrel{\pi}{\to}{\Omega}$ by Section~\ref{weyl group} and obviously also on $\cC\stackrel{\pr2}{\lto}\Omega$, the $\Gamma$-actions on $\cC$ and $\tilde{\Omega}$ induce a $\Gamma$-action on $\tilde{\cC}$ for which the projections $q_1,q_2$ are equivariant.
In particular, we can consider $\tilde{\sF}$ and $\sF'$ as $\Gamma$-equivariant sheaves via conjugation of sections by elements $\gamma\in\Gamma$ (see Section~\ref{topsheaves}).
The same works for $\sF$ because $\Gamma$ also acts equivariantly on $\cC\stackrel{\pr1}{\lto}C_1(\Omega)$ by the operation discussed in Section~\ref{subsection_cycle_action}.
\end{rema}

When discussing the Global Torelli Theorem and surjectivity of the period map for K3 surfaces, classical references, see for example \cite[\S1]{Kob90}, \cite[Chapter~VIII, Proposition~9.1, Corollary~9.2 and Proposition~13.5]{BHPV} and \cite[p.~148]{Bea81}, as an auxiliary object introduce the spaces of classes of volume one\footnote{Albeit with different notation, partially clashing with the one adapted here.} in $K\Omega$
\begin{align*}
(K\Omega)_{=1}&\defeq \{(\kappa,p)\in K\Omega \mid \langle \kappa, \kappa \rangle_\R = 1 \}\\
\intertext{and its open subspace} 
(K\Omega)^{\circ}_{=1}&\defeq (K\Omega)^{\circ}\cap (K\Omega)_{=1},
\end{align*}
together with the real-analytic map $\Pi\from (K\Omega)_{=1}\to\mathbb{G}^+_{\R}$, $(\kappa,p)\mapsto\R\kappa\oplus E(p)$, where $E\from\Omega\to\Grp2{\Lambda_{\R}}$, $[x]\mapsto \R\Re x\oplus\R\Im x\subset\Lambda_{\R}$ for $x \in \Lambda_\C \setminus \{0\}$ is the natural real-analytic two-to-one covering, see \cite[Proposition~6.1.5]{K3book}. Together with the mapping $\psi$ (see \Cref{cyclemap} and \Cref{cyclespace}) and natural projection maps we obtain the following commutative diagram of real-analytic manifolds:
\begin{equation}
\label{diag2}
\begin{tikzcd}[row sep=normal, column sep=huge]
\mathbb{G}_{\R}^+\dar{\psi}&(K\Omega)_{=1}\dar{(\psi\circ\Pi,\pr2)}\lar[swap]{\Pi}\rar{\pr2}&\Omega\dar[equal]\\
C_1(\Omega)_{\R}&\cC_{\R}\lar[swap]{\pr1}\rar{\pr2}&\Omega.
\end{tikzcd}
\end{equation}
\begin{prop}
\label{isom}
The map $(\psi\circ\Pi,\pr2)\from (K\Omega)_{=1}\overset{\sim}\lto\cC_{\R}$ is a $\Gamma$-equivariant isomorphism of real-analytic manifolds.
Over the set of twistor cycles, it restricts to a $\Gamma$-equivariant isomorphism $(K\Omega)^{\circ}_{=1}\stackrel{\sim}{\lto}\rest{\cC}{\Twr}$.
\end{prop}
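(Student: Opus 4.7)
The plan is to exhibit an explicit real-analytic inverse to $(\psi\circ\Pi,\pr2)$; the $\Gamma$-equivariance and the restriction statement will then follow immediately from the structure of the inverse construction.

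First I would verify well-definedness: given $(\kappa,p)\in(K\Omega)_{=1}$ with $p=[x]$, the three-space $V\defeq\R\kappa\oplus E(p)$ is real by construction. The period equations $\langle x,x\rangle_\C=0$ and $\langle x,\overline{x}\rangle_\C>0$ force $E(p)$ to be a real positive two-plane; combined with $\kappa\in V_p$ of positive norm and orthogonal to $E(p)$ (since $\kappa\perp x$ in $\langle\cdot,\cdot\rangle_\C$ is equivalent to $\kappa\perp E(p)$ in $\langle\cdot,\cdot\rangle_\R$), the form $\langle\cdot,\cdot\rangle_\R$ restricted to $V$ has signature $(3,0)$. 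Hence $V\in\G^+_\R$, $p\in\P(V)\cap\Omega=\psi(V)$, and $(\psi(V),p)\in\cC_\R$. Real-analyticity of the forward map is manifest.

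Next I would construct the inverse. Given $([C],p)\in\cC_\R$, \Cref{cyclemap} supplies a unique $V\in\G^+_\R$ with $\psi(V)=[C]$; the condition $p\in C$ forces $x\in V_\C$, hence $E(p)\subset V_\R\defeq V\cap\Lambda_\R$. Since $V_\R$ has signature $(3,0)$, the orthogonal complement of $E(p)$ inside $V_\R$ is a positive real line; it meets the chosen component $V_p$ in a unique ray, and that ray contains a unique unit vector $\kappa$. The assignment $([C],p)\mapsto(\kappa,p)$ inverts the forward map by construction. Real-analyticity of the inverse follows from real-analytic dependence of $V$ on $[C]$ (via $\psi\inv$, which is an open embedding of real-analytic manifolds by \Cref{cyclemap}), of $E(p)$ on $p$, and real-analyticity of orthogonal projection within a real-analytically varying family of positive three-spaces.

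The $\Gamma$-equivariance is automatic: $\Gamma$ acts orthogonally on $\Lambda_\R$, hence sends $E(p)$ to $E(\gamma(p))$ and $\kappa$ to $\gamma(\kappa)$, and $\psi$ is $\Gamma$-equivariant by \Cref{cycle action}. For the restriction $(K\Omega)^\circ_{=1}\to\rest{\cC}{\Twr}$ I would simply trace through the definitions: $(\kappa,p)\in(K\Omega)^\circ_{=1}$ iff $\langle\kappa,\delta\rangle_\R\neq 0$ for every $\delta\in\Delta_p$. Since $\delta\in\Delta_p$ is equivalent to $E(p)\subset\delta^\perp$, this is exactly the non-existence of a $\delta\in\Delta$ with $V=\R\kappa\oplus E(p)\subset\delta^\perp$, equivalently $V\in(\G^+_\R)^\circ$ and therefore $\psi(V)\in\Twr$. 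No technical obstacle appears to arise; the only step requiring care is the real-analyticity of the inverse, but this reduces to elementary linear algebra in a real-analytic family of three-spaces once $\psi$ has been recognized as an open embedding.
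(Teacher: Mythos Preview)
Your proof is correct and follows essentially the same approach as the paper: both construct the inverse by sending $([C],p)$ to $(\kappa,p)$, where $\kappa$ is the unique unit vector in the orthogonal complement of $E(p)$ inside the real positive three-space $V=\psi^{-1}([C])\cap\Lambda_\R$ that lies in the chosen positive cone $V_p$.

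The only noteworthy difference is in how global well-definedness of this $\kappa$ is organized. The paper packages the orthogonal complements as a real-analytic line bundle $\cL\subset\cV$ on $\cC_\R$, invokes simple connectedness of $\cC_\R$ to conclude $\cL$ is trivial, picks a global nowhere-vanishing section, normalizes, and then adjusts by a single global sign to land in $V_p$. You instead use the continuous choice of $V_p$ (already fixed in Section~\ref{abstract setting} using simple connectedness of $\Omega$) directly as a pointwise selection rule, and check real-analyticity locally. Your route is slightly more economical, since it avoids the separate line-bundle triviality argument; the paper's framing, on the other hand, makes the bundle $\cL$ explicit, which is convenient later when the section $\kappa$ is reused in the construction of $\sigma_\R$ in \Cref{realfamily}.
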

In other words, the universal families $\cC\stackrel{\pr1}{\lto} C_1(\Omega)$ and $\rest{\cC}{\Tw}\stackrel{\pr1}{\lto}\Tw$ provide natural extensions for $(K\Omega)_{=1}\stackrel{\Pi}{\lto}\mathbb{G}^+_{\R}$ and $(K\Omega)^{\circ}_{=1}\stackrel{\Pi}{\lto}(\mathbb{G}^+_{\R})^{\circ}$, respectively.
\begin{proof}[Proof of \Cref{isom}]
Pointwise, the desired inverse morphism $\cC_{\R}\to (K\Omega)_{=1}$ must map each point in $\cC_{\R}$ via $([C],p)\mapsto(\kappa,p)$, where $\kappa\in E(p)^{\perp}\subset\psi\inv([C])$ has volume one.
As a morphism, we will now construct this map via a normalized section in a real differentiable line bundle on $\cC_{\R}$; as the inverse of a real-analytic map, this then has to be real-analytic as well, e.g.~by the real-analytic inverse function theorem.

Note that pulling back the defining vector bundles on the positive Grassmannians $\Grp3{\Lambda_{\R}}$ and $\Grp2{\Lambda_{\R}}$ via the two maps
\begin{align*}
&\cC_{\R}\stackrel{\pr1}{\lto}C_1(\Omega)_{\R}\stackrel{\psi\inv}{\lto}\G_{\R}^+\bij \Grp3{\Lambda_{\R}}\\
&\cC_{\R}\stackrel{\pr2}{\lto}\Omega\stackrel{E}{\lto}\Grp2{\Lambda_{\R}}
\end{align*}
yields two real-analytic vector bundles $\cV,\cW$ on $\cC_{\R}$ of rank $3$ and $2$, respectively.
By construction, for each $([C],p)\in\cC_{\R}$ the fiber $\cV_{([C],p)}=\psi\inv([C])\cap\Lambda_{\R}$ contains the fiber $\cW_{([C],p)}=E(p)$, hence $\cW$ is a subbundle of $\cV$.
Therefore, the orthogonal complement of $\cW$ in $\cV$ is a real differentiable line bundle $\cL$ on $\cC_{\R}$.
As $\cC_{\R}$ is simply connected by the observations made in Section~\ref{cycleintro}, we have $\Hsh2{\cC_{\R}}{\Orth(1)} = \Hsh2{\cC_{\R}}{\Z/2\Z} = \{0\}$, and hence the line bundle $\cL$ is (differentiably) trivial.
We can thus find a nowhere vanishing differentiable section of $\cL$ -- that is, a map $\kappa\from\cC_{\R}\to\Lambda_{\R}\setminus\{0\}$ such that for each $([C],p)\in\cC_{\R}$ we have an orthogonal decomposition $\psi\inv(C)\cap\Lambda_{\R}=E(p)\oplus\R\kappa([C],p)$.
In particular $\kappa([C],p)$ is always orthogonal to each $x\in\Lambda_{\C}\setminus\{0\}$ with $p=[x]$ and $\langle\kappa([C],p),\kappa([C],p)\rangle_{\R}>0$ since $\psi\inv(C)\in\G^+_{\R}$.
By normalizing $\kappa$, we may assume $\langle\kappa([C],p),\kappa([C],p)\rangle_{\R}=1$ for all $([C],p)\in\cC_{\R}$.
Then, up to changing the sign of $\kappa$, we can assume $\kappa([C],p)\in V_p$ and thus also $(\kappa([C],p),p)\in (K\Omega)_{=1}$ for every $([C],p)\in\cC_{\R}$.
The map $(\kappa,\pr2)\from\cC_{\R}\to(K\Omega)^{\circ}_{=1}$ is thus the inverse real-analytic map we were looking for.

Moreover, the section $\kappa$ is at a point $([C],p)\in\cC_{\R}$ orthogonal to a $\delta\in\Delta_p$ if and only if $\psi\inv([C])\subset\delta^{\perp}$.
Hence $\kappa([C],p)\in V_p^{\circ}$ for all $p\in C$ is equivalent to $[C]$ being a twistor cycle.
Thus, the map $(\kappa,\pr2)\from\cC_{\R}\to (K\Omega)_{=1}$ restricts to an isomorphism $\rest{\cC}{\Twr}\to(K\Omega)^{\circ}_{=1}$.

For the compatibility with the $\Gamma$-action, note first that $E([\gamma x])=\gamma E([x])$ for all $\gamma\in\Gamma$ and ${x\in\Lambda_{\C}\setminus\{0\}}$.
As a consequence, the maps $E$, $\Pi$ and thus also $(\psi\circ\Pi,\pr2)\from (K\Omega)_{=1}\to\cC_{\R}$ are all $\Gamma$-equivariant, as claimed. 
\end{proof}

\begin{theo}[The real-analytic second level family of K3 twistor families]
\label{realfamily}
There exists a $\Gamma$-invariant real-analytic section $\sigma_{\R}\in\sF({\Twr})$ such that the induced real-analytic marked second level family $\sigma_{\R}^*\cN\to\rest{\cC}{\Twr}\to\Twr$ restricts over each $[C]\in\Twr$ to a marked K3 twistor family. 
\end{theo}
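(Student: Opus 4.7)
The plan is to exploit the $\Gamma$-equivariant real-analytic isomorphism $\rest{\cC}{\Twr}\isom (K\Omega)^{\circ}_{=1}$ from \Cref{isom}. The key observation is that each point $(\kappa,p)\in(K\Omega)^{\circ}_{=1}$ comes equipped with a distinguished positive representative $\kappa\in V_p^{\circ}$, which singles out the Weyl chamber $[\kappa]\in\pi_0(V_p^{\circ})$ containing it. This tautologically yields a map
\[\phi\from(K\Omega)^{\circ}_{=1}\lto\tilde{\Omega},\quad(\kappa,p)\longmapsto([\kappa],p).\]
It is continuous---and hence, since $\pi\from\tilde{\Omega}\to\Omega$ is \'etale, real-analytic---because $\kappa\mapsto[\kappa]$ is locally constant on $V_p^{\circ}$. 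Moreover, $\phi$ is $\Gamma$-equivariant by construction of the $\Gamma$-action on $\tilde{\Omega}$: we have $\gamma(V_p^{\circ})=V_{\gamma p}^{\circ}$, and $\gamma$ permutes Weyl chambers accordingly (Section~\ref{weyl group}).

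Composing $\phi$ with the inverse of the isomorphism from \Cref{isom} produces a $\Gamma$-equivariant real-analytic lift $\rest{\cC}{\Twr}\to\tilde{\Omega}$ of $\pr2\from\rest{\cC}{\Twr}\to\Omega$. Running the argument of \Cref{sections of F} verbatim in the continuous (respectively real-analytic) category, via the fiber-product description $\tilde{\cC}=\cC\x_{\Omega}\tilde{\Omega}$, this lift corresponds to a $\Gamma$-invariant real-analytic section $\sigma_{\R}\in\sF'(\rest{\cC}{\Twr})=\sF(\Twr)$. The marking on $\sigma_{\R}^*\cN$ is then inherited from the universal marking on $\cN\to\tilde{\Omega}$.

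To identify $\rest{\sigma_{\R}^*\cN}{\{[C]\}\x C}$ with a marked K3 twistor family for each $[C]\in\Twr$, first note that $\rest{\sigma_{\R}}{\{[C]\}}\in\tilde{\sF}(C)$ is a continuous lift of the holomorphic inclusion $C\inj\Omega$; since $\pi$ is \'etale, it is automatically holomorphic. Fix $p_0\in C$, set $(X,\mu)\defeq\rest{\sigma_{\R}^*\cN}{([C],p_0)}$, and let $\kappa_0\in V_{p_0}^{\circ}$ be the distinguished representative at $([C],p_0)$. By construction of $\phi$, the Weyl chamber $[\kappa_0]$ agrees with the image under $\mu_{\R}$ of the K\"ahler cone of $X$, so $a\defeq\mu_{\R}\inv(\kappa_0)$ is a K\"ahler class with $[C]=[C_{X,\mu,a}]$, cf.~\Cref{twr} and \Cref{twistor cycle decomposition}. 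For the associated marked twistor family $(T\to\P^1,\nu)$, both $\tilde{\per}_T\from\P^1\to\tilde{\Omega}$ and $\rest{\sigma_{\R}}{\{[C]\}}\circ\per_T$ are continuous lifts of the period map $\per_T$ to $\tilde{\Omega}$ that agree at $\per_T\inv(p_0)$. Since $\pi$ is \'etale and $\P^1$ is connected, they coincide throughout $\P^1$, and \Cref{refined} yields the desired isomorphism $\rest{\sigma_{\R}^*\cN}{\{[C]\}\x C}\isom T$ of marked families.

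The principal subtlety, rather than a genuine obstruction, is to verify that the Weyl chamber distinguished by $\phi$ is indeed the K\"ahler cone of the corresponding twistor fiber, and thus that the tautological construction produces the geometrically correct marked family. This hinges on the defining condition of $\Twr$: since $\psi\inv([C])$ is not contained in any $\delta^{\perp}$ for $\delta\in\Delta$, the Weyl chamber assignment is unambiguous at every point of $C$, and the agreement of the two refined period maps at the single point $p_0$ then propagates to all of $C$ by \'etaleness of $\pi$.
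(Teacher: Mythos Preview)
Your proof is correct and follows essentially the same approach as the paper: both use the $\Gamma$-equivariant isomorphism of \Cref{isom} to transport the tautological map $(\kappa,p)\mapsto([\kappa],p)$ into a section $\sigma_{\R}$, and both identify the resulting fiber over $[C]$ with a twistor family via \Cref{refined}. The only cosmetic difference is in the last step: the paper invokes \Cref{twistor cycle decomposition} to see directly that the refined period maps of the twistor family and of $\rest{\sigma_{\R}^*\cN}{\{[C]\}\times C}$ are embeddings with the same image, whereas you check agreement at one point and propagate via \'etaleness of $\pi$ and connectedness of $\P^1$.
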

\begin{proof}
The real-analytic map $(\kappa,\pr2)\from\rest{\cC}{\Twr}\to(K\Omega)^{\circ}_{=1}$ constructed in the proof of \Cref{isom} is like its inverse $\Gamma$-equivariant.
By construction we obtain a $\Gamma$-equivariant real-analytic section
\begin{align*}
\sigma_{\R}\defeq(\pr1,([\kappa],\pr2))\from\rest{\cC}{\Twr}&\to\tilde{\cC}\\
([C],p)&\mapsto ([C],([\kappa([C],p)],p))
\end{align*}
of $q_1$. Owing to properness of the projection $\pr1$, we have $\sF'(\rest{\cC_{\R}}{\Twr})=\sF(\Twr)$; i.e, $\sigma_{\R}$ can (and will) be interpreted as a $\Gamma$-invariant section in $\sF(\Twr)$.

The induced family $\sigma_{\R}^*\cN\eqdef(X\to\rest{\cC}{\Twr},\mu)$ is a marked real-analytic family of K3 surfaces with refined period map $([C],p)\mapsto([\kappa([C],p)],p)$.
On the other hand, we can take the induced twistor family $\cF=(T\to\P^1)$ associated to a given fiber $X_{([C],p)}$ with K\"ahler class $\mu_{([C],p)}\inv(\kappa([C],p))$.
We extend the marking $\mu_{([C],p)}$ trivially to a marking of $\cF$, then the refined period maps of $\cF$ and $\rest{\sigma_{\R}^*\cN}{\{[C]\}\x C}$ are by \Cref{twistor cycle decomposition} embeddings with the same image in $\tilde{\Omega}$.
By \Cref{refined} both families are isomorphic as marked families.
\end{proof}

\begin{rema}[Comparison of \Cref{realfamily} with \cite{Kob90}]
Our construction in \Cref{realfamily} yields essentially the same family Kobayashi describes in~\cite[Theorem~10]{Kob90}.
The relatively explicit construction of the section $\sigma_{\R}$ in our proof gives a description of the family using which it is easier to discuss extensions of the family to neighborhoods of $\Twr$ in $\Tw$.
Moreover, it was shown above that this family is real-analytic, which is not so obvious from Kobayashi's description that uses gluings of universal deformations of K\"ahler-Einstein K3-surfaces and Torelli Theorems for such partially differential-geometric objects.
\end{rema}

For purely topological reasons (see \cite[Proposition~II.2.5.1(iii)]{KS94}), the section $\sigma_{\R}$ constructed in \Cref{realfamily} extends to a section over an open neighborhood of $\Twr$ in $C_1(\Omega)$.
However, it requires some work to show that the extension of $\sigma_{\R}$ can also be chosen as a $\Gamma$-invariant section.
This follows from the following general result regarding equivariant geometry.

\begin{prop}[$\Gamma$-invariant extension of real sections]\label{extsection_general}
Let $\sG$ be the $\Gamma$-equi\-va\-riant sheaf of sections of a $\Gamma$-equivariant topological sheaf on $\Tw$.
Assume furthermore that we are given a $\Gamma$-invariant section $\sigma_\R \in \sG(\Twr)$ over $\Twr$. Then, there is a $\Gamma$-stable connected open neighborhood $U$ of $\Twr$ in $\Tw$ over which $\sigma_{\R}$ can be extended (uniquely) to a $\Gamma$-invariant section $\sigma\in\sG(U)$. 
\end{prop}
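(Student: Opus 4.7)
The natural strategy will be to pass to the quotient $Y \defeq \Tw/\Gamma$, interpret $\Gamma$-invariant sections of $\sG$ as ordinary sections of a descended sheaf on $Y$, and then apply the standard extension property for sheaves on paracompact spaces. Since $\Tw$ is a $\Gamma$-stable open subset of $C_1^+(\Omega)$, the induced $\Gamma$-action on $\Tw$ inherits proper discontinuity from \Cref{ProperlyDisc}; consequently the quotient $\pi \from \Tw \to Y$ exists as a Hausdorff complex space, and since $\Tw$ is second countable and locally compact, so is $Y$, making it paracompact. Moreover, as $\Gamma$ consists of real lattice isometries, the real cycle space $\Twr$ is $\Gamma$-invariant, so $\pi(\Twr)$ is a closed subset of $Y$.

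Next, I would define on $Y$ the sheaf of sets $\bar \sG$ by $\bar \sG(\bar V) \defeq \sG(\pi^{-1}(\bar V))^{\Gamma}$ for open $\bar V \subset Y$; a routine check shows that the $\Gamma$-invariants indeed define a sheaf, whose sections correspond bijectively to $\Gamma$-invariant sections of $\sG$ over the preimage. The given section $\sigma_{\R}$ then yields a section $\bar \sigma_\R \in \bar\sG(\pi(\Twr))$. Using $\bar\sG(\pi(\Twr)) = \varinjlim_{\bar V} \bar \sG(\bar V)$, applied to the paracompact space $Y$ just as in Section~\ref{topsheaves}, I would extend $\bar\sigma_\R$ to a section $\bar \sigma$ over some open neighborhood $\bar V$ of $\pi(\Twr)$ in $Y$. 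Pulling back via $\pi$ produces a $\Gamma$-invariant section of $\sG$ over the $\Gamma$-stable open neighborhood $\pi^{-1}(\bar V)$ of $\Twr$.

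To achieve connectedness, I would let $U$ be the connected component of $\pi^{-1}(\bar V)$ containing the connected set $\Twr$ (connectedness recalled in Section~\ref{cycleintro}). For every $\gamma \in \Gamma$, the translate $\gamma(U)$ is a connected open subset of $\pi^{-1}(\bar V)$ that meets $\Twr = \gamma(\Twr)$ and hence must coincide with $U$, so $U$ is $\Gamma$-stable. Restricting then gives the desired section $\sigma \in \sG(U)$. Uniqueness up to shrinking is automatic from the étalé nature of $\sG$: any two $\Gamma$-invariant extensions agree on $\Twr$, hence on an open neighborhood thereof, whose connected component containing $\Twr$ is itself $\Gamma$-stable by the same argument.

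The main delicate point will be the construction and use of $\bar\sG$ on the Hausdorff but possibly singular quotient $Y$; once one confirms that the $\Gamma$-invariants of a sheaf of sets again form a sheaf and that paracompactness of $Y$ is sufficient to invoke the standard closed-subset extension property, the remainder reduces to a formal pullback and connected-component argument.
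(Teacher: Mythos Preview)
Your overall strategy coincides with the paper's: descend to the quotient $Y=\Tw/\Gamma$, form the sheaf $\bar\sG(\bar V)=\sG(\pi^{-1}(\bar V))^\Gamma$, and invoke the closed-subset extension property on the paracompact space $Y$. However, the step you treat as routine is precisely the heart of the matter, and as written it is a gap.

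You assert that ``the given section $\sigma_\R$ then yields a section $\bar\sigma_\R\in\bar\sG(\pi(\Twr))$'' and immediately use $\bar\sG(\pi(\Twr))=\varinjlim_{\bar V}\bar\sG(\bar V)$. But an element of this direct limit is, by definition, the germ of a $\Gamma$-invariant section of $\sG$ on some $\Gamma$-stable open neighborhood of $\Twr$---which is essentially what you are trying to construct. What you actually have is a $\Gamma$-invariant section of $\sG|_{\Twr}$, i.e., an element of $(\pi|_{\Twr})_*(\sG|_{\Twr})^\Gamma$ over $\pi(\Twr)$. The nontrivial content is the identification
\[
\bigl(\pi_*(\sG)^\Gamma\bigr)\big|_{\pi(\Twr)} \;\cong\; (\pi|_{\Twr})_*(\sG|_{\Twr})^\Gamma,
\]
and this is what the paper isolates as its ``Main claim'' and proves stalkwise. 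The argument uses that the $\Gamma$-action is properly discontinuous, so each isotropy group $\Gamma_p$ is finite and $p$ admits a slice $S_p$ with $\Gamma\times_{\Gamma_p}S_p\hookrightarrow\Tw$; one then reduces to showing that a $\Gamma_p$-equivariant section of the \'etal\'e sheaf $\sG$ over $\Twr\cap S_p$ extends $\Gamma_p$-equivariantly to a neighborhood in $S_p$. This last point uses the local-homeomorphism property of $\sG$ (unique local extension) together with finiteness of $\Gamma_p$. None of these ingredients appear in your proposal; the issues you flag (sheaf axioms for $\bar\sG$, paracompactness of $Y$) are comparatively formal.
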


\begin{proof}
Let $\pi\from \Tw \to \Tw/\Gamma$ be the quotient map.
First, we recall that by Section~\ref{ProperlyDisc} the action of $\Gamma$ on $\Tw$ is properly discontinuous,
in particular, the quotient $\Tw/\Gamma$ is Hausdorff.
Moreover, for every $p \in \Tw$ the isotropy subgroup $\Gamma_p$ is finite and $p$ admits a neighborhood basis of $\Gamma_p$-invariant connected neighborhoods $S_p$ (that is $\Gamma_p$-equivariantly biholomorphic to a $\Gamma_p$-stable ball in a $\Gamma_p$-representation) such that the natural map $\Gamma \x_{\Gamma_p} S_p \to X$ yields a $\Gamma$-equivariant biholomorphism between the left hand side and its image $B_p$, which is an open $\Gamma$-stable neighborhood of $\Gamma\cdot p$ in $\Tw$; see e.g.~\cite{CartanDiscreteQuotients}.
We call $S_p$ a \emph{slice at $p$} and $B_p$ a \emph{slice neighborhood} of $\Gamma\cdot p$.
The presheaf $U \mapsto \sG(\pi^{-1}(U))^\Gamma$ of $\Gamma$-equivariant sections in $\sG$ is actually a sheaf, which we will denote by $\pi_*(\sG)^\Gamma$; cf.~\cite[p.~198]{GrothendieckTohoku}.

The subset $\Twr$ is closed and $\Gamma$-invariant in $\Tw$, so that its image $\pi(\Twr)$ is closed in $\Tw/\Gamma$ and realizes the topological quotient $\Twr/\Gamma$, with quotient map $\rest{\pi}{\Twr}\from \Twr \to\pi(\Twr)$.
Similar to the above, we will look at the sheaf $(\rest{\pi}{\Twr})_*(\rest{\sG}{\Twr})^\Gamma$.
This is relevant, since the section $\sigma_\R$ we want to extend is a section of this sheaf of invariants. \newline

\vspace{-0.2cm}
\noindent
\emph{Main claim:} The following natural map is an isomorphism of sheaves of sets:
\begin{equation}
\label{eq:mainclaim}
\rest{\pi_*(\sG)^\Gamma}{\pi(\Twr)} \longrightarrow (\rest{\pi}{\Twr})_*(\rest{\sG}{\Twr})^\Gamma
\end{equation}

\vspace{0.2cm}

Since this can be checked at the level of stalks, let $[p] = \pi(p) \in \pi(\Twr)$ be any point, represented by $p \in \Twr$, and let $B_p$ be a slice neighborhood of $\Gamma \cdot p$ in $\Tw$; i.e., $B_p \cong \Gamma \x_{\Gamma_p} S_p$, with corresponding isomorphism $B_p / \Gamma \cong S_p/\Gamma_p$.
Open sets of the form $S_p / \Gamma_p$ a neighborhood basis of the point $[p] = \pi(p)$ in $\Tw/\Gamma$.
Then, the restriction map yields an equivalence of categories between the category of $\Gamma$-sheaves on $B_p$ and the category of $\Gamma_p$-sheaves on the slice $S_p$.
We are therefore reduced to showing the analog of \eqref{eq:mainclaim} for the quotient $S_p \to S_p/\Gamma_p$, together with the sheaf $\rest{\sF'}{S_p}$ and its restriction to $\Twr \cap S_p$.
This however follows from the simple observation that every $\Gamma_p$-equivariant section of $\rest{\sG}{\Twr \cap S_p}$ defined in a neighborhood of $p$ in $\Twr \cap S_p$ first can be extended uniquely to a section of $\sG$ defined in a small neighborhood of $p$ in $S_p$, which can then be restricted to a $\Gamma_p$-invariant neighborhood of $p$ and made equivariant by averaging over the finite group $\Gamma_p$.
This concludes the proof of the main claim. \newline

Finally, since the action of $\Gamma$ on $\Tw$ is properly discontinuous, the quotient $\Tw / \Gamma$ is Hausdorff, and locally embeddedable as an analytic subset of some open subset of $\C^N$, for some $N$, see~e.g.~\cite{CartanDiscreteQuotients}; in particular, $\Tw / \Gamma$ is locally compact and also second countable (see \cite[Proposition~3.56]{LeeTopological}). Consequently, the quotient space is paracompact, e.g.~by~\cite[Theorem~4.77]{LeeTopological}.

With these preparations at hand, we may apply \cite[Proposition~II.2.5.1(iii)]{KS94} to the sheaf $\pi_*(\sG)^\Gamma$ and $\sigma_{\R}$, seen via the isomorphism \eqref{eq:mainclaim} as an element of $\pi_*(\sG)^\Gamma(\pi(\Twr))$.
We obtain an open neighborhood $V$ of the closed subset $\pi(\Twr)$ in $\Tw/\Gamma$, together with a section $\sigma \in \pi_*(\sG)^\Gamma(V)$ uniquely extending $\sigma_\R$, which we may finally interpret as a $\Gamma$-invariant section of $\sG$ defined over $U \defeq \pi^{-1} (V)$, which is a $\Gamma$-stable open neighborhood of $\Twr$ in $\Tw$. 
\end{proof}

While the following is clear from the previous proof, we note it explicitly for later usage.
\begin{rema}[$\Gamma$- and $\tau$-stable neighborhood basis of $\Twr$]
\label{neighborhoods}
Since $\Gamma$ acts via homeo\-morphisms on $\Tw$, the quotient map $\pi\from\Tw\to\Tw/\Gamma$ is open.
Hence, there is a basis of $\Gamma$-stable open neighborhoods $U$ of $\Twr$ in $\Tw$.
Note that the action of $\Gamma$ commutes with $\tau$, hence by replacing each $U$ with $U\cap\tau(U)$ we can further achieve $\tau(U)=U$ and $U^{\tau}=\Twr$.
\end{rema}

Applying \Cref{extsection_general} and \Cref{neighborhoods} to the $\Gamma$-invariant section $\sigma_\R\in\sF(\Twr)$ of \Cref{realfamily} yields the following result.

\begin{coro}\label{extsection}
There is a $\Gamma$- and $\tau$-stable connected open neighborhood $U$ of $\Twr$ in $\Tw$ over which the section $\sigma_\R$ obtained in \Cref{realfamily} can be extended (uniquely) to a $\Gamma$-invariant section $\sigma \in \sF(U)$.
\end{coro}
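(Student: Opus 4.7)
The plan is to combine \Cref{extsection_general} with the symmetrization observations in \Cref{neighborhoods}. By \Cref{sheaf action} the sheaf $\sF = (\pr1)_*\sF'$ is $\Gamma$-equivariant and, via the equivalence recalled in Section~\ref{topsheaves}, corresponds to a $\Gamma$-equivariant topological sheaf on $C_1(\Omega)$, so its restriction $\rest{\sF}{\Tw}$ fits the hypothesis of \Cref{extsection_general}. Since $\sigma_{\R}\in\sF(\Twr)$ is $\Gamma$-invariant by \Cref{realfamily}, that proposition directly produces a $\Gamma$-stable connected open neighborhood $U'$ of $\Twr$ in $\Tw$ together with a unique $\Gamma$-invariant extension $\sigma'\in\sF(U')$ of $\sigma_{\R}$.

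To additionally enforce $\tau$-stability, I would replace $U'$ by $U''\defeq U'\cap\tau(U')$. This is an open neighborhood of $\Twr$ because $\tau$ fixes $\Twr$ pointwise (by \Cref{realcycles}), it is tautologically $\tau$-stable, and it is still $\Gamma$-stable because the actions of $\Gamma$ and $\tau$ on $\Tw$ commute: both descend from $\R$-linear operations on $\Lambda_{\C}$, namely the action of $\Gamma\subset\Orth(\Lambda_{\R})$ on one side and complex conjugation on the other. Taking $U$ to be the connected component of $U''$ that contains the connected subspace $\Twr$ then yields a connected $\Gamma$- and $\tau$-stable open neighborhood of $\Twr$: since $\Gamma$ and $\tau$ act by homeomorphisms, they permute the connected components of $U''$ while fixing the one through $\Twr$. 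The restriction $\sigma\defeq\rest{\sigma'}{U}\in\sF(U)$ is then the desired $\Gamma$-invariant extension of $\sigma_{\R}$; uniqueness is inherited from \Cref{extsection_general}.

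No serious obstacle arises; the argument is essentially bookkeeping once \Cref{extsection_general} is in place. The only point requiring a brief verification is the commutativity of the $\Gamma$- and $\tau$-actions on $\Tw$, without which the naive symmetrization $U'\cap\tau(U')$ would not automatically remain $\Gamma$-stable, but this is immediate from the fact that both actions are restrictions to the cycle space of $\R$-linear automorphisms of $\Lambda_{\C}$ that commute with each other.
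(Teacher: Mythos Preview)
Your proposal is correct and follows essentially the same approach as the paper: apply \Cref{extsection_general} to obtain a $\Gamma$-stable neighborhood with a unique $\Gamma$-invariant extension, then use the symmetrization $U'\cap\tau(U')$ from \Cref{neighborhoods} (which relies on the commutativity of the $\Gamma$- and $\tau$-actions) and pass to the connected component containing $\Twr$. The paper compresses all of this into a single sentence invoking those two results; you have simply spelled out the details.
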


\begin{rema}[Invariant sections induce equivariant families]
\label{equivariant}
Let $S\subset\cC$ be a $\gamma$-stable complex subspace for some $\gamma\in\Gamma$, and $\sigma\in\sF'(S)$ a $\gamma$-invariant section with induced marked family $\sigma^*\cN\eqdef(X\stackrel{\pi}{\to}S,\mu)$.
Then, the automorphism $\phi\defeq\mu\inv\circ\gamma^*\circ\mu$ of $\Rs2{\pi}{\csh\Z X}$ induces for each point $p\in V$ an effective Hodge isometry $\Hsh2{X_{\gamma(p)}}{\Z}\to\Hsh2{X_p}{\Z}$.
Hence, there must be a unique automorphism $\tilde{\gamma}\in\Aut_{\O}(X)$ such that $(\tilde{\gamma},\gamma)$ is a marked automorphism of $\sigma^*\cN$ fulfilling $\tilde{\gamma}^*=\phi$ (cf.~\cite[Theorem~1']{BR75}).
In particular, if $S$ is $\Gamma$-stable and $\sigma$ is $\Gamma$-invariant, then $\sigma^*\cN$ becomes a $\Gamma$-equivariant marked family via the induced action on $X$.
\end{rema}

\begin{defi}[Universal small deformation of K3 twistor spaces]
\label{defT}
Using \Cref{realfamily} together with \Cref{extsection} and \Cref{equivariant}, we obtain a $\Gamma$-invariant section $\sigma\in\sF(U)$ over a $\Gamma$- and $\tau$-stable open neighborhood $U$ of $\Twr$ in $\Tw$ such that the induced marked embedded second level family $\cT_U\defeq(\sigma^*\cN\to\rest{\cC}U\to U)$ of K3 surfaces is $\Gamma$-equivariant. We call $\cT_U$ the \emph{universal small deformation of K3 twistor spaces}.
\end{defi}

By functoriality, the period map and the second level period map of $\cT_U$ are the projection $\rest{\cC}U\stackrel{\pr2}{\to}\Omega$ and the inclusion $U\subset C_1(\Omega)$, respectively. The name for $\cT_U$ introduced in \Cref{defT} stems from the following universal property. 

\begin{theo}[Universal property of the extended family]
\label{univfamily}
If $(\cF_B,\mu)$ is a deformation of a marked K3 twistor family such that $\Per(B)\subset U$ holds for the associated second level period map $\Per$, then $(\cF_B,\mu)\isom\Per^*\cT_U$ for the family $\cT_U$ introduced in \Cref{defT}. In particular, the functor of small deformations of marked K3 twistor families is represented by the family $\cT_U$.
\end{theo}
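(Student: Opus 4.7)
My plan is to deduce the isomorphism $(\cF_B,\mu)\isom\Per^*\cT_U$ from \Cref{global pullback} by using the family $\cT_U$ itself as the candidate $Y$ therein. Since the second level period map of $\cT_U$ is, by \Cref{defT}, the inclusion $U\subset\mathsf{S}_2$, the family $\cT_U$ already satisfies the structural hypotheses on the candidate family in condition~(b) of \Cref{global pullback}. Thus the entire argument reduces to exhibiting a single point $b_0\in B$ for which the fiber $\cF_{b_0}$ of $(\cF_B,\mu)$ is isomorphic, as a marked family of K3 surfaces over a smooth rational curve, to the fiber $(\cT_U)_{\Per(b_0)}$ of $\cT_U$.

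For this I would take $b_0$ to be the central point of the deformation, so that by hypothesis $\cF_{b_0}$ is a marked K3 twistor family; by \Cref{twr} this already forces $\Per(b_0)\in\Twr$. On the other hand, the construction in \Cref{defT} ensures that $\sigma$ restricts on $\Twr$ to the section $\sigma_{\R}$ from \Cref{realfamily}, so the fiber $(\cT_U)_{\Per(b_0)}=(\sigma_{\R}^*\cN)_{\Per(b_0)}$ is likewise a marked K3 twistor family, and by functoriality of the period map the image of its period map is precisely the cycle $\Per(b_0)\subset\Omega$. Then \Cref{twistorisomorphism} supplies the required isomorphism $\cF_{b_0}\isom(\cT_U)_{\Per(b_0)}$ of marked families, verifying condition~(b) of \Cref{global pullback}, and the equivalence (a)$\gdw$(b) proven there yields $(\cF_B,\mu)\isom\Per^*\cT_U$.

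For the representability statement, I would invoke the Local Torelli Theorem (\Cref{local torelli}): the second level period map of $\cT_U$ is by construction the inclusion $U\inj C_1(\Omega)$, which is trivially a local biholomorphism at every point, so $\cT_U$ is a universal deformation of each of its fibers. In particular it is a universal deformation of every marked K3 twistor family, as by \Cref{twr} together with \Cref{realfamily} these are exactly the fibers of $\cT_U$ over $\Twr\subset U$. Moreover, for any small deformation the hypothesis $\Per(B)\subset U$ is automatically satisfied after shrinking $B$ around the central point, by continuity of $\Per$ and the openness of $U$ in $C_1(\Omega)$. The main---and essentially only---obstacle in the plan is the verification of condition~(b) of \Cref{global pullback}, which itself ultimately rests on the fact, built into the construction in \Cref{realfamily}, that the fibers of $\cT_U$ over $\Twr$ are genuinely K3 twistor families.
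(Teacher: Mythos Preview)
Your proposal is correct and follows essentially the same approach as the paper: use \Cref{twistorisomorphism} to identify the central fiber of $(\cF_B,\mu)$ with the corresponding fiber of $\cT_U$, then invoke \Cref{global pullback} to obtain the isomorphism $(\cF_B,\mu)\isom\Per^*\cT_U$. The only minor difference is that for the representability statement the paper argues uniqueness of the universal morphism directly via functoriality of the second level period map (\Cref{functoriality2}) and injectivity of $U\inj C_1(\Omega)$, whereas you deduce universality from \Cref{local torelli}; both routes are valid and equally short.
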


\begin{proof}
By assumption and \Cref{twistorisomorphism} below, $(\cF_B,\mu)$ and $\cT_U$ have at least one isomorphic fiber.
Hence, $(\cF_B,\mu)\isom\Per^*\cT_U$ by \Cref{global pullback}.
For a small deformation of a marked K3 twistor family we can apply this observation up to shrinking the base.
The uniqueness of the pullback map follows from the functoriality of the second level period map, see \Cref{functoriality2}, because the second level period map of $\cT_U$ is injective.
\end{proof}

\begin{lemm}
\label{twistorisomorphism}
Two marked twistor families of K3 surfaces with the same image of their period maps are isomorphic as marked families.
\end{lemm}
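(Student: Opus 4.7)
My plan is to reduce the claim to the universality of the Burns--Rapoport moduli space $\tilde\Omega$ recalled in Section~\ref{K3moduli}. Let $\cF_i = (T_i\stackrel{f_i}{\to}\P^1,\mu_i)$, $i=1,2$, be the two marked twistor families of K3 surfaces with common period image $C \subset \Omega$; by \Cref{twr} we have $C = \psi(V)$ for a unique $V \in (\G_{\R}^+)^{\circ}$. Each $\per_i\from \P^1 \to C$ is a biholomorphism, and the refined period map construction of Section~\ref{K3moduli} yields embeddings $\tilde{\per}_i\from \P^1 \to \tilde\Omega$ lifting $\per_i$ along the étale projection $\pi\from \tilde\Omega \to \Omega$. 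The strategy is to first prove $\tilde{\per}_1(\P^1) = \tilde{\per}_2(\P^1)$ inside $\tilde\Omega$, and then to conclude via the universal marked family $\cM \to \tilde\Omega$.

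I expect the main work to be the verification that both refined period maps land on the same point of $\tilde\Omega$ over each $p = [v] \in C$. Concretely, for $t_i \in \P^1$ with $\per_i(t_i) = p$, one must show that the K\"ahler-cone images $(\mu_{i,t_i})_{\R}(C^+((T_i)_{t_i}))$ determine the same Weyl chamber of the positive cone $V_p$. For this I will invoke \Cref{twistor cycle decomposition}: in each twistor fiber, the hyperk\"ahler K\"ahler class is sent by $(\mu_{i,t_i})_{\R}$ into the real line in $V_{\R}$ orthogonal to both $v$ and $\overline{v}$, and its sign is fixed by positivity, i.e.\ by the requirement that the image sits in the chosen component $V_p$. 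Since $V \in (\G_{\R}^+)^{\circ}$, no $\delta \in \Delta$ annihilates all of $V$, so this vector lies in the interior of a single Weyl chamber of $V_p$, which must therefore be the K\"ahler-cone image for both families at $p$. Hence the refined period points agree.

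Once this is in hand, I set $h \defeq \tilde{\per}_2^{-1} \circ \tilde{\per}_1 \in \Aut(\P^1)$, using that both $\tilde{\per}_i$ are isomorphisms onto the common image $\tilde{C} \defeq \tilde{\per}_1(\P^1) = \tilde{\per}_2(\P^1)$. By the universal property of $(\cM \to \tilde\Omega)$ as the fine moduli space of marked K3 surfaces, each family $(T_i,\mu_i)$ is canonically isomorphic, as a marked family, to the pullback $\tilde{\per}_i^{*}\cM$. From $\tilde{\per}_1 = \tilde{\per}_2 \circ h$ we then obtain
\[
 T_1 \;\cong\; \tilde{\per}_1^{*}\cM \;=\; h^{*}\tilde{\per}_2^{*}\cM \;\cong\; h^{*}T_2,
\]
providing the desired isomorphism $(g,h)\from\cF_1\to\cF_2$ of marked families covering the base automorphism $h$ of $\P^1$.
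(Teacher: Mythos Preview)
Your proof is correct and takes a genuinely different route from the paper's argument. The paper argues pointwise using the Global Torelli Theorem and the uniqueness of the hyperk\"ahler metric in a given K\"ahler class: it picks one fiber of each twistor family, produces a Hodge isometry (adjusted to be effective and K\"ahler-cone compatible), realizes it via Global Torelli as a biholomorphism of the fibers, and then appeals to metric uniqueness to conclude that the whole twistor constructions agree. Your approach instead stays entirely within the moduli-theoretic framework: you compute the refined period map of each family explicitly via \Cref{twistor cycle decomposition}, observe that both land on the \emph{same} section of $\pi\from\tilde\Omega\to\Omega$ over $C$ because the K\"ahler class is pinned down (up to positive scaling) as the positive generator of the real line $V_{\R}\cap v^{\perp}\cap\overline{v}^{\perp}$, and then invoke the universal family $\cM\to\tilde\Omega$. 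The key point---that this generator lies in $V_p^{\circ}$---follows exactly as you indicate: if some $\delta\in\Delta_p$ were orthogonal to it, then $\delta$ would be orthogonal to all of $V$, contradicting $V\in(\G_{\R}^+)^{\circ}$.

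What each approach buys: the paper's argument is more self-contained at this point in the exposition, using only Global Torelli and Yau's theorem. Your argument avoids any direct differential-geometric reasoning about uniqueness of metrics, and instead anticipates the machinery of Sections~\ref{section sheaves} (in particular \Cref{refined} and the construction of $\sigma_{\R}$ in \Cref{realfamily}, where exactly the same computation of the refined period map over $\Twr$ is carried out). In that sense your proof fits more naturally into the paper's later narrative, and makes the statement an immediate corollary of the fine-moduli-space property of $\tilde\Omega$.
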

\begin{proof}
Let $X=((M,I),\mu)$ and $X'=((M',I'),\mu')$ be two marked K3 surfaces with K\"ahler classes $a,a'$ and $C_{X,\mu,a}=C_{X',\mu',a'}$.
Let $g$ be the hyperk\"ahler metric on $M$ with respect to complex structures $I,J,K$ satisfying the quaternionic relations.
Then $\mu'_{\C}(\Hdg20{X'})=\mu_{\C}(\Hdg20{M,t_1I+t_2J+t_3K})$ for some $t\in S^2$.
Hence there is a Hodge isometry $\phi\from\Hsh2{X'}{\Z}\to\Hsh2{M,t_1I+t_2J+t_3K}{\Z}$.
By changing sign and composing with a Picard-Lefschetz reflection if necessary, we can assume $\phi$ to be effective and compatible with the K\"ahler cones.
By the Global Torelli Theorem, $\phi$ is then induced by an isomorphism $X'\bij(M,t_1I+t_2J+t_3K)$.
The hyperk\"ahler metric in each K\"ahler class is unique, hence the twistor constructions for $X$ and $X'$ yield isomorphic families.
\end{proof}

\subsection{Consequences of the existence and further questions}\label{section_disucssion_univ_family}

As $\cT_s$ is a twistor family for every $s\in\Twr$, we obtain the following immediate consequence of \Cref{univfamily}.
\begin{coro}
Let $(\cF_B,\mu)$ be a marked embedded second level family of K3 surfaces with second level period map $\Per$.
If the fiber $\cF_0$ over a point $0\in B$ is a twistor family, then, up to shrinking $B$ around $0$, for every $b\in B$ with $[\Per(b)]\in\Twr$ the fiber $\cF_b$ is also a twistor family.\qed
\end{coro}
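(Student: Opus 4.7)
My plan is to reduce the statement directly to the universal property of the family $\cT_U$ established in \Cref{univfamily}, combined with the tautological description of its fibers over $\Twr$ coming from \Cref{realfamily} and \Cref{defT}.

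The first step is a straightforward shrinking argument: since $\Per\from B \to C_1(\Omega)$ is holomorphic, hence continuous, and $\Per(0) \in \Twr \subset U$ with $U$ open in $\Tw$, we can replace $B$ by an open neighborhood of $0$ on which $\Per(B) \subset U$. After this shrinking the hypotheses of \Cref{univfamily} are satisfied: by assumption $\cF_0$ is a marked K3 twistor family, so $(\cF_B,\mu)$ is a (small) deformation of a marked K3 twistor family, and now $\Per(B) \subset U$.

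Applying \Cref{univfamily} I obtain a canonical isomorphism $(\cF_B,\mu) \isom \Per^*\cT_U$ of marked embedded second level families. Taking fibers over an arbitrary $b \in B$ then yields $\cF_b \isom (\cT_U)_{\Per(b)}$. It remains to show that for $s \in \Twr$ the fiber $(\cT_U)_s$ is itself a marked K3 twistor family. This is built into the very definition of $\cT_U$ in \Cref{defT}: the section $\sigma \in \sF(U)$ is the unique $\Gamma$-invariant extension of the section $\sigma_\R \in \sF(\Twr)$ from \Cref{extsection}, so $\rest{\sigma}{\Twr} = \sigma_\R$ and hence $\rest{\cT_U}{\Twr} \isom \sigma_\R^*\cN$; by \Cref{realfamily} the latter family restricts over any $[C]\in\Twr$ to a marked K3 twistor family. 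Specializing to $s = \Per(b) \in \Twr$ then gives that $\cF_b$ is a K3 twistor family, as desired.

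I do not anticipate any genuine difficulty in this argument: the entire substance has already been invested in the construction and universal property of $\cT_U$. The only items worth spelling out with care are the shrinking step (to match the hypothesis $\Per(B)\subset U$ of \Cref{univfamily}) and the identification of $\rest{\sigma}{\Twr}$ with $\sigma_\R$, which follows from the uniqueness part of \Cref{extsection} (or equivalently from the Hausdorff property of $\sF$, \Cref{T2}).
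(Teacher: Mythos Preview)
Your proposal is correct and follows essentially the same approach as the paper: the paper likewise notes that $\cT_s$ is a twistor family for every $s\in\Twr$ and deduces the corollary as an immediate consequence of \Cref{univfamily}, which is exactly the reduction you carry out after the shrinking step.
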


The existence of a $\Gamma$-equivariant family over $U$ has an interesting consequence for the $\Gamma$-action on the cycle space, cf.~\Cref{FixedEqualsPointwiseFixed}.

\begin{prop}[Lattice automorphisms fixing cycles in $U$]\label{prop:FixedCyclesFixedPoints}
Let $[C] \in U \subset C_1(\Omega)$.
If $\gamma \in \Gamma_{[C]}$, then $\gamma \in \bigcap_{p \in C}\Gamma_p$.
That is, if $\gamma$ fixes $C$ as a set, $\gamma (C)=C$, then $\gamma$ already fixes $C$ pointwise.
A fortiori, $\gamma$ in fact induces a holomorphic automorphism of each fiber of $\cT_{[C]}$. 
\end{prop}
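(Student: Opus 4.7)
\medskip

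Since $\cT_U$ is a $\Gamma$-equivariant marked embedded second level family (\Cref{defT} together with \Cref{equivariant}), the element $\gamma \in \Gamma_{[C]}$ lifts to a biholomorphic automorphism $\tilde{\gamma}$ of the total space of $\cT_U$ covering the $\Gamma$-action on $\rest{\cC}{U} \to U$. Its restriction to the fiber $\cT_{[C]} = (\cX_{[C]} \stackrel{f}{\to} C)$ gives a biholomorphic automorphism of the total space $\cX_{[C]}$, which by \Cref{unique fiber structure} induces an automorphism $\beta_{\gamma} \in \Aut(C)$ of the base; by functoriality of the period map, $\beta_\gamma$ coincides with the restriction $\gamma|_C$. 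Once $\gamma|_C = \id_C$ is established, $\gamma$ lies in $\bigcap_{p\in C}\Gamma_p$, and the a fortiori statement is immediate, since $\tilde\gamma$ then preserves each fiber of $f$ and restricts to a biholomorphism of each.

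Consider the subset $Y \defeq \{[C']\in U^\gamma \mid \gamma|_{C'} = \id_{C'}\}$. This is a closed complex analytic subset of $U^\gamma$, being the locus on which the holomorphic section $[C']\mapsto\gamma|_{C'}$ of the bundle of automorphisms of the $\P^1$-bundle $\rest{\cC}{U} \to U$, restricted over $U^\gamma$, coincides with the identity section. By \Cref{FixedHKTwistorLines} applied pointwise to twistor cycles, $Y$ contains the real points $U^\gamma \cap \Twr$. Denoting by $Z \subset U^\gamma$ the irreducible component containing $[C]$, the aim is to show $[C]\in Y$ by proving $Y\cap Z = Z$.

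Since $\tau$ and $\gamma$ commute on $U$---both arising from $\R$-linear operations on $\Lambda_\R$---the fixed locus $U^\gamma$ is $\tau$-stable, and one has $U^\gamma \cap \Twr = (U^\gamma)^\tau$. Granting that the component $Z$ is $\tau$-stable and that $Z\cap \Twr\ne \emptyset$, $Z$ becomes a complexification of its real-analytic subset $Z\cap \Twr$; by \Cref{localcoordinates}, the latter is then a totally real submanifold of $Z$ of real dimension equal to $\dim_\C Z$. Any closed complex analytic subset of the connected complex manifold $Z$ that contains such a maximal totally real submanifold must equal $Z$ itself, by the identity principle for holomorphic maps restricted to totally real sets of maximal dimension; thus $Y\cap Z = Z$, finishing the argument.

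The main obstacle is to verify both the $\tau$-stability of $Z$ and the non-emptiness of $Z\cap \Twr$. Here one exploits the positivity of $V \defeq \psi^{-1}([C])\in\G^+$: the restriction $\gamma|_V$ is unitary with respect to the positive-definite Hermitian form $\langle\cdot,\overline{\cdot}\rangle_\C|_V$, hence diagonalizable with eigenvalues on the unit circle. Combined with the reality of $\gamma$, which forces conjugation-symmetry of the spectral decomposition of $\gamma|_V$, one can construct a $\gamma$-stable real positive three-space $V_0$ close to $V$ and contained in the $1$-eigenspace of $\gamma$, producing a twistor cycle $[C_0] \in Z\cap\Twr$ in the same component (possibly after shrinking $U$ around $\Twr$ to exclude components of the $\gamma$-fixed locus arising from other eigenvalue configurations); the $\tau$-stability of $Z$ then follows from $\tau\gamma=\gamma\tau$.
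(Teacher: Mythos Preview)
Your first paragraph already contains the complete proof, but you do not realize it. You invoke \Cref{unique fiber structure} to extract an automorphism $\beta_\gamma \in \Aut(C)$ covered by $\tilde\gamma|_{\cX_{[C]}}$, and you correctly identify $\beta_\gamma$ with $\gamma|_C$. However, you overlook the full strength of that lemma: its second sentence says that \emph{every} biholomorphic automorphism of the total space of an embedded family is a $C$-automorphism of $\cF$, i.e., covers the \emph{identity} on the base $C$. Hence $\gamma|_C = \beta_\gamma = \id_C$ immediately, and the proof is finished. This is exactly the paper's argument, phrased there as ``by functoriality of the period map we must have $\per = \per \circ \gamma$'' (the period map of $\cT_{[C]}$ being the embedding $C \hookrightarrow \Omega$); the mechanism behind the second part of \Cref{unique fiber structure} is precisely this functoriality combined with injectivity of $\per$.

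Your remaining three paragraphs are therefore superfluous, and moreover the analytic-continuation strategy they pursue has a genuine gap. You need the irreducible component $Z \subset U^\gamma$ containing $[C]$ to intersect $\Twr$, but your sketch for this is not convincing. There is no reason the $1$-eigenspace of $\gamma$ on $\Lambda_\C$ should contain a positive three-space, let alone one ``close to $V$''; the eigenvalues of $\gamma|_V$ lie on $S^1$ but need not include $1$, and since $V$ need not be $\tau$-stable the eigenvalues need not even come in conjugate pairs. Your proposed remedy of ``shrinking $U$ around $\Twr$ to exclude components of the $\gamma$-fixed locus arising from other eigenvalue configurations'' is dangerous: such shrinking might exclude $[C]$ itself, and in any case would have to be done uniformly over the infinitely many $\gamma \in \Gamma$, which is not addressed. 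A correct version of this continuation argument would require showing that $U$ can be chosen so that every component of every $U^\gamma$ meets $\Twr$ (perhaps via a $\Gamma$-equivariant deformation retraction of $U$ onto $\Twr$), which is substantial additional work and still far less direct than the paper's one-line argument.
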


\begin{proof}
As $\cT_U$ is $\Gamma$-equivariant and $\gamma(C)=C$, the fiber $\cT_{[C]}$ is a $\gamma$-equivariant marked family of K3 surfaces.
Via \Cref{equivariant}, we lift $\gamma$ to a marked automorphism $(\tilde{\gamma},\gamma)$ of $\cT_{[C]}$.
The period map of $\cT_{[C]}$ is the $\gamma$-equivariant embedding $\per\from \{[C]\}\x C\stackrel{\pr2}{\inj}C\subset\Omega$.
By functoriality of the period map we must have $\per=\per\circ\gamma$ and thus $\gamma$ must act trivially on $C$.
Thus, as every $p \in C$ is fixed by $\gamma$, the automorphism $\tilde{\gamma}$ must in fact induce an automorphism on each fiber of $\cT_{[C]}$.
\end{proof}

\begin{rema}[Stabilizers of cycles and diffeomorphisms]\label{singleDiffeomo}
\Cref{prop:FixedCyclesFixedPoints} can be interpreted as saying that every element of $\Gamma_{[C]}$ is geometrically induced;
this should be compared with the situation regarding stabilizers $\Gamma_p$ of points $p \in \Omega$ described in Section~\ref{weyl group}.
We expect that in fact $\gamma$ is induced by a single diffeomorphism of the real-analytic four-manifold $M$ underlying the K3 surface in any fiber of $\cT_{[C]}$. This expectation is based on the observation made in \Cref{FixedHKTwistorLines} below. Before stating the result, we recall (for example from~\cite{BanerjeeMoore}) that a \emph{hyperk\"ahler isometry} of a hyperk\"ahler manifold $(M, g)$ with compatible complex structures $I, J, K \in \End(TM)$ is an isometry $\psi$ of the Riemannian manifold $(M, g)$ that satisfies $\psi^*(L) = L$ (i.e., that is holomorphic) with respect to all complex structures $L$ in the $2$-sphere of complex structures on $M$ generated by $I,J, K$.

\end{rema}

\begin{prop}[Twistor cycles fixed by lattice automorphisms]\label{FixedHKTwistorLines}
Let $[C]=[C_{X,\mu,a}] \in \Twr$ be a K3 twistor cycle, for $X=(M,I)$ with hyperk\"ahler metric $g$ on $M$ in the K\"ahler class $a$.
If $\gamma \in \Gamma_{[C]}$, then $\gamma(p) = p$ for all $p \in C$.
That is, if $\gamma(C) = C$, then $\gamma$ fixes $C$ pointwise.
A fortiori, $\gamma$ induces an automorphism of the corresponding twistor family induced by a hyperk\"ahler isometry of $(M,g)$. 
\end{prop}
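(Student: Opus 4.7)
The plan is to reduce the pointwise-fixing assertion to the algebraic claim that any $\gamma \in \Gamma_{[C]}$ restricts to the identity on the positive three-space $V \defeq \psi^{-1}([C])$, from which both parts of the proposition follow. The reduction itself is purely algebraic: since $\gamma$ preserves $V_{\R} = V \cap \Lambda_{\R}$ and $\gamma \in \Orthp(\Lambda)$ preserves the orientation on the positive part, we have $\gamma|_{V_{\R}} \in \SO(V_{\R}) \cong \SO(3)$; on the other hand, $C = \P(V) \cap Q_{\Lambda}$ is a smooth conic spanning $\P(V) \cong \P^{2}$, so any projective-linear self-map of $\P(V)$ fixing $C$ pointwise must be the identity on $\P(V)$, and the isometry and orientation conditions promote this to $\gamma|_{V} = \mathrm{id}_{V}$.

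I would then argue by contradiction, assuming that $\gamma|_{V_{\R}}$ is a non-trivial rotation with axis $L \subset V_{\R}$ and rotation plane $P = L^{\perp} \cap V_{\R}$. For an orthonormal basis $(u, w)$ of $P$, a direct computation gives $\langle u + iw, u + iw\rangle = 0$ and $\langle u + iw, \overline{u + iw}\rangle = 2 > 0$, so the complex-conjugate eigenvectors $u \pm iw$ of $\gamma|_{V}$ for the non-real eigenvalues project to fixed points $p_{\pm} \in C$ interchanged by $\tau$. Hence $\gamma \in \Gamma_{p_{+}}$, and \Cref{weyl group} yields a decomposition $\gamma = w \phi^{*}$ with $w \in \Weyl{p_{+}}$ and $\phi \in \Aut(X_{p_{+}})$ for a marked K3 surface $(X_{p_{+}}, \mu_{+})$ with period $p_{+}$. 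The twistor hypothesis $[C] \in \Twr$ (i.e.\ $V^{\perp} \cap \Delta = \emptyset$) together with \Cref{FixedEqualsPointwiseFixed} rules out any non-trivial Weyl-group contribution that could preserve $V$ while acting non-trivially on it, reducing to $\gamma = \phi^{*}$ for some biholomorphism $\phi \in \Aut(X_{p_{+}})$.

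The final and hardest step is to upgrade $\phi$ to a hyperk\"ahler isometry of $(M, g)$ in the strict sense of the definition recalled before the statement, i.e., one that preserves every $L$ in the twistor sphere $S^{2}$ of complex structures on $M$. Here I would apply the Global Torelli theorem along $C$ in conjunction with the twistor family structure to realize $\gamma$ as the action on cohomology of a diffeomorphism $\Phi$ of the total space of $(T \stackrel{f}{\to} \P^{1}, \mu)$, and then use the twistor condition once more to force the induced action of $\Phi$ on $S^{2}$ to be trivial rather than a non-trivial rotation. Once $\phi$ is known to be a strict hyperk\"ahler isometry, it preserves each K\"ahler class $[\kappa_{L}]$ individually and hence acts trivially on $V_{\R}$, contradicting the assumed non-triviality of $\gamma|_{V_{\R}}$; this simultaneously establishes the a fortiori statement. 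The main obstacle, as indicated, is precisely this strictness: showing that the abstract compatibility $\gamma(V) = V$ combined with the twistor hypothesis forces triholomorphicity in the strict sense, rather than merely a hyperk\"ahler symmetry that permutes the twistor sphere.
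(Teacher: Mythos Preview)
Your overall arc parallels the paper's: reduce to $\gamma|_{V_\R}\in\SO(3)$, find a fixed point $p_+\in C$, show that the orthogonal real direction (the K\"ahler class $a_0$ at $p_+$) is fixed by $\gamma$, then invoke Global Torelli to realize $\gamma$ as $\varphi^*$ for some $\varphi\in\Aut(X_{p_+})$. Your detour through the decomposition $\Gamma_{p_+}=\Weyl{p_+}\rtimes\Aut(X_{p_+})$ and \Cref{FixedEqualsPointwiseFixed} is both unnecessary and incomplete: once you know $\gamma(a_0)=a_0$ with $a_0$ in the K\"ahler cone, $\gamma$ already preserves that Weyl chamber and hence lies in the $\Aut$-factor directly; conversely, \Cref{FixedEqualsPointwiseFixed} only excludes \emph{single} reflections $s_\delta$ from $\Gamma_{[C]}$, not products. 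The paper at the corresponding point simply asserts ``In particular, $\gamma$ fixes $C$ pointwise'' and then runs the Torelli argument at every $z\in\P^1$.

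You are right to isolate the ``strictness'' step as the crux, and your proposal does not resolve it: showing that the isometry $\varphi$ of $(M,g)$ fixes \emph{each} complex structure $L$ in the twistor sphere is exactly equivalent to $\gamma|_C=\id{}$, so invoking it to derive the latter is circular. From $\gamma([v_0])=[v_0]$, $\gamma([\overline{v_0}])=[\overline{v_0}]$, $\gamma(a_0)=a_0$ one only obtains $\gamma|_V=\mathrm{diag}(\lambda,1,\overline\lambda)$ with $|\lambda|=1$ in the basis $(v_0,a_0,\overline{v_0})$; nothing so far forces $\lambda=1$. The paper's proof has the same gap at the sentence ``In particular, $\gamma$ fixes $C$ pointwise.''

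Indeed, the statement as written appears to admit counterexamples. Let $X$ be a K3 surface with a non-symplectic automorphism $\sigma$ of finite order, so $\sigma^*\omega_X=\zeta\,\omega_X$ with $\zeta\neq 1$, and let $\kappa$ be a $\sigma$-invariant K\"ahler class (average any K\"ahler class over $\langle\sigma\rangle$). Then $\gamma\defeq\mu\circ\sigma^*\circ\mu^{-1}\in\Gamma$ by Section~\ref{weyl group}, and $\gamma$ preserves $V=\mu_\C(\Hdg20X\oplus\C\kappa\oplus\Hdg02X)$, hence $\gamma\in\Gamma_{[C]}$ for $[C]=[C_{X,\mu,[\kappa]}]\in\Twr$. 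But $\gamma|_V=\mathrm{diag}(\zeta,1,\overline\zeta)$, so $\gamma$ acts on $C$ as a non-trivial rotation fixing only $[\omega_X]$ and $[\overline{\omega_X}]$. The associated $\sigma$ is an isometry of $(M,g)$ that rotates the twistor sphere---precisely the ``hyperk\"ahler symmetry that permutes the twistor sphere'' you worried about---but not a hyperk\"ahler isometry in the strict sense recalled before the statement. So the obstacle you identified is not merely hard; it is where both your argument and the paper's break down, and the conclusion needs to be weakened accordingly.
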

\begin{proof}
Let $(T\to\P^1,\mu)$ be the corresponding marked twistor family.
As its period map is an isomorphism $\per\from \P^1\bij C$, the automorphism $\rest{\gamma}C\in\Aut(C)$ has at least one fixed point, say $p_0 = \per(z_0)$ for $z_0 \in \P^1$.
Writing $p_0 = [v_0]$ for $v_0 \in \Lambda_{\C}\setminus\{0\}$, we get $C = \P(V)\cap \Omega$ for $V \defeq \C v_0\oplus\C a_0\oplus\C\overline{v_0}$, where $a_0$ is the image of the class of the K\"ahler form of $g$ on the K3 surface $T_{z_0}$ under the marking.
As $\gamma$ is real, it also fixes $\C\overline{v_0} \subset V$ and hence $\overline{p_0}\in C$.
Moreover, $\gamma$ induces a unitary automorphism on $V$ with respect to $\rest{\langle\cdot,\overline{\cdot}\rangle_{\C}}V$, thus $\gamma$ preserves also the orthogonal complement $(\C v_0\oplus\C\overline{v_0})^{\perp}=\C a_0$ in $V$.
Consequently, $\gamma(a_0) = \pm a_0$.
Since $\gamma \in \Orthp(\Lambda)$ preserves the orientation of $V$, we must have $\gamma (a_0) = a_0$.
In particular, $\gamma$ fixes $C$ pointwise.

Let $z \in \P^1$ be a point with corresponding K\"ahler class mapping to $a \in \Lambda_\R$ via the marking $\mu_\R$.
From the above we infer that $\gamma$ yields a Hodge isometry of $\Hsh2{T_z}{\Z}$ whose real extension fixes the K\"ahler class $\mu^{-1}_{\R}(a)$.
Hence, by the Global Torelli Theorem, there exists an automorphism $\varphi$ of $X_z$ such that $\gamma = \varphi^*$ on $\Hsh2{T_z}{\Z}$.
Since the $(1,1)$-form $\kappa_z$ on $T_z$ induced by the K\"ahler-Einstein metric $g$ on $T_z$ is uniquely determined by the class $\mu^{-1}_{\R}(a)$, we may up to rescaling $a$ assume that $\varphi^*(\kappa_z)=\kappa_z$ and $\varphi^*(g)=g$.
In other words, $\varphi$ is an isometry of the Riemannian manifold $(M,g)$ underlying $T_z$ and therefore induces a holomorphic automorphism of the twistor family.
Moreover, as we have seen above, $\varphi$ also satisfies $\varphi^*(L) = L$ for all complex structures $L = L_z$, $z \in \P^1$, so that it is indeed a hyperk\"ahler isometry. 
\end{proof}

Once we know that $\sigma_{\R}$ extends to some $\Gamma$-stable open neighborhood $U$ of $\Twr$ in $\Tw$, it is natural to ask whether there is an extension to the whole of $\Tw$. The following result gives a negative answer to this question. 

\begin{lemm}[Extensions of $\sigma_{\R}$]
\label{extensions}
Let $\sigma\in\sF(U)$ be a section on a connected open neighborhood $U$ of $\Twr$ in $C_1^+(\Omega)$ with $\rest{\sigma}{\Twr}=\sigma_{\R}$.
Then, $U\subsetneq\Tw$ is a proper subset.
A fortiori, for every point $p\in\Omega$ the set $U\cap \Tw_p\defeq\{[C]\in U\cap\Tw\mid p\in C\}$ must not connect any two different connected components of $\Tw_{\R,p}\defeq\{[C]\in\Twr\mid p\in C\}$.
\end{lemm}

We will discuss this result in a couple of remarks before proving it further down. To understand the last condition, we denote for a given point $p\in\Omega$ the sets of real or complex cycles containing $p$ by $\Tw_{\R,p}\subset C_1(\Omega)_{\R,p}$ and $\Tw_p\subset C_1(\Omega)_p$, respectively.

\begin{rema}\label{cycles_through_point}
Although $\Tw$ is a complexification of $\Twr$, the space $\Tw_p$ is not a complexification of $\Tw_{\R,p}$.
In fact, the condition $p\in C$ fixes one complex direction of $\psi\inv([C])\subset\Lambda_{\C}$, but if $[C]$ is real, it fixes the two real directions $E(p)\subset\psi\inv([C])\cap\Lambda_{\R}$. 
Hence, we get \[\dim_\C \Tw_p =\dim_{\C}\Gr2{\C^{21}} > \dim_{\R}\Gr1{\R^{20}} =\dim_\R \Tw_{\R,p}.\]
This can also be seen from the Lie-theoretic description in \cite[beginning of Section~14.5]{FHW}, keeping in mind the description of $C_1(\Omega)$ given in Section~\ref{M_D}.
\end{rema}

If $\Delta_p\neq\emptyset$, then $\Tw_{\R,p}\subset C_1(\Omega)_{\R,p}$ has complement of real codimension one and $\Tw_p\subset C_1(\Omega)_p$ has complement of complex codimension two. Hence, $\Tw_p$ is connected while $\Tw_{\R,p}$ is not; we describe the set of these connected components in the following remark.

\begin{rema}[Connected components of $\Tw_{\R,p}$]
\label{connected components}
We let $p\in\Omega$ be a point and write $\sigma_{\R}=(\pr1,([\kappa],\pr2))$ as in the proof of \Cref{realfamily}.
By construction the map $(\kappa,\pr2)$ induces isomorphisms $\cC_{\R}\bij (K\Omega)_{=1}$ and $\rest{\cC}{\Twr}\bij (K\Omega)^{\circ}_{=1}$.
These restrict over the subspaces $C_1(\Omega)_{\R,p}$ and $\Tw_{\R,p}$ to isomorphisms to the space of classes of norm $1$ in $V_p$ or $V_p^{\circ}$, respectively. In particular, $([\kappa],\pr1)$ induces a bijection between the sets of connected components $\pi_0(\Tw_{\R,p})$ and $\pi_0(V_p^{\circ})$ --- that is, the set of Weyl chambers over $p$ (cf. also \cite[Section~16.4B, p.~235]{FHW} and the proof of \Cref{twr} above).
\end{rema}

\begin{proof}[Proof of \Cref{extensions}]
Let $\pr1,\pr2$ denote the projections from $\cC^+$ to $C_1^+(\Omega)$ and $\Omega$.
First, recall by \Cref{transverse} that for $\delta\in\Delta$ the hyperplane $H_{\delta}=\P(\delta^{\perp})\cap\Omega$ intersects every complex cycle with $C\not\subset H_{\delta}$ transversely in exactly two points.
Consequently, for every $\delta\in\Delta$ the preimage $Z_{\delta}\defeq\pr2\inv(H_{\delta})\subset\cC^+$ is a hyperplane such that for all $[C]\in C_1^+(\Omega)$ the intersection $\pr1\inv([C])\cap Z_{\delta}$ consists either of two distinct points if $C\not\subset H_{\delta}$, or of the entire fiber $\pr1\inv([C])=\{[C]\}\x C$ of $\pr1$ otherwise.
Hence the restriction $\rest{Z_\delta}{\Tw}\to\Tw$ of $\pr1$ is a $2$-sheeted covering, which must be trivial since $\Tw$ is simply connected. Now, let $\delta\in\Delta$ and $(\cF,\mu)$ be a marked twistor family with period map the inclusion of a twistor cycle $C\subset\Omega$, $[C]\in\Twr$.
Then $\mu^{-1}(\delta)$ is a $(1,1)$-class exactly on the fibers over the two distinct intersection points $C\cap H_{\delta}$.
By the construction of the twistor space (see Section~\ref{twistor}), these two fibers must necessarily carry conjugate complex structures, i.e. $\mu^{-1}(\delta)$ must be effective on exactly one of them, anti-effective on the other. 

Now, let $\sigma\in\sF(U)$ be an extension of $\sigma_{\R}$ over a connected open neighborhood $U$ of $\Twr$ in $C_1^+(\Omega)$.
By \Cref{varying partition}, this section induces for every $[C]\in U$ and $p\in C$ a partition $P_{([C],p)}$ of $\Delta_p$.
Recall that the complement of the inclusion $\Tw\subset C_1^+(\Omega)$ has complex codimension $3$, hence $\Tw$ and also $U\cap\Tw$ are connected.
Therefore, by the continuity explained in \Cref{varying partition}, $\delta$ must be effective on every point $([C],p)$ of one sheet of the covering $\rest{Z_{\delta}}{U\cap\Tw}\to U\cap\Tw$ and anti-effective on the points of the other sheet. If there was a cycle $[C]\in U\setminus\Tw$, then $C\subset H_{\delta}$ for a $\delta\in\Delta$ and thus $\rest{Z_{\delta}}U$ would be connected, contradicting the continuity discussed in \Cref{varying partition}.
Finally, to show the last claim of \Cref{extensions}, let $p$ be a point $p\in\Omega$ for which $\Tw_{\R,p}$ has at least two different connected components $K_1,K_2$.
Then we have $\Delta_p\neq\emptyset$, otherwise $\Tw_{\R,p}=C_1(\Omega)_{\R,p}$ would be connected.
By \Cref{connected components} the $[\kappa(K_i)]$ must be different Weyl chambers over $p$ and thus differ on their signs of their intersection with some $\delta\in\Delta_p$.
If $U\cap\Tw_{p}$ connected the two components $K_i$, by the continuity of $\kappa$ over $U\cap\Tw_p$ and the intersection product, there would have to be a $[C]\in U\cap\Tw_p$ with $\langle\kappa([C],p),\delta\rangle=0$, contradicting $\kappa([C],p)\in V_p^{\circ}$.
\end{proof}

\section{Complex-Riemannian metrics associated with cycles}
\label{section metrics}

Finally, based on the results obtained above, we are able to give an answer to \Cref{geomques}, which asks for geometric properties shared by K3-surfaces living in a family over a small deformation of a twistor cycle.
While \Cref{conformalStructures} below gives a satisfying solution to the posed problem, the answer leads to a number of new questions and directions. We will discuss some of these in Section~\ref{conclusion} below.

\subsection{Deformations of the twistor construction}

Assume that we are given a twistor family $\cF=(T\stackrel{f}{\to}\P^1)$ as in Section~\ref{twistor}. Recall from \Cref{deformation space} that the fibration structure survives on every small deformation of the total space $T$. 

In this section, we will show that also the remaining holomorphic data \ref{twistor}\ref{twistor line} and \ref{twistor}\ref{omega} deform with $T$. We first consider relative symplectic forms in \Cref{sympl} and then move on to twistor lines in \Cref{deform twistor lines} below.

\begin{lemm}[Relative symplectic form]
\label{sympl}
Possibly up to restricting further to a $\Gamma$- and $\tau$-stable open neighborhood $U$ of $\Twr$ in $\Tw$, the universal small deformation of K3 twistor spaces $\cT_U$ carries a unique $\Gamma$-invariant relative symplectic form $\omega$ restricting for each $[C]\in \Twr$ to the canonical relative symplectic form\footnote{Cf.~\ref{twistor}\ref{omega}}
on the twistor family $\cT_{[C]}$.
\end{lemm}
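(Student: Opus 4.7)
My plan is to realize the desired form as a nowhere vanishing section of a certain holomorphic line bundle on $U$ and to obtain it as the unique $\Gamma$-invariant holomorphic extension of a canonical real-analytic section living on $\Twr$. By \Cref{rel sympl}, a relative symplectic form on $\cX$ is the same as a pulled back nowhere vanishing section of the sheaf $\cN\defeq\pi_*\sL$, where $\sL\defeq F_*(\Omega^2_{\cX/\cC}\otimes F^*\O_{\cC}(2))$; here the fact that $\cN$ is a holomorphic line bundle with one-dimensional fibers follows from $\sL\isom\pi^*\pi_*\sL$ together with the triviality of $\rest{\sL}{C_b}$ noted in \Cref{rel sympl}.

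The first technical step is to exhibit a canonical $\Gamma$-invariant real-analytic section $\omega_\R$ of $\rest{\cN}{\Twr}$. For each $[C]\in\Twr$, the K\"ahler class on the fibers of the marked twistor family $\cT_{[C]}$ is unambiguously pinned down by the normalization $\langle\kappa,\kappa\rangle_\R=1$ used in the construction of $\sigma_\R$ in the proof of \Cref{realfamily}, so that it determines a unique hyperk\"ahler metric on the underlying K3 surface together with the canonical relative symplectic form of Section~\ref{twistor}\ref{omega}. By the real-analytic structure of Kobayashi's universal family of K3 twistor spaces (\Cref{realfamily}, after \cite{Kob90}), these pointwise data organize into a nowhere vanishing real-analytic section $\omega_\R$ of $\rest{\cN}{\Twr}$. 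It is $\Gamma$-invariant since $\sigma_\R$ is, and since every $\gamma\in\Gamma$ stabilizing a twistor cycle acts by hyperk\"ahler isometries on the corresponding twistor family (\Cref{FixedHKTwistorLines}), so that the pointwise construction of $\omega_\R$ commutes with the $\Gamma$-action.

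The second technical step is to extend $\omega_\R$ holomorphically in a $\Gamma$-stable open neighborhood of $\Twr$. Since $\Twr$ is a totally real submanifold of $\Tw$ of maximal real dimension (\Cref{realcycles}), in any local holomorphic trivialization of $\cN$ the section $\omega_\R$ corresponds to a real-analytic function on such a submanifold, which admits a unique holomorphic extension to a small open neighborhood via the Whitney--Bruhat type argument underlying \Cref{univprop}. The local extensions match by the identity theorem along $\Twr$ and glue to a holomorphic section of $\cN$ on an open neighborhood of $\Twr$ in $\Tw$. To make this extension $\Gamma$-invariant I would apply the sheaf-theoretic averaging argument of \Cref{extsection_general} to the sheaf of holomorphic sections of $\cN$, exploiting the proper discontinuity of the $\Gamma$-action (\Cref{ProperlyDisc}). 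After shrinking $U$ around $\Twr$ the resulting extension stays nowhere vanishing (as $\omega_\R$ is) and thus defines the required relative symplectic form. Uniqueness is automatic since any two $\Gamma$-invariant holomorphic extensions would agree on $\Twr$ and hence on a neighborhood by the identity theorem; I expect the main delicate point to be the verification of real-analyticity of $\omega_\R$, which rests on the real-analytic structure of Kobayashi's family.
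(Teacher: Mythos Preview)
Your proposal is correct and follows essentially the same strategy as the paper: construct a $\Gamma$-invariant real-analytic section over $\Twr$ from the canonical twistor data determined by the normalized K\"ahler classes, then extend it holomorphically to a $\Gamma$-stable neighborhood via \Cref{extsection_general}. The paper resolves the point you correctly flag as delicate---real-analyticity of $\omega_\R$---by arguing that the K\"ahler--Einstein metrics depend real-analytically on $([C],p)$ (via the real-analytic implicit function theorem in Banach spaces applied to the Monge--Amp\`ere operator), after which the explicit formula \cite[Equation~(3.87)]{HKLR} yields real-analyticity of $\omega$; for $\Gamma$-invariance the paper uses uniqueness of the K\"ahler--Einstein metric in its class rather than \Cref{FixedHKTwistorLines}, which concerns only stabilizers of a single cycle.
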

\begin{proof}
We write $F\from\cX\to\cC$ for the projection $\sigma^*\cN\to\rest{\cC}U$. To construct a $\Gamma$-invariant relative symplectic form on $\cX$ boils down to finding a $\Gamma$-invariant section $\omega$ in $F_*(\Omega^2_{\cX/\cC}\otimes F^*\O_{\cC}(2))$, cf.~\Cref{rel sympl}, where $\Gamma$ acts equivariantly on $\Omega^2_{\cX/\cC}$ via pulling back differential forms.
Here, it suffices to construct a \emph{real-analytic} relative symplectic form $\omega$ over $\rest{\cC}{\Twr}$, since then \Cref{extsection_general} applies to show that up to shrinking $U$ around $\Twr$, $\omega$ extends uniquely to a $\Gamma$-invariant (holomorphic) section of 
$F_*(\Omega^2_{\cX/\cC}\otimes F^*\O_{\cC}(2))$ over $\pr1^{-1}(U)$; by \Cref{neighborhoods} we can then further achieve $U$ to be $\Gamma$- and $\tau$-stable.

For the construction of the section over $\Twr$, recall that $\rest{\cX}{\Twr}\to\rest{\cC}{\Twr}\to\Twr$ is the real-analytic second level family of \Cref{realfamily}, all of whose fibers are marked K3 twistor families.
The first component of the inverse map $(\kappa,\pr2)\from\rest{\cC}{\Twr}\to(K\Omega)^{\circ}_{=1}$ constructed in \Cref{isom} provides $\rest{\cX}{\Twr}\to\rest{\cC}{\Twr}$ with a real-analytic section $\kappa$ of K\"ahler classes of volume one.
By Yau's solution of the Calabi conjecture, for every $([C],p)\in\rest{\cC}{\Twr}$ the class corresponding to $\kappa([C],p)$ contains a unique K\"ahler-Einstein metric $g([C],p)$, which induces a canonical relative holomorphic-symplectic form $\omega_{[C]}$ over each twistor cycle $[C]\in\Twr$. In fact, the metrics $g{([C],p)}$ depend real-analytically\footnote{This seems to be well-known to experts; the key point is to apply a version of the implicit function theorem for real-analytic maps between Banach spaces, see \cite[Theorem~IV.3.12]{MR0467421}, after identifying a partial derivative of a relative Monge-Amp\`ere-type operator as an (invertible) Laplacian.} on $([C],p)\in\rest{\cC}{\Twr}$, hence the same holds for the $\omega_{[C]}$, since these can be written down in terms of the complex structures and the metric by the explicit formula \cite[Equation~(3.87)]{HKLR}.
As the section $\kappa$ is $\Gamma$-invariant and each of the $g([C],p)$ is uniquely determined in the class corresponding to $\kappa{([C],p)}$, the group $\Gamma$ must also act equivariantly on the family of metrics on $\rest{\cC}{\Twr}$.
Consequently, $\omega$ is also $\Gamma$-invariant.
\end{proof}

As a next step, we deform twistor lines (see \ref{twistor}\ref{twistor line}) within the total space of deformations of IHS manifolds. The following result was partly inspired by the work of LeBrun on deformations of complex-Riemannian metrics in \cite[Section~6]{Leb83}. 

\begin{prop}[Deformations of twistor lines for IHS manifolds]
\label{deform twistor lines}
Let $T$ be a universal deformation over $U$ of a marked twistor family $T_0$ of an $n$-dimensional IHS manifold.
Then, the following statements hold true.
\begin{enumerate}
\item\label{family of lines}
Smooth rational curves inside $T$ with normal bundle $\O_{\P^1}(1)^{\oplus n}\oplus\O_{\P^1}^{\oplus 3n_-}$ give rise to a common deformation $L\to S$ inside $T$ of all twistor lines in $T_0$, where $S\subset\dou T$ is a smooth $(5n-3)$-dimensional open complex subspace and $L\subset\dou(T)\x T$.
\item\label{relative family}
Up to shrinking $U$, the evaluation morphism $L\to T$ is surjective, and there is a flat holomorphic submersion $p \colon S\to U$ such that over each $t\in U$ the fiber $S_t\subset\dou T_t$ is the smooth $2n$-dimensional open complex space of smooth rational curves in $T_t$ with normal bundle $\O_{\P^1}(1)^{\oplus n}$ inside $T_t$.
\item\label{involution on family}
Up to shrinking $U$, the natural antiholomorphic involution on the twistor family induces a unique equivariant antiholomorphic involution $\tau$ of $L\to S\to U$.
\item\label{K3 case}
For the marked second level family $\cT_U$ of K3 surfaces introduced in \Cref{defT}, for \ref{relative family} and \ref{involution on family} to hold it suffices to shrink $U$ around $\Twr$; in particular, $S$ contains all the twistor lines of each $\cT_t$.
The involution $\tau$ induces the complex conjugation on $U\subset C_1(\Omega)$, and moreover the natural antiholomorphic involution on every twistor family $\cT_t$. 
\end{enumerate}
\end{prop}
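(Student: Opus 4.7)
I would first use the twistor construction (Section~\ref{twistor}) to identify the normal bundle $\Nb{l_m}{T_0}\cong\O_{\P^1}(1)^{\oplus n}$ of each twistor line $l_m \subset T_0$, and note that the normal bundle of the central fiber $T_0\subset T$ is trivial of rank $\dim U = 3n_-$ (see \Cref{deformation space}). The exact sequence
\begin{equation*}
0 \to \Nb{l_m}{T_0} \to \Nb{l_m}{T} \to \Nb{T_0}{T}\big|_{l_m} \to 0
\end{equation*}
splits via the vanishing $\mathrm{Ext}^1(\O_{\P^1},\O_{\P^1}(1))\cong H^1(\P^1,\O_{\P^1}(1))=0$, yielding $\Nb{l_m}{T}\cong\O_{\P^1}(1)^{\oplus n}\oplus\O_{\P^1}^{\oplus 3n_-}$. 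Since also $H^1(\P^1,\Nb{l_m}{T})=0$, Kodaira's stability theorem then shows that the open subspace $S\subset\dou(T)$ of smooth rational curves with this normal bundle is smooth at every $[l_m]$ of the expected dimension $h^0(\Nb{l_m}{T})=2n+3n_-$, and the restriction $L\subset\dou(T)\x T$ of the universal Douady family is the sought common unobstructed deformation of all twistor lines in $T_0$.

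\textbf{Parts (ii) and (iii).} My plan for part (ii) exploits Steinness: after shrinking $U$ to a Stein open neighborhood of the basepoint (using in the K3 case that $\Tw$ contains no compact positive-dimensional subvarieties, thanks to \Cref{lem:HSS}), every holomorphic map $l\cong\P^1\to U$ must be constant, so each $[l]\in S$ lies in a single fiber of $T\to U$. This yields a holomorphic $p\from S\to U$ through the universal inclusion $\dou(T/U)\inj\dou(T)$ of Section~\ref{douady section}; its differential $dp_{[l_m]}$ equals the surjective restriction $H^0(\Nb{l_m}{T})\twoheadrightarrow H^0(\Nb{T_0}{T}|_{l_m})=\C^{3n_-}$ from part (i), so $p$ is a submersion with smooth $2n$-dimensional fibers $S_t$ of the asserted form. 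Global generation of $\Nb{l_m}{T}$ will make $\mathrm{ev}\from L\to T$ a submersion along $L_0$, so $\mathrm{ev}(L)$ is open in $T$ and contains the compact $T_0$; Ehresmann's theorem then gives $\mathrm{ev}(L)=T$ after shrinking $U$ once more. For part (iii), via $\tau_0$, the conjugate family $\overline{T}\to\overline{U}$ is another universal deformation of $T_0$, and universality of $T\to U$ produces, up to shrinking $U$, a unique biholomorphism $\overline{U}\bij U$ together with a compatible isomorphism $\overline{T}\bij T$ extending $\tau_0$---the sought antiholomorphic involution. By \Cref{douinvolution}, it then lifts to an involution on $\dou(T)$ preserving the $\tau$-invariant condition cutting out $S$, and hence equivariantly to $L$.

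\textbf{Part (iv).} For the K3 family $\cT_U$ of \Cref{defT}, the base $U$ is already $\tau$-stable and $\cT_U$ is $\tau$-equivariant by construction (\Cref{extsection}, \Cref{neighborhoods}, \Cref{equivariant}), and by the Local Torelli Theorem (\Cref{local torelli}) the family is universal at every fiber. Therefore the pointwise arguments of (i)--(iii) will apply uniformly along $U$ and glue to a global family $L\to S\to U$ together with an antiholomorphic involution $\tau$ covering complex conjugation on $U$ and restricting over $\Twr$ to the original twistor involutions. The main technical obstacle---making the successive shrinkings of $U$ compatible across parts (ii) and (iii)---will be absorbed by the global existence of $\cT_U$; the only shrinking that remains, for surjectivity of the evaluation, can be arranged around the real locus $\Twr$, where every fiber is a genuine twistor family whose twistor lines cover the corresponding total space.
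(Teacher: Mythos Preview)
Your proposal follows essentially the same route as the paper's proof: the normal bundle sequence and Kodaira's theorem for (i), Steinness of $U$ plus the relative Douady space for (ii), and the universal property applied to the conjugate family for (iii). Two points deserve attention. First, in (i) you should also check that having normal bundle $\O_{\P^1}(1)^{\oplus n}\oplus\O_{\P^1}^{\oplus 3n_-}$ is an \emph{open} condition on $\dou(T)$; the paper does this via $\hdim1{\P^1}{\End(\sN)}=0$.

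Second, and more substantively, in (iv) your claim that $\cT_U$ is $\tau$-equivariant ``by construction'' is not supported by the references you cite: \Cref{equivariant} gives only $\Gamma$-equivariance, and \Cref{neighborhoods} gives only $\tau$-stability of the base $U$. The antiholomorphic involution on the total space of $\cT_U$ is precisely what part (iii) constructs, and the point of (iv) is to do this \emph{globally} over $U$ rather than just locally near one $[C_0]$. The paper achieves this by invoking the global universal property \Cref{univfamily} (not merely the local \Cref{local torelli}) so that the universal morphism $(\overline{\cF}_U,\mu)\to(\cF_U,\mu)$ exists over all of $U$ at once, and then uses \Cref{unique isomorphism} to show independence of the chosen base point $t_0\in\Twr$. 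Your gluing strategy would also work, but you need these ingredients rather than an assumed $\tau$-equivariance.
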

The following diagram summarizes the situation:
\vspace{-0.2cm}

\begin{equation}
\label{diag3}
\begin{tikzcd}[row sep=normal, column sep=normal]
L\ar[bend left=20, two heads]{rrr}{\pr2}\dar{\pr1}&L_0\dar{\pr1}\lar[hook']\rar[two heads]{\pr2}&T_0\dar{f_0}\rar[hook]&T\dar{f}\ar[bend left=50]{dd}{\pi}\\
S\ar[bend right=35]{rrrd}[near start]{p}&S_0\lar[hook']\drar{}&C_0\dar{}\rar[hook]&\rest{\cC(\pdom{\Lambda})}{U}\dar{\pr1}\\
&&\{[C_0]\}\rar[hook]&U
\end{tikzcd}
\end{equation}

The individual parts of \Cref{deform twistor lines} will be proven separately after the terminology valid for all parts of the proof has been set up first. 

\subsubsection*{Setup for the proof of \Cref{deform twistor lines}.}
Let $X=(M,I)$ be an $n$-dimensional IHS manifold with IHS lattice $\Lambda$. Its twistor family $\cF_0=(T_0\stackrel{f_0}{\to}C_0)$ discussed in Section~\ref{twistor} is a quadratic family of IHS manifolds \cite[Example~3.11]{BKS}. As $C_0\cong\P^1$ is simply connected, there is a $\Lambda$-marking of $\cF_0$, and by \Cref{twistor cycle decomposition} for every such marking the period map $\per\from C_0\inj\pdom{\Lambda}$ embeds $C_0$ as a real cycle.
Thus, we may assume that $C_0\subset\pdom{\Lambda}$ is a real cycle and $\per$ is the inclusion.
By \Cref{unobs}, we may additionally assume that the given universal deformation of $(\cF_0,\mu_0)$ is of the form \[(\cF_U,\mu)=(T\stackrel{f}{\to}\rest{\cC(\pdom{\Lambda})}U\stackrel{\pr1}{\to}U,\mu)\] over an open neighborhood $U\subset C_1(\pdom{\Lambda})$ of $[C_0]$.

\subsubsection*{Proof of \ref{family of lines}.}
We consider the set $S\subset\dou(T)$ of smooth rational curves $l\subset T$ with normal bundle $\O_l(1)^{\oplus n}\oplus\O_l^{\oplus 3n_-}$ in $T$, i.e., $\phi^*\Nb{l}{T_0}\cong\O_{\P^1}(1)^{\oplus n}\oplus\O_{\P^1}^{\oplus 3n_-}\eqdef\sN$ for a biholomorphism $\phi\from\P^1\to l$.
This is in fact an open condition on $[l]\in\dou(T)$. Indeed, recall that $\P^1$ is rigid and that by \cite[Theorem~3.5.3.2 and Section~3.8.1.2]{VB10} for the holomorphic vector bundle $\sN$ on $\P^1$ to be stable under small deformations it suffices to notice that
\[\hdim1{\P^1}{\End(\sN)}\defeq\hdim1{\P^1}{\sN\otimes\sN^{\vee}}=\hdim0{\P^1}{\O_{\P^1}(-2)\otimes\sN\otimes\sN^{\vee}} = 0.\]

Next, we check that $S$ indeed contains all twistor lines of $T_0$.
For this, we denote for each point $m\in M$ the corresponding twistor line by $l_m\defeq\{m\}\x\P^1\subset T_0$, which has normal bundle $\Nb{l_m}{T_0} \cong \O_{l_m}(1)^{\oplus n}$ in $T_0$.
In order to compare the normal bundles of $l_m$ inside $T_0$ and $T$, we consider the exact sequence \begin{equation}\label{eq:normalsequence}
0\to\Nb{l_m}{T_0}\to\Nb{l_m}{T}\to\rest{\Nb{T_0}{T}}{l_m}\to0.
\end{equation}
Note that we have isomorphisms \[\rest{\Nb{T_0}{T}}{l_m}\cong\rest{\O_{T_0}^{\oplus3n_-}}{l_m}\cong\O_{l_m}^{\oplus3n_-}.\] Therefore, as $\hdim1{l_m}{(\O_{l_m}^{\oplus3n_-})^{\vee}\otimes \O_{l_m}(1)^{\oplus n}} = 0$, the exact sequence \eqref{eq:normalsequence} splits; i.e., $\Nb{l_m}{T}\cong\O_{l_m}(1)^{\oplus n}\oplus\O_{l_m}^{\oplus3n_-}$, and thus $[l_m]\in S$, as claimed.

With a view towards the remaining claims, as we have $\hdim0{l}{\Nb{l}{T}}=5n-3$ and $\hdim1{l}{\Nb{l}{T}}=0$ for all $[l]\in S$, it follows from \cite[Theorem~1]{Kod62} that $S$ is indeed smooth of dimension $5n-3$.
The restriction of the universal family (see Section~\ref{douady section}) to the open subset $S$ can then be considered as a common deformation $g\from L\to S$ of all twistor lines $l_m$ inside $T$, where $L\subset\dou T\x T$. This concludes the proof of \ref{family of lines}.

\subsubsection*{Proof of \ref{relative family}.}
Due to the structure of their normal bundle, the twistor lines $l_m$ are \emph{free rational curves} in $T$, i.e., their deformations in $T$ cover neighborhoods of each $l_m$ in $T$, see for example~\cite[Corollary~II.3.5.4]{KollarRationalCurves}.
Hence, as the twistor lines $l_m$ cover $T_0$, the subspace $\pr2(L)\subset T$ must contain an open neighborhood of $T_0$ in $T$.
As $\pi$ is proper, by shrinking $U$ around $[C_0]$ we may assume that $\pr2(L)=T$ and that $U$ is Stein.
Then, $\pi$ must map each compact complex subspace of $T$ to a point of $U$ and thus the canonical embedding $\dou(T/U)\inj\dou(T)$ (cf.~Section~\ref{douady section}) is a biholomorphism. In particular, $\pi$ maps each $l\subset T$ with $[l]\in S$ to a point of $U$.
That is, $l$ is contained in one of the fibers of $\pi$ and hence of the projection $L\subset\dou T\x T\stackrel{\pr2}{\to}T\stackrel{\pi}{\to}U$.
Then, by the universal property of the relative Douady space $\dou(T/U)$ (see again Section~\ref{douady section}), there is a unique holomorphic map $p\from S\to U$ such $p\circ \pr1=\pi\circ\pr2\from L\to U$.

Now, for $t\in U$ let $l$ be a smooth rational curve contained in the fiber $T_t$ of $\pi$.
Using the exact sequence $0\to\Nb{l}{T_t}\to\Nb{l}{T}\to\rest{\Nb{T_t}{T}}{l}\to0$, cf.~\eqref{eq:normalsequence}, we see that the conditions $\Nb{l}{T_t}\cong\O_{l}(1)^{\oplus n}$ and $\Nb{l}{T}\cong\O_{l}(1)^{\oplus n}\oplus\O_{l}^{\oplus3n_-}$ on $l$ are equivalent.
In particular, the fiber $S_t$ of $p$ is via the canonical closed embedding $\dou(T_t)\to\dou(T/U)$ isomorphic to the space of smooth rational curves $l\subset T_t$ with normal bundle $\O_{l}(1)^{\oplus n}$ in $T_t$.
Note that the vector bundle $\O_{\P^1}(1)^{\oplus n}$ on $\P^1$ is stable under small deformations due to
\[\hdim1{\P^1}{\End(\O_{\P^1}(1)^{\oplus n})}=\hdim0{\P^1}{\O_{\P^1}(-2)^{\oplus n^2}}=0.\]
Therefore, having normal bundle $\O_{l}(1)^{\oplus n}$ in $T_t$ is also an open condition on $[l]\in\dou T_t$.
It follows, again with Kodaira's result, that $S_t$ is a $2n$-dimensional smooth open complex subspace of $\dou(T_t)$.
Consequently, the maps $p\from S\to U$ and $\pr2\from L\to T$ are flat submersions, see \cite[Chapter~II, Theorem~1.14]{SCV7}.

\subsubsection*{Proof of \ref{involution on family}.}
The proof of this item has similarities with the argument used to establish \Cref{douinvolution}. We consider the antiholomorphic involutions $\tau$ on $\pdom{\Lambda}$ and $C_1(\pdom{\Lambda})$ induced by the natural complex conjugation on $\Lambda_{\C}$ (cf.~Sections~\ref{subsect:cyclespaces} and \ref{realcycles}).
These induce an antiholomorphic involution $\tau$ on $\cC(\pdom{\Lambda})\subset C_1(\pdom{\Lambda})\x\pdom{\Lambda}$ via the actions on the two factors.
By shrinking $U$ to $U\cap\tau(U)$, we may assume that $U$ is stable under $\tau$.
Then $U^{\tau}=U\cap C_1(\pdom{\Lambda})_{\R}\subset\Twr$.

We define $\overline{T}$ and $\rest{\overline{\cC(\pdom{\Lambda})}}U$ as $T$ and $\rest{\cC(\pdom{\Lambda})}U$ equipped with the respective conjugate complex structures.
Then $\overline{T}\stackrel{f}{\to}\rest{\overline{\cC(\pdom{\Lambda})}}U\stackrel{\tau\circ\pr1}{\lto}U$ defines a second level family $\overline{\cF}_U$ of IHS manifolds. If we equip this second level family with the existing marking $\mu$, then the marked second level family $(\overline{\cF}_U,\mu)$ has period maps $\tau\circ\pr2\from\rest{\overline{\cC(\pdom{\Lambda})}}U\to\pdom{\Lambda}$ and the inclusion $U\inj C_1(\pdom{\Lambda})$.

By the defining properties of twistor families (cf.~Section~\ref{twistor}), there is an antiholomorphic involution $\tau_0$ of $\cF_0=(T_0\to C_0)$ over the antipodal map on $C_0\cong\P^1$.
Via $\tau_0$, we can consider $(\overline{\cF}_U,\mu)$ as a deformation of $(\cF_0,\mu_0)$.
Since $(\cF_U,\mu)$ is a universal deformation, up to shrinking $U$, this induces a universal morphism $(\overline{\cF}_U,\mu)\to(\cF_U,\mu)$.
By functoriality of the period maps, this must induce the identity on $U$ and $\tau$ on $\rest{\overline{\cC(\pdom{\Lambda})}}U$.
Moreover, $(\overline{T}\stackrel{f}{\to}\rest{\overline{\cC(\pdom{\Lambda})}}U)\to(T\stackrel{f}{\to}\rest{\cC(\pdom{\Lambda})}U)$ restricts to $\tau_0$ on the central fiber.
In other words, we have constructed an antiholomorphic involution of $T\stackrel{f}{\to}\rest{\cC(\pdom{\Lambda})}U$ over $\tau$ (which we also name $\tau$) that extends $\tau_0$ on the central fiber.

By \Cref{douinvolution}, this $\tau$ induces an antiholomorphic involution $\tau$ of the Douady space $\dou(T)$.
It preserves the subspace $S\subset\dou(T)$ and its fibration structure over $U$ because the involution $\tau$ on $T$ preserves the defining properties of the lines $[l]\in S$.
The uniqueness of $\tau$ follows from the uniqueness of the universal morphism $(\overline{\cF}_U,\mu)\to(\cF_U,\mu)$.

\subsubsection*{Proof of \ref{K3 case}.}
Let $\cT_U=(T\to\rest{\cC}U\to U)$ be the marked second level family of K3 surfaces defined in \Cref{defT} over an open neighborhood $U$ of $\Twr$ in $\Tw$.
By construction, $\cT_{t_0}$ is a marked K3 twistor family for every given $t_0\in\Twr$.

In an open neighborhood $U_{t_0}$ of $t_0$, we can proceed as in parts \ref{relative family} and \ref{involution on family} above to construct $L\to S\to U_{t_0}$ with equivariant antiholomorphic involution $\tau$.
Recalling the argument used to prove part~\ref{relative family}, the families $L_t\to S_t$ for $t\in U_{t_0}$ were constructed using universal properties of relative Douady spaces and hence do not depend on the $t_0\in\Twr$ we started with. A similar comment can be made regarding the construction carried out in the proof of part \ref{involution on family}. Therefore, the constructions of $L\to S\to U_{t_0}$ and their antiholomorphic involutions for varying $t_0$ glue together over an open neighborhood $U$ of $\Twr$ in $\Tw$.
For the construction of the involution $\tau$ we can shrink $U$ further around $\Twr$ to make it stable under the existing antiholomorphic involution on $C_1(\Omega)$.

As we are in the K3 surface case, using \Cref{univfamily} instead of just the universal property of universal deformation spaces, the universal morphism $(\overline{\cF}_U,\mu)\to(\cF_U,\mu)$ constructed in the proof of \ref{involution on family} above exists globally without having to shrink $U$ any further.
Moreover, by \Cref{unique isomorphism} this morphism is also independent of the chosen twistor family $\cT_{t_0}$ we started with in part~\ref{involution on family}.
As $\tau$ induces the natural complex conjugation on $U\subset C_1(\Omega)$, it fixes $U\cap\Twr$ and thus induces an antiholomorphic involution on every twistor family $\cT_t$ for $t\in\Twr$, which on $\cT_{t_0}$ coincides with the natural antiholomorphic involution.
As the construction of $\tau$ does not depend on the twistor family $\cT_{t_0}$ we started with, the involution $\tau$ must induce the natural antiholomorphic involution on all of them.

\medskip 
This concludes the proof of \Cref{deform twistor lines}.
\qed

\smallskip

\begin{rema}[Deforming the set of twistor lines over $U$]\label{relative_cycle_space_is_trivial}
We think about the holomorphic submersion $p \colon S\to U$ as ``deforming'' the manifold $S_0$ of twistor lines of $T_0$ over the basis $U$. However, recall from \cite[Corollary~4.6]{Cam91} that $S_0$ is never compact for any IHS manifold. Hence, $p$ is not proper and thus not a deformation in our terminology, cf.~Section~\ref{deformation sec}.
On the other hand, it follows from a result of Andreotti--Vesentini that $p$ is essentially locally trivial in the following precise sense: Mapping each point $m\in M$ to the corresponding twistor line $[l_m] \defeq [\{m\}\x\P^1 ] \in S_0$ defines a totally real embedding $\imath \colon M \hookrightarrow S_0$ of $M$ into the complex manifold $S_0$.
Therefore, there exists a neighborhood basis of Stein open subsets $ W_\epsilon$ of $\imath(M)$ inside $S_0$. It then follows from \cite{AV62} that for any relatively compact $W_0 \subset W_\epsilon$ neighborhood of $\imath(M)$ in $W_\epsilon$ there exists an open subset $W \subset S$ with $W \cap S_0 = W_0$ and such that $\rest{p}{W}\colon W \to p(W) \subset U$ is biholomorphic over $p(W)$ to the product $p(W) \x W_0$ of the open subset $p(W)$ with $W_0$. 
\end{rema}

\subsection{The question of Fels--Huckleberry--Wolf and Looijenga}\label{Answer_to_question}

Using the precise information gathered in \Cref{deform twistor lines} above, we are finally in the position to attack \Cref{geomques}, at least for cycles contained in the neighborhood $U$ of $\Twr$ constructed in part \ref{K3 case} of \Cref{deform twistor lines}. 

\begin{theo}[Complex twistor cycles and Ricci-flat ASD holomorphic conformal structures] 
\label{conformalStructures}
Let $\cT_U=(T\to\rest{\cC}U\to U)$ be the marked second level family of K3 surfaces introduced in \Cref{defT}, which is defined over an open subspace $U\subset C_1(\Omega)$, and let $M$ be the real $4$-manifold underlying every K3 surface. Then, up to shrinking $U\subset C_1(\Omega)$ around the space of twistor cycles, there is a canonical $\Gamma$-equivariant family of anti-selfdual holomorphic conformal structures on the submersion $p\colon S \to U$ constructed in \Cref{deform twistor lines} such that for every $t = [C] \in U$ the induced conformal equivalence class of metrics on the fiber $S_t$ contains a complex-hyperk\"ahler representative that induces exactly those complex structures on $M$ that correspond to the points $p \in C \subset \Omega$ via the Global Torelli Theorem. Over every twistor cycle this complex-hyperk\"ahler metric is the complexification of the K\"ahler-Einstein metric coming from Yau's solution of the Calabi conjecture.
\end{theo}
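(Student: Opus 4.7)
The plan is to apply the non-linear graviton / reconstruction theorem (\Cref{InversePenrose}) fiberwise to the second level family $\cT_U$, assembling the structures established on it in the preceding sections. After shrinking $U$ further around $\Twr$ so that both \Cref{sympl} and the final item of \Cref{deform twistor lines} apply, the relevant holomorphic input data are: (i) the $\Gamma$-invariant relative symplectic form $\omega$ on $\cT_U$ from \Cref{sympl}; (ii) the flat holomorphic submersion $p \colon S \to U$ together with the incidence variety $L \subset \dou(T) \x T$ from \Cref{deform twistor lines}, whose fiber $S_t \subset \dou(T_t)$ over $t = [C]$ is the smooth four-dimensional open complex space of smooth rational curves in $T_t$ with normal bundle $\O_{\P^1}(1)^{\oplus 2}$; and (iii) the equivariant antiholomorphic involution $\tau$ on $L \to S \to U$ lifting the complex conjugation on $U$ and restricting to the canonical involution on every twistor family $\cT_t$ for $t \in \Twr$.

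Next, one applies \Cref{InversePenrose} fiberwise: for every $t = [C] \in U$, the fibration $T_t \to C_t \cong \P^1$ admits regular sections (precisely the elements of $S_t$) and carries the relative symplectic form $\omega_t$, so $S_t$ inherits a canonical complex-hyperk\"ahler metric $g_t$, determined up to rescaling of $\omega_t$. In complex dimension four, $g_t$ is automatically ASD and Ricci-flat, and the construction naturally endows the underlying real-analytic manifold with the expected sphere of complex structures parametrized by $C_t \subset \Omega$. To assemble these fiberwise structures into a holomorphic conformal structure on $p \colon S \to U$, one exploits that the non-linear graviton construction produces the conformal class by explicit algebraic formulas in the holomorphic input data $(L \subset \dou(T) \x T, \omega)$; see \cite[Section~10.5]{Dun10} and compare \cite[Theorem~3.3]{HKLR}. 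Applying these formulas in the relative setting yields the desired holomorphic conformal structure on $p$. Its $\Gamma$-equivariance is then automatic from the $\Gamma$-equivariance of the input data: of $\cT_U$ by \Cref{defT}, of $\omega$ by \Cref{sympl}, and of $L \to S \to U$ by the uniqueness asserted in \Cref{deform twistor lines}.

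Finally, for $t \in \Twr$ the reconstruction assertion of \Cref{InversePenrose} identifies $g_t$, after restriction to the component of the $\tau$-fixed locus of $S_t$ containing the classical twistor lines, with the complexification of the K\"ahler-Einstein metric on the underlying K3 surface; this holds because $\omega_t$ was constructed in \Cref{sympl} precisely from that metric. The main technical obstacle is the globalization step in the second paragraph: one must verify that the fiberwise complex-hyperk\"ahler conformal classes furnished by \Cref{InversePenrose} indeed assemble into a holomorphic object on $p \colon S \to U$, rather than merely a real-analytic one. This requires a parameter-holomorphic variant of the non-linear graviton theorem, implicit in \cite{HKLR} but demanding careful extraction in our relative setting.
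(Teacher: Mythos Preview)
Your outline follows the paper's strategy closely: assemble the input data from \Cref{sympl} and \Cref{deform twistor lines}, run the non-linear graviton construction fiberwise, and invoke the reproducing property over $\Twr$. You also correctly identify the crux: showing that the fiberwise conformal classes assemble into a \emph{holomorphic} object on $p\colon S\to U$, not merely a real-analytic one. However, you leave this step as a black box (``explicit algebraic formulas\ldots demanding careful extraction''), and this is precisely where the paper does concrete work that your proposal lacks.

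The paper's mechanism is as follows. First, use \Cref{trivial bundle} to globally trivialize the holomorphic $\P^1$-bundle $\cC^+(\Omega)\to C_1^+(\Omega)$, yielding a projection $q\colon\cC\to\P^1$ and a trivialization $(\pr1)_*\O_{\cC}(1)\cong\C^2\otimes\O_{C_1^+(\Omega)}$. Then, following the bundle-theoretic version of Penrose's construction, decompose the relative tangent bundle as a tensor product
\[
\sT_{S/U}\;\cong\;\underbrace{(\pr1)_*\bigl((\pr2)^*(\sT_{T/\cC}\otimes f^*\O_{\cC}(-1))\bigr)}_{\sS}\;\otimes\;\underbrace{(\pr1)_*\bigl(f^*\O_{\cC}(1)\bigr)}_{\tilde{\sS}},
\]
which is $\Gamma$-equivariant. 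The conformal structure is then the holomorphic line subbundle $\mathrm{Alt}^2\sS\otimes\mathrm{Alt}^2\tilde{\sS}\hookrightarrow\mathrm{Sym}^2(\sT_{S/U})$; this is manifestly holomorphic and $\Gamma$-equivariant because it is built from holomorphic, equivariant bundle operations on $S$. The relative symplectic form $\omega$ trivializes $\mathrm{Alt}^2\sS$ $\Gamma$-invariantly, while the fixed trivialization of $\tilde{\sS}$ (with the standard symplectic form on $\C^2$) trivializes $\mathrm{Alt}^2\tilde{\sS}$, producing a metric representative---but note that this second trivialization need not be $\Gamma$-equivariant, so only the \emph{conformal class} is claimed to be equivariant, not the metric itself. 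Your proposal slightly overstates equivariance here.

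In summary: your plan is correct but incomplete at the key step; the missing ingredient is the explicit relative tensor decomposition of $\sT_{S/U}$, made possible by the global triviality of the cycle bundle from \Cref{trivial bundle}.
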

The claim of \Cref{conformalStructures} concerning the complex structures represented by the points in $C\subset \Omega$ is to be understood in the following way: By \Cref{relative_cycle_space_is_trivial}, every fiber $S_t$ of $p$ is a complexification of $M$ and  admits a totally real embedding of $M$. Now, as a complex-hyperk\"ahler manifold $S_t$ comes equipped with a family of integrable distributions $Q_\zeta \subset \sT_{S_t}$ parametrized by $\zeta \in \mathbb{P}^1$, see for example \cite[Def.~4.1--4.2]{MR4477203}. Restricting these distributions to $T^\mathbb{C}M$ yields integrable almost complex structures on $M$. The claim is that these complex structures on $M$ are exactly the ones corresponding to the points in $C$ via the Global Torelli Theorem (or in fact just the Weak Torelli Theorem as formulated in \cite[Chapter VIII, Corollary 11.2]{BHPV}).

\begin{proof}[Proof of \Cref{conformalStructures}]
Let $\cT_U$ be the marked second level family of K3 surfaces defined in \Cref{defT} with relative symplectic form $\omega$ on $T$. We shrink $U$ around $\Twr$, such that over $U$ the family $L\to S\stackrel{p}{\to} U$ (of parameter spaces) of deformed twistor lines with equivariant antiholomorphic involution $\tau$ obtained as part of \Cref{deform twistor lines} exists.

Note that $U$ is a complexification of $U^{\tau}=\Twr$, and that $S$ is a complexification of $S^{\tau}$. As $\tau$ is equivariant, $S^{\tau}=(\rest{S}{\Twr})^{\tau}$ lies over $\Twr$ with fibers $(S^{\tau})_{t_0}=(S_{t_0})^{\tau}$ for all ${t_0}\in\Twr$. We now apply the reconstruction theorem \cite[Theorem~3.3]{HKLR}, recalled also in \Cref{InversePenrose} above: By the construction in \Cref{deform twistor lines}\ref{relative family}, for every ${t_0}\in\Twr$ the fiber $(S_{t_0})^{\tau}$ is the space of real twistor lines of the twistor family $\cT_{t_0}$ and thus $(S_{t_0})^{\tau}$ is real-analytically isomorphic to the real-analytic four-manifold $M$ underlying the twistor construction of $\cT_{t_0}$.
In other words, fixing a diffeomorphism $T_{t_0}\isom M\x\P^1$, the evaluation map $L\to T$ maps the fibers of $\rest{(L\to S)}{S_{t_0}^{\tau}}$ bijectively to the lines $\{m\}\x\P^1\subset M\x\P^1$.
Hence $\rest{L}{S^{\tau}}\to\rest{T}{\Twr}$ is a real-analytic isomorphism and, due to the real-analytic Ehresmann Theorem, the restriction $S^{\tau}\to\Twr$ is a real-analytic $M$-fiber bundle. As a consequence of this,\footnote{Compare also with \Cref{relative_cycle_space_is_trivial}.} for every ${t_0}\in\Twr$ we can find an open neighborhood $U_{t_0}$ of $t_0$ and a complexification $M^{\C}$ of $M$, such that $\rest{S}{U_{t_0}}$ contains a $\tau$-stable open neighborhood of $\rest{S^{\tau}}U$ that is over $U_{t_0}$ biholomorphic to the trivial bundle $M^{\C}\x U_{t_0}\to U_{t_0}$. 

Next, we claim that the family $\cC^+(\Omega) \to C_1^+(\Omega)$ is a holomorphically trivial $\P^1$-bundle on $C_1^+(\Omega)$.
Indeed, notice that a priori, say by the Grauert-Fischer Theorem, $\cC(\Omega)\to C_1(\Omega)$ is a holomorphic fiber bundle with fiber $\P^1$ and structure group $\PSL(2, \C)$, the group of holomorphic automorphisms of $\P^1$.
In particular, the structure group is a connected complex Lie group. As $\G^+$ is contractible, the fiber bundle $\cC^+(\Omega)$ is topologically trivial, e.g.~by \cite[Proposition~7.5]{Wed16} or \cite[Corollary~11.6]{Ste51}.
So, based on our observation regarding the structure group and on the fact that $\G^+$ is Stein, the Oka principle may be applied to conclude that $\cC^+(\Omega)\to C_1^+(\Omega)$ is even holomorphically trivial \cite[Satz~6]{Grau58}.

Hence, we may trivialize the holomorphic $\mathbb{P}^1$-bundle $\pr1\colon \cC\defeq\cC^+(\Omega) \to C_1^+(\Omega)$; this leads to a projection $q\colon \cC \to \mathbb{P}^1$. In particular, we obtain an induced isomorphism $\O_{\cC}(1) \cong q^*\O_{\mathbb{P}^1}(1)$. This in turn induces a trivialization of 
\begin{equation}\label{pushforward_trivialisation}
(\pr1)_*(\O_{\cC}(1)) \cong \Hsh0{\mathbb{P}^1}{\O_{\mathbb{P}^1}(1)} \otimes \O_{C_1^+(\Omega)} \cong \mathbb{C}^2 \otimes \O_{C_1^+(\Omega)}.
\end{equation}
We fix these isomorphisms, restrict them to $U \subset C_1^+(\Omega)$, and endow $\mathbb{C}^2$ with the standard complex-symplectic structure. 

We will now carry out the reverse Penrose construction relative over $U$. Looking at Diagram~\eqref{diag3} and following the fiberwise arguments in \cite[Section~3]{MR3210602}, cf.~also \cite[Section~2.9]{MR1306960}, we see that the relative tangent bundle $\sT_{S/U}$ can be $\Gamma$-equivariantly decomposed as a tensor product
\begin{align*}\sT_{S/U} &\cong \left[(\pr1)_* ((\pr2)^*(\sT_{T/\cC} \otimes f^*\O_{\cC}(-1)) )\right] \otimes \left[(\pr1)_* (f^*\O_{\cC}(1)) \right]\\ &\eqdef \sS \otimes \tilde{\sS}.\end{align*}
As a consequence, the tensor product $\mathrm{Alt}^2 \sS \otimes \mathrm{Alt}^2 \tilde{\sS}$ embeds $\Gamma$-equivariantly as a sub line bundle into $\mathrm{Sym}^2(\sT_{S/U})$, defining a $\Gamma$-equivariant family of holomorphic conformal structures on the submersion $p\colon S \to U$, cf.~\cite[Chapter~I, \S3.3]{MR1632008}. 

In our situation, the relative symplectic form $\omega \in \Hsh0{\cC}{f_*\Omega^2_{T/\cC}\otimes f^*\O_{\cC}(2)}$ canonically endows $\sS$ with a $\Gamma$-invariant $p$-fiberwise holomorphic-symplectic structure, while the second factor $\tilde{\sS}$ inherits a $p$-fiberwise holomorphic-symplectic (but not necessarily $\Gamma$-equivariant) structure from the fixed decomposition \eqref{pushforward_trivialisation} and the standard symplectic form on $\mathbb{C}^2$. Combining the two, we obtain a $p$-fiberwise non-degenerate symmetric holomorphic $2$-tensor on $\sT_{S/U}$, i.e., a (not necessarily $\Gamma$-equivariant) family of holomorphic Riemannian metrics on the submersion $p\colon S \to U$ inside the given $\Gamma$-equivariant conformal classes. As fiberwise we just executed the Non-linear Graviton Construction, \Cref{InversePenrose} implies that the family of holomorphic Riemannian metrics just exhibited defines a complex-hyperk\"ahler metric on each fiber that for all $t\in\Twr$ coincides with the complexification of a real K\"ahler-Einstein metric. In particular, all conformal structures defined by $\mathrm{Alt}^2 \sS \otimes \mathrm{Alt}^2 \tilde{\sS}$ are anti-selfdual (which is a property of the conformal class of metrics) and admit a Ricci-flat representative.

Regarding the last claim, we note that after possibly shrinking $S_t$ around $M$, the topological quotient of $S_t \times \mathbb{P}^1$ (or, in invariant formulation, the projectivization of the trivial vector bundle $\mathbb{P}(\tilde{\sS}|_{S_t})$) by the twistor foliation defined for example on \cite[p.~14]{MR4477203} is $M \times \mathbb{P}^1$. From data given to us by the complex-hyperk\"ahler metric, we obtain all the structures on $M \times \mathbb{P}^1$ that make it into a complex twistor space, which by the correspondence between complex-hyperk\"ahler manifolds and complex twistor spaces, see e.g.~\cite[Theorem~10.5.5]{Dun10}, is isomorphic to the complex twistor space we started with; the claim follows. The proof of \Cref{conformalStructures} is therefore complete. 
\end{proof}

\begin{rema}[The complex-hyperk\"ahler metric is almost canonical]
 As the proof of \Cref{conformalStructures}  shows, the complex-hyperk\"ahler metric in the conformal class emerging from the construction is canonical up to scaling by a single non-zero complex number.
\end{rema}

\begin{coro}[Answer to \Cref{geomques}]\label{cor:commonHKmetric}
 Let $[C] \in U$ correspond to a small deformation of a twistor cycle. Then, the fibers of the marked embedded family $\cT_{[C]}=(T_{[C]}\to C)$ of K3 surfaces share a common complex-hyperk\"ahler metric.  
\end{coro}
\begin{proof} Let $C$ be a small deformation of the twistor cycle $C_0\subset \Omega$. I.e., setting $t= [C]$ and $t_0 = {[C_0]}$, in the notation of the second paragraph of the proof of \Cref{conformalStructures} above,  assume $t \in U_{t_0}$. As explained in \emph{loc.~cit.} (and also in \Cref{relative_cycle_space_is_trivial}) we know that the fiber $S_{t}$ of $p$ over $t \in U$ is a complexification of $M$; more precisely, composing the natural totally real embedding of $M$ into $S_{t_0}$ with the inverse of the local trivialization of $S$ near $t_0$, we obtain a totally real embedding of $M$ into $S_{t}$. Restricting the complex-hyperk\"ahler metric on $S_{t}$ obtained in \Cref{conformalStructures} to $M$ yields the desired complex-hyperk\"ahler metric on $M$. 
\end{proof}
As by \Cref{univfamily} any small deformation of any twistor family occurs as a fiber of $\cT_U$, \Cref{cor:commonHKmetric} applies to any such family and thus provided an answer to \Cref{geomques}.


\section{Open questions and future directions}
\label{conclusion}

While \Cref{conformalStructures} and \Cref{cor:commonHKmetric} answer \Cref{geomques} from the introduction, a lot of interesting problems remain, some of which we would like to discuss here. 

\subsubsection*{Natural open subsets of the cycle space}
First, it is curious that the universal small deformation $\cT_U$ exists on a small neighborhood $U$ of $\Twr$ but cannot be extended to the whole of $\Tw$ by \Cref{extensions}.
It is therefore a natural question to determine at least one natural maximal open subset to which this family extends.

In a different but related direction, while we restricted the discussion in this paper to the open neighborhood $C_1^+(\Omega) \supset \Tw$ of $\Twr$ inside $C_1(\Omega)$, once it is possible to move away further from the real locus $C_1(\Omega)_{\R} \supset \Twr$ into the cycle space, it will also become interesting to investigate the function-theoretic properties of the inclusion of the Hermitian-symmetric space $C_1^+(\Omega)$ into the connected component of the cycle space containing the real cycles in more detail. This is especially interesting in the light of the possibility of using the Penrose transform to pass between the period domain and (subsets of) its cycle space, cf.~\cite[Chapter 14]{FHW}.

\subsubsection*{Moduli of (complex-)hyperk\"ahler metrics}
The quotient $\Twr/\Gamma$ can be identified with a moduli space of K\"ahler-Einstein metrics on the four-manifold $M$ underlying all K3 surfaces, see for example \cite[Section~12.113]{Besse} or \cite{Kob90}. Since the action of $\Gamma$ on the cycle space, and hence in particular on the neighborhood $U$ of $\Twr$, is proper, one might ask whether a similar moduli-theoretic interpretation exists for the complex space $U/\Gamma$. For this, one has to investigate whether the determinant of the bundle $(\pr1)_*(\O_{\cC}(1)) $ of \eqref{pushforward_trivialisation} can be $\Gamma$-equivariantly trivialized over $U$ (this would yield a canonical metric on the submersion $p$). Moreover, it is necessary to establish positive results towards the expectation expressed in \Cref{singleDiffeomo} in order to find out which equivalence relation is induced by the $\Gamma$-action.

\subsubsection*{Higher dimensions}
Of course it would be interesting to see in which form and to which extend the results obtained in Section~\ref{Answer_to_question} generalize from K3 surfaces to higher-dimensional IHS manifolds.

\subsubsection*{Degenerate twistor families and singular IHS-varieties}

Points in $C_1(\Omega)_{\R}\setminus \Twr$ are characterized by the phenomenon that $\kappa$ degenerates. 
Besse asked in \cite[Section~12.K]{Besse} what happens geometrically under these degenerations of real twistor cycles. The question has a known answer involving families of orbifold K\"ahler-Einstein metrics on K3 surfaces with ADE singularities due to Kobayashi and Todorov \cite{MR902574}, see also \cite{Kob90}. As a consequence, these ``holes'' can be filled in with \emph{generalized K3-surfaces} obtained by blowing down $(-2)$-curves that intersect the corresponding $\kappa$ trivially. The problem on the level of markings discussed in Section~\ref{section_disucssion_univ_family} above appears in a different guise in the definition of marking for a holomorphic family of such generalized K3-surfaces, see \cite[p.~278ff]{Kob90}. This indicates that considering families also involving such generalized K3-surfaces with the appropriate markings, the family over $\Twr $ can indeed by extended to $C_1(\Omega)_{\R}$. On the other hand, the connection to the Burns-Rapoport space
would at least partially be lost, so that it is interesting to see whether the sheaf-theoretic approach taken by us can be modified to prove such a result. Once this is in place, it seems feasible to ``fill in the holes'' in a moduli space of hyperk\"ahler metrics; i.e., the coarse moduli space of (orbifold) K\"ahler-Einstein metrics should have a modular complexification parametrizing both smooth and orbifold complex-hyperk\"ahler metrics. 

\subsubsection*{Complexified K\"ahler classes}
Focusing on the K3 case, we have analyzed the space of cycles $C_1(\Omega)_p$ through a given point $p$ in the K3 period domain in \Cref{cycles_through_point}. While real cycles through $p$ come from K\"ahler classes on the K3 surface $X$ corresponding to $p$, some (but certainly not all) complex cycles through $p$ arise from complex perturbations of K\"ahler classes, i.e., classes of the form $\kappa + i \beta$ for $\kappa$ a K\"ahler class on $X$ and $\beta \in \Hdg11X$. In the literature, special representatives of such cohomology classes have been discussed for some time, see the discussion in \cite[Section 1]{MR4767634}. It is a natural question how these compare with the complex-hyperk\"ahler metrics found by us above. 

\vspace{0.2cm}

\bibliographystyle{alpha}
\bibliography{cycles}

\vfill
\end{document}